\newcommand{\subgrp}[1]{\langle #1 \rangle}
\newcommand{\set}[1]{\left\{ #1 \right\}}
\newcommand{\bs}[1]{\boldsymbol{#1}}
\newcommand{\wt}[1]{\widetilde{ #1}}
\newcommand{\ol}[1]{\overline{#1}}
\newcommand{\op}{\textup{op}}
\newcommand{\ev}{\textup{ev}}
\newcommand{\odd}{\textup{odd}}
\newcommand{\ve}{\varepsilon}
\newcommand{\dbar}{\ol{d}}
\newcommand{\Ebar}{\ol{E}}
\newcommand{\simrightarrow}{\stackrel{\sim}{\rightarrow}}
\DeclareMathOperator{\coker}{coker}
\DeclareMathOperator{\Dist}{Dist}
\DeclareMathOperator{\End}{End}
\DeclareMathOperator{\Ext}{Ext}
\DeclareMathOperator{\bbExt}{\mathbb{E}xt}
\DeclareMathOperator{\opH}{H}
\DeclareMathOperator{\Hom}{Hom}
\DeclareMathOperator{\id}{id}
\DeclareMathOperator{\im}{im}
\DeclareMathOperator{\Lie}{Lie}
\DeclareMathOperator{\str}{str}
\DeclareMathOperator{\svec}{svec}
\DeclareMathOperator{\Tot}{Tot}
\DeclareMathOperator{\tr}{tr}
\DeclareMathOperator{\Yext}{Yext}
\newcommand{\gotimes}{\tensor[^g]{\otimes}{}}
\renewcommand{\mod}{\, \textup{mod}\, }
\newcommand{\Hbul}{\opH^\bullet}
\newcommand{\F}{\mathbb{F}}
\newcommand{\G}{\mathbb{G}}
\newcommand{\N}{\mathbb{N}}
\newcommand{\Z}{\mathbb{Z}}
\newcommand{\cp}{\mathcal{P}}
\newcommand{\cv}{\mathcal{V}}
\newcommand{\bsa}{\bs{A}}
\newcommand{\bsc}{\bs{c}}
\newcommand{\bsD}{\bs{D}}
\newcommand{\bsd}{\bs{d}}
\newcommand{\bsdelta}{\bs{\Delta}}
\newcommand{\bsE}{\bs{E}}
\newcommand{\bse}{\bs{e}}
\newcommand{\bsg}{\bs{\Gamma}}
\newcommand{\bsi}{\bs{I}}
\newcommand{\bsir}{\bsi^{(r)}}
\newcommand{\bsilr}{{\bsi_\ell}^{(r)}}
\newcommand{\bsirone}{{\bsi_1}^{(r)}}
\newcommand{\bsijzero}{{\bsi_0}^{(j)}}
\newcommand{\bsirzero}{{\bsi_0}^{(r)}}
\newcommand{\bsK}{\bs{K}}
\newcommand{\bsk}{\bs{\kappa}}
\newcommand{\bsl}{\bs{\Lambda}}
\newcommand{\bso}{\bs{\Omega}}
\newcommand{\bsp}{\bs{\cp}}
\newcommand{\bsPi}{\bs{\Pi}}
\newcommand{\bss}{\bs{S}}
\newcommand{\bssigma}{\bs{\Sigma}}
\newcommand{\bst}{\bs{T}}
\newcommand{\bsv}{\bs{\cv}}
\newcommand{\bsvone}{\bsv_{\ol{1}}}
\newcommand{\bsvzero}{\bsv_{\ol{0}}}
\newcommand{\bibsp}{\textup{bi-}\bsp}
\newcommand{\g}{\mathfrak{g}}
\newcommand{\gl}{\mathfrak{gl}}
\newcommand{\fS}{\mathfrak{S}}
\newcommand{\fsvec}{\mathfrak{svec}}
\newcommand{\glmn}{\gl(m|n)}
\newcommand{\glone}{\gl(m|n)_{\ol{1}}}
\newcommand{\glzero}{\gl(m|n)_{\ol{0}}}
\newcommand{\gone}{\g_{\ol{1}}}
\newcommand{\gzero}{\g_{\ol{0}}}
\newcommand{\Vone}{V_{\ol{1}}}
\newcommand{\Voner}{{\Vone}^{(r)}}
\newcommand{\Vzero}{V_{\ol{0}}}
\newcommand{\Vzeror}{{\Vzero}^{(r)}}
\numberwithin{equation}{subsection}
\newtheorem{theorem}{Theorem}[subsection]
\newtheorem{proposition}[theorem]{Proposition}
\newtheorem{corollary}[theorem]{Corollary}
\newtheorem{lemma}[theorem]{Lemma}
\newtheorem{problem}[theorem]{Problem}
\newtheorem*{theorem*}{Theorem}
\newtheorem*{corollary*}{Corollary}
\theoremstyle{definition}
\newtheorem{definition}[theorem]{Definition}
\newtheorem{example}[theorem]{Example}
\newtheorem{remark}[theorem]{Remark}
\title{Cohomological finite-generation for finite supergroup schemes}
\author{Christopher M.\ Drupieski}
\address{Department of Mathematical Sciences, DePaul University, Chicago, IL 60614, USA}
\email{cdrupies@depaul.edu}
\thanks{This work was supported in part by a faculty development grant from the DePaul University College of Science and Health, and by an AMS--Simons Travel Grant.}
\subjclass[2010]{Primary 20G10. Secondary 17B56.}
\begin{document}

\begin{abstract}
In this paper we compute extension groups in the category of strict polynomial super\-functors and thereby exhibit certain ``universal extension classes'' for the general linear supergroup. Some of these classes restrict to the universal extension classes for the general linear group exhibited by Friedlander and Suslin, while others arise from purely super phenomena. We then use these extension classes to show that the cohomology ring of a finite supergroup scheme---equivalently, of a finite-dimensional cocommutative Hopf superalgebra---over a field is a finitely-generated algebra. Implications for the rational cohomology of the general linear supergroup are also discussed.
\end{abstract}

\maketitle


\section{Introduction}

\subsection{Main results}

Let $k$ be a field of positive characteristic $p$. Friedlander and Suslin introduced the category $\cp$ of strict polynomial functors over $k$ as part of their investigation into the cohomology of finite $k$-group schemes \cite{Friedlander:1997}. They calculated the Yoneda algebra in $\cp$ of the $r$-th Frobenius twist of the identity functor, and thereby exhibited certain ``universal extension classes'' for the general linear group. These extension classes enabled them to show that the cohomology ring of a finite $k$-group scheme, or equivalently of a finite-dimensional cocommutative $k$-Hopf algebra, is a finitely-generated $k$-algebra. The purpose of this article is to extend all of these results to the world of $\Z/2\Z$-graded vector spaces and group schemes. In particular, we prove:

\begin{theorem*}[\ref{thm:fgpgeq3}, \ref{thm:fgp=0}]
Let $k$ be a field, and let $G$ be a finite $k$-supergroup scheme, or equivalently a finite-dimensional cocommutative $k$-Hopf superalgebra. Then the cohomology ring $\Hbul(G,k)$ is a finitely-generated $k$-superalgebra, and for each finite-dimensional $G$-supermodule $M$, the cohomology group $\Hbul(G,M)$ is a finitely-generated $\Hbul(G,k)$-supermodule.
\end{theorem*}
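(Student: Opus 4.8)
The plan is to follow the strategy pioneered by Friedlander and Suslin in the classical case, adapted to the super setting. The first reduction is to pass from an arbitrary finite $k$-supergroup scheme $G$ to a closed embedding into a general linear supergroup. Concretely, the regular representation $k[G]$ gives a faithful comodule, so $G$ embeds as a closed subgroup scheme of $GL(m|n)$ for suitable $m,n$; by a standard Frobenius-kernel argument one further arranges $G \le GL(m|n)_{(r)}$, the $r$-th Frobenius kernel, for $r$ large enough. Since $k[G]$ is finitely generated and cohomology commutes with the relevant limits, it suffices to prove the finite-generation statement for $G = GL(m|n)_{(r)}$ and more generally to produce enough ``universal'' cohomology classes on $GL(m|n)$ that restrict nontrivially to every Frobenius kernel. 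This is exactly where the main results of the paper on $\Ext$-groups in the category $\cp$ of strict polynomial superfunctors enter: the Yoneda-algebra computation for Frobenius twists of the identity superfunctor produces classes in $\opH^\bullet(GL(m|n), \gl(m|n)^{(r)})$ (and the ``purely super'' classes), whose images under restriction to $GL(m|n)_{(r)}$ are the analogues of the Friedlander--Suslin classes.

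Next I would set up the Noetherian induction. Write $A = \opH^\bullet(GL(m|n)_{(r)}, k)$ for the ring in question and feed in the universal classes: these give a finitely generated subalgebra $B \subseteq \operatorname{Dist}(GL(m|n)_{(r)})$-cohomology, or rather a polynomial subalgebra generated by the restricted universal classes together with $\opH^\bullet$ of the underlying even Frobenius kernel $(GL(m|n)_\ev)_{(r)}$ and the exterior/symmetric contribution of the odd part, over which $A$ and every $\opH^\bullet(GL(m|n)_{(r)}, M)$ should be shown to be a Noetherian, respectively finite, module. The mechanism is a spectral sequence (a Lyndon--Hochschild--Serre or May-type spectral sequence associated to the filtration of $GL(m|n)_{(r)}$, or the hyper-$\Ext$ spectral sequence computing cohomology of the Frobenius kernel from the strict polynomial superfunctor $\Ext$-groups): one shows the universal classes are permanent cycles, that multiplication by them makes the $E_1$- (or $E_2$-) page a Noetherian module over the subalgebra they generate, and that Noetherianity passes to the abutment because the spectral sequence has a horizontal vanishing line after finitely many pages. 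For the module statement one runs the same argument with coefficients in $M$, using that $\opH^\bullet(GL(m|n)_{(r)}, M)$ is a module over $\opH^\bullet(GL(m|n)_{(r)}, k)$ compatibly with the spectral sequences.

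There are two genuinely super complications to address. First, cohomology rings of supergroup schemes are graded-commutative superalgebras, so one must be careful that ``finitely generated'' is the right notion and that Noetherian induction is valid: the even part $A_{\ol 0}$ is an honest commutative ring and $A_{\ol 1}$ is a finite $A_{\ol 0}$-module once $A$ is finitely generated, so one reduces Noetherianity questions to the commutative subring $A_{\ol 0}$; some care with the sign conventions and with nilpotents of odd degree is needed but is routine. Second, and more seriously, the characteristic-two case behaves differently, which is why the cited theorem splits into \ref{thm:fgpgeq3} (for $p \ne 2$, and presumably also characteristic zero separately as \ref{thm:fgp=0}) and a separate treatment: when $p = 2$ the odd part of $\gl(m|n)$ contributes a Frobenius-twist phenomenon that overlaps with the even one, and the ``purely super'' extension classes and the structure of $GL(m|n)_{(1)}$ require a modified argument. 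So the plan is to prove the $p \ge 3$ and $p = 0$ cases by the Frobenius-twist/universal-class method just outlined, and to handle $p = 2$ either by the same method with the characteristic-two strict polynomial superfunctor computations substituted, or by a separate reduction.

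The step I expect to be the main obstacle is establishing that the relevant spectral sequence degenerates in the appropriate range---i.e.\ producing a horizontal vanishing line and checking that the universal classes act so as to kill everything above it---because this is where the precise structure of the $\Ext$-algebra in $\cp$, including the new super classes, must be used quantitatively rather than just qualitatively, and where the parity subtleties of graded-commutativity interact with the Noetherian-module bookkeeping. Verifying that the universal classes are nonzero on \emph{every} Frobenius kernel (not just stably) is the other delicate point, since it is what guarantees the subalgebra $B$ is large enough for the induction to terminate.
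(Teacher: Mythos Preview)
Your strategy for $p \geq 3$ is broadly right in spirit, but you conflate what this paper proves with what it cites from prior work. The reductions you outline---to infinitesimal $G$, to a closed embedding $G \hookrightarrow GL(m|n)_r$, and the May-type spectral sequence argument showing that suitable universal classes force finite generation---were already established in \cite{Drupieski:2013b}. The present paper's contribution is to \emph{exhibit} the conjectured classes $e_r^{m,n}$ and $c_r^{m,n}$ as restrictions to $GL(m|n)$ of the functor-cohomology classes $\bse_r + \bse_r^\Pi$ and $\bsc_r + \bsc_r^\Pi$, and then to verify two specific conditions \ref{eq:ertoG1} and \ref{eq:crtoG1} on their restriction to the \emph{first} Frobenius kernel $GL(m|n)_1$ only (not to all Frobenius kernels, as you suggest). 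The verification for $\bsc_r$ combines an explicit restriction to a one-dimensional odd unipotent subgroup with a $T$-weight argument; for $\bse_r$ it is a chain-level computation with the resolution $X(\g)$ in the base case $r=1$, followed by an inductive cup-product argument. You also misdiagnose $p=2$: in that case the supertwist is the ordinary flip, so every finite $k$-supergroup scheme is an ordinary finite $k$-group scheme, and Friedlander--Suslin applies verbatim---no modified super argument is needed.

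There is a genuine gap in your treatment of $p=0$. You propose to handle it by ``the Frobenius-twist/universal-class method just outlined,'' but in characteristic zero there is no Frobenius morphism, hence no Frobenius kernels, no twist functors $\bsir$, and no classes $\bse_r$, $\bsc_r$; the entire apparatus of Sections~\ref{sec:cohomology}--\ref{sec:yonedaalgebra} is simply unavailable. The paper instead invokes Kostant's structure theorem (Theorem~\ref{thm:Kostant}): over an algebraically closed field of characteristic zero, a finite-dimensional cocommutative Hopf superalgebra is isomorphic to a smash product $kG \# \Lambda(V)$ with $G$ a finite group and $V$ purely odd. The Lyndon--Hochschild--Serre spectral sequence for $\Lambda(V) \trianglelefteq A$ collapses by Maschke's theorem, yielding $\Hbul(A,k) \cong S(V^*)^G$, which is finitely generated by Noether's classical invariant-theory result; the module statement follows by the same collapse. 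This argument is short and uses none of the strict polynomial superfunctor machinery.
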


Recall that a superspace is a $\Z/2\Z$-graded vector space. The category $\fsvec$ of $k$-super\-spaces, consisting of the $k$-superspaces as objects and arbitrary linear maps between them as morphisms, admits a tensor product operation with a braiding $T: V \otimes W \rightarrow W \otimes V$ defined by
\[
T(v \otimes w) = (-1)^{\ol{v} \cdot \ol{w}} w \otimes v.
\]
Here $\ol{v},\ol{w} \in \Z_2 :=\Z/2\Z = \{ \ol{0},\ol{1} \}$ denote the degrees of homogeneous elements $v \in V$ and $w \in W$. Then a $k$-Hopf superalgebra is a Hopf algebra object in the category $\fsvec$. Similarly, an affine $k$-super\-group scheme is an affine group scheme object in $\fsvec$. Each ordinary $k$-Hopf algebra can be viewed as a $k$-Hopf superalgebra concentrated in degree $\ol{0}$. Conversely, if $A$ is a $k$-Hopf superalgebra, then $A$ is a subalgebra of the (typically non-cocommutative) ordinary $k$-Hopf algebra $A \# k\Z_2$ (the smash product of $A$ with the group algebra $k\Z_2$), and the cohomology ring of $A \# k\Z_2$ identifies with the $\ol{0}$-graded component of $\Hbul(A,k)$. Thus, our main theorem can be viewed as a generalization of the Friedlander--Suslin finite-generation result to a wider class of Hopf algebra objects or to a wider class of ordinary Hopf algebras. In particular, our theorem provides additional supporting evidence for a conjecture of Etingof and Ostrik \cite[Conjecture 2.18]{Etingof:2004}, which asserts that the cohomology ring of a finite tensor category should be a finitely-generated algebra. Reducing the gradings mod~$2$, our theorem also applies to $\Z$-graded cocommutative Hopf algebras (as defined, e.g., by Milnor and Moore \cite{Milnor:1965}) and group schemes.

\begin{corollary*}
Let $A$ be a finite-dimensional cocommutative graded Hopf algebra (in the sense of Milnor and Moore) over the field $k$. Then the cohomology ring $\Hbul(A,k)$ is a finitely-generated $k$-algebra.
\end{corollary*}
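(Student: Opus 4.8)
The plan is to derive the corollary from Theorems~\ref{thm:fgpgeq3} and~\ref{thm:fgp=0} via the mod-$2$ reduction already advertised in the introduction, so essentially all of the work lies in setting up the dictionary between graded and super Hopf algebras rather than in any new homological algebra. First I would observe that the Koszul sign rule governing the product and coproduct of a $\Z$-graded Hopf algebra $A=\bigoplus_n A_n$ depends only on the parities of the $\Z$-degrees involved; hence applying the projection $\Z\twoheadrightarrow\Z_2$ to the grading turns $A$ into a $\Z_2$-graded, i.e.\ super, Hopf algebra $\ol{A}$, which is again cocommutative and finite-dimensional. Thus the main theorem applies to $\ol{A}$, and I will write $\Hbul(\ol{A},k)$ for its cohomology ring (equivalently $\Hbul(G,k)$ for the associated finite $k$-supergroup scheme).

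Next I would check that this reduction carries the cohomology of $A$ onto the cohomology of $\ol{A}$. Choose a resolution $P_\bullet\to k$ of the trivial module by $\Z$-graded projective $A$-modules; reducing the internal grading mod $2$ sends it to a resolution of $k$ by projective $\ol{A}$-supermodules, since a $\Z$-graded direct summand of a graded free module becomes a $\Z_2$-graded direct summand of a free supermodule and exactness is insensitive to the grading. Applying $\Hom(-,k)$ to both resolutions and comparing, one sees that the complex computing $\Hbul(\ol{A},k)$ is precisely the complex computing $\Hbul(A,k)$ with its internal $\Z$-grading collapsed modulo $2$ and summed over the two residue classes; in particular the two cohomology rings are isomorphic as ungraded $k$-algebras, the super grading on $\Hbul(\ol{A},k)$ being the mod-$2$ reduction of the internal grading on $\Hbul(A,k)$.

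Finally I would invoke Theorems~\ref{thm:fgpgeq3} and~\ref{thm:fgp=0}: $\Hbul(\ol{A},k)$ is a finitely-generated $k$-superalgebra, hence a finitely-generated $k$-algebra once the $\Z_2$-grading is forgotten, and therefore so is $\Hbul(A,k)$. The one point requiring genuine care, and the step I would write out, is that the comparison of the previous paragraph respects cup products: one must verify that collapsing the $\Z$-grading mod $2$ carries the Koszul signs appearing in the differentials and in the Yoneda/cup product on $\Hom_A(P_\bullet,k)$ to the super signs appearing for $\ol{A}$, so that the identification is an isomorphism of algebras and not merely of graded vector spaces. Because the Milnor--Moore sign conventions are exactly the $\Z$-graded refinement of the super sign conventions used throughout this paper, this is a formal verification, and it is precisely where the hypothesis ``in the sense of Milnor and Moore'' enters; no input beyond the main theorem is needed.
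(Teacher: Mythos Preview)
Your proposal is correct and follows exactly the approach the paper intends: the paper offers no proof of this corollary beyond the single sentence ``Reducing the gradings mod~$2$, our theorem also applies to $\Z$-graded cocommutative Hopf algebras,'' and your write-up is precisely the natural elaboration of that remark. The only minor point is that you cite Theorems~\ref{thm:fgpgeq3} and~\ref{thm:fgp=0}, which together cover characteristic $p\geq 3$ and $p=0$; the characteristic-$2$ case is handled (as the paper notes) by observing that the sign rule is then trivial, so $\ol{A}$ is an ordinary cocommutative Hopf algebra and Friedlander--Suslin applies directly.
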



The main focus of this article is the calculation of extension groups in the category $\bsp$ of strict polynomial superfunctors, recently defined by Axtell \cite{Axtell:2013}, and the application of those results to the cohomology of the general linear supergroup $GL(m|n)$. We write $\bsp_d$ for Axtell's category of degree-$d$ ``strict polynomial functors of type I.'' Axtell shows for $m,n \geq d$ that $\bsp_d$ is equivalent to the category of finite-dimensional left supermodules for the Schur superalgebra $S(m|n,d)$, and hence also equivalent to the category of degree-$d$ polynomial representations for the supergroup $GL(m|n)$. Axtell also defines a category of ``strict polynomial functors of type II,'' which for $n \geq d$ is equivalent to the category of finite-dimensional left supermodules for the Schur superalgebra $Q(n,d)$ of type $Q$ defined by Brundan and Kleshchev \cite[\S4]{Brundan:2002}. We don't specifically consider the cohomology of these ``type II'' functors at this time, but we expect that to do so would yield some interesting results related to the representation theory of the supergroup $Q(n)$.

A degree-$d$ homogeneous strict polynomial superfunctor $F$ can be restricted to the subcategory $\bsvzero$ of purely even finite-dimensional $k$-superspaces or to the subcategory $\bsvone$ of purely odd finite-dimensional $k$-superspaces. These two restrictions are each then naturally degree-$d$ homogeneous strict polynomial functors in the sense of \cite{Friedlander:1997}, though they are rarely isomorphic. Now suppose that $k$ is a (perfect) field of characteristic $p > 2$. Given a $k$-superspace $V$ and a positive integer $r$, write $V^{(r)}$ for the $k$-superspace obtained by twisting the $k$-module structure on $V$ by the Frobenius morphism $\lambda \mapsto \lambda^{p^r}$. Since the decomposition $V = \Vzero \oplus \Vone$ of a superspace $V$ into its even and odd subspaces is not functorial (it is not compatible with the composition of odd linear maps), there is no corresponding direct sum decomposition of the identity functor $\bsi: V \mapsto V$. On the other hand, for each $r \geq 1$ the $r$-th Frobenius twist functor $\bsir: V \mapsto V^{(r)}$ decomposes as a direct sum $\bsir = \bsirzero \oplus \bsirone$ in $\bsp_{p^r}$ with $\bsirzero(V) = \Vzeror$ and $\bsirone(V) = \Voner$. This can be thought of as a functor analogue of the fact that the Frobenius morphism for $GL(m|n)$ has image in the underlying purely even subgroup $GL_m \times GL_n$ of $GL(m|n)$. In general, there does not appear to be a natural way to extend ordinary strict polynomial functors to the structure of strict polynomial superfunctors, though Frobenius twists of ordinary strict polynomial functors can be lifted to $\bsp$ in several ways; see Section \ref{subsec:Frobenius}.

The decomposition $\bsir = \bsirzero \oplus \bsirone$ leads to a matrix ring decomposition
\begin{equation} \label{eq:matrixring}
\renewcommand*{\arraystretch}{1.5}
\begin{pmatrix}
\Ext_{\bsp}^\bullet(\bsi_0^{(r)},\bsi_0^{(r)}) & \Ext_{\bsp}^\bullet(\bsi_1^{(r)},\bsi_0^{(r)}) \\
\Ext_{\bsp}^\bullet(\bsi_0^{(r)},\bsi_1^{(r)}) & \Ext_{\bsp}^\bullet(\bsi_1^{(r)},\bsi_1^{(r)})
\end{pmatrix}
\end{equation}
of the Yoneda algebra $\Ext_{\bsp}^\bullet(\bsir,\bsir)$. One of our main results (Theorem \ref{thm:Yonedaalgebra}) is the description of this algebra in terms of certain extension classes
\[
\left.
\begin{aligned}
\bse_i' &\in \Ext_{\bsp}^{2p^{i-1}}(\bsi_0^{(r)},\bsi_0^{(r)}) \\
\bse_i'' &\in \Ext_{\bsp}^{2p^{i-1}}(\bsi_1^{(r)},\bsi_1^{(r)})
\end{aligned}
\right\} \text{ for $1 \leq i \leq r$, and }
\left\{
\begin{aligned}
\bsc_r &\in \Ext_{\bsp}^{p^r}(\bsi_1^{(r)},\bsi_0^{(r)}), \\
\bsc_r^\Pi &\in \Ext_{\bsp}^{p^r}(\bsi_0^{(r)},\bsi_1^{(r)}).
\end{aligned}
\right.
\]
For example, we show that $\Ext_{\bsp}^\bullet(\bsirzero,\bsirzero)$ is a commutative algebra generated by $\bse_1',\ldots,\bse_r'$ and $\bsc_r \cdot \bsc_r^\Pi$ subject only to the relations $(\bse_1')^p = \cdots = (\bse_{r-1}')^p = 0$ and $(\bse_r')^p = \bsc_r \cdot \bsc_r^\Pi$. Additionally, the restriction functor $F \mapsto F|_{\bsvzero}$ induces a surjective map from $\Ext_{\bsp}^\bullet(\bsirzero,\bsirzero)$ to the extension algebra $\Ext_{\cp}^\bullet(I^{(r)},I^{(r)})$ calculated by Friedlander and Suslin. Some notable differences from the classical case include the fact that $\Ext_{\bsp}^\bullet(\bsir,\bsir)$ is noncommutative and contains non-nilpotent elements. We also observe by degree consideration that precomposition with $\bsir$ does not induce an injective map on $\Ext$-groups in $\bsp$; cf.\ \cite[Corollary 1.3]{Franjou:1999}.

\subsection{Organization of the paper}

The paper is organized as follows: In Sections \ref{sec:preliminaries} and \ref{sec:cohomology} we give definitions and basic results that are needed for doing cohomology calculations in the category $\bsp$. Readers acquainted with ordinary strict polynomial functors may find much of this material quite familiar, though some attention should be paid to make the transition from ordinary to $\Z_2$-graded objects. In particular, $\bsp$ is not an abelian category, so care should be taken in defining the relevant $\Ext$-groups (Sections \ref{subsec:projectivesinjectives}--\ref{subsec:cohomology}), in defining operations on $\Ext$-groups (Sections \ref{subsec:cohomology}--\ref{subsec:externalproducts}), and in interpreting homogeneous elements as equivalence classes of $n$-extensions (Section \ref{subsec:extnextensions}).

In Section \ref{sec:yonedaalgebra} we compute the Yoneda algebra $\Ext_{\bsp}^\bullet(\bsir,\bsir)$. Our strategy parallels the inductive approach in \cite{Friedlander:1997} (in turn based on that in \cite{Franjou:1994}) using hypercohomology spectral sequences. For the base case of induction, we consider a super analogue $\bso$ of the de~Rham complex functor, which satisfies a super Cartier isomorphism: $\Hbul(\bso_{pn}) \cong {\bso_n}^{(1)}$. This isomorphism does not, however, preserve the cohomological degree, and this failure more-or-less directly leads to the existence of the extension classes $\bsc_r$ and $\bsc_r^\Pi$. After the base case, the remaining steps in the induction argument rely on the ordinary de Rham complex functor and follow quite closely the arguments in \cite{Friedlander:1997}. One of the key results toward describing the multiplicative structure of the algebra $\Ext_{\bsp}^\bullet(\bsir,\bsir)$ is Lemma \ref{lem:differential}, but the proof of the lemma is deferred until Section \ref{subsec:fgpgeq3}, after we have had a chance to analyze how certain extension classes restrict to the Frobenius kernel of $GL(m|n)$.

Finally, in Section \ref{sec:applications} we present our main applications to the cohomology of the general linear supergroup $GL(m|n)$ and to the cohomology of finite supergroup schemes. In particular, we show that the restrictions to $GL(m|n)$ of certain distinguished extension classes $\bse_r$, $\bse_r^\Pi$, $\bsc_r$, and $\bsc_r^\Pi$ provide in a natural way the extension classes for $GL(m|n)$ conjectured in \cite[\S5.4]{Drupieski:2013b}. Combined with our previous results in \cite{Drupieski:2013b}, this proves cohomological finite-generation for finite supergroup schemes over fields of characteristic $p > 2$. The case $p=2$ reduces to the case of ordinary finite $k$-group schemes (since then every finite $k$-supergroup scheme \emph{is} a finite $k$-group scheme), while the case $p=0$ follows swiftly (though not trivially) from a structure theorem of Kostant; see Section \ref{subsec:fgp=0}. We also show in Section \ref{subsec:implications} that the restriction of $\bse_r$ to $GL(m|n)$ naturally produces a nonzero class in the rational cohomology group $\opH^2(GL(m|n),k)$. This stands in contrast to the well-known fact that if $G$ is a reductive algebraic group, then $\opH^i(G,k) = 0$ for all $i > 0$. It also demonstrates, in contrast to the classical case, that the embedding of the category of polynomial representations for $GL(m|n)$ into the category of all rational representations for $GL(m|n)$ does not induce isomorphisms on extension groups. We are not aware of any other nontrivial calculations of $\opH^i(GL(m|n),k)$ in the literature, though Brundan and Kleshchev have shown that $\opH^1(Q(n),k)$ is a one-dimensional odd superspace \cite[Corollary 7.8]{Brundan:2003a}. An obvious open problem is to calculate the complete structure of the rational cohomology ring $\Hbul(GL(m|n),k)$.

\subsection{Conventions}

Except when indicated, $k$ will denote a perfect field of positive characteristic $p > 2$, and we will follow the notation, terminology, and conventions laid out in \cite[\S2]{Drupieski:2013b}. In particular, we assume that the reader is familiar with the standard sign conventions of ``super'' linear algebra. All vector spaces are $k$-vector spaces, and all unadorned tensor products denote tensor products over $k$. Set $\Z_2 = \Z/2\Z = \{ \ol{0},\ol{1} \}$, and write $V = \Vzero \oplus \Vone$ for the decomposition of a superspace $V$ into its even and odd subspaces. Given a homogeneous element $v \in V$, write $\ol{v} \in \Z_2$ for the $\Z_2$-degree of $v$. Except when indicated, all isomorphisms will arise from even linear maps; we typically reserve the symbol ``$\simeq$'' for isomorphisms arising from odd linear maps. Set $\N = \set{0,1,2,\ldots}$.

\subsection{}

An earlier draft of this manuscript, posted on the arXiv from August 2014 to August 2015 and discussed by the author in a number of venues, asserted in error that the differential in Lemma \ref{lem:differential} is trivial in degree $s = 2p^r$. Consequently, the earlier draft misidentified the multiplicative structure of the algebra $\Ext_{\bsp}^\bullet(\bsir,\bsir)$. The author discovered the error as a result of work in progress with Jonathan Kujawa, investigating the structure of support varieties for restricted Lie superalgebras. Just as Suslin, Friedlander, and Bendel \cite{Suslin:1997,Suslin:1997a} applied the functor cohomology calculations of Friedlander and Suslin \cite{Friedlander:1997} to investigate cohomological support varieties for finite group schemes, so too do we hope to apply the results of this paper to investigate cohomological support varieties for graded group schemes.

\section{Preliminaries} \label{sec:preliminaries}

\subsection{Strict polynomial superfunctors} \label{subsec:definitions}

Let $\fsvec$ be the category whose objects are the $k$-super\-spaces and whose morphisms are the $k$-linear maps between superspaces. The category $\fsvec$ is naturally enriched over itself. Let $\bsv$ be the full subcategory of $\fsvec$ whose objects are the finite-dimensional $k$-superspaces, and let $\fsvec_\ev$ and $\bsv_\ev$ be the underlying even subcategories having the same objects as $\fsvec$ and $\bsv$, respectively, but only the even linear maps as morphisms.\footnote{In \cite{Drupieski:2013b}, $\fsvec$ is denoted $\svec_k$, while $\fsvec_\ev$ is denoted $\fsvec_k$.} Then $\fsvec_\ev$ and $\bsv_\ev$ are abelian categories. Given $V,W \in \bsv$, let $T: V \otimes W \rightarrow W \otimes V$ be the supertwist map, which is defined on homogeneous simple tensors by $T(v \otimes w) = (-1)^{\ol{v} \cdot \ol{w}} w \otimes v$.\footnote{From now on, whenever we state a formula in which a homogeneous degree has been specified, we mean that the formula is true as written for homogeneous elements, and that it extends linearly to non-homogeneous elements.} For each $n \in \N$, there exists a right action of the symmetric group $\fS_n$ on $V^{\otimes n}$ such that the transposition $(i,i+1) \in \fS_n$ acts via the linear map $(1_V)^{\otimes(i-1)} \otimes T \otimes (1_V)^{\otimes(n-i-1)}$. Set $\bsg^n(V) = (V^{\otimes n})^{\fS_n}$. Then $\bsg^n: V \mapsto \bsg^n(V)$ is an endofunctor on $\bsv_\ev$. If $V = \Vzero$, then $\bsg^n(V) = \Gamma^n(V)$, where $\Gamma^n$ denotes the ordinary $n$-th divided power (i.e., symmetric tensor) functor.

For each $A,B \in \bsv$, the supertwist map induces an isomorphism $A^{\otimes n} \otimes B^{\otimes n} \cong (A \otimes B)^{\otimes n}$, which is an isomorphism of $\fS_n$-modules if we consider $A^{\otimes n} \otimes B^{\otimes n}$ as a right $\fS_n$-module via the diagonal map $\fS_n \rightarrow \fS_n \times \fS_n$. This means that $\bsg^n(A) \otimes \bsg^n(B)$ is naturally a subspace of $\bsg^n(A \otimes B)$. In particular, if $\phi: A \otimes B \rightarrow C$ is an even linear map, then there exists an induced even linear map
\begin{equation} \label{eq:gammamap}
\bsg^n(\phi): \bsg^n(A) \otimes \bsg^n(B) \rightarrow \bsg^n(C).
\end{equation}
Now given $n \in \N$, define $\bsg^n \bsv$ to be the category whose objects are the same as those in $\bsv$, whose morphisms are defined by $\Hom_{\bsg^n \bsv}(V,W) = \bsg^n \Hom_k(V,W)$, and in which the composition of morphisms is induced as in \eqref{eq:gammamap} by the composition of linear maps in $\bsv$. Alternatively, there exists by \cite[Lemma 3.1]{Axtell:2013} a natural isomorphism $\bsg^n \Hom_k(V,W) \cong \Hom_{k\fS_n}(V^{\otimes n},W^{\otimes n})$. Then composition in $\bsg^n \bsv$ can be viewed as the composition of $k\fS_n$-module homomorphisms.

\begin{definition} \label{def:superfunctor}
Let $n \in \N$. A \emph{homogeneous strict polynomial superfunctor of degree $n$} is an even linear functor $F: \bsg^n \bsv \rightarrow \bsv$, i.e., a covariant functor $F: \bsg^n \bsv \rightarrow \bsv$ such that for each $V,W \in \bsv$, the function $F_{V,W} : \bsg^n \Hom_k(V,W) \rightarrow \Hom_k(F(V),F(W))$ is an even linear map. Given degree-$n$ homogeneous strict polynomial superfunctors $F$ and $G$, a homomorphism $\eta: F \rightarrow G$ consists for each $V \in \bsv$ of a map $\eta(V) \in \Hom_k(F(V), G(V))$ such that for each $\phi \in \Hom_{\bsg^n \bsv}(V,W)$ one has
\[
\eta(W) \circ F(\phi) = (-1)^{\ol{\eta} \cdot \ol{\phi}} G(\phi) \circ \eta(V).
\]
We denote by $\bsp_n$ the category whose objects are the homogeneous strict polynomial superfunctors of degree $n$ and whose morphisms are the homomorphisms between those functors. The category $\bsp$ of arbitrary strict polynomial superfunctors is defined to be the category $\prod_{n \in \N} \bsp_n$.
\end{definition}

\begin{remark} \label{rem:strictstructure}
Let $\bsg \bsv$ be the category whose objects are the same as those in $\bsv$, whose morphisms are defined by $\Hom_{\bsg \bsv}(V,W) = \prod_{n \in \N} \Hom_{\bsg^n \bsv}(V,W)$, and in which composition of morphisms is defined componentwise using the composition laws in each $\bsg^n \bsv$. Then each $F \in \bsp$ is naturally an even linear functor $F: \bsg \bsv \rightarrow \fsvec$ as follows: Let $F = \bigoplus_{n \in \N} F^n$ be the \emph{polynomial decomposition} of $F$, i.e., the decomposition of $F$ into the sum of its homogeneous components $F^n \in \bsp_n$. Similarly, write $\phi = \prod_{n \in \N} \phi_n$ for the decomposition of $\phi \in \Hom_{\bsg \bsv}(V,W)$. Now $F: \bsg \bsv \rightarrow \fsvec$ is defined on objects by $F(V) = \bigoplus_{n \in \N} F^n(V)$, and on morphisms by $F(\phi) = \prod_{n \in \N} F^n(\phi_n)$, i.e., $F(\phi)$ acts on the summand $F^n(V)$ of $F(V)$ by the linear map $F^n(\phi_n): F^n(V) \rightarrow F^n(W)$.

Conversely, let $F: \bsg \bsv \rightarrow \fsvec$ be an even linear functor. Then for each $V \in \bsg \bsv$, the function $F_{V,V}: \Hom_{\bsg \bsv}(V,V) \rightarrow \Hom_k(F(V),F(V))$ makes $F(V)$ into a left $\Hom_{\bsg \bsv}(V,V)$-module, and the decomposition $\Hom_{\bsg \bsv}(V,V) = \prod_{n \in \N} \Hom_{\bsg^n \bsv}(V,V)$ leads to an expression $\id_V = \prod_{n \in \N} \id_{V,n}$ of the identity morphism as an infinite sum of orthogonal commuting idempotents. Set $F^n(V) = \id_{V,n}(F(V))$, and given $\phi \in \Hom_{\bsg \bsv}(V,W)$, define $F^n(\phi) : F^n(V) \rightarrow F^n(W)$ to be the map ${\id_{W,n}} \circ {F(\phi)} \circ {\id_{V,n}}$. Then $F^n: \bsg^n \bsv \rightarrow \fsvec$ is an even linear functor. If $F^n(V) \in \bsv$ for each $V \in \bsv$, then $F^n \in \bsp_n$. If also $F(V) = \bigoplus_{n \in \N} F^n(V)$ for each $V \in \bsv$, then $F = \bigoplus_{n \in \N} F^n \in \bsp$.
\end{remark}

\begin{remark}
Let $V,W \in \bsv$, and let $\phi \in \Hom_k(V,W)_{\ol{0}}$. Then for $n \in \N$, $\phi^{\otimes n} \in  \bsg^n \Hom_k(V,W)$. Thus, it follows that a homogeneous strict polynomial superfunctor $F \in \bsp_n$ defines an ordinary functor $\bsv_\ev \rightarrow \bsv_\ev$ that acts on objects by $V \mapsto F(V)$ and on morphisms by $\phi \mapsto F(\phi^{\otimes n})$. Throughout the paper, we will often state remarks (such as this one) in the context of homogeneous superfunctors, and then leave it to the reader to consider how those remarks can be extended to the non-homogeneous case, and vice versa.
\end{remark}

The category $\bsp$ is not an abelian category, though the underlying even subcategory $\bsp_\ev$, having the same objects as $\bsp$ but only the even homomorphisms, is an abelian category in which kernels and cokernels are computed ``pointwise'' in the category $\bsv$. More generally, if $\eta$ is a homogeneous homomorphism in $\bsp$, then the kernel, cokernel, and image of $\eta$ are again objects in $\bsp$. Only the even homomorphisms in $\bsp$ are genuine natural transformations between functors.

Observe that $\bsv_\ev = \bsvzero \oplus \bsvone$, where $\bsvzero$ is the full subcategory of $\bsv_\ev$ having as objects just the purely even superspaces, i.e., the superspaces $V$ with $V = \Vzero$, and $\bsvone$ is the full subcategory of $\bsv_\ev$ having as objects just the purely odd superspaces (and as morphisms just the even linear maps between them). In turn, $\bsvzero$ and $\bsvone$ are each isomorphic to the category $\cv$ of arbitrary finite-dimensional $k$-vector spaces. Replacing $\bsv$ by $\bsvzero$ or $\bsvone$ in the definition of $\bsg^n \bsv$, one obtains categories $\bsg^n(\bsvzero)$ and $\bsg^n(\bsvone)$ that are each isomorphic to the ordinary analogue $\Gamma^n \cv$ of $\bsg^n \bsv$.

Let $V,W \in \bsvzero$. Since $\Hom_k(V,W)$ is a purely even space, there exists a natural identification $\Hom_{\bsg^n \bsv}(V,W) = \Hom_{\Gamma^n \cv}(V,W)$. Thus, it makes sense to consider the restriction of $F \in \bsp_n$ to the category $\bsg^n(\bsvzero) \cong \Gamma^n \cv$. Similarly, we can consider the restriction of $F$ to $\bsg^n(\bsvone) \cong \Gamma^n \cv$. By abuse of notation, we denote these restrictions by $F|_{\bsvzero}$ and $F|_{\bsvone}$, respectively. In either case, we can then consider $F: \Gamma^n \cv \rightarrow \bsv$ as a functor to $\cv$ by forgetting the $\Z_2$-gradings on objects in $\bsv$. Then $F|_{\bsvzero}$ and $F|_{\bsvone}$ are homogeneous strict polynomial functors of degree $n$ in the sense of \cite{Friedlander:1997}; cf.\ the comments following \cite[3.7]{Friedlander:2003}. In particular, the restriction map $F \mapsto F|_{\bsvzero}$ defines an exact linear functor from $\bsp$ to the category $\cp$ of ordinary strict polynomial functors, which we call \emph{restriction from $\bsp$ to $\cp$}. In this paper we will consider strict polynomial superfunctors that restrict to well-known ordinary strict polynomial functors. In these situations we often use a boldface symbol for the strict polynomial superfunctor and a non-boldface version of the same symbol for the functor's restriction to $\bsvzero$. In particular, we consider superfunctors that restrict to the divided power algebra functor $\Gamma$ (i.e., the symmetric tensor functor), the symmetric algebra functor $S$, and the exterior algebra functor $\Lambda$ (which is also isomorphic to the anti-symmetric tensor functor). We write $\cp_n$ and $\cp$ for the ordinary analogues of $\bsp_n$ and $\bsp$, respectively.

\begin{remark}
In \cite[\S5]{Axtell:2013}, Axtell provides an alternate definition for the category $\bsp_d$ that more closely matches the original definition given by Friedlander and Suslin. The definition given above, stated by Axtell in \cite[\S3.3]{Axtell:2013}, follows the expositions of Kuhn \cite[\S3.1]{Kuhn:1998} and Pirashvili \cite[\S4.1]{Pirashvili:2003}.
\end{remark}

\subsection{Constructions} \label{subsec:constructions}

Given $F \in \bsp_m$ and $G \in \bsp_n$, one can construct, in exactly the same manner as for ordinary strict polynomial functors,
\begin{enumerate}[label=(\thesubsection.\arabic*)]
\setcounter{enumi}{\value{equation}}
\item the direct sum $F \oplus G \in \bsp$, $V \mapsto F(V) \oplus G(V)$,
\item the composite $F \circ G \in \bsp_{mn}$, $V \mapsto F(G(V))$, and
\item the tensor product $F \otimes G \in \bsp_{m+n}$, $V \mapsto F(V) \otimes G(V)$.
\setcounter{equation}{\value{enumi}}
\end{enumerate}
For example, for each $U \in \bsv$, the inclusion of the Young subgroup $(\fS_n)^{\times m}$ into $\fS_{mn}$ induces an inclusion $\bsg^{mn}(U) \hookrightarrow \bsg^m(\bsg^n(U))$; this inclusion is used to define the action of $F \circ G$ on morphisms. Similarly, the inclusion of the Young subgroup $\fS_m \times \fS_n$ into $\fS_{m+n}$ induces for each $U \in \bsv$ an inclusion $\bsg^{m+n}(U) \hookrightarrow \bsg^m(U) \otimes \bsg^n(U)$, which is used to define the action of $F \otimes G$ on morphisms. We leave further details of these constructions to the reader.

For $U \in \bsv$, set $U^* = \Hom_k(U,k)$, the $k$-linear dual. Given $F \in \bsp_n$, the dual $F^\# \in \bsp_n$ is defined on objects by $F^\#(V) = F(V^*)^*$. On morphisms, $(F^\#)_{V,W}$ is the composite map
\[
\bsg^n \Hom_k(V,W) \cong \bsg^n \Hom_k(W^*,V^*) \stackrel{F_{W^*,V^*}}{\longrightarrow} \Hom_k(F(W^*),F(V^*)) \cong \Hom_k(F^\#(V),F^\#(W)),
\]
where the first and last isomorphisms are induced by sending a linear map $\psi$ to its transpose $\psi^*$. This construction is due to Kuhn \cite[\S3.4]{Kuhn:1994}, and $F^\#$ is sometimes called the `Kuhn dual' of $F$.

Recall that $V$ is naturally isomorphic to its double dual $(V^*)^*$ via the map $\Phi(V): V \rightarrow (V^*)^*$ that is defined for $v \in V$ and $g \in V^*$ by $\Phi(V)(v)(g) = (-1)^{\ol{v} \cdot \ol{g}} g(v)$. Let $\Phi: \bsi \simrightarrow \bsi^\#$ be the natural transformation lifting $\Phi(V)$. Then $F$ identifies with $F^{\#\#}$ via the composite isomorphism
\[
F = F \circ \bsi \stackrel{F \circ \Phi}{\longrightarrow} F \circ \bsi^\# = \bsi \circ F \circ \bsi^\# \stackrel{\Phi \circ (F \circ \bsi^\#)}{\longrightarrow} \bsi^\# \circ F \circ \bsi^\#.
\]
Now let $F,G \in \bsp_n$, and let $\eta \in \Hom_{\bsp_n}(F,G)$. Then $\eta^\# \in \Hom_{\bsp_n}(G^\#,F^\#)$ is defined by $\eta^\#(V) = \eta(V^*)^*$. If $\sigma \in \Hom_{\bsp_n}(G,H)$, then $(\sigma \circ \eta)^\# = (-1)^{\ol{\sigma} \cdot \ol{\eta}} \eta^\# \circ \sigma^\#$. Thus, $(-)^\#$ defines an equivalence of categories $\bsp_n \simeq \bsp_n^{\op,-}$, where $\bsp_n^{\op,-}$ denotes the category with the same objects and morphisms as the opposite category of $\bsp_n$, but in which the composition law has been modified so that $\eta \circ_{\op} \sigma$ is now equal to $(-1)^{\ol{\sigma} \cdot \ol{\eta}} \eta \circ_{\op} \sigma$; cf.\ \cite[\S3.4]{Axtell:2013}.

\subsection{Examples} \label{subsec:examples}

Throughout this section, let $m,n \in \N$, let $V,W \in \bsv$, and let $\phi \in  \bsg^n \Hom_k(V,W)$. We identify $\phi$ with an element of $\Hom_{k\fS_n}(V^{\otimes n},W^{\otimes n})$. Let $\set{x_1,\ldots,x_s}$ be a basis for $\Vzero$ and let $\set{y_1,\ldots,y_t}$ be a basis for $\Vone$.

\subsubsection{Tensor products}

Let $U \in \bsv$. Then the functors $V \mapsto V \otimes U$ and $V \mapsto U \otimes V$ are objects in $\bsp_1$. These functors act on morphisms by sending $\phi \in \Hom_k(V,W)$ to the tensor products of maps $\phi \otimes \id_U$ and ${\id_U} \otimes {\phi}$, respectively. Then the supertwist map $T: V \otimes U \rightarrow U \otimes V$ lifts to an isomorphism $(- \otimes U) \cong (U \otimes -)$. Taking $U = k$, we get the identity functor $\bsi: \bsv \rightarrow \bsv$.

\subsubsection{Parity change}

Let $k^{0|1}$ be a one-dimensional purely odd superspace. Then the parity change functor $\bsPi$ is the functor $k^{0|1} \otimes -$. It acts on objects by reversing the $\Z_2$-grading, and on morphisms by $\bsPi(\phi) = (-1)^{\ol{\phi}} \phi$, i.e., if $\phi: V \rightarrow W$ is an even linear map, then $\bsPi(\phi): \bsPi(V) \rightarrow \bsPi(W)$ is equal to $\phi$ as a map between the underlying spaces, while if $\phi$ is odd, then $\bsPi(\phi) = -\phi$.

In contradiction to our stated convention on the use of boldface and non-boldface versions of the same symbol, set $\Pi = - \otimes k^{0|1}$. Then $\Pi$ acts on objects by $\Pi(V) = \bsPi(V)$, and acts on morphisms by $\Pi(\phi) = \phi$, i.e., $\Pi(\phi)$ is (always) equal to $\phi$ as a map between the underlying spaces.

Write $\id_{V \rightarrow \bsPi(V)}$ for the identity map on $V$ considered as an odd linear map $V \rightarrow \bsPi(V)$. Then $\id_{V \rightarrow \bsPi(V)}$ lifts for each $F \in \bsp$ to an odd isomorphism $\id_{F \rightarrow \bsPi \circ F}: F \simeq \bsPi \circ F$. More generally, for each $F,G \in \bsp$ there exist odd isomorphisms
\begin{equation} \label{eq:paritychangebsphom}
\begin{aligned}
\Hom_{\bsp}(F,G) &\simrightarrow \Hom_{\bsp}(F,\bsPi \circ G), & \eta &\mapsto {\id_{G \rightarrow \bsPi \circ G}} \circ \eta, & \text{and} \\
\Hom_{\bsp}(F,G) &\simrightarrow \Hom_{\bsp}(\bsPi \circ F,G), & \eta &\mapsto \eta \circ \id_{\bsPi \circ F \rightarrow F}
\end{aligned}
\end{equation}
that are natural with respect to even homomorphisms in either variable. Similar comments apply to the odd isomorphism $F \simeq \Pi \circ F$ induced by the superspace map $V \rightarrow V$, $v \mapsto (-1)^{\ol{v}}v$.

\subsubsection{Tensor powers}

The $n$-th tensor power functor $\otimes^n \in \bsp_n$ is defined on objects by $(\otimes^n)(V) = V^{\otimes n}$ and on morphisms by $(\otimes^n)(\phi) = \phi$. Equivalently, $\otimes^n = \bsi^{\otimes n}$. Set $\bst = \bigoplus_{n \in \N} \otimes^n$. Then $\bst(V)$ is the tensor superalgebra on $V$. Since $(\otimes^m) \otimes (\otimes^n) = \otimes^{m+n}$, it follows that the multiplication map on $\bst(V)$ lifts to a natural transformation $m_{\bst}: \bst \otimes \bst \rightarrow \bst$. Also, given $\sigma \in \fS_n$, it follows that the right action of $\sigma$ on $V^{\otimes n}$ lifts to a natural transformation $\otimes^n \rightarrow \otimes^n$, which we also denote $\sigma$.

\subsubsection{Symmetric powers}

The $n$-th symmetric power functor $\bss^n \in \bsp_n$ is defined on objects by
\[
\bss^n(V) = (V^{\otimes n})_{\fS_n} = (V^{\otimes n})/\subgrp{z-(z.\sigma): z \in V^{\otimes n}, \sigma \in \fS_n}.
\]
On morphisms, $\bss^n(\phi) : \bss^n(V) \rightarrow \bss^n(W)$ is the linear map that is naturally induced by $\phi$.

Set $\bss = \bigoplus_{n \in \N} \bss^n$. Then $\bss(V)$ is the symmetric superalgebra on $V$. It is the free commutative superalgebra generated by the superspace $V$. As an algebra, $\bss(V)$ is isomorphic to the ordinary tensor product of algebras $S(\Vzero) \otimes \Lambda(\Vone)$; cf.\ the algebra denoted $S_s(V)$ in \cite[\S2.3]{Drupieski:2013b}. Since the product in $\bss(V)$ is induced by the product in $\bst(V)$, multiplication in $\bss(V)$ lifts to a natural transformation $m_{\bss}: \bss \otimes \bss \rightarrow \bss$. The following set of monomials forms a basis for $\bss(V)$:
\begin{equation} \label{eq:bssbasis}
\{ x_1^{a_1} \cdots x_s^{a_s} y_1^{b_1} \cdots y_t^{b_t} : a_i,b_j \in \N, 0 \leq b_j \leq 1 \}.
\end{equation}

\subsubsection{Exterior powers}

Write $\sigma \mapsto (-1)^{\sigma}$ for the one-dimensional sign representation of $\fS_n$. The $n$-th exterior power functor $\bsl^n \in \bsp_n$ is defined on objects by
\[
\bsl^n(V) = (V^{\otimes n})/\subgrp{z - (-1)^\sigma (z.\sigma): z \in V^{\otimes n}, \sigma \in \fS_n}.
\]
On morphisms, $\bsl^n(\phi): \bsl^n(V) \rightarrow \bsl^n(W)$ is the linear map that is naturally induced by $\phi$.

Set $\bsl = \bigoplus_{n \in \N} \bsl^n$. Then $\bsl(V)$ is the exterior superalgebra on $V$. In the terminology of \cite[\S2.1]{Drupieski:2013b}, $\bsl(V)$ is the free graded-commutative graded super\-algebra generated by $V$ when $V$ is considered as a $\Z$-graded superspace concentrated in degree $1$. As an algebra, $\bsl(V)$ is isomorphic to the graded tensor product of superalgebras $\Lambda(\Vzero) \gotimes S(\Vone)$.\footnote{If $A$ and $B$ are graded superalgebras, then multiplication of homogeneous simple tensors in $A \gotimes B$ is defined by $(a \otimes b)(c \otimes d) = (-1)^{\ol{b} \cdot \ol{c}} (-1)^{\deg(b) \cdot \deg(c)} (ac \otimes bd)$, where $\deg(x)$ denotes the $\Z$-degree of $x$. The $\Z$-degree of $a \otimes b$ is defined by $\deg(a \otimes b) = \deg(a)+\deg(b)$.} As for $\bss(V)$, the multiplication map in $\bsl(V)$ lifts to a natural transformation $m_{\bsl}: \bsl \otimes \bsl \rightarrow \bsl$, and a basis for $\bsl(V)$ is given by the set
\begin{equation} \label{eq:bslbasis}
\{ x_1^{a_1} \cdots x_s^{a_s} y_1^{b_1} \cdots y_t^{b_t} : a_i,b_j \in \N, 0 \leq a_i \leq 1 \}
\end{equation}
of monomials in $\bsl(V)$. The algebra $\bsl(V)$ identifies with the algebra denoted $\Lambda_s(V)$ in \cite[\S2.3]{Drupieski:2013b}.

\subsubsection{Divided powers} \label{subsubsec:dividedpowers}

The $n$-th divided power functor $\bsg^n \in \bsp_n$ is defined on objects by
\[
\bsg^n(V) = (V^{\otimes n})^{\fS_n} = \{ z \in V^{\otimes n} : z.\sigma = z \text{ for all $\sigma \in \fS_n$} \}
\]
and on morphisms by $\bsg^n(\phi) = \phi|_{\bsg^n(V)}$. Set $\bsg = \bigoplus_{n \in \N} \bsg^n$.

Let $J \subseteq \fS_{m+n}$ be a set of right coset representatives for the Young subgroup $\fS_m \times \fS_n$ of $\fS_{m+n}$. Then $\sum_{\sigma \in J} \sigma$ defines a natural transformation $(\otimes^m) \otimes (\otimes^n) \rightarrow \otimes^{m+n}$. This natural transformation restricts to a natural transformation $\bsg^m \otimes \bsg^n \rightarrow \bsg^{m+n}$ that does not depend on the choice of $J$. Summing over all $m,n \in \N$, there exists a natural transformation $m_{\bsg}: \bsg \otimes \bsg \rightarrow \bsg$. Given $v \in \Vzero$ and $n \in \N$, set $\gamma_n(v) = v \otimes \cdots \otimes v \in \bsg^n(V)$.\footnote{The definition of $\gamma_n(v)$ also makes sense if $v \in \Vone$ and $n = 1$, but $\gamma_n(v) \notin \bsg^n(V)$ if $v \in \Vone$ and $n \geq 2$.} By an adaptation of the arguments in \cite[IV.5]{Bourbaki:2003}, one can show that the product $m_{\bsg}(V): \bsg(V) \otimes \bsg(V) \rightarrow \bsg(V)$ makes $\bsg(V)$ into a commutative superalgebra, and that the set
\begin{equation} \label{eq:bsgbasis}
\{ \gamma_{a_1}(x_1) \cdots \gamma_{a_s}(x_s) y_1^{b_1} \cdots y_t^{b_t}: a_i,b_j \in \N, 0 \leq b_j \leq 1 \}.
\end{equation}
of monomials in $\bsg(V)$ is a basis for $\bsg(V)$. In particular, $\bsg(V)$ is isomorphic to the tensor product of algebras $\Gamma(\Vzero) \otimes \Lambda(\Vone)$ (cf.\ \cite[(4) and Lemma 4.1]{Axtell:2013}), and is generated as an algebra by the subspace $\Vone \subseteq \bsg^1(V)$ together with the elements of the form $\gamma_{p^e}(v)$ for $v \in \Vzero$ and $e \geq 0$.

\subsubsection{Alternating powers}

The $n$-th alternating power functor $\bsa^n \in \bsp_n$ is defined on objects by
\[
\bsa^n(V) = \{ z \in V^{\otimes n} : z.\sigma = (-1)^\sigma z \text{ for all $\sigma \in \fS_n$} \}
\]
and on morphisms by $\bsa^n(\phi) = \phi|_{\bsa^n(V)}$. Set $\bsa = \bigoplus_{n \in \N} \bsa^n$.

Again let $J$ be a set of right coset representatives for $\fS_m \times \fS_n$ in $\fS_{m+n}$. Then $\sum_{\sigma \in J} (-1)^\sigma \sigma$ restricts to a natural transformation $\bsa^m \otimes \bsa^n \rightarrow \bsa^{m+n}$ that does not depend on the choice of $J$. Summing over all $m,n \in \N$, there exists a natural transformation $m_{\bsa}: \bsa \otimes \bsa \rightarrow \bsa$. Now arguing as for $\bsg$, one can show that the product $m_{\bsa}(V)$ makes $\bsa(V)$ into a graded-commutative graded superalgebra, with the grading induced by considering $V$ as a graded superspace concentrated in degree $1$. One can also show that the set
\begin{equation} \label{eq:bsabasis}
\set{x_1^{a_1} \cdots x_s^{a_s} \gamma_{b_1}(y_1) \cdots \gamma_{b_t}(y_t): a_i,b_j \in \N, 0 \leq a_i \leq 1}.
\end{equation}
of monomials in $\bsa(V)$ is a basis for $\bsa(V)$, and hence that $\bsa(V)$ is isomorphic as an algebra to the graded tensor product of algebras $\Lambda(\Vzero) \gotimes \Gamma(\Vone)$. In particular, $\bsa(V)$ is generated as an algebra by the subspace $\Vzero \subseteq \bsa^1(V)$ together with the elements of the form $\gamma_{p^e}(v)$ for $v \in \Vone$ and $e \geq 0$. 

\subsection{Bisuperfunctors} \label{subsec:bisuperfunctors}

Let $\bsv \times \bsv$ denote the direct product of the category $\bsv$ with itself. Thus, the objects of $\bsv \times \bsv$ are pairs $(V,W)$ with $V,W \in \bsv$, while a morphism $\phi: (V,W) \rightarrow (V',W')$ in $\bsv \times \bsv$ is a linear map $\phi: V \oplus W \rightarrow V' \oplus W'$ such that $\phi(V) \subseteq V'$ and $\phi(W) \subseteq W'$. Now given $d,e \in \N$, define $\bsg_d^e(\bsv \times \bsv)$ to be the category with the same objects as $\bsv \times \bsv$, but in which morphisms are defined by
\begin{equation} \label{eq:bsgdehom}
\Hom_{\bsg_d^e(\bsv \times \bsv)}((V,W),(V',W')) = \Hom_{\bsg^d \bsv}(V,V') \otimes \Hom_{\bsg^e \bsv}(W,W'),
\end{equation}
and in which the composition law is induced by the composition laws in $\bsg^d(\bsv)$ and $\bsg^e(\bsv)$. In other words, if $\phi_d$ and $\psi_d$ are composable morphisms in $\bsg^d \bsv$, and if $\phi_e$ and $\psi_e$ are composable morphisms in $\bsg^e \bsv$, then $(\phi_d \otimes \phi_e) \circ (\psi_d \otimes \psi_e) = (-1)^{\ol{\phi_e} \cdot \ol{\psi_d}} (\phi_d \circ \psi_d) \otimes (\phi_e \circ \psi_e)$.

\begin{definition} \label{def:bisuperfunctor}
Let $d,e \in \N$. A \emph{strict polynomial bisuperfunctor of bidegree $(d,e)$} is an even linear functor $F: \bsg_d^e(\bsv \times \bsv) \rightarrow \bsv$. Given two such functors $F$ and $G$, a homomorphism $\eta: F \rightarrow G$ consists for each $(V,W) \in \bsv \times \bsv$ of a linear map $\eta(V,W): F(V,W) \rightarrow G(V,W)$ such that for each $\phi \in \Hom_{\bsg_d^e(\bsv \times \bsv)}((V,W),(V',W'))$, one has
\begin{equation} \label{eq:bihom}
\eta(V',W') \circ F(\phi) = (-1)^{\ol{\eta} \cdot \ol{\phi}} G(\phi) \circ \eta(V,W).
\end{equation}
We denote by $\bsp_d^e$ the category whose objects are the strict polynomial bisuperfunctors of bidegree $(d,e)$ and whose morphisms are the homomorphisms between those functors. Given $n \in \N$, the category $\bsp(n)$ of \emph{strict polynomial bisuperfunctors of total degree $n$} is the category $\prod_{d+e = n} \bsp_d^e$, and the category $\bibsp$ of arbitrary strict polynomial bisuperfunctors is the category $\prod_{n \in \N} \bsp(n)$.
\end{definition}

Given $F \in \bsp_d^e$, $F' \in \bsp_r^s$, $G \in \bsp_m$, and $H \in \bsp_n$, one can construct:
\begin{enumerate}[label=(\thesubsection.\arabic*)]
\setcounter{enumi}{\value{equation}}
\item the internal direct sum $F \oplus F' \in \bsp_d^e \oplus \bsp_r^s \subset \bibsp$, $(V,W) \mapsto F(V,W) \oplus F'(V,W)$,
\item the external direct sum $G \boxplus H \in \bsp_m^0 \oplus \bsp_0^n \subset \bibsp$, $(V,W) \mapsto G(V) \oplus H(W)$, \label{eq:externaldirectsum}
\item the composite $G \circ F \in \bsp_{dm}^{em}$, $(V,W) \mapsto G(F(V,W))$,
\item the composite $F \circ (G,H) \in \bsp_{dm}^{en}$, $(V,W) \mapsto F(G(V),H(W))$,
\item the internal tensor product $F \otimes F' \in \bsp_{d+r}^{e+s}$, $(V,W) \mapsto F(V,W) \otimes F'(V,W)$,
\item the external tensor product $G \boxtimes H \in \bsp_m^n$, $(V,W) \mapsto G(V) \otimes H(W)$, and
\item the dual $F^\# \in \bsp_d^e$, $(V,W) \mapsto F(V^*,W^*)^*$.
\setcounter{equation}{\value{enumi}}
\end{enumerate}
We leave the details of these constructions to the reader.

Define $\iota_m^0: \bsp_m \rightarrow \bsp_m^0$ and $\iota_0^n: \bsp_n \rightarrow \bsp_0^n$ by $\iota_m^0(G)(V,W) = G(V)$ and $\iota_0^n(H)(V,W) = H(W)$. Then $\iota_m^0$ and $\iota_0^n$ induce isomorphisms $\bsp_m \cong \bsp_m^0$ and $\bsp_n \cong \bsp_0^n$, and one can immediately check for $G \in \bsp_m$ and $H \in \bsp_n$ that $G \boxplus H = \iota_m^0(G) \oplus \iota_0^n(H)$ and $G \boxtimes H = \iota_m^0(G) \otimes \iota_0^n(H)$. Set $\bsi_{1,0} = \iota_1^0(\bsi)$ and $\bsi_{0,1} = \iota_0^1(\bsi)$. Then $\iota_m^0(G) = G \circ \bsi_{1,0}$, $\iota_0^n(H) = H \circ \bsi_{0,1}$, and the direct sum functor $\bssigma \in \bibsp$,
\[
\bssigma: (V,W) \mapsto V \oplus W,
\]
is equal to the internal direct sum $\bsi_{1,0} \oplus \bsi_{0,1}$.

\begin{remark}
Given $n \in \N$, define $\bsg^n(\bsv \times \bsv)$ to be the category that is obtained by replacing $\bsv$ with $\bsv \times \bsv$ in the definition of $\bsg^n \bsv$. In particular, if $(V,W),(V',W') \in \bsv \times \bsv$, then
\begin{equation} \label{eq:pairhom}
\Hom_{\bsg^n(\bsv \times \bsv)}((V,W),(V',W')) = \bsg^n[\Hom_k(V,V') \oplus \Hom_k(W,W')].
\end{equation}
The exponential isomorphism for $\bsg$ (discussed next in Section \ref{subsec:Palgebras}) induces an isomorphism
\[
\Hom_{\bsg^n(\bsv \times \bsv)}((V,W),(V',W')) \cong \prod_{d+e=n} \Hom_{\bsg_d^e(\bsv \times \bsv)}((V,W),(V',W'))
\]
that is compatible with the composition of morphisms. Taking $(V,W) = (V',W')$, this yields a decomposition in $\bsg^n(\bsv \times \bsv)$ of the identity morphism $\id_{(V,W)}$ as a sum of commuting orthogonal idempotents. Now let $F: \bsg^n(\bsv \times \bsv) \rightarrow \bsv$ be an even linear functor. Then by reasoning similar to that in Remark \ref{rem:strictstructure}, it follows that the decomposition of $\id_{(V,W)}$ for each $(V,W) \in \bsv \times \bsv$ induces a decomposition $\bigoplus_{d+e=n} F_d^e$ of $F$ with $F_d^e \in \bsp_d^e$. In other words, $F \in \bsp(n)$. Conversely, each $F \in \bsp(n)$ naturally defines an even linear functor $F: \bsg^n(\bsv \times \bsv) \rightarrow \bsv$. Thus, $\bsp(n)$ identifies with the category of even linear functors $F: \bsg^n(\bsv \times \bsv) \rightarrow \bsv$ whose morphisms satisfy \eqref{eq:bihom}.
\end{remark}

\subsection{\texorpdfstring{$\bsp$}{P}-algebras and exponential superfunctors} \label{subsec:Palgebras}

\begin{definition} \textup{(cf.\ \cite[\S3]{Touze:2014})} \label{definition:Palgebra}
A functor $A \in \bsp$ is a \emph{$\bsp$-algebra} if there exist even homomorphisms $k \rightarrow A$ and $m_A: A \otimes A \rightarrow A$ such that for each $V \in \bsv$ the induced maps make $A(V)$ into a $k$-superalgebra. We say that $A$ is \emph{commutative} if each $A(V)$ is then a commutative superalgebra. We say that $A$ is \emph{graded} if there exists a decomposition $A = \bigoplus_{n \in \Z} A^n$ such that $m_A$ restricts for each $m,n \in \Z$ to a homomorphism $A^m \otimes A^n \rightarrow A^{m+n}$, and we say that $A$ is \emph{graded-commutative} if each $A(V)$ is then a graded-commutative graded superalgebra with respect to the grading $A(V) = \bigoplus_{n \in \Z} A^n(V)$. If $A$ and $B$ are (graded) $\bsp$-algebras, then $\eta \in \Hom_{\bsp}(A,B)$ is a (\emph{graded}) \emph{$\bsp$-algebra homomorphism} if each $\eta(V)$ is a homomorphism of (graded) superalgebras.
\end{definition}

The functors $\bss$ and $\bsg$ are examples of commutative $\bsp$-algebras. Any $\bsp$-algebra can be made into a graded $\bsp$-algebra via its \emph{polynomial grading}, i.e., the grading defined by the functor's polynomial decomposition. The polynomial gradings make $\bsl$ and $\bsa$ into graded-commutative $\bsp$-algebras. Now let $A$ and $B$ be graded $\bsp$-algebras. We denote by $A \gotimes B$ the graded $\bsp$-algebra such that for each $V \in \bsv$, $(A \gotimes B)(V)$ is equal to the graded tensor product of superalgebras $A(V) \gotimes B(V)$. Similarly, if $A$ and $B$ are graded $\bsp$-coalgebras, then the graded tensor product $A \gotimes B$ is naturally a graded $\bsp$-coalgebra. (The reader can fill in the coalgebra analogue of Definition \ref{definition:Palgebra}.) If the $\Z$-grading on either $A$ or $B$ is trivial (or purely even), then we may denote $A \gotimes B$ simply as $A \otimes B$.

\begin{definition}
Let $A \in \bsp$. Then $A$ is a \emph{graded $\bsp$-bialgebra} if $A$ is both a graded $\bsp$-algebra and a graded $\bsp$-coalgebra, and if the coproduct $\Delta_A: A \rightarrow A \gotimes A$ is a homomorphism of graded $\bsp$-algebras. (If the grading on $A$ is purely even, then we may drop the adjective \emph{graded}.)
\end{definition}

Given a (graded) $\bsp$-algebra $A$, we can consider the composite bisuperfunctor homomorphism
\begin{equation} \label{eq:mu}
\mu_A : A \boxtimes A = (A \circ \bsi_{1,0}) \otimes (A \circ \bsi_{0,1}) \rightarrow (A \circ \bssigma) \otimes (A \circ \bssigma) = (A \otimes A) \circ \bssigma \rightarrow A \circ \bssigma
\end{equation}
induced by the inclusions $\bsi_{1,0} \hookrightarrow \bssigma$, $\bsi_{0,1} \hookrightarrow \bssigma$ and by the product $m_A: A \otimes A \rightarrow A$.

\begin{definition} \label{definition:exponential}
A graded $\bsp$-algebra $A$ is an \emph{exponential superfunctor} if $\mu_A$ is an isomorphism of strict polynomial bisuperfunctors, i.e., if for each $V,W \in \bsv$, the composite map
\[
A(V) \otimes A(W) \rightarrow A(V \oplus W) \otimes A(V \oplus W) \rightarrow A(V \oplus W),
\]
induced by the inclusions $V \hookrightarrow V \oplus W$ and $W \hookrightarrow V \oplus W$ and by multiplication in $A(V \oplus W)$, lifts to an isomorphism of strict polynomial bisuperfunctors $A \boxtimes A \cong A \circ \bssigma$.
\end{definition}

\begin{example}
The functors $\bss$, $\bsg$, $\bsl$, and $\bsa$ are exponential superfunctors. More generally, if $A$ and $B$ are exponential superfunctors, then so is $A \gotimes B$.
\end{example}

Let $\bsD: \bsv \rightarrow \bsv \times \bsv$ be the diagonal functor $V \mapsto (V,V)$, and let $\bsdelta: \bsi \rightarrow \bsD$ be the natural transformation that lifts the usual diagonal map on vector spaces. Let $A$ be an exponential super\-functor. Composing the isomorphism $A \boxtimes A \cong A \circ \bssigma$ with $\bsD$, we obtain an isomorphism $A \otimes A \cong A \circ \bsD$ of polynomial superfunctors. Define $\Delta_A$ to be the composite homomorphism
\begin{equation} \label{eq:definecoproduct}
\Delta_A: A = A \circ \bsi \stackrel{A \circ \bsdelta}{\longrightarrow} A \circ \bsD \stackrel{(\mu_A \circ \bsD)^{-1}}{\longrightarrow} A \otimes A.
\end{equation}
Then $\Delta_A$ defines a coassociative coproduct on $A$, making $A$ into a (graded) $\bsp$-coalgebra.

Now suppose in addition for each $V,W \in \bsv$ that $\mu_A$ induces an isomorphism of graded super\-algebras $A(V) \gotimes A(W) \cong A(V \oplus W)$. Then $\Delta_A$ is a homomorphism of graded $\bsp$-algebras (because it is the composition of two algebra homomorphisms), and hence endows $A$ with the structure of a graded $\bsp$-bialgebra. In particular, $\bss$ and $\bsg$ are (trivially graded) commutative cocommutative $\bsp$-bialgebras, and $\bsl$ and $\bsa$ are graded-commutative graded-cocommutative graded $\bsp$-bialgebras. Using the descriptions in Section \ref{subsec:examples} for $\bsg$, $\bss$, $\bsl$ and $\bsa$ in terms of $S$, $\Gamma$, and $\Lambda$, one can check that the coproducts restrict to the usual coproducts for $S$, $\Gamma$, and $\Lambda$. Specifically, the coproducts on $S(V)$ and $\Lambda(V)$ are induced (for $v \in V$) by the map $v \mapsto v \otimes 1 + 1 \otimes v$, and the coproduct on $\Gamma(V)$ is induced by the map $\gamma_n(v) \mapsto \sum_{i+j=n} \gamma_i(v) \otimes \gamma_j(v)$; for details see \cite[III.11.1]{Bourbaki:1998a} and \cite[IV.5.7]{Bourbaki:2003}.

\begin{remark} \label{rem:lambda}
Given a graded $\bsp$-coalgebra $A$, there exists a bisuperfunctor homomorphism
\begin{equation} \label{eq:lambda}
\lambda_A : A \circ \bssigma \rightarrow (A \otimes A) \circ \bssigma = (A \circ \bssigma) \otimes (A \circ \bssigma) \rightarrow (A \circ \bsi_{1,0}) \otimes (A \circ \bsi_{0,1}) = A \boxtimes A
\end{equation}
induced by the coproduct $\Delta_A: A \rightarrow A \otimes A$ and the projections $\bssigma \rightarrow \bsi_{1,0}$ and $\bssigma \rightarrow \bsi_{0,1}$. If $A$ is an exponential superfunctor with coproduct defined by \eqref{eq:definecoproduct}, then $\lambda_A = \mu_A^{-1}$. Conversely, suppose $A$ is a graded $\bsp$-coalgebra and $\lambda_A$ is an isomorphism. Let $+: \bsD \rightarrow \bsi$ be the natural transformation defined by $+(v_1,v_2) = v_1+v_2$. Then the composite
\begin{equation} \label{eq:defineproduct}
A \otimes A \stackrel{(\lambda_A \circ \bsD)^{-1}}{\longrightarrow} A \circ \bsD \stackrel{A(+)}{\longrightarrow} A,
\end{equation}
defines an associative product $m_A$ making $A$ into a graded $\bsp$-algebra, and the homomorphism $\mu_A$ defined in terms of \eqref{eq:defineproduct} is equal to $\lambda_A^{-1}$.
\end{remark}

\subsection{Duality isomorphisms} \label{subsec:duality}

As described in Section \ref{subsec:Palgebras}, $\bsg$ is a commutative cocommutative $\bsp$-bialgebra. Then by duality so is $\bsg^\#$. Since $\bsg^1 = \bsi$, we have $(\bsg^\#)^1 = \bsi^\#$. Then the isomorphism $\bsi \cong \bsi^\#$ together with the multiplication map on $\bsg^\#$ defines a natural transformation
\[
\otimes^n = \bsi^{\otimes n} \stackrel{\sim}{\longrightarrow} (\bsi^\#)^{\otimes n} \longrightarrow (\bsg^\#)^n
\]
that factors through $\bss^n$. Summing over all $n \in \N$, it follows that there exists a unique algebra homomorphism $\theta: \bss \rightarrow \bsg^\#$ that extends the identification $\bss^1 = \bsi \cong \bsi^\# = (\bsg^\#)^1$. Now by an adaptation of the arguments in \cite[IV.5.11]{Bourbaki:2003}, one can show that $\theta$ is an isomorphism. Explicitly, let $\set{x_1,\ldots,x_s}$ and $\set{y_1,\ldots,y_t}$ be bases for $\Vzero$ and $\Vone$, and let $\set{x_1^*,\ldots,x_s^*}$ and $\set{y_1^*,\ldots,y_t^*}$ be the dual bases. Then $\theta(V)(x_1^{a_1} \cdots x_s^{a_s} y_1^{b_1} \cdots y_t^{b_t})$ evaluates to either $1$ or $-1$ (depending on the values of $b_1,\ldots,b_t$) on the basis monomial $\gamma_{a_1}(x_1^*) \cdots \gamma_{a_n}(x_n^*)(y_1^*)^{b_1} \cdots (y_t^*)^{b_t}$, and evaluates to $0$ on all other basis monomials in $\bsg(V^*)$ of the form \eqref{eq:bsgbasis}. The isomorphism $\bss \cong \bsg^\#$ is compatible with the coproducts on $\bss$ and $\bsg^\#$, so $\bss \cong \bsg^\#$ as $\bsp$-bialgebras. By duality, $\bss^\# \cong \bsg$ as $\bsp$-bialgebras as well. An entirely parallel argument shows that $\bsl \cong \bsa^\#$ and $\bsa \cong \bsl^\#$ as graded $\bsp$-bialgebras.


\begin{remark} \label{rem:symmetrizationmaps}
There exist unique algebra homomorphisms $\bss \rightarrow \bsg$ and $\bsl \rightarrow \bsa$ extending the identifications $\bss^1 = \bsi = \bsg^1$ and $\bsl^1 = \bsi = \bsa^1$. Given $V \in \bsv$, the induced maps $\bss(\Vone) \rightarrow \bsg(\Vone)$ and $\bsl(\Vzero) \rightarrow \bsa(\Vzero)$ are surjective, hence isomorphisms by dimension comparison. If $k$ is a field of characteristic zero, then the maps $\bss \rightarrow \bsg$ and $\bsl \rightarrow \bsa$ are isomorphisms.
\end{remark}

\subsection{Frobenius twists} \label{subsec:Frobenius}

Given $r \geq 1$ and $V \in \bsv$, write $\varphi: k \rightarrow k$ for the $p^r$-power map $\lambda \mapsto \lambda^{p^r}$, and set $V^{(r)} = k \otimes_{\varphi} V$. In other words, $V^{(r)}$ is equal to $V$ as an additive group, but a scalar $\lambda \in k$ acts on $V^{(r)}$ the way that $\lambda^{p^{-r}}$ acts on $V$. The $r$-th Frobenius twist functor $\bsir \in \bsp_{p^r}$ is defined on objects by $\bsir(V) = V^{(r)}$. To describe the action of $\bsir$ on morphisms, first observe for each $V \in \bsv$ that the $p^r$-power map defines an algebra homomorphism $\bss(V)^{(r)} \rightarrow \bss(V)$ that is natural with respect to even linear maps on $V$. By abuse of notation, we also denote this homomorphism by $\varphi$. If $z = z_{\ol{0}} + z_{\ol{1}}$ is the decomposition of $z$ into its even and odd components, then $z^p = (z_{\ol{0}})^p$, so $\varphi$ has image in the subalgebra $\bss(\Vzero)$ of $\bss(V)$. One can check that if $A,B \in \bsv$, then there exists a commutative diagram in which the horizontal arrows are the natural algebra maps:
\begin{equation} \label{eq:prpowercompatible}
\vcenter{\xymatrix{
\bss(A \otimes B)^{(r)} \ar@{->}[r] \ar@{->}[d]_{\varphi} & \bss(A)^{(r)} \otimes \bss(B)^{(r)} \ar@{->}[d]^{\varphi \otimes \varphi} \\
\bss(A \otimes B) \ar@{->}[r] & \bss(A) \otimes \bss(B).
}}
\end{equation}
By duality, there exists for each $V \in \bsv$ an algebra homomorphism $\varphi^\#: \bsg(V) \rightarrow \bsg(V)^{(r)}$, which we refer to as the \emph{dual Frobenius morphism}, that is also natural with respect to even linear maps on $V$. Explicitly, $\varphi^\#$ acts on the generators for $\bsg(V)$ described in Section \ref{subsubsec:dividedpowers} by
\begin{equation} \label{eq:dualFrobenius}
\varphi^\#(z) = \begin{cases}
0 & \text{if $z \in \Vone \subseteq \bsg^1(V)$}, \\
\gamma_{p^{e-r}}(v) & \text{if $z = \gamma_{p^e}(v)$ for some $e \in \N$, $v \in \Vzero$, and $e \geq r$,} \\
0 & \text{if $z = \gamma_{p^e}(v)$ for some $e \in \N$, $v \in \Vzero$, and $e < r$.}
\end{cases}
\end{equation}
Then $\varphi^\#$ is determined by its restriction to the subalgebra $\bsg(\Vzero)$ of $\bsg(V)$, and has image in the subalgebra $\bsg(\Vzero)^{(r)}$ of $\bsg(V)^{(r)}$. Now given $V,W \in \bsv$, $\bsir_{V,W}$ is the linear map
\begin{equation} \label{eq:IrVW}
\bsg^{p^r} \Hom_k(V,W) \stackrel{\varphi^\#}{\longrightarrow} [\bsg^1 \Hom_k(V,W)_{\ol{0}}]^{(r)} = \Hom_k(V^{(r)},W^{(r)})_{\ol{0}}.
\end{equation}
Dualizing \eqref{eq:prpowercompatible}, and using the fact that $\varphi^\#$ is natural with respect to even linear maps, it follows that $\bsir_{V,W}$ is compatible with the composition of morphisms. From now on, for $r \geq 1$ and $F \in \bsp_n$, set $F^{(r)} = F \circ \bsir \in \bsp_{p^rn}$. Since \eqref{eq:IrVW} has image in the space of even linear maps, it follows for $r \geq 1$ that there exist subfunctors $\bsirzero$ and $\bsirone$ of $\bsir$ such that $\bsirzero(V) = \Vzeror$, $\bsirone(V) = \Voner$, and $\bsir = \bsirzero \oplus \bsirone$. Additionally,
\begin{equation} \label{eq:conjugatebsir}
\bsi_1^{(r)} = \Pi \circ \bsi_0^{(r)} \circ \Pi.
\end{equation}

\begin{lemma}
Let $F \in \bsp_n$, and let $r \geq 1$. Then $F \circ \bsirzero$ and $F \circ \bsirone$ are summands of $F \circ \bsir$.
\end{lemma}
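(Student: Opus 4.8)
The plan is to exhibit $F \circ \bsirzero$ and $F \circ \bsirone$ as \emph{retracts} of $F \circ \bsir$ and then use that the even subcategory $\bsp_\ev$ is abelian to promote ``retract'' to ``direct summand.'' Write $\iota_0\colon \bsirzero \to \bsir$ and $\pi_0\colon \bsir \to \bsirzero$ for the even inclusion and projection supplied by the decomposition $\bsir = \bsirzero \oplus \bsirone$, and likewise $\iota_1,\pi_1$ for the odd summand, so that $\pi_0 \circ \iota_0 = \id_{\bsirzero}$ and $\pi_1 \circ \iota_1 = \id_{\bsirone}$.

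The essential point is that composition with the fixed functor $F \in \bsp_n$ on the left is functorial on even homomorphisms: an even homomorphism $\eta\colon G \to G'$ in $\bsp_{p^r}$ induces an even homomorphism $F \circ \eta\colon F \circ G \to F \circ G'$ in $\bsp_{p^r n}$, given pointwise by $(F \circ \eta)(V) = F\bigl(\eta(V)^{\otimes n}\bigr)$ --- which makes sense because $\eta(V)$ is even, so $\eta(V)^{\otimes n} \in \bsg^n \Hom_k(G(V),G'(V))$ --- and this assignment satisfies $F \circ \id = \id$ and $F \circ (\eta' \circ \eta) = (F \circ \eta') \circ (F \circ \eta)$. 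I would verify this just as for ordinary strict polynomial functors: naturality of $F \circ \eta$ with respect to a morphism in $\bsg^{p^r n}\Hom_k(V,W)$ reduces, through the Young-subgroup inclusion $\bsg^{p^r n}(U) \hookrightarrow \bsg^n(\bsg^{p^r}(U))$ that defines composition, to the naturality of $\eta$ together with its evenness (which keeps any signs from appearing), and compatibility with composition and identities is then immediate. This is the one step that requires genuine care, though the care is minimal precisely because $\eta$ is even; it is also the reason the statement speaks of summands and not of an honest direct-sum decomposition, since $F \circ (-)$ is not additive in general --- for instance $\otimes^2 \circ (\bsirzero \oplus \bsirone)$ picks up a cross-term $\bsirzero \otimes \bsirone$.

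Granting this, apply $F \circ (-)$ to the relation $\pi_0 \circ \iota_0 = \id_{\bsirzero}$ to obtain even homomorphisms $F \circ \iota_0\colon F \circ \bsirzero \to F \circ \bsir$ and $F \circ \pi_0\colon F \circ \bsir \to F \circ \bsirzero$ with $(F \circ \pi_0)\circ(F \circ \iota_0) = F \circ (\pi_0 \circ \iota_0) = \id_{F \circ \bsirzero}$. Thus $F \circ \bsirzero$ is a retract of $F \circ \bsir$ in $\bsp_\ev$, and since that category is abelian the idempotent $(F \circ \iota_0)\circ(F \circ \pi_0)$ on $F \circ \bsir$ splits, giving $F \circ \bsir \cong (F \circ \bsirzero) \oplus C$ for some $C \in \bsp_{p^r n}$. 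The same argument applied to $\pi_1 \circ \iota_1 = \id_{\bsirone}$ exhibits $F \circ \bsirone$ as a summand of $F \circ \bsir$.
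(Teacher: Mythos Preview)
Your argument is correct. The maps you produce are in fact the same maps the paper constructs, but you organize the verification of naturality differently, and the difference is worth noting. The paper works directly with the \emph{non-functorial} vector-space inclusions and projections $\iota_V\colon V_{\ol 0}\hookrightarrow V$ and $\pi_V\colon V\twoheadrightarrow V_{\ol 0}$, passes to $\bs{\iota_V}=(\iota_V)^{\otimes p^r n}$ and $\bs{\pi_V}=(\pi_V)^{\otimes p^r n}$ in $\bsg^{p^r n}\bsv$, applies $F\circ\bsir$, and then checks naturality by hand using the specific fact that $\bsir$ annihilates morphisms not lying in $\bsg^{p^r n}[\Hom_k(V,W)_{\ol 0}]$. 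You instead start from the already-established decomposition $\bsir=\bsirzero\oplus\bsirone$ in $\bsp_{p^r}$, which packages that same fact into honest even natural transformations $\iota_0,\pi_0,\iota_1,\pi_1$, and then invoke the general principle that $G\mapsto F\circ G$ carries even homomorphisms to even homomorphisms and preserves identities and composites. Your route is more conceptual and more general---it shows that for \emph{any} even retract $G_0$ of $G$ in $\bsp_m$ and any $F\in\bsp_n$, the composite $F\circ G_0$ is a summand of $F\circ G$---whereas the paper's argument stays closer to the particular structure of $\bsir$. Both land on the same idempotent of $F\circ\bsir$; your remark that this idempotent need not be the full projector onto $(F\circ\bsirzero)\oplus(F\circ\bsirone)$ (because post-composition is not additive) matches the paper's subsequent remark that in general $F\circ\bsir\neq(F\circ\bsirzero)\oplus(F\circ\bsirone)$.
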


\begin{proof}
We prove that $F \circ \bsirzero$ is a summand of $F \circ \bsir$; the proof for $F \circ \bsirone$ is entirely analogous. Given $V \in \bsv$, let $\iota_V: \Vzero \rightarrow V$ and $\pi_V: V \rightarrow \Vzero$ be the natural (but non-functorial) inclusion and projection maps, respectively. These are even linear maps, and $\pi_V \circ \iota_V$ is the identity on $\Vzero$. Set $\bs{\iota_V} = (\iota_V)^{\otimes p^rn} \in \bsg^{p^rn} \Hom_k(\Vzero,V)$, and set $\bs{\pi_V} = (\pi_V)^{\otimes p^r n} \in \bsg^{p^rn} \Hom_k(V,\Vzero)$. Then the composite $\bs{\pi_V} \circ \bs{\iota_V} \in \bsg^{p^rn}\Hom_k(\Vzero,\Vzero)$ is the identity map on $\Vzero$ in the category $\bsg^{p^rn} \bsv$. Now applying the functor $F \circ \bsir$,  we get maps
\[
(F \circ \bsir)(\Vzero) \stackrel{(F \circ \bsir)(\bs{\iota_V})}{\longrightarrow} (F \circ \bsir)(V) \stackrel{(F \circ \bsir)(\bs{\pi_V})}{\longrightarrow} (F \circ \bsir)(\Vzero)
\]
whose composite is the identity. But $(F \circ \bsir)(\Vzero) = (F \circ \bsirzero)(V)$, so to prove that $F \circ \bsirzero$ is a summand of $F \circ \bsir$, it suffices to show that
\begin{align*}
(F \circ \bsir)(\bs{\iota_V}) &: (F \circ \bsi_0^{(r)})(V) \rightarrow (F \circ \bsir)(V) \quad \text{and} \\
(F \circ \bsir)(\bs{\pi_V}) &: (F \circ \bsir)(V) \rightarrow (F \circ \bsi_0^{(r)})(V)
\end{align*}
lift to natural transformations. We verify this for $(F \circ \bsir)(\bs{\iota_V})$, the argument for $(F \circ \bsir)(\bs{\pi_V})$ being entirely analogous. Let $W \in \bsv$, and let $\phi \in \bsg^{p^r n}\Hom_k(V,W)$. We must show that
\begin{equation} \label{eq:naturality}
(F \circ \bsir)(\phi) \circ (F \circ \bsir)(\bs{\iota_V}) = (F \circ \bsir)(\bs{\iota_W}) \circ (F \circ \bsi_0^{(r)})(\phi).
\end{equation}
We may assume that $\phi \in \bsg^{p^rn}[\Hom_k(V,W)_{\ol{0}}]$, since otherwise \eqref{eq:dualFrobenius} implies that $(F \circ \bsir)(\phi) = 0$ and $(F \circ \bsirzero)(\phi) = 0$. By definition, if $\psi \in \bsg^{p^r} \Hom_k(V,W)$, then
\[
\bsi_0^{(r)}(\psi) = \bsir( (\pi_W)^{\otimes p^r} \circ \psi \circ (\iota_V)^{\otimes p^r}).
\]
Then it follows that $(F \circ \bsirzero)(\phi) = (F \circ \bsir)(\bs{\pi_W} \circ \phi \circ \bs{\iota_V})$. Now since $\phi \in \bsg^{p^rn}[\Hom_k(V,W)_{\ol{0}}]$, it follows that $\phi \circ \bs{\iota_V} = \bs{\iota_W} \circ (\bs{\pi_W} \circ \phi \circ \bs{\iota_V})$, and hence that \eqref{eq:naturality} is true.
\end{proof}

\begin{remark}
In general, if $F \in \bsp_n$ and $r \geq 1$, then $F \circ \bsir \neq (F \circ \bsirzero) \oplus (F \circ \bsirone)$.
\end{remark}

Now let $A \in \bsp$ be an exponential superfunctor. Then
\[
A^{(r)}  = A \circ (\bsirzero \oplus \bsirone) \cong (A \circ \bsirzero) \gotimes (A \circ \bsirone)
\]
as $\bsp$-algebras. This observation applies in particular to $\bss$, $\bsg$, $\bsl$, and $\bsa$. For $n \in \N$, set
\begin{equation} \label{eq:twistcomponents}
\begin{aligned}
S_0^{n(r)} &= \bss^n \circ \bsi_0^{(r)}, &
\Lambda_0^{n(r)} &= \bsa^n \circ \bsi_0^{(r)}, &
\Gamma_0^{n(r)} &= \bsg^n \circ \bsi_0^{(r)}, \\
\Lambda_1^{n(r)} &= \bss^n \circ \bsi_1^{(r)}, &
\Gamma_1^{n(r)} &= \bsa^n \circ \bsi_1^{(r)}, &
S_1^{n(r)} &= \bsl^n \circ \bsi_1^{(r)}.
\end{aligned}
\end{equation}
For each $V \in \bsv$, one has ${S_0^n}^{(r)}(V) = S^n({\Vzero}^{(r)})$ and ${\Lambda_1^n}^{(r)}(V) = \Lambda^n({\Vone}^{(r)})$ as abstract vector spaces. As a superspace, ${S_0^n}^{(r)}(V)$ is always a purely even superspace, while ${\Lambda_1^n}^{(r)}(V)$ is a purely even (resp.\ odd) superspace if $n$ is even (resp.\ odd). Similar identifications and comments apply to the other composite functors defined in \eqref{eq:twistcomponents}.

Suppose for the moment that $F \in \cp_n$ is an ordinary homogeneous strict polynomial functor. Let $r \geq 1$, and let $F^{(r)} \in \cp_{p^rn}$ be the ordinary $r$-th Frobenius twist of $F$. We can lift $F^{(r)}$ to the structure of a homogeneous strict polynomial superfunctor in several different ways. First, since \eqref{eq:IrVW} has image in the space of even linear maps, it follows that $\bsirzero$ and $\bsirone$ define even linear functors $\bsg^{p^rn} \bsv \rightarrow \bsg^n (\bsvzero)$ and $\bsg^{p^rn} \bsv \rightarrow \bsg^n (\bsvone)$, respectively. Then making the natural identifications $\bsvzero \cong \cv$ and $\bsvone \cong \cv$, we can consider $F$ as an even linear functor $\bsg^n(\bsvzero) \rightarrow \bsvzero$ or as an even linear functor $\bsg^n(\bsvone) \rightarrow \bsvone$. Next, the inclusions $\bsvzero \hookrightarrow \bsv$ and $\bsvone \hookrightarrow \bsv$ are even linear functors. We denote the composite even linear functors by
\begin{align}
F \circ \bsi_0^{(r)} &: \bsg^{p^rn}(\bsv) \rightarrow \bsg^n(\bsvzero) \stackrel{F}{\rightarrow} \bsvzero \hookrightarrow \bsv, \quad \text{and} \label{eq:evenlift} \\
F \circ \bsi_1^{(r)} &: \bsg^{p^rn}(\bsv) \rightarrow \bsg^n(\bsvone) \stackrel{F}{\rightarrow} \bsvone \hookrightarrow \bsv. \label{eq:oddlift}
\end{align}
Finally, we can optionally compose \eqref{eq:evenlift} or \eqref{eq:oddlift} with the parity change functor $\Pi$. (Composing with $\bsPi$ has the same result as composing with $\Pi$.) This gives four different ways we can lift $F^{(r)} \in \cp_{p^rn}$ to an element of $\bsp_{p^rn}$. For example, the four liftings of $I^{(r)}$ obtained in this fashion are $\bsirzero$, $\bsirone$, $\Pi \circ \bsirzero$, and $\Pi \circ \bsirone$. More generally, if $\bs{F^{(r)}}$ denotes one of the four liftings of $F^{(r)}$ to $\bsp$ described above, then the other three liftings are $\bs{F^{(r)}} \circ \Pi$, $\Pi \circ \bs{F^{(r)}}$, and $\Pi \circ \bs{F^{(r)}} \circ \Pi$.

One can now show that each of the functors defined in \eqref{eq:twistcomponents} is a lifting of either $S^{n(r)}$, $\Lambda^{n(r)}$, or $\Gamma^{n(r)}$ to the category $\bsp$. In particular, one can use this observation to see that
\begin{equation} \label{eq:conjugateiso}
\left. \begin{aligned}
S_1^{n(r)} &\cong S_0^{n(r)} \circ \Pi \\
\Lambda_1^{n(r)} &\cong \Lambda_0^{n(r)} \circ \Pi \\
\Gamma_1^{n(r)} &\cong \Gamma_0^{n(r)} \circ \Pi
\end{aligned} \right\rbrace \text{ if $n$ is even, and} \qquad
\left. \begin{aligned}
S_1^{n(r)} &\cong \Pi \circ S_0^{n(r)} \circ \Pi \\
\Lambda_1^{n(r)} &\cong \Pi \circ \Lambda_0^{n(r)} \circ \Pi \\
\Gamma_1^{n(r)} &\cong \Pi \circ \Gamma_0^{n(r)} \circ \Pi
\end{aligned} \right\rbrace \text{ if $n$ is odd.} 
\end{equation}
From now on, if $F \in \cp_n$ and $r \geq 1$, then unless stated otherwise we will use \eqref{eq:evenlift} to consider $F^{(r)}$ as a homogeneous strict polynomial superfunctor. Then for all $F \in \cp_n$, one has $F^{(r)}|_{\bsvzero} \cong F^{(r)}$ as ordinary strict polynomial functors. On the other hand, if $F \in \bsp_n$, then using \eqref{eq:evenlift} to lift $F|_{\bsvzero} \in \cp_n$ to $\bsp_n$ does not in general result in a strict polynomial superfunctor that is isomorphic to the original functor $F$.

\section{Cohomology of strict polynomial superfunctors} \label{sec:cohomology}

\subsection{Projectives and injectives} \label{subsec:projectivesinjectives}

Recall that $\bsp_\ev$ is an abelian category. We say that $P \in \bsp$ is \emph{projective} if the functor $\Hom_{\bsp}(P,-): \bsp_\ev \rightarrow \fsvec_\ev$ is exact, and that $Q \in \bsp$ is \emph{injective} if $\Hom_{\bsp}(-,Q): \bsp_\ev \rightarrow \fsvec_\ev$ is exact. Given $F,G \in \bsp$, one has $\Hom_{\bsp}(F,G)_{\ol{0}} = \Hom_{\bsp_\ev}(F,G)$, while the odd isomorphisms in \eqref{eq:paritychangebsphom} restrict to odd isomorphisms
\begin{equation} \label{eq:oddtoevenhom}
\begin{aligned}
\Hom_{\bsp}(F,G)_{\ol{1}} &\simeq \Hom_{\bsp}(F,\bsPi \circ G)_{\ol{0}} = \Hom_{\bsp_\ev}(F,\bsPi \circ G), \text{ and} \\
\Hom_{\bsp}(F,G)_{\ol{1}} &\simeq \Hom_{\bsp}(\bsPi \circ F,G)_{\ol{0}} = \Hom_{\bsp_\ev}(\bsPi \circ F,G)
\end{aligned}
\end{equation}
that are natural with respect to even homomorphisms in either variable. Then it follows that a functor is projective (resp.\ injective) in the above-defined sense if and only if it is projective (resp.\ injective) as an object in the abelian category $\bsp_\ev$.

Given $d \in \N$ and $V \in \bsv$, set $\bsg^{d,V} = \bsg^d \Hom_k(V,-)$ and set $\bss^{d,V} = \bss^d(V \otimes -) \cong (\bsg^{d,V})^\#$. Then by Yoneda's Lemma, there exist for each $F \in \bsp_d$ natural isomorphisms
\begin{equation} \label{eq:Yonedalemma}
\Hom_{\bsp_d}(\bsg^{d,V},F) \cong F(V) \quad  \text{and} \quad \Hom_{\bsp_d}(F,\bss^{d,V}) \cong F^\#(V).
\end{equation}
It follows that $\bsg^{d,V}$ is projective in $\bsp_d$ and $\bss^{d,V}$ is injective in $\bsp_d$ \cite[\S3.5]{Axtell:2013}. Taking $V = k$, $\bsg^d$ is projective in $\bsp_d$ and $\bss^d$ is injective in $\bsp_d$. The next theorem follows from \cite[Proposition A.1]{Axtell:2013} and the proof of \cite[Theorem 4.2]{Axtell:2013}.

\begin{theorem}[Axtell] \label{thm:enoughprojectives}
Let $m,n,d \in \N$ such that $m,n \geq d$, and set $V = k^{m|n} = k^m \oplus \Pi(k^n)$. Then the functor $\bsg^{d,V} \oplus (\Pi \circ \bsg^{d,V})$ is a projective generator in $(\bsp_d)_\ev$, and evaluation on $V$ induces an equivalence of categories between $\bsp_d$ and the category of finite-dimensional left supermodules for the Schur superalgebra $S(m|n,d) := \End_{k\fS_d}((k^{m|n})^{\otimes d}) \cong \End_{\bsg^d \bsv}(V)$.
\end{theorem}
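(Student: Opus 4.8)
The plan is to exhibit $P:=\bsg^{d,V}\oplus(\Pi\circ\bsg^{d,V})$ as a projective generator of the abelian category $(\bsp_d)_\ev$, and then to apply the standard Morita-type equivalence: a projective generator $P$ of a (suitably finite) abelian category $\mathcal A$ makes $\Hom_{\mathcal A}(P,-)$ an equivalence onto the category of finite-dimensional modules over $\End_{\mathcal A}(P)$. Projectivity is immediate: by the Yoneda isomorphism \eqref{eq:Yonedalemma} the functor $F\mapsto F(V)$ is represented by $\bsg^{d,V}$, and since kernels and cokernels in $(\bsp_d)_\ev$ are computed pointwise this functor is exact, so $\bsg^{d,V}$ is projective; and $\Pi$ restricts to an exact self-inverse autoequivalence of $\bsp_d$, so $\Pi\circ\bsg^{d,V}$, hence $P$, is projective as well.

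Next I would compute $\Hom$ out of $P$. Combining \eqref{eq:Yonedalemma} with the odd isomorphisms \eqref{eq:paritychangebsphom}--\eqref{eq:oddtoevenhom} gives natural isomorphisms $\Hom_{(\bsp_d)_\ev}(\bsg^{d,V},F)\cong F(V)_{\ol 0}$ and $\Hom_{(\bsp_d)_\ev}(\Pi\circ\bsg^{d,V},F)\cong F(V)_{\ol 1}$, so $\Hom_{(\bsp_d)_\ev}(P,-)$ is naturally isomorphic to evaluation at $V$ (forgetting the $\Z_2$-grading). Taking $F=\bsg^{d,V}$ and using the cited isomorphism $\bsg^d\Hom_k(V,V)\cong\End_{k\fS_d}(V^{\otimes d})=S(m|n,d)$ \cite[Lemma 3.1]{Axtell:2013}, one sees that evaluation at $V$ carries a natural left $S(m|n,d)$-module structure, that $\End_{\bsp_d}(\bsg^{d,V})\cong S(m|n,d)$, and that under evaluation $P$ corresponds to the projective generator $S(m|n,d)\oplus\Pi(S(m|n,d))$ of the category of finite-dimensional $S(m|n,d)$-supermodules. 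So the whole theorem will follow once we know that $P$ is a generator, i.e.\ that evaluation at $V$ is faithful on $(\bsp_d)_\ev$: that $F(V)=0$ forces $F=0$.

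This faithfulness is the one substantive step, and it is where the hypothesis $m,n\geq d$ is used. Under the identification $\Hom_{\bsg^d\bsv}(W,W')\cong\Hom_{k\fS_d}(W^{\otimes d},W'^{\otimes d})$ (with $\id_W\leftrightarrow\id_{W^{\otimes d}}$), I would show that for every $W\in\bsg^d\bsv$ the identity morphism $\id_W$ factors in $\bsg^d\bsv$ through $V=k^{m|n}$. Fix a homogeneous basis of $W$ and decompose $\id_{W^{\otimes d}}$ as the finite sum of the $k\fS_d$-linear idempotents projecting onto the spans of the $\fS_d$-orbits of basis tensors (the ``weight components''). Each such orbit involves at most $d$ distinct even and at most $d$ distinct odd basis vectors of $W$, so its span lies in $U^{\otimes d}$ for some sub-superspace $U\hookrightarrow W$ with $\dim U_{\ol 0}\le d$ and $\dim U_{\ol 1}\le d$; since $m,n\ge d$, such $U$ also embeds as an even direct summand of $V$, and chasing the split inclusions and projections $U\hookrightarrow W$ and $U\hookrightarrow V$ shows that the corresponding projection factors as a composite $W\to V\to W$ in $\bsg^d\bsv$. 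Summing, $\id_W=\sum_i\alpha_i\circ\beta_i$ with $\beta_i\in\Hom_{\bsg^d\bsv}(W,V)$ and $\alpha_i\in\Hom_{\bsg^d\bsv}(V,W)$; applying $F$ yields $\id_{F(W)}=\sum_iF(\alpha_i)\circ F(\beta_i)$, and each term factors through $F(V)$, so $F(V)=0$ implies $F(W)=0$ for all $W$. (Equivalently, running this identity through the Yoneda embedding $\bsg^{d,(-)}$ shows directly that every standard projective $\bsg^{d,W}$ is a direct summand of a finite direct sum of copies of $\bsg^{d,V}$, hence of copies of $P$.) A routine argument using the projectivity of $P$ and the finiteness of $F(V)$ then upgrades faithfulness to the statement that every $F$ is a quotient of $P^{\oplus\dim F(V)}$, so $P$ is a generator. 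Feeding this into the Morita equivalence quoted above and matching the parity-change symmetry on the two sides --- which is exactly what makes the target the full supermodule category rather than its underlying even subcategory --- gives the asserted equivalence $\bsp_d\simeq S(m|n,d)\text{-}\smod$. The main obstacle is the combinatorial factorization in the third paragraph; the rest is Yoneda's lemma together with bookkeeping involving $\Pi$.
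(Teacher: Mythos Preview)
The paper does not supply its own proof of this theorem: immediately before the statement it says only that the result ``follows from \cite[Proposition A.1]{Axtell:2013} and the proof of \cite[Theorem 4.2]{Axtell:2013}.'' Your write-up is a correct reconstruction of precisely those ingredients, so there is nothing to contrast.

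In particular, your ``one substantive step''---factoring $\id_W$ in $\bsg^d\bsv$ through $V=k^{m|n}$ by decomposing $\id_{W^{\otimes d}}$ into its $\fS_d$-stable weight components and observing that each such component is supported on a sub-superspace of even and odd dimensions at most $d\le m,n$---is exactly the content of Axtell's Proposition~A.1 (compare how the paper invokes that proposition again for the bisuperfunctor analogue just below). The remainder of your argument, deducing from this that evaluation at $V$ is faithful, that $\bsg^{d,V}\oplus(\Pi\circ\bsg^{d,V})$ is a projective generator, and that the resulting Morita equivalence matches the $\Pi$-symmetry so as to land in $S(m|n,d)$-supermodules rather than ungraded modules, is the routine passage that the paper attributes to the proof of \cite[Theorem 4.2]{Axtell:2013}. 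One small point worth making explicit: a single weight projection $\pi_\mu$ is not of the form $\phi^{\otimes d}$, so when you say it ``factors as a composite $W\to V\to W$ in $\bsg^d\bsv$'' you are using the identification $\Hom_{\bsg^d\bsv}(-,-)\cong\Hom_{k\fS_d}((-)^{\otimes d},(-)^{\otimes d})$ and not merely tensor powers of linear maps; your text already flags this, but it is the place a reader might stumble.
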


Let $d,e \in \N$. The notions of projectivity and injectivity for objects in $\bsp_d^e$ are defined analogously as for $\bsp$. Let $m,n,r,s \in \N$, and suppose $m,n \geq d$ and $r,s \geq e$. Set $V = k^{m|n}$ and set $W = k^{r|s}$. Using the fact that $V$ and $W$ satisfy the hypotheses of \cite[Proposition A.1]{Axtell:2013} for the categories $\bsg^d \bsv$ and $\bsg^e \bsv$, respectively, one can then apply \cite[Proposition A.1]{Axtell:2013} to prove:

\begin{proposition}
Let $d,e,m,n,r,s \in \N$ such that $m,n \geq d$ and $r,s \geq e$. Set $V = k^{m|n}$, and set $W = k^{r|s}$. Then the functor $(\bsg^{d,V} \boxtimes \bsg^{e,W}) \oplus [\Pi \circ (\bsg^{d,V} \boxtimes \bsg^{e,W})]$ is a projective generator for the category $(\bsp_d^e)_\ev$, and evaluation on $(V,W)$ induces an equivalence of categories between $\bsp_d^e$ and the category of finite-dimensional left-supermodules for the superalgebra
\[
\End_{\bsg_d^e(\bsv \times \bsv)}((V,W)) \cong S(m|n,d) \otimes S(r|s,e).
\]
\end{proposition}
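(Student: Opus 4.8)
The plan is to follow the proof of Axtell's Theorem~\ref{thm:enoughprojectives} (that is, \cite[\S3.5, Theorem 4.2]{Axtell:2013}) almost word for word, with the category $\bsg^d\bsv$ and the object $k^{m|n}$ replaced by the ``tensor product'' category $\bsg_d^e(\bsv\times\bsv)$ and the object $(V,W)=(k^{m|n},k^{r|s})$. First I would note, using the hom-formula \eqref{eq:bsgdehom}, that for each $(A,B)\in\bsv\times\bsv$ one has $\Hom_{\bsg_d^e(\bsv\times\bsv)}((V,W),(A,B)) = \bsg^d\Hom_k(V,A)\otimes\bsg^e\Hom_k(W,B)$, so that as a bisuperfunctor in $(A,B)$ the external tensor product $\bsg^{d,V}\boxtimes\bsg^{e,W}$ is exactly the representable functor $\Hom_{\bsg_d^e(\bsv\times\bsv)}((V,W),-)$. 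A super-Yoneda argument identical to the one behind \eqref{eq:Yonedalemma} then produces a $\Z_2$-graded natural isomorphism $\Hom_{\bsp_d^e}(\bsg^{d,V}\boxtimes\bsg^{e,W},F)\cong F(V,W)$ (evaluate a homomorphism on $\id_{(V,W)}$; conversely extend an element of $F(V,W)$ by naturality). Restricting to even homomorphisms and invoking the parity-change isomorphisms \eqref{eq:paritychangebsphom} for the $\Pi$-twisted summand, this gives $\Hom_{(\bsp_d^e)_\ev}(P,F)\cong F(V,W)$, where $P:=(\bsg^{d,V}\boxtimes\bsg^{e,W})\oplus[\Pi\circ(\bsg^{d,V}\boxtimes\bsg^{e,W})]$. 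Since kernels and cokernels in $(\bsp_d^e)_\ev$ are computed pointwise in $\bsv$, evaluation at $(V,W)$ is exact, so $P$ is projective.

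It then remains to show that $P$ is a generator, equivalently that $F(V,W)=0$ forces $F=0$; this is the one place where the inequalities $m,n\geq d$ and $r,s\geq e$ intervene. Here I would appeal to \cite[Proposition A.1]{Axtell:2013}, a general criterion guaranteeing that evaluation at a distinguished object of a superspace-enriched category induces an equivalence from the category of even linear functors out of that category to $\bsv$, onto the category of finite-dimensional left supermodules over the endomorphism superalgebra of the object. Axtell verifies the hypotheses of that proposition for $k^{m|n}$ in $\bsg^d\bsv$ when $m,n\geq d$, and the identical verification gives them for $k^{r|s}$ in $\bsg^e\bsv$ when $r,s\geq e$; what one must add is that these hypotheses are inherited by $(V,W)$ in $\bsg_d^e(\bsv\times\bsv)$. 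The mechanism is the factorization \eqref{eq:bsgdehom} of the morphisms of $\bsg_d^e(\bsv\times\bsv)$ as tensor products of morphisms of $\bsg^d\bsv$ and $\bsg^e\bsv$: the data witnessing the hypothesis for $V$ and for $W$ can be tensored together to witness it for $(V,W)$. Granting this, \cite[Proposition A.1]{Axtell:2013} yields an equivalence between $\bsp_d^e$ and the category of finite-dimensional left supermodules over $\End_{\bsg_d^e(\bsv\times\bsv)}((V,W))$, under which $P$ corresponds to the regular module.

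Finally, taking $(A,B)=(V,W)$ in \eqref{eq:bsgdehom} and tracking the Koszul sign in the composition law recorded there, one finds that $\End_{\bsg_d^e(\bsv\times\bsv)}((V,W))$ is $\End_{\bsg^d\bsv}(V)\otimes\End_{\bsg^e\bsv}(W)$ \emph{as superalgebras}, with the multiplication of the super tensor product; by Theorem~\ref{thm:enoughprojectives} the two factors are the Schur superalgebras $S(m|n,d)$ and $S(r|s,e)$, so $\End_{\bsg_d^e(\bsv\times\bsv)}((V,W))\cong S(m|n,d)\otimes S(r|s,e)$, and the proposition follows. I expect the main obstacle to be the claim that the hypotheses of \cite[Proposition A.1]{Axtell:2013} pass from the two single-variable categories to their tensor product $\bsg_d^e(\bsv\times\bsv)$, together with the sign bookkeeping needed to see that $\End((V,W))$ is the \emph{super} (rather than ungraded) tensor product of the two Schur superalgebras; the Yoneda and exactness parts are routine transcriptions of the single-variable case.
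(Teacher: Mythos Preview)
Your proposal is correct and follows essentially the same approach as the paper: the paper simply remarks that since $V$ and $W$ satisfy the hypotheses of \cite[Proposition A.1]{Axtell:2013} for $\bsg^d\bsv$ and $\bsg^e\bsv$ respectively, one can apply that proposition to the tensor-product category $\bsg_d^e(\bsv\times\bsv)$, which is exactly what you spell out in more detail. Your elaboration of the Yoneda identification, the tensoring of the witnessing data, and the sign bookkeeping for the endomorphism superalgebra are all implicit in the paper's one-line justification.
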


\subsection{Cohomology groups and Yoneda products} \label{subsec:cohomology}

Theorem \ref{thm:enoughprojectives} implies that $\bsp_\ev$ is an abelian category with enough projectives and enough injectives, so we can apply the usual machinery of homological algebra to study extension groups in $\bsp_\ev$.

By construction, extension groups in $\bsp_\ev$ are purely even vector spaces. Our ultimate goal is to use strict polynomial superfunctors to understand the cohomology of finite supergroup schemes, or equivalently, of finite-dimensional cocommutative Hopf super\-algebras. Even the simplest examples in that context indicate that odd cohomology classes are important. For example, if $V$ is a finite-dimensional purely odd superspace, then the ordinary exterior algebra $\Lambda(V)$ is naturally a finite-dimensional cocommutative Hopf superalgebra, and it is a classical result that $\Hbul(\Lambda(V),k) \cong S^\bullet(V^*)$ \cite[2.2(2)]{Priddy:1970}. In particular, $\Hbul(\Lambda(V),k)$ is generated as an algebra by the odd subspace $\opH^1(\Lambda(V),k)_{\ol{1}} \cong V^*$. We thus take the following approach to studying extensions between strict polynomial superfunctors:

\begin{definition} \label{def:ExtP}
Given $F \in \bsp$, define $\Ext_{\bsp}^n(F,-)$ to be the $n$-th right derived functor of
\[
\Hom_{\bsp}(F,-): \bsp_{\ev} \rightarrow \fsvec_{\ev}.
\]
\end{definition}

Taking Definition \ref{def:ExtP} as our starting point, we get from \eqref{eq:oddtoevenhom} odd isomorphisms
\begin{equation} \label{eq:oddtoevenext}
\begin{aligned}
\Ext_{\bsp}^n(F,G)_{\ol{1}} &\simeq \Ext_{\bsp}^n(F,\bsPi \circ G)_{\ol{0}} = \Ext_{\bsp_\ev}^n(F,\bsPi \circ G), \text{ and} \\
\Ext_{\bsp}^n(F,G)_{\ol{1}} &\simeq \Ext_{\bsp}^n(\bsPi \circ F,G)_{\ol{0}} = \Ext_{\bsp_\ev}^n(\bsPi \circ F,G).
\end{aligned}
\end{equation}
Through judicious use of post-composition by the parity change functor $\bsPi$, we can appeal to $\bsp_\ev$ to define or deduce results about extension groups ``in $\bsp$.'' While we could probably complete all of the calculations of this paper while working purely with extension groups in $\bsp_{\ev}$, doing so would be an inconvenience since we'd have to keep track of isomorphisms like those in \eqref{eq:oddtoevenext}. (It turns out that all of the extension classes appearing in our main theorem are purely even, but it was not a priori obvious that this would be the outcome.)

The following discussion provides a framework for constructing the hypercohomology spectral sequences discussed in Section \ref{subsec:hypercohomology}. Let $(C,d^C)$ and $(D,d^D)$ be chain complexes in the category $\bsp_\ev$, and let $n \in \Z$. Define an \emph{even} (resp.\ \emph{odd}) \emph{chain map} $\varphi: C \rightarrow D[n]$ to consist for each $i \in \Z$ of an even (resp.\ odd) homomorphism $\varphi_i: C_{i+n} \rightarrow D_i$ such that $d_i^D \circ \varphi_i = \varphi_{i-1} \circ d_{i+n}^C$.\footnote{Given a graded space $X = \bigoplus_{i \in \Z} X_i$ and given $n \in \Z$, define $X[n]$ to be the graded space with $X[n]_i = X_{n+i}$.} Then an even chain map $\varphi: C \rightarrow D[n]$ is precisely a chain map of degree $-n$ in the category $\bsp_\ev$, while an odd chain map $\varphi: C \rightarrow D[n]$ is equivalent by \eqref{eq:oddtoevenhom} to an even chain map $\varphi': C \rightarrow (\bsPi \circ D)[n]$. Say that two even (resp.\ odd) chain maps $\varphi,\psi: C \rightarrow D[n]$ are \emph{even} (resp.\ \emph{odd}) \emph{homotopic}, and write $\varphi \simeq \psi$, if there exists for each $i \in \Z$ an even (resp.\ odd) homomorphism $\Sigma_i: C_{i+n} \rightarrow D_{i+1}$ such that $\varphi_i - \psi_i = d_{i+1}^D \circ \Sigma_i + \Sigma_{i-1} \circ d_{i+n}^C$. As usual, the property of being even (resp.\ odd) homotopic is an equivalence relation on the set of even (resp.\ odd) chain maps, the composition of two homogeneous chain maps is again a homogeneous chain map, and composition of homogeneous chain maps is compatible with homotopy equivalence.


Now let $F,G,H \in \bsp$, and let $C$ and $D$ be projective resolutions in $\bsp_{\ev}$ of $F$ and $G$, respectively. Then the even (resp.\ odd) subspace of $\Ext_{\bsp}^n(F,G)$ identifies with the vector space of homotopy classes of even (resp.\ odd) chain maps $\varphi: C \rightarrow D[n]$. Identifying homogeneous elements of $\Ext_{\bsp}^m(G,H)$ and $\Ext_{\bsp}^n(F,G)$ with homotopy classes of homogeneous chain maps, it follows that the composition of chain maps induces an even bilinear map
\begin{equation} \label{eq:Yonedaproduct}
\Ext_{\bsp}^m(G,H) \otimes \Ext_{\bsp}^n(F,G) \rightarrow \Ext_{\bsp}^{m+n}(F,H),
\end{equation}
which we call the \emph{Yoneda product of extensions in $\bsp$}. Associativity of \eqref{eq:Yonedaproduct} follows from the fact that composition of chain maps is associative. In particular, if $F \in \bsp$, then $\Ext_{\bsp}^\bullet(F,F)$ has the structure of a graded superalgebra, which we call the \emph{Yoneda algebra of $F$}.

\begin{remark} \label{rem:Yonedareduction}
Using \eqref{eq:oddtoevenext}, the Yoneda product \eqref{eq:Yonedaproduct} can be expressed in terms of the usual Yoneda product in $\bsp_\ev$. For example, the product $\Ext_{\bsp}^m(G,H)_{\ol{0}} \otimes \Ext_{\bsp}^n(F,G)_{\ol{1}} \rightarrow \Ext_{\bsp}^{m+n}(F,H)_{\ol{1}}$ can be computed via the composite map
\begin{align*}
\Ext_{\bsp}^m(G,H)_{\ol{0}} \otimes \Ext_{\bsp}^n(F,G)_{\ol{1}}
&\simrightarrow \Ext_{\bsp_\ev}^m(G,H) \otimes \Ext_{\bsp_\ev}^n(\bsPi \circ F,G) \\
&\rightarrow \Ext_{\bsp_\ev}^{m+n}(\bsPi \circ F, H) \simrightarrow \Ext_{\bsp}^{m+n}(F,H)_{\ol{1}},
\end{align*}
while $\Ext_{\bsp}^m(G,H)_{\ol{1}} \otimes \Ext_{\bsp}^n(F,G)_{\ol{1}} \rightarrow \Ext_{\bsp}^{m+n}(F,H)_{\ol{0}}$ can be computed
via the composite map
\begin{align*}
\Ext_{\bsp}^m(G,H)_{\ol{1}} \otimes \Ext_{\bsp}^n(F,G)_{\ol{1}}
&\simrightarrow \Ext_{\bsp_\ev}^m(\bsPi \circ G, H) \otimes \Ext_{\bsp_\ev}^n(F,\bsPi \circ G) \\
&\rightarrow \Ext_{\bsp_\ev}^{m+n}(F,H) = \Ext_{\bsp}^{m+n}(F,H)_{\ol{0}}.
\end{align*}
(Technically, the second composite computes the desired product multiplied by $-1$, due to the fact that the first arrow is induced by a tensor product of odd maps and due to the super sign conventions for tensor products of maps.)
\end{remark}

\begin{remark} \label{rem:nohomnoext}
If $F \in \bsp_m$, $G \in \bsp_n$, and $m \neq n$, then $\Hom_{\bsp}(F,G) = 0$. From this it immediately follows that $\Ext_{\bsp}^\bullet(F,G) = 0$ whenever $F$ and $G$ are homogeneous of different degrees. In the future we will often apply this observation without further comment.
\end{remark}

We leave it to the reader to formulate bisuperfunctor analogues of the definitions in this section.

\subsection{Operations on cohomology groups} \label{subsec:operations}

\subsubsection{Duality} \label{subsubsec:dualityiso}

Let $Q$ and $R$ be injective resolutions in the category $\bsp_\ev$ of $F$ and $G$, respectively. By duality, the even (resp.\ odd) subspace of $\Ext_{\bsp}^n(F,G)$ identifies with the set of homotopy classes of even (resp.\ odd) chain maps $\psi: Q[-n] \rightarrow R$. Since the duality functor $F \mapsto F^\#$ sends projective objects to injective objects and vice versa, it follows that the operation of sending a homogeneous chain map $\varphi: C \rightarrow D[n]$ to the dual map $\varphi^\#: D[n]^\# \rightarrow C^\#$ induces an isomorphism $\Ext_{\bsp}^n(F,G) \simrightarrow \Ext_{\bsp}^n(G^\#,F^\#)$, which we denote by $z \mapsto z^\#$. Moreover, if $z$ and $w$ are homogeneous elements such that the Yoneda product $z \cdot w$ makes sense, it follows that $(z \cdot w)^\# = (-1)^{\ol{z} \cdot \ol{w}} w^\# \cdot z^\#$, since this holds when composing homogeneous morphisms in $\bsp$. Finally, since $F \cong F^{\#\#}$, it follows that $\Ext_{\bsp}^\bullet(F^{\#\#},G^{\#\#}) \cong \Ext_{\bsp}^\bullet(F,G)$. Thus we can consider the duality functor $F \mapsto F^\#$ as inducing an anti-involution on extension groups in $\bsp$.

\subsubsection{Precomposition} \label{subsubsec:precomposition}

Let $H \in \bsp$. Precomposition with $H$, $F \mapsto F \circ H$, defines an exact even linear endofunctor on the category $\bsp_\ev$. Then for each $F,G \in \bsp$, there exists an induced even linear map $\Ext_{\bsp_\ev}^\bullet(F,G) \rightarrow \Ext_{\bsp_\ev}^\bullet(F \circ H, G \circ H)$ that is compatible with the Yoneda product in $\bsp_\ev$. Since precomposition by $H$ commutes with the isomorphisms in \eqref{eq:paritychangebsphom}, we can use \eqref{eq:oddtoevenext} together with Remark \ref{rem:Yonedareduction}, to deduce that precomposition by $H$ lifts to an even linear map $\Ext_{\bsp}^\bullet(F,G) \rightarrow \Ext_{\bsp}^\bullet(F \circ H, G \circ H)$ that is compatible with the Yoneda product \eqref{eq:Yonedaproduct}.

\subsubsection{Conjugation by \texorpdfstring{$\Pi$}{Pi}} \label{subsubsec:conjugationbyPi}

The operation $F \mapsto \Pi \circ F$ of postcomposition with $\Pi$ preserves projective resolutions in $\bsp_\ev$, sends homogeneous chain maps to homogeneous chain maps of the same parity, and is compatible with the composition of homomorphisms. Then postcomposition with $\Pi$ induces for each $F,G \in \bsp$ an even isomorphism $\Ext_{\bsp}^\bullet(F,G) \simrightarrow \Ext_{\bsp}^\bullet(\Pi \circ F, \Pi \circ G)$ that is compatible with the Yoneda product \eqref{eq:Yonedaproduct}. Now given $F \in \bsp$, set $F^\Pi = \Pi \circ F \circ \Pi$. We refer to the operation $F \mapsto F^\Pi$ as \emph{conjugation by $\Pi$}. Combining the comments in this section with those of Section \ref{subsubsec:precomposition}, it follows for each $F,G \in \bsp$ that there exists an even isomorphism $\Ext_{\bsp}^\bullet(F,G) \simrightarrow \Ext_{\bsp}^\bullet(F^\Pi,G^\Pi)$ that is compatible with \eqref{eq:Yonedaproduct}. We denote this map by $z \mapsto z^\Pi$, and refer to it as the conjugation action of $\Pi$ on extension groups in $\bsp$. Since $\Pi \circ \Pi = \bsi$, then $(z^\Pi)^\Pi = z$.

\subsection{Cup products and coproducts} \label{subsec:externalproducts}

Let $V,W \in \bsv$, and let $d,e \in \N$. The exponential property for $\bsg$ implies that $\bsg^{d,V} \otimes \bsg^{e,W}$ is isomorphic to a direct summand of $\bsg^{d+e,V \oplus W} \in \bsp_{d+e}$. Then it follows from Theorem \ref{thm:enoughprojectives} and the K\"{u}nneth formula that if $P$ and $Q$ are projective resolutions in $\bsp_\ev$ of $A$ and $C$, respectively, then the tensor product of complexes $P \otimes Q$ is a projective resolution in $\bsp_\ev$ of $A \otimes C$. Similarly, the external tensor product $P \boxtimes Q$ is a projective resolution in $(\bibsp)_\ev$ of $A \boxtimes C$. Now given $B,D \in \bsp$, there exist well-defined even linear maps
\begin{align}
\Ext_{\bsp}^m(A,B) \otimes \Ext_{\bsp}^n(C,D) &\rightarrow \Ext_{\bsp}^{m+n}(A \otimes C, B \otimes D) \quad \text{and} \label{eq:externalbspproduct} \\
\Ext_{\bsp}^m(A,B) \otimes \Ext_{\bsp}^n(C,D) &\rightarrow \Ext_{\bibsp}^{m+n}(A \boxtimes C, B \boxtimes D) \label{eq:externalbibspproduct}
\end{align}
induced by sending cocycles $\varphi: P \rightarrow B$ and $\psi: Q \rightarrow D$ to the tensor product of maps $\varphi \otimes \psi: P \otimes Q \rightarrow B \otimes D$ or to the external tensor product of maps $\varphi \boxtimes \psi: P \boxtimes Q \rightarrow B \boxtimes D$, respectively. The same argument as for \cite[Proposition 3.6]{Suslin:1997} shows that \eqref{eq:externalbibspproduct} induces an isomorphism
\begin{equation} \label{eq:externalproductextiso}
\kappa: \Ext_{\bsp}^\bullet(A,B) \otimes \Ext_{\bsp}^\bullet(C,D) \simrightarrow \Ext_{\bibsp}^\bullet(A \boxtimes C, B \boxtimes D).
\end{equation}

Suppose $A$ is a $\bsp$-coalgebra and $B$ is a $\bsp$-algebra. Then there exist even bilinear maps
\begin{align}
\Ext_{\bsp}^m(A,C) \otimes \Ext_{\bsp}^n(A,D) &\rightarrow \Ext_{\bsp}^{m+n}(A,C \otimes D), \label{eq:Acupproduct} \\
\Ext_{\bsp}^m(C,B) \otimes \Ext_{\bsp}^n(D,B) &\rightarrow \Ext_{\bsp}^{m+n}(C \otimes D,B), \quad \text{and} \label{eq:Bcupproduct} \\
\Ext_{\bsp}^m(A,B) \otimes \Ext_{\bsp}^n(A,B) &\rightarrow \Ext_{\bsp}^{m+n}(A,B)  \label{eq:ABcupproduct}
\end{align}
that arise in the usual fashion from \eqref{eq:externalbspproduct} by composing with the maps in cohomology induced by the coproduct $\Delta_A: A \rightarrow A \otimes A$, the product $m_B: B \otimes B \rightarrow B$, or both. We refer to these maps as the \emph{cup products} of the corresponding $\Ext$-groups.

Given a graded $\bsp$-algebra $B = \bigoplus_{n \in \Z} B^n$, consider the diagram
\[
\xymatrix{
B^i \otimes B^j \ar@{->}[r]^-{m_B} \ar@{->}[d]^-{T} & B^{i+j} \ar@{=}[d] \\
B^j \otimes B^i \ar@{->}[r]^-{m_B} & B^{i+j}
}
\]
in which the left-hand vertical arrow is induced by the supertwist map. Set $\ve(B) = 0$ if for each $i,j \in \Z$ the above diagram commutes, and set $\ve(B)=1$ if for each $i,j \in \Z$ the above diagram commutes up to the sign $(-1)^{ij}$, i.e., if the grading makes $B$ into a graded-commutative $\bsp$-algebra. If either of these conditions is satisfied, say that $B$ is $\ve(B)$-commutative. Similarly, one defines the notion of $\ve(A)$-cocommutativity for a graded $\bsp$-coalgebra $A$. Now the next lemma follows from essentially the same ``straightforward (but tiresome)'' reasoning as \cite[Lemma 1.11]{Franjou:1999}.

\begin{lemma} \label{lem:cupcommute}
Let $A = \bigoplus_{n \in \Z} A^n$ be a graded $\bsp$-coalgebra, and let $B = \bigoplus_{n \in \Z} B^n$ be a graded $\bsp$-algebra. Consider the diagram
\begin{equation} \label{eq:cupcommutativity}
\vcenter{\xymatrix{
\Ext_{\bsp}^s(A^i,B^i) \otimes \Ext_{\bsp}^t(A^j,B^j) \ar@{->}[r] \ar@{->}[d]^-{T} & \Ext_{\bsp}^{s+t}(A^{i+j},B^{i+j}) \ar@{=}[d] \\
\Ext_{\bsp}^t(A^j,B^j) \otimes \Ext_{\bsp}^s(A^i,B^i) \ar@{->}[r] & \Ext_{\bsp}^{s+t}(A^{i+j},B^{i+j})
}}
\end{equation}
in which the horizontal arrows are the corresponding cup products, and the left-hand vertical arrow is the supertwist map. If $A$ is $\ve(A)$-cocommutative and if $B$ is $\ve(B)$-commutative, then the diagram commutes up to the sign $(-1)^{st + \ve(A) \cdot ij + \ve(B) \cdot ij}$.
\end{lemma}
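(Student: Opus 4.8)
The cup product \eqref{eq:ABcupproduct} is, by its very construction, the external product \eqref{eq:externalbspproduct} followed by the map induced in cohomology by the coproduct $\Delta_A \colon A \to A \otimes A$ and the product $m_B \colon B \otimes B \to B$. The plan is therefore to follow the supertwist $T$ through each of these three operations in turn, and then to multiply the three resulting signs. Fix projective resolutions $P \to A^i$ and $Q \to A^j$ in $\bsp_\ev$. As recalled in Section~\ref{subsec:externalproducts}, Theorem~\ref{thm:enoughprojectives} and the K\"unneth formula make $P \otimes Q$ a projective resolution of $A^i \otimes A^j$ and $Q \otimes P$ a projective resolution of $A^j \otimes A^i$, and the supertwist lifts to an isomorphism of complexes $T_P \colon P \otimes Q \simrightarrow Q \otimes P$ over the supertwist $T_A \colon A^i \otimes A^j \simrightarrow A^j \otimes A^i$. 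Since both routes around \eqref{eq:cupcommutativity} are bilinear, it suffices to verify the identity on a tensor $\alpha \otimes \beta$ of homogeneous classes, which we represent by homogeneous cocycles $\varphi$ of cohomological degree $s$ and $\psi$ of cohomological degree $t$.

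First, interchanging the two tensor factors of the chain map $\varphi \otimes \psi \colon P \otimes Q \to B^i \otimes B^j$ --- that is, conjugating by $T_P$ on the source and by the supertwist $T_B \colon B^i \otimes B^j \simrightarrow B^j \otimes B^i$ on the target --- introduces, by the Koszul sign rule for homogeneous maps between superspaces, precisely the sign $(-1)^{st}(-1)^{\ol{\varphi} \cdot \ol{\psi}}$. The factor $(-1)^{\ol{\varphi} \cdot \ol{\psi}}$, coming from the $\Z_2$-degrees, is exactly the sign built into the supertwist $T$ on $\Ext_{\bsp}^s(A^i,B^i) \otimes \Ext_{\bsp}^t(A^j,B^j)$ that forms the left-hand vertical arrow of \eqref{eq:cupcommutativity}; so the net contribution of this step, beyond that vertical twist, is the factor $(-1)^{st}$. (Alternatively, one may first replace $\varphi$ and $\psi$ by even representatives using \eqref{eq:oddtoevenext} and Remark~\ref{rem:Yonedareduction}, after which $T$ is the bare flip and only $(-1)^{st}$ survives.) Next, writing $\Delta_A^{(i,j)} \colon A^{i+j} \to A^i \otimes A^j$ and $\Delta_A^{(j,i)} \colon A^{i+j} \to A^j \otimes A^i$ for the relevant homogeneous components of $\Delta_A$, the hypothesis of $\ve(A)$-cocommutativity is exactly the identity $T_A \circ \Delta_A^{(i,j)} = (-1)^{\ve(A) \cdot ij}\, \Delta_A^{(j,i)}$; applying $\Ext_{\bsp}^\bullet(-,B^{i+j})$ shows that $(\Delta_A^{(i,j)})^*$ and $(\Delta_A^{(j,i)})^*$, once composed with the isomorphism induced by the supertwist, differ by the sign $(-1)^{\ve(A) \cdot ij}$. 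Dually, $\ve(B)$-commutativity gives $m_B^{(j,i)} \circ T_B = (-1)^{\ve(B) \cdot ij}\, m_B^{(i,j)}$, and applying $\Ext_{\bsp}^\bullet(A^{i+j},-)$ shows that $(m_B^{(i,j)})_*$ and $(m_B^{(j,i)})_*$, once composed with the isomorphism induced by the supertwist, differ by $(-1)^{\ve(B) \cdot ij}$.

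Assembling these three observations, the clockwise composite in \eqref{eq:cupcommutativity} equals the counter-clockwise composite multiplied by $(-1)^{st} \cdot (-1)^{\ve(A) \cdot ij} \cdot (-1)^{\ve(B) \cdot ij}$: the twist maps $T_A$ and $T_B$ produced in the last two observations are precisely those used to compare the source and target of the external product in the first, so they cancel and only the sign remains. This is the asserted commutativity. The main obstacle is the sign bookkeeping in the first step: one must carefully separate the Koszul sign $(-1)^{st}$ coming from the cohomological degrees from the internal $\Z_2$-degree sign $(-1)^{\ol{\varphi} \cdot \ol{\psi}}$, confirm that the latter is indeed the sign carried by the supertwist on $\Ext$-groups, and check that the K\"unneth comparison and the twist isomorphisms $T_P$, $T_A$, $T_B$ all intertwine compatibly with the external product and with the maps induced by $\Delta_A$ and $m_B$. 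Once a consistent sign convention is fixed, this is the evident ``super'' analogue of the ``straightforward (but tiresome)'' computation behind \cite[Lemma~1.11]{Franjou:1999}, and it presents no essential difficulty.
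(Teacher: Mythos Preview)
Your proposal is correct and is precisely the detailed version of what the paper does: the paper gives no explicit proof of Lemma~\ref{lem:cupcommute}, merely remarking that it follows from ``essentially the same `straightforward (but tiresome)' reasoning as \cite[Lemma~1.11]{Franjou:1999}.'' Your three-step sign chase (external product, coproduct, product) is exactly that reasoning, adapted to the super setting with the appropriate separation of the cohomological sign $(-1)^{st}$ from the internal $\Z_2$-sign $(-1)^{\ol\varphi\cdot\ol\psi}$.
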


Recall that the diagonal functor $\bsD: \bsv \rightarrow \bsv \times \bsv$ and the direct sum functor $\bssigma: \bsv \times \bsv \rightarrow \bsv$ are adjoint functors (on both sides). Using the definition \eqref{eq:pairhom} of homomorphisms in the category $\bsg^n(\bsv \times \bsv)$, it follows that $\bsD$ and $\bssigma$ extend to a pair of adjoint functors $\bsg^n(\bsv) \rightarrow \bsg^n(\bsv \times \bsv)$ and $\bsg^n(\bsv \times \bsv) \rightarrow \bsg^n(\bsv)$, which we also denote $\bsD$ and $\bssigma$. Then precomposition by $\bsD$ and $\bssigma$ defines a pair of adjoint functors $\bsp_n \rightarrow \bsp(n)$, $F \mapsto F \circ \bssigma$, and $\bsp(n) \rightarrow \bsp_n$, $F \mapsto F \circ \bsD$. Extending componentwise, we get a pair of exact adjoint functors $\bsp \rightarrow \bibsp$, $F \mapsto F \circ \bssigma$, and $\bibsp \rightarrow \bsp$, $G \mapsto G \circ \bsD$. Then for each $F \in \bsp$ and $G \in \bibsp$, one gets an isomorphism
\begin{equation} \label{eq:adjunctioniso}
\alpha: \Ext_{\bibsp}^\bullet(F \circ \bssigma,G) \simrightarrow \Ext_{\bsp}^\bullet(F,G \circ \bsD)
\end{equation}
that is natural with respect to even homomorphisms in $F$ or $G$. Now the next theorem follows by the same reasoning as its classical analogue; cf.\ \cite[\S1.7]{Franjou:1999} and \cite[\S\S5.3--5.4]{Touze:2010b}, and also \cite[\S4.4]{Kuhn:1995}.

\begin{theorem} \label{thm:exponentialext}
Let $A \in \bsp$ be an exponential superfunctor, let $C \in \bsp_m$, and let $D \in \bsp_n$. Write $\bigoplus_{n \in \N} A^n$ for the polynomial decomposition of $A$. Then for each $m,n \in \N$, the cup products \eqref{eq:Acupproduct} and \eqref{eq:Bcupproduct} induce isomorphisms
\begin{align}
\Ext_{\bsp}^\bullet(A^m,C) \otimes \Ext_{\bsp}^\bullet(A^n,D) &\cong \Ext_{\bsp}^\bullet(A^{m+n},C \otimes D), \quad \text{and} \label{eq:firstcupiso} \\
\Ext_{\bsp}^\bullet(C,A^m) \otimes \Ext_{\bsp}^\bullet(D,A^n) &\cong \Ext_{\bsp}^\bullet(C \otimes D,A^{m+n}). \label{eq:secondcupiso}
\end{align}
\end{theorem}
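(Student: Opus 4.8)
The plan is to establish \eqref{eq:firstcupiso} directly and to deduce \eqref{eq:secondcupiso} from it by Kuhn duality. For the latter, one observes that $A^\#$ is again an exponential superfunctor (its comultiplication being, under the canonical identifications, the isomorphism $\lambda_{A^\#} = \mu_{A^\#}^{-1}$ of Remark~\ref{rem:lambda}), that $(C \otimes D)^\# \cong C^\# \otimes D^\#$, and that the anti-involution $z \mapsto z^\#$ of Section~\ref{subsubsec:dualityiso} carries the cup product \eqref{eq:Bcupproduct} for $A$ to the cup product \eqref{eq:Acupproduct} for $A^\#$ up to a sign; then \eqref{eq:secondcupiso} for $A$ follows from \eqref{eq:firstcupiso} for $A^\#$. (Alternatively one runs the argument below word for word with injective resolutions, the product $m_A$ in place of the coproduct $\Delta_A$, and the isomorphism $\lambda_A$ of Remark~\ref{rem:lambda} in place of $\mu_A$.) Throughout, by Remark~\ref{rem:nohomnoext} we may freely replace $A^m$, $A^n$, and $A^{m+n}$ by $A$, since the remaining polynomial components of $A$ contribute nothing, and likewise every $\Ext$-group in $\bibsp$ below is concentrated in a single bidegree.

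The first step is to exhibit the cup product \eqref{eq:Acupproduct} as a composite of three isomorphisms already available to us: the external product isomorphism $\kappa$ of \eqref{eq:externalproductextiso}; the isomorphism induced on $\Ext_{\bibsp}$ by the exponential isomorphism $\mu_A \colon A \boxtimes A \simrightarrow A \circ \bssigma$; and the adjunction isomorphism $\alpha$ of \eqref{eq:adjunctioniso}, taken with $F = A$ and $G = C \boxtimes D$ and using $(C \boxtimes D) \circ \bsD = C \otimes D$. Thus I would consider
\[
\begin{aligned}
\Ext_{\bsp}^\bullet(A,C) \otimes \Ext_{\bsp}^\bullet(A,D)
&\xrightarrow{\ \kappa\ } \Ext_{\bibsp}^\bullet(A \boxtimes A,\, C \boxtimes D) \xrightarrow{\ (\mu_A^{-1})^*\ } \Ext_{\bibsp}^\bullet(A \circ \bssigma,\, C \boxtimes D) \\
&\xrightarrow{\ \alpha\ } \Ext_{\bsp}^\bullet(A,\, C \otimes D).
\end{aligned}
\]
Since all three arrows are isomorphisms, the composite is an isomorphism, and the whole content of the theorem is that this composite agrees with the cup product.

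To verify that agreement---the heart of the argument---I would work at the level of cocycles. First, the internal external product \eqref{eq:externalbspproduct} is obtained from $\kappa$ by postcomposing with precomposition by $\bsD$, because $(\varphi \boxtimes \psi) \circ \bsD = \varphi \otimes \psi$; hence the cup product sends a class $\zeta$ to $\Delta_A^*\bigl(\kappa(\zeta) \circ \bsD\bigr)$, where $\Delta_A$ is the coproduct of \eqref{eq:definecoproduct}. On the other hand, unwinding $\alpha$---which comes from the adjunction $(-\circ\bssigma) \dashv (-\circ\bsD)$, whose unit at $A$ is precomposition with the diagonal $\bsdelta$---the composite above sends $\zeta$ to $\bigl(\kappa(\zeta) \circ \bsD\bigr) \circ (\mu_A \circ \bsD)^{-1} \circ (A \circ \bsdelta)$. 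But $(\mu_A \circ \bsD)^{-1} \circ (A \circ \bsdelta)$ is precisely $\Delta_A$ by \eqref{eq:definecoproduct}, so the two prescriptions coincide.

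The identification just described is, as in the classical case (cf.\ \cite[\S1.7]{Franjou:1999} and \cite[\S\S5.3--5.4]{Touze:2010b}), a routine if tiresome diagram chase, and I do not expect it to be the main obstacle. The genuine work lies in the surrounding $\Z_2$-graded bookkeeping: recording the bisuperfunctor analogues of the ambient homological algebra ($(\bibsp)_\ev$ has enough projectives, and the K\"unneth formula holds for external tensor products of projective resolutions, giving the isomorphism \eqref{eq:externalproductextiso}); checking that $\kappa$, $(\mu_A^{-1})^*$, and $\alpha$ are all even maps, so that the super-sign conventions of Section~\ref{subsec:cohomology} introduce no discrepancy in the identification above (this amounts to tracking the Koszul signs through the definitions of $\kappa$ and of the product on $A \boxtimes A$); and confirming that everything is compatible with postcomposition by $\bsPi$ as in Remark~\ref{rem:Yonedareduction}, so that the statement holds for $\Ext_{\bsp}$ and not merely for $\Ext_{\bsp_\ev}$.
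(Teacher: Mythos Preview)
Your approach is correct and is precisely the argument the paper has in mind: the paper's own ``proof'' consists only of the sentence that the theorem ``follows by the same reasoning as its classical analogue; cf.\ \cite[\S1.7]{Franjou:1999} and \cite[\S\S5.3--5.4]{Touze:2010b}, and also \cite[\S4.4]{Kuhn:1995}'', and your factorisation of the cup product as $\alpha \circ (\mu_A^*)^{-1} \circ \kappa$ is exactly what those references do (indeed, the paper itself records in the paragraph after \eqref{eq:extcoproduct} that $\kappa^{-1} \circ \mu_A^* \circ \alpha^{-1}$ is the inverse of \eqref{eq:firstcupiso}). Your unwinding of the agreement via $\Delta_A = (\mu_A \circ \bsD)^{-1} \circ (A \circ \bsdelta)$ and the identification of the unit of the adjunction with $A \circ \bsdelta$ is the standard verification, and your duality reduction for \eqref{eq:secondcupiso} is fine.
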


Let $F \in \bsp$. We say that $F$ is \emph{additive} if the external direct sum $F \boxplus F \in \bibsp$ is isomorphic as a strict polynomial bisuperfunctor to $F \circ \bssigma$, i.e., if for each $V,W \in \bsv$ there exists a bifunctorial isomorphism $F(V) \oplus F(W) \cong F(V \oplus W)$. For example, if $A$ is an exponential superfunctor with polynomial decomposition $\bigoplus_{n \in \N} A^n$, $A^0 = k$, and $n$ is the least positive integer such that $A^n \neq 0$, then $A^n$ is additive. The next theorem, a variant of a vanishing theorem originally due to Pirashvili, is thus closely related to Theorem \ref{thm:exponentialext}. Its proof follows from a repetition of the proof of \cite[Theorem 2.13]{Friedlander:1997}, after first applying \eqref{eq:oddtoevenext} to reduce to extension groups in the abelian category $\bsp_{\ev}$. (For historical context, see Kuhn \cite[Remark 6.4]{Kuhn:1998}.)

\begin{theorem} \label{thm:vanishing}
Let $T$ and $T'$ be homogeneous strict polynomial superfunctors of positive degrees, and let $F \in \bsp$ be an additive functor. Then $\Ext_{\bsp}^\bullet(F,T \otimes T')=0$.
\end{theorem}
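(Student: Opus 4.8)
The plan is to adapt Pirashvili's argument exactly as in the proof of \cite[Theorem 2.13]{Friedlander:1997}, after first passing from $\bsp$ to the abelian category $\bsp_\ev$. For the odd part of $\Ext_{\bsp}^\bullet(F,T\otimes T')$, the isomorphisms \eqref{eq:oddtoevenext} identify it with $\Ext_{\bsp_\ev}^\bullet(F,\bsPi\circ(T\otimes T'))$, and since $\bsPi\circ(T\otimes T')\cong(\bsPi\circ T)\otimes T'$ with $\bsPi\circ T$ again homogeneous of the same positive degree as $T$, it suffices to prove that $\Ext_{\bsp_\ev}^\bullet(F,S\otimes S')=0$ for every additive $F\in\bsp$ and every pair $S,S'$ of homogeneous strict polynomial superfunctors of positive degrees $d,d'$. (Splitting $S$ and $S'$ into homogeneous components does not affect this, so there is no loss in assuming homogeneity.)

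First I would realize the internal tensor product $S\otimes S'$ as the diagonal restriction of the external one: with $\bsD: \bsv \rightarrow \bsv\times\bsv$ the diagonal functor one has $(S\boxtimes S')\circ\bsD=S\otimes S'$, where $S\boxtimes S'\in\bsp_d^{d'}$. Applying the adjunction isomorphism \eqref{eq:adjunctioniso} (which, coming from an adjoint pair of exact functors on the relevant abelian categories, is equally valid between the even $\Ext$-groups) gives
\[
\Ext_{\bsp_\ev}^\bullet(F,S\otimes S')=\Ext_{\bsp_\ev}^\bullet\bigl(F,(S\boxtimes S')\circ\bsD\bigr)\cong\Ext_{(\bibsp)_\ev}^\bullet(F\circ\bssigma,S\boxtimes S').
\]
Now $F$ is additive, so $F\circ\bssigma\cong F\boxplus F$. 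Writing $F=\bigoplus_n F^n$ for the polynomial decomposition, each $F^n$ is again additive and $F\boxplus F=\bigoplus_n\bigl(\iota_n^0(F^n)\oplus\iota_0^n(F^n)\bigr)$ has nonzero homogeneous components only in the bidegrees $(n,0)$ and $(0,n)$ for $n\in\N$. Since $S\boxtimes S'$ is concentrated in the single bidegree $(d,d')$ with $d,d'\geq 1$, the bisuperfunctor analogue of Remark \ref{rem:nohomnoext} --- namely that $\Ext_{\bibsp}^\bullet$ vanishes between objects supported in distinct bidegrees --- forces the right-hand side above to vanish. Hence $\Ext_{\bsp_\ev}^\bullet(F,S\otimes S')=0$, which completes the proof.

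The argument is almost entirely formal, resting on three ingredients already available above: the adjunction \eqref{eq:adjunctioniso}, the identity $(S\boxtimes S')\circ\bsD=S\otimes S'$, and the decomposition $F\circ\bssigma\cong F\boxplus F$ furnished by additivity. The only place calling for any care is the reduction to $\bsp_\ev$: one must observe that post-composing the target with $\bsPi$ again yields a tensor product of homogeneous superfunctors of positive degree, which is immediate because $\bsPi$ preserves the polynomial grading. I do not expect any serious obstacle beyond this bookkeeping.
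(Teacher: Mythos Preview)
Your proof is correct and follows essentially the same route the paper indicates: reduce to $\bsp_\ev$ via \eqref{eq:oddtoevenext} and then run the Pirashvili/Friedlander--Suslin argument using the adjunction \eqref{eq:adjunctioniso} together with the bidegree mismatch between $F\boxplus F$ and $S\boxtimes S'$. The observation $\bsPi\circ(T\otimes T')\cong(\bsPi\circ T)\otimes T'$ handles the odd part cleanly, exactly as needed.
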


Finally, let $A$ be a $\bsp$-algebra and let $B$ be a $\bsp$-coalgebra. Then the coproduct
\begin{equation} \label{eq:ABcoproduct}
\Ext_{\bsp}^\bullet(A,B) \rightarrow \Ext_{\bsp}^\bullet(A,B) \otimes \Ext_{\bsp}^\bullet(A,B)
\end{equation}
is defined in terms of the coproduct $\Delta_B$, the isomorphism $\alpha$ of \eqref{eq:adjunctioniso}, the homomorphism $\mu_A$ of \eqref{eq:mu}, and the isomorphism $\kappa$ of \eqref{eq:externalproductextiso} as the composite linear map
\begin{equation} \label{eq:extcoproduct}
\begin{gathered}
\Ext_{\bsp}^\bullet(A,B)
\stackrel{\Delta_{B*}}{\longrightarrow} \Ext_{\bsp}^\bullet(A, B \otimes B)
\stackrel{\alpha^{-1}}{\longrightarrow} \Ext_{\bibsp}^\bullet(A \circ \bssigma, B \boxtimes B) \\
\stackrel{\mu_A^*}{\longrightarrow} \Ext_{\bibsp}^\bullet(A \boxtimes A, B \boxtimes B)
\stackrel{\kappa^{-1}}{\longrightarrow} \Ext_{\bsp}^\bullet(A,B) \otimes \Ext_{\bsp}^\bullet(A,B).
\end{gathered}
\end{equation}
The reader can formulate the coproduct analogue of Lemma \ref{lem:cupcommute} by replacing the horizontal arrows in \eqref{eq:cupcommutativity} with the corresponding (left-facing) coproducts. If $A$ is an exponential superfunctor, then the composition $\kappa^{-1} \circ \mu_A^* \circ \alpha^{-1}$ in \eqref{eq:extcoproduct} is the inverse of the cup product isomorphism \eqref{eq:firstcupiso}; cf.\ the first paragraph of Remark \ref{rem:lambda}.

\subsection{\texorpdfstring{$\Ext^n$ and $n$-extensions}{Extn and n-extensions}} \label{subsec:extnextensions}

Recall that, while $\bsp$ is not an abelian category, it is closed under kernels and cokernels of homogeneous morphisms. Then it makes sense to consider exact sequences in $\bsp$ in which each morphism is homogeneous. Given $F,G \in \bsp$, define a \emph{homogeneous $n$-extension of $F$ by $G$ in $\bsp$} to be an exact sequence
\begin{equation} \label{eq:nextension}
E: 0 \rightarrow G = E_{n+1} \rightarrow E_n \rightarrow \cdots \rightarrow E_1 \rightarrow E_0 = F \rightarrow 0
\end{equation}
in $\bsp$ in which each morphism is homogeneous. Define the parity of $E$ to be the sum of the parities of the morphisms appearing in $E$. Say that two $n$-extensions $E$ and $E'$ of $F$ by $G$ in $\bsp$ satisfy the relation $E \leadsto E'$ if there exists a commutative diagram
\begin{equation}
\vcenter{\xymatrix{
0 \ar@{->}[r] & G \ar@{->}[r] \ar@{=}[d] & E_n \ar@{->}[r] \ar@{->}[d] & \cdots \ar@{->}[r] & E_1 \ar@{->}[r] \ar@{->}[d] & F \ar@{->}[r] \ar@{=}[d] & 0 \\
0 \ar@{->}[r] & G \ar@{->}[r] & E_n' \ar@{->}[r] & \cdots \ar@{->}[r] & E_1' \ar@{->}[r] & F \ar@{->}[r] & 0
}}
\end{equation}
in which each vertical arrow is a homogeneous morphism in $\bsp$. Then the relation $E \leadsto E'$ generates an equivalence relation $E \sim E'$ on the set of homogeneous $n$-extensions of $F$ by $G$.

One can show that if $E \leadsto E'$, and hence if $E \sim E'$, then $E$ and $E'$ must be of the same parity. Define $\Yext_{\bsp}^n(F,G)_{\ol{0}}$ (resp.\ $\Yext_{\bsp}^n(F,G)_{\ol{1}}$) to be the set of equivalence classes of even (resp.\ odd) homogeneous $n$-extensions of $F$ by $G$ in $\bsp$. One can also check that the operation of composing (i.e., splicing) homogeneous extensions induces a well-defined product between equivalence classes. We call this product the \emph{composition of homogeneous extensions in $\bsp$}.

\begin{proposition} \label{prop:extnandnextensions}
For each $F,G \in \bsp$ and each $n \in \N$, there exist bijections
\begin{align*}
\theta_0: &\Ext_{\bsp}^n(F,G)_{\ol{0}} \rightarrow \Yext_{\bsp}^n(F,G)_{\ol{0}}, \quad \text{and} \\
\theta_1: &\Ext_{\bsp}^n(F,G)_{\ol{1}} \rightarrow \Yext_{\bsp}^n(F,G)_{\ol{1}}
\end{align*}
under which the Yoneda product \eqref{eq:Yonedaproduct} of homogeneous elements corresponds to the composition of homogeneous extensions in $\bsp$.
\end{proposition}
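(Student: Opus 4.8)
The plan is to reduce the statement to the classical Yoneda description of $\Ext^n$ in the abelian category $\bsp_\ev$ and then transport it across the parity-change isomorphisms \eqref{eq:oddtoevenext}. By Theorem~\ref{thm:enoughprojectives}, $\bsp_\ev$ is abelian with enough projectives, so Yoneda's classical theorem supplies a bijection between $\Ext_{\bsp_\ev}^n(F,G) = \Ext_{\bsp}^n(F,G)_{\ol{0}}$ and the set of equivalence classes of ordinary $n$-extensions of $F$ by $G$ in $\bsp_\ev$, under which the Yoneda product (defined via composition of chain maps in Section~\ref{subsec:cohomology}) corresponds to splicing of extensions. An ordinary $n$-extension in $\bsp_\ev$ is precisely a homogeneous $n$-extension in $\bsp$ all of whose morphisms are even; it is in particular an even homogeneous $n$-extension in the sense of \eqref{eq:nextension}. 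The first task is therefore to show that every even homogeneous $n$-extension $E$ in $\bsp$ is $\sim$-equivalent to one whose morphisms are all even, and that two such ``normalized'' extensions are $\sim$-equivalent in $\bsp$ if and only if they are equivalent in $\bsp_\ev$. For the first point, writing $o_i \in \Z_2$ for the sum of the parities of the morphisms $d_1,\dots,d_i$ of $E$, one replaces $E_i$ by $\bsPi \circ E_i$ when $o_i = \ol{1}$ and leaves $E_i$ unchanged when $o_i = \ol{0}$, composing the $d_i$ with the tautological odd isomorphisms $\id_{E_i \rightarrow \bsPi \circ E_i}$ of \eqref{eq:paritychangebsphom}; since the number of odd $d_i$ is even, this produces an equivalent $n$-extension with all morphisms even and with $E_0 = F$, $E_{n+1} = G$ unchanged. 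For the second point, a ladder witnessing $E \leadsto E'$ with homogeneous (possibly odd) vertical arrows $\psi_i$ satisfies $\ol{\psi_i} = \ol{\psi_{i-1}} + \ol{d_i} + \ol{d_i'}$, and conjugating the $\psi_i$ by the same isomorphisms used to normalize $E$ and $E'$ makes every vertical arrow even, i.e.\ yields a genuine equivalence in $\bsp_\ev$. Combined with the obvious surjectivity, this produces the bijection $\theta_0$.

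For the odd case I would define $\theta_1$ as the composite of the first odd isomorphism of \eqref{eq:oddtoevenext}, $\Ext_{\bsp}^n(F,G)_{\ol{1}} \simrightarrow \Ext_{\bsp}^n(F,\bsPi \circ G)_{\ol{0}}$, with the bijection $\theta_0$ for the pair $(F,\bsPi \circ G)$ just constructed, followed by the operation that sends an $n$-extension $0 \rightarrow \bsPi \circ G \rightarrow E_n \rightarrow \cdots \rightarrow F \rightarrow 0$ to $0 \rightarrow G \rightarrow E_n \rightarrow \cdots \rightarrow F \rightarrow 0$, where the new leftmost morphism is obtained by precomposing with the odd isomorphism $\id_{G \rightarrow \bsPi \circ G}$. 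The result is an odd homogeneous $n$-extension of $F$ by $G$, and this last operation is a bijection onto $\Yext_{\bsp}^n(F,G)_{\ol{1}}$: its inverse splits the odd isomorphism off the left end, which is legitimate up to $\sim$ by the normalization of the previous paragraph. One must also check that using the second isomorphism of \eqref{eq:oddtoevenext} instead — splicing an odd isomorphism onto the right end — yields the same map $\theta_1$, which is a short diagram chase.

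Finally, for compatibility of $\theta_0$ and $\theta_1$ with products in all four parity combinations, one expresses the Yoneda product \eqref{eq:Yonedaproduct} of homogeneous classes in terms of products in $\bsp_\ev$ exactly as in Remark~\ref{rem:Yonedareduction}, applies the classical identity ``Yoneda product $=$ splicing'' in the abelian category $\bsp_\ev$, and translates back through $\theta_0$. The only delicate point is the super signs: when an odd extension is spliced with an odd extension, the two interior copies of $\bsPi$ introduced in the construction of $\theta_1$ must be slid past one another, and the resulting sign has to be matched against the $-1$ recorded in Remark~\ref{rem:Yonedareduction}. I expect this sign bookkeeping in the odd--odd case, together with the parity bookkeeping in the normalization arguments above, to be the main (though entirely routine) obstacle; everything else is a direct appeal to the classical theory in $\bsp_\ev$.
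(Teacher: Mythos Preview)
Your proposal is correct and follows essentially the same route as the paper: reduce $\theta_0$ to the classical Yoneda correspondence in the abelian category $\bsp_\ev$, define $\theta_1$ by transporting across \eqref{eq:oddtoevenext} and splicing the odd isomorphism $G \simeq \bsPi \circ G$ onto the left end, and check compatibility with products via Remark~\ref{rem:Yonedareduction}. The one minor technical difference is in showing that $\Yext_{\bsp_\ev}^n(F,G) \to \Yext_{\bsp}^n(F,G)_{\ol{0}}$ is injective: you normalize the vertical arrows of a $\leadsto$-ladder directly by conjugating with parity-change isomorphisms, whereas the paper instead constructs a one-sided inverse $\theta'''$ using the standard procedure of \cite[Theorem~IV.9.1]{Hilton:1997}.
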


\begin{proof}
First we construct $\theta_0$. Let $\Yext_{\bsp_\ev}^n(F,G)$ be the set of equivalence classes of $n$-extensions of $F$ by $G$ in the category $\bsp_\ev$. Since $\bsp_\ev$ is an abelian category, it is well-known that there exists a bijection $\theta': \Ext_{\bsp_\ev}^n(F,G) \rightarrow \Yext_{\bsp_\ev}^n(F,G)$ under which the Yoneda product for $\Ext_{\bsp_\ev}^n(F,G)$ corresponds to the composition of extensions for $\Yext_{\bsp_\ev}^n(F,G)$. Next, the inclusion of categories $\bsp_\ev \hookrightarrow \bsp$ induces a function $\theta'': \Yext_{\bsp_\ev}^n(F,G) \rightarrow \Yext_{\bsp}^n(F,G)_{\ol{0}}$ that is compatible with the composition of extensions. We claim that $\theta''$ is a bijection. Assuming this, the composite
\[
\Ext_{\bsp}^n(F,G)_{\ol{0}} = \Ext_{\bsp_\ev}^n(F,G) \stackrel{\theta'}{\rightarrow} \Yext_{\bsp_\ev}^n(F,G) \stackrel{\theta''}{\rightarrow} \Yext_{\bsp}^n(F,G)_{\ol{0}}
\]
then provides the desired bijection $\theta_0$.

To see that $\theta''$ is a surjection, let $[E] \in \Yext_{\bsp}^n(F,G)_{\ol{0}}$, and write $E$ as in \eqref{eq:nextension}. If each morphism appearing in $E$ is even, then $[E]$ is in the image of $\theta''$, and we are done. Otherwise, let $i$ be the least index such that the morphism $E_{i+1} \rightarrow E_i$ is odd. Then one can show that $E \leadsto E'$, where $E'$ is obtained from $E$ by replacing $E_{i+1}$ by $\bsPi \circ E_{i+1}$, and replacing the morphisms $E_{i+1} \rightarrow E_i$ and $E_{i+2} \rightarrow E_{i+1}$ by the composites $\bsPi \circ E_{i+1} \simeq E_{i+1} \rightarrow E_i$ and $E_{i+2} \rightarrow E_{i+1} \simeq \bsPi \circ E_{i+1}$, respectively. Now arguing by induction on $i$, and using the assumption that $E$ was even, it follows that $E$ is equivalent to an $n$-extension in which each morphism is even, and hence that $[E] \in \im(\theta'')$. To see that $\theta''$ is an injection, let $\theta''': \Yext_{\bsp}^n(F,G)_{\ol{0}} \rightarrow \Ext_{\bsp}^n(F,G)_{\ol{0}} = \Ext_{\bsp_\ev}^n(F,G)$ be defined by precisely the same procedure as in the first half of the proof of \cite[Theorem IV.9.1]{Hilton:1997}. Then the composite $\theta''' \circ \theta'' \circ \theta'$ is the identity map, which proves that $\theta''$ must be one-to-one.

Define $\theta_1$ to be the composite function
\[
\Ext_{\bsp}^n(F,G)_{\ol{1}} \simeq \Ext_{\bsp}^n(F,\bsPi \circ G)_{\ol{0}} \stackrel{\theta_0}{\longrightarrow} \Yext_{\bsp}^n(F,\bsPi \circ G)_{\ol{0}} \simeq \Yext_{\bsp}^n(F,G)_{\ol{1}},
\]
where the last isomorphism is the obvious parity-reversing bijection induced by the odd isomorphism $G \simeq \bsPi \circ G$. Then $\theta_1$ is a bijection, and using Remark \ref{rem:Yonedareduction} one can then check that $\theta_0$ and $\theta_1$ are compatible with the respective products.
\end{proof}

\subsection{Hypercohomology} \label{subsec:hypercohomology}

Since $\Hom_{\bsp}(-,-)$ defines a bifunctor $\bsp_\ev \rightarrow \fsvec_\ev$, contravariant in the first variable and covariant in the second, we can consider for each chain complex $A$ in $\bsp_\ev$ and each cochain complex $C$ in $\bsp_\ev$ the $n$-th hypercohomology group $\bbExt_{\bsp}^n(A,C)$ of $\Hom_{\bsp}(-,-)$, as defined for example in \cite[XVII.2]{Cartan:1999}. In particular, we will make extensive use of the fact that if $A$ is an object in $\bsp$ considered as a chain complex concentrated in degree zero, then the two hypercohomology spectral sequences converging to $\bbExt_{\bsp}^\bullet(A,C)$ are each right modules over the Yoneda algebra $\Ext_{\bsp}^\bullet(A,A)$. For the reader's convenience we briefly describe some of the details behind this construction in this special case.

Let $A \in \bsp$, and let $C$ be a (non-negative) cochain complex in $\bsp_\ev$. Let $P = P_\bullet$ be a projective resolution in $\bsp_\ev$ of $A$, and let $Q = Q^{\bullet,\bullet}$ be an injective Cartan-Eilenberg resolution in $\bsp_\ev$ of $C$. For $i,j \in \N$, set $\Hom_{\bsp}(P,Q)^{i,j} = \bigoplus_{r+s=j} \Hom_{\bsp}(P_r,Q^{i,s})$. This indexing induces on $\Hom_{\bsp}(P,Q)$ the structure of a first quadrant double complex in which the horizontal differential is induced by the horizontal differential from $Q$, and in which the vertical differential is induced by the differential from $P$ and the vertical differential from $Q$. Now $\bbExt_{\bsp}^n(A,C)$ is the $n$-th cohomology group of the double complex $\Hom_{\bsp}(P,Q)$. Interpreting homogeneous elements of $\Ext_{\bsp}^m(A,A)$ as homotopy classes of homogeneous chain maps $P \rightarrow P[m]$, the right action of $\Ext_{\bsp}^m(A,A)$ on $\bbExt_{\bsp}^n(A,C)$,
\[
\bbExt_{\bsp}^n(A,C) \otimes \Ext_{\bsp}^m(A,A) \rightarrow \bbExt_{\bsp}^{m+n}(A,C),
\]
is then induced by the composition of homomorphisms
\[
\Hom_{\bsp}(P,Q) \otimes \Hom_{\bsp}(P,P) \rightarrow \Hom_{\bsp}(P,Q).
\]

The first quadrant double complex $\Hom_{\bsp}(P,Q)$ gives rise to two spectral sequences that each converge to $\bbExt_{\bsp}^\bullet(A,C)$. The \emph{first hypercohomology spectral sequence} arises from computing the cohomology of the double complex first along columns, and takes the form
\begin{equation} \label{eq:firsthypercohomology}
{}^IE_1^{s,t} = \Ext_{\bsp}^t(A,C^s) \Rightarrow \bbExt_{\bsp}^{s+t}(A,C),
\end{equation}
Then the differential $d_1: {}^IE_1^{s,t} \rightarrow {}^IE_1^{s+1,t}$ identifies with the map in cohomology induced by the differential $C^s \rightarrow C^{s+1}$ of the complex $C$. The \emph{second hypercohomology spectral sequence} arises from computing the cohomology of the double complex first along rows, and takes the form
\begin{equation} \label{eq:secondhypercohomology}
{}^{II}E_2^{s,t} = \Ext_{\bsp}^s(A,\opH^t(C)) \Rightarrow \bbExt_{\bsp}^{s+t}(A,C).
\end{equation}
The two spectral sequences are related by the composite map 
\begin{equation} \label{eq:secondfirstcomposite}
{}^{II}E_2^{s,0} \twoheadrightarrow {}^{II}E_{\infty}^{s,0} \hookrightarrow \bbExt_{\bsp}^s(A,C) \twoheadrightarrow {}^I E_\infty^{0,s} \hookrightarrow {}^I E_1^{0,s},
\end{equation}
which identifies with the map in cohomology $\Ext_{\bsp}^s(A,\opH^0(C)) \rightarrow \Ext_{\bsp}^s(A,C^0)$ induced by the inclusion $\opH^0(C) \hookrightarrow C^0$; cf.\ \cite[XVII.3]{Cartan:1999}, though what we index as the first spectral sequence is indexed there as the second and vice versa. The reader can check that the filtrations on $\Hom_{\bsp}(P,Q)$ that give rise to \eqref{eq:firsthypercohomology} and \eqref{eq:secondhypercohomology} are compatible with the right action of $\Hom_{\bsp}(P,P)$, and thus \eqref{eq:firsthypercohomology} and \eqref{eq:secondhypercohomology} become spectral sequences of right $\Ext_{\bsp}^\bullet(A,A)$-modules. In particular, the right action on the $E_1$-page of \eqref{eq:firsthypercohomology} and the right action on the $E_2$-page of \eqref{eq:secondhypercohomology} identify with the corresponding Yoneda products defined in Section \ref{subsec:cohomology}.

\begin{remark} \label{rem:restriction}
By general abstract nonsense, restriction from $\bsp$ to $\cp$ extends for each $F,G \in \bsp$ to a linear map $\Ext_{\bsp}^\bullet(F,G) \rightarrow \Ext_{\cp}^\bullet(F|_{\bsvzero},G|_{\bsvzero})$ that is compatible with Yoneda products. More generally, if $A \in \bsp$ and if $C$ is a cochain complex in $\bsp_{\ev}$, then restriction from $\bsp$ to $\cp$ induces a linear map $\bbExt_{\bsp}^\bullet(A,C) \rightarrow \bbExt_{\cp}^\bullet(A|_{\bsvzero},C|_{\bsvzero})$ on hypercohomology groups.
\end{remark}

\section{The Yoneda algebra \texorpdfstring{$\Ext_{\bsp}^\bullet(\bsir,\bsir)$}{Ext(Ir,Ir)}} \label{sec:yonedaalgebra}

Our goal in this section is to describe the structure of the Yoneda algebra $\Ext_{\bsp}^\bullet(\bsir,\bsir)$. Since $\bsir = \bsirzero \oplus \bsirone$, it follows that $\Ext_{\bsp}^\bullet(\bsir,\bsir)$ is isomorphic as an algebra to the matrix ring \eqref{eq:matrixring}. Thus, it suffices to describe each of the components of the matrix ring and to describe the possible products between them. Our strategy is based on the inductive approach in \cite{Friedlander:1997} (in turn based on that in \cite{Franjou:1994}) using hypercohomology spectral sequences.

\subsection{The super de Rham complex}

Set $\bso = \bss \otimes \bsa$, and recall from Section \ref{subsec:Palgebras} that $\bso$ inherits from $\bss$ and $\bsa$ the structure of a $\bsp$-algebra. Given $i,n \in \N$ with $i \leq n$, define $\bso_n^i$ be the subfunctor $\bss^{n-i} \otimes \bsa^i$ of $\bso$. Following \cite[\S4]{Friedlander:1997}, we call $i$ the cohomological degree and $n$ the total degree of $\bso_n^i$. Now set $\bso_n = \bigoplus_{i=0}^n \bso_n^i$ and $\bso^i = \bigoplus_{n=0}^i \bso_n^i$. Then the cohomological grading $\bso = \bigoplus_{i \in \N} \bso^i$ makes $\bso$ into a graded-commutative graded $\bsp$-algebra.

The product and coproduct maps\footnote{Recall the discussion immediately preceding Remark \ref{rem:lambda}.} on $\bss$ and $\bsa$ induce natural transformations
\begin{align*}
\bsd &: \bso_n^i = \bss^{n-i} \otimes \bsa^i \rightarrow \bss^{n-i-1} \otimes \bsi \otimes \bsa^i \rightarrow \bss^{n-i-1} \otimes \bsa^{i+1} = \bso_n^{i+1}, \quad \text{and} \\
\bsk &: \bso_n^i = \bss^{n-i} \otimes \bsa^i \rightarrow \bss^{n-i} \otimes \bsi \otimes \bsa^{i-1} \rightarrow \bss^{n-i+1} \otimes \bsa^{i-1} = \bso_n^{i-1}.
\end{align*}
One can check for each $V \in \bsv$ that $\bsd(V)$ and $\bsk(V)$ each make $\bso(V)$ into a differential graded superalgebra, and hence that $\bsd$ and $\bsk$ are differentials. We refer to the resulting complexes $(\bso,\bsd)$ and $\bs{Kz}:=(\bso,\bsk)$ as the \emph{super de Rham complex} and the \emph{super Koszul complex}. On $\bsvzero$, $(\bso,\bsd)$ and $(\bso,\bsk)$ restrict to the ordinary de Rham complex $(\Omega,d)$ and the ordinary Koszul complex $Kz := (\Omega,\kappa)$, respectively; cf.\ \cite{Franjou:1994,Friedlander:1997}. On $\bsvone$, $(\bso,\bsd)$ and $(\bso,\bsk)$ restrict to the dual de Rham complex $(\Omega^\#,d^\#)$ and the dual Koszul complex $Kz^\# = (\Omega^\#,\kappa^\#)$. Specifically, $\bso_n^i|_{\bsvone} \cong (\Omega_n^{n-i})^\#$. If $V$ is an abelian Lie superalgebra, then $(\bso(V),\bsk(V))$ identifies with the Koszul resolution of $V$ as studied in \cite[\S3.1]{Drupieski:2013b}. In particular, $(\bso,\bsk)$ is acyclic.

\begin{lemma} \label{lem:homotopy}
On $\bso_n$, the transformation $\bsd \circ \bsk + \bsk \circ \bsd$ acts as multiplication by $n$.
\end{lemma}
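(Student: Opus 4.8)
The plan is to recognize $\bsd\circ\bsk+\bsk\circ\bsd$ as a derivation of the $\bsp$-algebra $\bso$ and to check the identity on algebra generators. Fix $V\in\bsv$ and set $h=\bsd(V)\circ\bsk(V)+\bsk(V)\circ\bsd(V)$, an endomorphism of $\bso(V)$. Since $\bsd(V)$ and $\bsk(V)$ each make $\bso(V)$ into a differential graded superalgebra, $\bsd$ and $\bsk$ are antiderivations of cohomological degrees $+1$ and $-1$ with respect to the graded-commutative product on $\bso(V)$; one checks in addition that both preserve the internal $\Z_2$-grading, so that the super signs do not interfere with the graded-commutator formalism for the cohomological grading. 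Hence $h$, being the graded commutator of $\bsd$ and $\bsk$, is a derivation of $\bso(V)$ of cohomological degree $0$. Both $\bsd$ and $\bsk$ also preserve the total grading $\bso=\bigoplus_n\bso_n$, so $h$ restricts to a derivation preserving each $\bso_n(V)$. Since the operator $E$ defined by $E|_{\bso_n(V)}=n\cdot\id$ is the Euler derivation for the total grading, $h-E$ is a derivation, and it suffices to prove $h=E$ on a set of $k$-algebra generators of $\bso(V)$.

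For the generators I would use the descriptions of $\bss$ and $\bsa$ from Section~\ref{subsec:examples}: the algebra $\bso(V)=\bss(V)\otimes\bsa(V)$ is generated by $\bss^1(V)=V\subseteq\bso_1^0(V)$, by $\bsa^1(V)=V\subseteq\bso_1^1(V)$, and by the divided powers $\gamma_{p^e}(y)\in\bso_{p^e}^{p^e}(V)$ for $y\in\Vone$ and $e\geq 1$. On the first two families $n=1$, and the verification reduces to the base identities $\bsk\circ\bsd=\id$ on $\bso_1^0(V)$ (here $\bsk$ vanishes on $\bso_1^0$ for degree reasons, so $h=\bsk\bsd$) and $\bsd\circ\bsk=\id$ on $\bso_1^1(V)$ (here $\bso_1^2=0$, so $\bsd$ vanishes on $\bso_1^1$ and $h=\bsd\bsk$); each follows by unwinding the coproduct/product definitions of $\bsd$ and $\bsk$ in total degree $1$. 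On the third family, $\bso_{p^e}^{p^e+1}=0$ forces $\bsd(\gamma_{p^e}(y))=0$, so $h(\gamma_{p^e}(y))=\bsd\bsk(\gamma_{p^e}(y))$; unwinding $\bsk$ via the coproduct of $\bsa$ gives $\bsk(\gamma_{p^e}(y))=y\otimes\gamma_{p^e-1}(y)\in\bso_{p^e}^{p^e-1}(V)$, and then the antiderivation property, the vanishing $\bsd(\gamma_{p^e-1}(y))=0$, and the divided-power relation $\gamma_1(y)\gamma_{p^e-1}(y)=\binom{p^e}{1}\gamma_{p^e}(y)$ give $h(\gamma_{p^e}(y))=p^e\,\gamma_{p^e}(y)=E(\gamma_{p^e}(y))$.

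The routine part is thus these three short low-degree computations. The step needing the most care is the third family of generators: the divided powers $\gamma_{p^e}(y)$ with $y$ odd are generators of $\bsa(V)$ precisely because $\operatorname{char}k=p>0$, and both $h$ and $E$ send $\gamma_{p^e}(y)$ to $p^e\gamma_{p^e}(y)$, which is zero in characteristic $p$ --- so the identity holds there, but one must not forget to include these generators when invoking the derivation argument. An alternative that sidesteps generators would be to check that $\bsd$ and $\bsk$ are compatible with the exponential isomorphism $\bso(V\oplus W)\cong\bso(V)\otimes\bso(W)$, reducing to $\dim V=1$ (so $V=k$ or $V=k^{0|1}$), where $h=n\cdot\id$ on $\bso_n(V)$ is immediate from the explicit two-term bases of $\bso_n(V)$; but since the antiderivation property of $\bsd$ and $\bsk$ is already available, I would present the generator argument as the main proof.
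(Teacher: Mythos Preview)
Your proposal is correct and takes essentially the same approach as the paper: both recognize that $\bsd\circ\bsk+\bsk\circ\bsd$ is an (ordinary) derivation of $\bso(V)$ because $\bsd$ and $\bsk$ are each DG-superalgebra differentials, and then reduce to checking on algebra generators. The paper's proof simply says the verification on generators is ``easily verified,'' whereas you spell it out---including the point that the divided powers $\gamma_{p^e}(y)$ for odd $y$ must be included among the generators of $\bsa(V)$ in positive characteristic (where the identity holds trivially since both sides vanish).
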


\begin{proof}
Let $V \in \bsv$, and set $f = \bsd(V) \circ \bsk(V) + \bsk(V) \circ \bsd(V)$. Since $\bsd(V)$ and $\bsk(V)$ each make $\bso(V)$ into a differential graded algebra, it follows that $f$ acts as an ordinary algebra derivation on $\bso(V)$, i.e., $f(ab) = f(a) \cdot b + a \cdot f(b)$ for all $a,b \in \bso(V)$. Then it suffices to consider the action of $f$ on a set of algebra generators for $\bso(V) = \bss(V) \otimes \bsa(V)$, where the conclusion is easily verified. 
\end{proof}

Since the de Rham differential $\bsd$ is a derivation, it follows that the cohomology $\Hbul(\bso)$ of $\bso$ with respect to $\bsd$ inherits the structure of a graded-commutative graded $\bsp$-algebra. Since $\bsd$ respects the total degree, one gets $\Hbul(\bso) = \bigoplus_{n \in \N} \Hbul(\bso_n)$, and Lemma \ref{lem:homotopy} implies that $\Hbul(\bso_n) \neq 0$ only if $p \mid n$. The next theorem is a super analogue of the Cartier isomorphism; see \cite{Cartier:1957} or \cite[Theorem 4.1]{Friedlander:1997}. Unlike its ordinary analgoue, the super Cartier isomorphism does not preserve the cohomological degree.

\begin{theorem} \label{thm:Cartieriso}
There exists a $\bsp$-algebra isomorphism $\theta: \bso^{(1)} \simrightarrow \Hbul(\bso)$, which we call the super Cartier isomorphism, that restricts for each $n \in \N$ to an isomorphism ${\bso_n}^{(1)} \cong \Hbul(\bso_{pn})$.
\end{theorem}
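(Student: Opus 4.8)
\textit{Strategy.} The plan is to write down the super Cartier map $\theta$ explicitly as a homomorphism of $\bsp$-algebras, by prescribing it on a set of algebra generators of $\bso^{(1)}$, and then to prove it is an isomorphism by reducing, via the exponential property, to a computation on one-dimensional superspaces. For the structural inputs: since $\bss$ and $\bsa$ are exponential superfunctors, so is $\bso=\bss\otimes\bsa$; moreover the de~Rham differential $\bsd$ is assembled from the product and coproduct maps of $\bss$ and $\bsa$, hence is compatible with the exponential isomorphism, and $\bso\boxtimes\bso\cong\bso\circ\bssigma$ is an isomorphism of complexes of strict polynomial bisuperfunctors respecting the total and cohomological gradings. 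Applying the K\"{u}nneth theorem pointwise, $\Hbul(\bso)$ inherits the structure of an exponential graded-commutative graded $\bsp$-algebra, and likewise $\bso^{(1)}=\bso\circ\bsi^{(1)}$ is exponential (using that $\bsi^{(1)}$ is additive). Recall finally that $\bso$ restricts on $\bsvzero$ to the classical de~Rham complex $(\Omega,d)$ — where we have the Cartier isomorphism $C\colon\Omega^{(1)}\simrightarrow\Hbul(\Omega)$ of \cite[Theorem~4.1]{Friedlander:1997} — and on $\bsvone$ to the dual de~Rham complex $(\Omega^\#,d^\#)$, whose cohomology is computed by the Kuhn dual $C^\#$ of $C$.

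\textit{Construction of $\theta$.} By Sections~\ref{subsubsec:dividedpowers} and~\ref{subsec:examples}, after precomposition with $\bsi^{(1)}$ the $\bsp$-algebra $\bso^{(1)}$ is generated by: the classes $v^{(1)}\in\bss^1$ and $\bsd v\in\bsa^1$ for $v$ in the even direction $\bsi_0^{(1)}$ of $\bsi^{(1)}$; the classes $u^{(1)}\in\bss^1$ for $u$ in the odd direction $\bsi_1^{(1)}$; and the divided powers $\gamma_{p^e}(u^{(1)})\in\bsa^{p^e}$ for $u$ in the odd direction and $e\geq0$ (here $\bsd v$ denotes the image of $v$ under the natural transformation $\bsd\colon\bss^1\to\bsa^1$). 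I then let $\theta$ be the $\bsp$-algebra homomorphism determined on these generators by
\[
v^{(1)}\mapsto[v^p],\qquad \bsd v\mapsto[v^{p-1}\,\bsd v],\qquad u^{(1)}\mapsto[u\cdot\gamma_{p-1}(\bsd u)],\qquad \gamma_{p^e}(u^{(1)})\mapsto[\gamma_{p^{e+1}}(\bsd u)].
\]
Each right-hand side is visibly a $\bsd$-cocycle, as the relevant monomial either sits in top total/cohomological degree or is killed by $\bsd$ because $(\bsd v)^2=0$ in $\bsa$, $u^2=0$ in $\bss$, and $\binom{p}{1}\equiv0$; note that, unlike the classical case, $\theta$ raises the cohomological degree on the odd generators. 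The verification that these formulas are additive and scale correctly under the $p^r$-power twist — hence assemble into a well-defined morphism in $\bsp$ — is a routine (if tiresome) extension of the classical Cartier computation: for $v^{(1)}$ one uses $(v+w)^p=v^p+w^p$; for the odd generators one uses the divided-power identity $\gamma_n(a+b)=\sum_i\gamma_i(a)\gamma_{n-i}(b)$ together with explicit coboundaries correcting the cross terms, and the Lucas congruences $\binom{p^e+p^{e'}}{p^e}\equiv\binom{p^{e+1}+p^{e'+1}}{p^{e+1}}\pmod p$ to see that the divided-power relations are preserved. One checks moreover that $\theta$ sends the total-degree-$n$ summand $\bso_n^{(1)}$ into $\Hbul(\bso_{pn})$, so it restricts to a morphism ${\bso_n}^{(1)}\to\Hbul(\bso_{pn})$ for each $n$.

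\textit{$\theta$ is an isomorphism.} Since $\theta$ is a homomorphism of exponential $\bsp$-algebras it is compatible with the exponential structure maps, hence it is an isomorphism on $V\oplus W$ once it is one on $V$ and on $W$; so it suffices to check $\theta$ on $k^{1|0}$ and $k^{0|1}$, after which Theorem~\ref{thm:enoughprojectives} (evaluation on $k^{m|n}$, $m,n$ large, is an equivalence of categories) promotes this to an isomorphism in $\bsp$, and hence in each degree $\bso_n^{(1)}\simrightarrow\Hbul(\bso_{pn})$. On $k^{1|0}$ the restriction of $\theta$ to $\bsvzero$ is exactly the classical Cartier map, an isomorphism by \cite[Theorem~4.1]{Friedlander:1997}. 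On $k^{0|1}$ one computes directly: in one odd variable $\bso_N(k^{0|1})$ is the two-term complex $\bss^1\otimes\bsa^{N-1}\xrightarrow{\bsd}\bsa^N$ with differential equal to multiplication by $N$, so $\Hbul(\bso_{pm}(k^{0|1}))$ is two-dimensional, spanned by the classes of $u\,\gamma_{pm-1}(\bsd u)$ and $\gamma_{pm}(\bsd u)$, onto which $\theta$ visibly carries the corresponding basis of $\bso_m^{(1)}(k^{0|1})$ (equivalently, the restriction of $\theta$ to $\bsvone$ is the inverse of the Kuhn dual $C^\#$, under the identification $\bso|_{\bsvone}\cong\Omega^\#$). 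The main obstacle is the construction step rather than the verification of bijectivity: the pointwise identification $\bso^{(1)}(V)\cong\Hbul(\bso(V))$ falls out at once from K\"{u}nneth and ordinary Cartier, but only through the non-functorial splitting $V=\Vzero\oplus\Vone$, so the real content is producing a genuinely functorial comparison map — the generators-and-relations recipe above — and checking it is well defined.
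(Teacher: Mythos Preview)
Your approach is the paper's approach: write down $\theta(V)$ on basis-dependent algebra generators of $\bso^{(1)}(V)$ by exactly the formulas you give (these coincide with the paper's), reduce bijectivity to the one-dimensional cases via the exponential property and K\"unneth, and then verify that the family $\theta(V)$ is natural. Where you fall short is in the last step. You correctly identify that ``the real content is producing a genuinely functorial comparison map,'' but then you dispatch the verification as ``additive and scale correctly under the $p^r$-power twist --- hence assemble into a well-defined morphism in $\bsp$.'' That is not what naturality in $\bsp$ means: a morphism of degree-$pn$ strict polynomial superfunctors must commute with \emph{every} $\phi\in\bsg^{pn}\Hom_k(V,W)$, not just with direct-sum inclusions and scalars. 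Additivity-up-to-coboundary and correct scalar behaviour are necessary but not sufficient.

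The paper spends most of its proof on exactly this point: it fixes bases for $V$ and $W$, lets $\phi=\prod_{i,j}\gamma_{a_{i,j}}(e_{i,j})$ range over the resulting basis of $\bsg^{pn}\Hom_k(V,W)$, and computes both sides of the naturality square on each generator of $\bso^{(1)}(V)$. The even generators go through by the familiar multinomial-coefficient argument. The odd generator $u^{(1)}\mapsto[u\cdot\gamma_{p-1}(\bsd u)]$ is the delicate one; the paper handles it by rewriting $u\cdot\gamma_{p-1}(\bsd u)=\bsk(\gamma_p(\bsd u))$ and using Lemma~\ref{lem:homotopy} (that $\bsd\bsk+\bsk\bsd$ is multiplication by the total degree) to see that the cross terms arising from a general $\phi$ land in the image of $\bsd$. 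Your phrase ``divided-power identity together with explicit coboundaries correcting the cross terms'' gestures at this, but does not do it. Finally, the appeal to Theorem~\ref{thm:enoughprojectives} is superfluous: once $\theta$ is a natural transformation and the exponential reduction shows $\theta(V)$ is bijective for every $V$, you already have an isomorphism in $\bsp$.
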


\begin{proof}
First we show for each $V \in \bsv$ that $\bso(V^{(1)}) \cong \Hbul(\bso(V))$ as superalgebras. Then we show that this family of isomorphisms lifts to an isomorphism of strict polynomial superfunctors. Recall from Section \ref{subsec:Palgebras} that $\bso$ is an exponential superfunctor. For $V,W \in \bsv$, the exponential isomorphism $\bso(V \oplus W) \cong \bso(V) \otimes \bso(W)$ defines an isomorphism of complexes between $\bso(V \oplus W)$ and the tensor product of complexes $\bso(V) \otimes \bso(W)$. Then to prove that $\bso(V^{(1)}) \cong \Hbul(\bso(V))$ as superspaces, it suffices by the K\"{u}nneth theorem to assume that $V$ is one-dimensional.

First suppose $V = k$, and let $v \in V$ be nonzero. Then $\bso_n^0(V) = \bss^n(V) \otimes \bsa^0(V)$ is spanned by $v^n \otimes 1$, $\bso_n^1(V) = \bss^{n-1}(V) \otimes \bsa^1(V)$ is spanned by $v^{n-1} \otimes v$, and $\bso_n^i(V) = 0$ otherwise. Now $\bsd(v^n \otimes 1) = n \cdot (v^{n-1} \otimes v)$, so $\opH^i(\bso_n(V))$ is one-dimensional and spanned by the class of $v^n \otimes 1$ if $i = 0$ and $p \mid n$, is one-dimensional and spanned by the class of $v^{n-1} \otimes v$ if $i=1$ and $p \mid n$, and is zero otherwise. Then $\Hbul(\bso_{pn}(V)) \cong \bso_n(V^{(1)})$ as superspaces.

Next suppose $V = \Pi(k)$, and let $v \in V$ be nonzero. Then $\bso_n^n(V) = \bss^0(V) \otimes \bsa^n(V)$ is spanned by $1 \otimes \gamma_n(v)$, $\bso_n^{n-1}(V) = \bss^1(V) \otimes \bsa^{n-1}(V)$ is spanned by $v \otimes \gamma_{n-1}(v)$, and $\bso_n^i(V) = 0$ otherwise. Now $\bsd(v \otimes \gamma_{n-1}(v)) = n \cdot (1 \otimes \gamma_n(v))$, so $\opH^i(\bso(V))$ is one-dimensional and spanned by the class of $v \otimes \gamma_{n-1}(v)$ if $i=n-1$ and $p \mid n$, is one-dimensional and spanned by the class of $1 \otimes \gamma_n(v)$ if $i=n$ and $p \mid n$, and is zero otherwise. Then $\Hbul(\bso_{pn}(V)) \cong \bso_n(V^{(1)})$ as superspaces.

Now let $V \in \bsv$ be arbitrary. Let $\{ v_1,\ldots,v_\ell \}$ be a homogeneous basis for $V$, and let $\{ v_1',\ldots,v_\ell' \}$ be the same set but considered as a basis for $V^{(1)}$. From the previous two paragraphs, it follows not only that $\bso(V^{(1)}) \cong \Hbul(\bso(V))$ as superspaces, but that $\Hbul(\bso(V))$ is generated as a superalgebra by the cohomology classes of $v_i^p \otimes 1$ and $v_i^{p-1} \otimes v_i$ for $\ol{v}_i = \ol{0}$, and the classes of $v_i \otimes \gamma_{p-1}(v_i)$ and $1 \otimes \gamma_{p^e}(v_i)$ for $\ol{v}_i = \ol{1}$ and $e \geq 1$. These elements generate a subalgebra of $\bso(V)$ isomorphic to $\bso(V^{(1)})$. Explicitly, there exists an injective algebra homomorphism $\theta(V): \bso(V^{(1)}) \hookrightarrow \bso(V)$ satisfying
\begin{equation} \label{eq:Cartieriso}
\left. \begin{aligned} v_i' \otimes 1 &\mapsto v_i^p \otimes 1 \\ 1 \otimes v_i' &\mapsto v_i^{p-1} \otimes v_i \end{aligned} \right\rbrace \text{for $\ol{v}_i = \ol{0}$,} \quad
\left. \begin{aligned} v_i' \otimes 1 &\mapsto v_i \otimes \gamma_{p-1}(v_i) \\ 1 \otimes \gamma_{p^e}(v_i') &\mapsto 1 \otimes \gamma_{p^{e+1}}(v_i) \end{aligned} \right\rbrace \text{for $\ol{v}_i = \ol{1}$ and $e \geq 0$.}
\end{equation}
Then $\theta(V)$ induces an isomorphism of superalgebras $\bso(V^{(1)}) \cong \Hbul(\bso(V))$, which by abuse of notation we also denote by $\theta(V)$. Using the relations in the divided power algebra and the fact that $\lambda^p = \lambda$ for all $\lambda \in \F_p \subseteq k$, one can check that $\theta(V)(1 \otimes \gamma_n(v_i')) = 1 \otimes \gamma_{pn}(v_i)$ for all $n \in \N$.

Set $F = \Hbul(\bso) \in \bsp$. Now we show that the isomorphism $\theta(V): \bso(V^{(1)}) \rightarrow F(V)$ lifts to an isomorphism of strict polynomial superfunctors. To do this, we must show for each $V,W \in \bsv$, each $\phi \in \bsg^{pn} \Hom_k(V,W)$, and each $z \in \bso_n(V^{(1)})$ that
\begin{equation} \label{eq:Cartiercompatible}
[\theta(W) \circ \bso^{(1)}(\phi)](z) = [F(\phi) \circ \theta(V)](z).
\end{equation}
Since the multiplication morphisms for $\bso^{(1)}$ and $\Hbul(\bso)$ are natural transformations, it suffices to verify \eqref{eq:Cartiercompatible} as $z$ ranges over a set of generators of the algebra $\bso(V^{(1)})$. By linearity, it also suffices to verify \eqref{eq:Cartiercompatible} as $\phi$ ranges over a basis for $\bsg^{pn} \Hom_k(V,W)$. We will also find it convenient not to consider $F(\phi)$ directly, but to consider the function $\bso(\phi)$ that induces $F(\phi)$.

Let $\{ w_1,\ldots,w_m \}$ be a homogeneous basis for $W$, and let $\{ w_1',\ldots,w_m' \}$ be the same set but considered as a basis for $W^{(1)}$. For each $1 \leq i \leq \ell$ and $1 \leq j \leq m$, let $e_{i,j} \in \Hom_k(V,W)$ be the ``matrix unit'' satisfying $e_{i,j}(v_a) = \delta_{j,a} w_i$, and let $e_{i,j}'$ be the corresponding element of $\Hom_k(V^{(1)},W^{(1)})$. Then the $e_{i,j}$ form a homogeneous basis for $\Hom_k(V,W)$. Consequently, $\bsg (\Hom_k(V,W))$ admits a basis consisting of all monomials $\prod_{i,j} \gamma_{a_{i,j}}(e_{i,j})$ (the products taken, say, in the lexicographic order) with $a_{i,j} \in \N$ and $a_{i,j} \leq 1$ if $\ol{e_{i,j}} = \ol{1}$. Of course, a similar statement holds for $\bsg (\Hom_k(V^{(1)},W^{(1)}))$. Now to verify \eqref{eq:Cartiercompatible}, we can assume that $\phi = \prod_{i,j} \gamma_{a_{i,j}}(e_{i,j})$ for some $a_{i,j} \in \N$ with $a_{i,j} \leq 1$ if $\ol{e_{i,j}} = \ol{1}$, and that $z$ is one of the generators for $\bso(V^{(1)})$ appearing in \eqref{eq:Cartieriso}. 

First we consider the expression $[\theta(W) \circ \bso^{(1)}(\phi)](z)$. Since $(\bso^{(1)})(\phi) = \bso(\varphi^\#(\phi))$, where $\varphi^\#$ is the dual Frobenius morphism described in \eqref{eq:dualFrobenius}, it follows that $\varphi^\#(\phi) = 0$ if $p \nmid a_{i,j}$ for some $a_{i,j}$, and $\varphi^\#(\phi) = \prod_{i,j} \gamma_{a_{i,j}/p}(e_{i,j}')$ otherwise. So suppose $p \mid a_{i,j}$ for each $a_{i,j}$; say $a_{i,j} = pb_{i,j}$. If $\ol{v}_n' = \ol{0}$ and $z = v_n' \otimes 1 \in \bso_1(V^{(1)})$, then by assumption $\sum_{i,j} a_{i,j} = p$, so $\phi = \gamma_{p}(e_{i,j})$ for some $i,j$. Then
\begin{align*}
[\theta(W) \circ \bso^{(1)}(\gamma_p(e_{i,j}))](v_n' \otimes 1) &= \delta_{j,n} \cdot \theta(W)(w_i' \otimes 1) = \delta_{j,n} (w_i^p \otimes 1), & \text{and similarly,} \\
[\theta(W) \circ \bso^{(1)}(\gamma_p(e_{i,j}))](1 \otimes v_n') &= \delta_{j,n} (w_i^{p-1} \otimes w_i) & \text{if $\ol{v}_n' = \ol{0}$, and} \\
[\theta(W) \circ \bso^{(1)}(\gamma_p(e_{i,j}))](v_n' \otimes 1) &= \delta_{j,n} (w_i \otimes \gamma_{p-1}(w_i)) & \text{if $\ol{v}_n' = \ol{1}$.}
\end{align*}
Now suppose $\ol{v}_n' = \ol{1}$ and $z = 1 \otimes \gamma_{p^e}(v_n') \in \bso_{p^e}(V^{(1)})$. Then $\sum_{i,j} a_{i,j} = p^{e+1}$, and
\begin{align*}
[\theta(W) \circ \bso^{(1)}(\phi)](z) &= \textstyle [\theta(W) \circ \bso(\prod_{i,j} \gamma_{b_{i,j}}(e_{i,j}'))](1 \otimes \gamma_{p^e}(v_n')) \\
&= \textstyle \theta(W) \left( \prod_{b_{i,j} \neq 0} [\delta_{j,n} \cdot (1 \otimes \gamma_{b_{i,j}}(w_i'))] \right) \\
&= \textstyle \prod_{a_{i,j} \neq 0} [\delta_{j,n} \cdot (1 \otimes \gamma_{a_{i,j}}(w_i))].
\end{align*}

Next we consider the expression $[\bso(\phi) \circ \theta(V)](z)$. Set $c = \sum_{i,j} a_{i,j}$, let $H$ be the Young subgroup $\prod_{i,j} \fS_{a_{i,j}}$ of $\fS_c$, and let $J \subset \fS_c$ be a set of right coset representatives for $H$. Then as in \cite[IV.5.3]{Bourbaki:2003}, we can write $\phi = \prod_{i,j} \gamma_{a_{i,j}}(e_{i,j})$ in the form
\begin{equation} \label{eq:phiorbitsum} \textstyle \textstyle
\phi = \sum_{\sigma \in J} \left( \bigotimes_{i,j} [(e_{i,j})^{\otimes a_{i,j}}] \right).\sigma,
\end{equation}
where the factors in the tensor product are taken in the lexicographic order. Now suppose $\ol{v}_n' = \ol{0}$ and $z = v_n' \otimes 1$. Then $\sum_{i,j} a_{i,j} = p$, and $[\bso(\phi) \circ \theta(V)](v_n' \otimes 1) = \bss^p(\phi)(v_n^p) \otimes 1$. From \eqref{eq:phiorbitsum} it follows that $\bss^p(\phi)(v_n^p) = 0$ unless $a_{i,j} =0$ for all $j \neq n$. So suppose $a_{i,j} = 0$ for all $j \neq n$. Then
\[ \textstyle
\bss^p(\phi)(v_n^p) = \binom{p}{a_{1,n},a_{2,n},\ldots,a_{\ell,n}} \left(\prod_{i=1}^\ell w_i^{a_{i,n}} \right)\in \bss^p(W).
\]
Since $k$ is a field of characteristic $p$, the multinomial coefficient in this expression is equal to zero if $a_{i,n} < p$ for some $i$. Then $[\bso(\phi) \circ \theta(V)](v_n' \otimes 1) = 0$ unless $\phi = \gamma_p(e_{i,j})$ for some $i$ and $j$, and in this case one has $[\bso(\gamma_p(e_{i,j})) \circ \theta(V)](v_n' \otimes 1) = \delta_{j,n}(w_i^p \otimes 1)$. A similar analysis shows that $[\bso(\gamma_p(e_{i,j})) \circ \theta(V)](1 \otimes v_n') = \delta_{j,n}(w_i^{p-1} \otimes w_i)$, but that if $\phi = \prod_{i,j} \gamma_{a_{i,j}}(e_{i,j})$ and $a_{i,j} \neq 0$ for $j \neq n$, or if $a_{i,j} \neq 0$ for more than one factor in the product, then $[\bso(\phi) \circ \theta(V)](1 \otimes v_n')$ is either equal to zero or (assuming $a_{i,j} = 0$ for $j \neq n$) is equal to the coboundary of $(\prod_{1 \leq i \leq m} w_i^{a_{i,n}}) \otimes 1$.

Now suppose $\ol{v}_n' = \ol{1}$, $z = 1 \otimes \gamma_{p^e}(v_n')$, and $\phi = \prod_{i,j} \gamma_{a_{i,j}}(e_{i,j})$. Then
\[ \textstyle
[\bso(\phi) \circ \theta(V)](1 \otimes \gamma_{p^e}(v_n')) = \bso(\phi)(1 \otimes \gamma_{p^{e+1}}(v_n)) = \prod_{a_{i,j} \neq 0} [\delta_{j,n} \cdot (1 \otimes \gamma_{a_{i,j}}(w_i))].
\]
The analysis of $\bso(V)$ from the one-dimensional case shows that the last expression in this equation is a coboundary unless $p \mid a_{i,j}$ for all $a_{i,j}$. Finally, suppose $\ol{v}_n' = \ol{1}$ and $z = v_n' \otimes 1$. Then
\begin{align*}
[\bso(\phi) \circ \theta(V)](v_n' \otimes 1) &= \bso(\phi)(v_n \otimes \gamma_{p-1}(v_n)) \\
&= [\bso(\phi) \circ \bsk(V)](1 \otimes \gamma_p(v_n)) \\
&= [\bsk(W) \circ \bso(\phi)](1 \otimes \gamma_p(v_n)).
\end{align*}
Now $\bso(\phi)(1 \otimes \gamma_p(v_n)) = \prod_{a_{i,j} \neq 0} [\delta_{j,n} (1 \otimes \gamma_{a_{i,j}}(w_i))]$, and this expression is a coboundary in $\bso_p(W)$ unless $p \mid a_{i,j}$ for all $a_{i,j}$. But $\bsk(W) \circ \bsd(W) = - \bsd(W) \circ \bsk(W)$ on $\bso_p(W)$ by Lemma \ref{lem:homotopy}, so it follows that $[\bso(\phi) \circ \theta(V)](v_n' \otimes 1)$ is a coboundary unless $p \mid a_{i,j}$ for all $a_{i,j}$, i.e., unless $\phi = \gamma_p(e_{i,j})$ for some $i$ and $j$. On the other hand, if $\phi = \gamma_p(e_{i,j})$, then $[\bso(\phi) \circ \theta(V)](v_n' \otimes 1) = \delta_{j,n}(w_i \otimes \gamma_{p-1}(w_i))$.

Combining the observations of the previous four paragraphs, \eqref{eq:Cartiercompatible} then follows.
\end{proof}

\begin{remark} \label{remark:Cartierrestriction}
As a $\bsp$-algebra, $\bso^{(1)} \cong \bss^{(1)} \otimes \bsa^{(1)}$. Identifying $\Hbul(\bso_{pn})$ with $\bso_n^{(1)}$ via the super Cartier isomorphism, one gets, in the notation of \eqref{eq:twistcomponents},
\[
\opH^t(\bso_{pn}) = \bigoplus_{\substack{a+b+c+d=n \\ b(p-1)+c+pd = t}} (S_0^{a(1)} \otimes \Lambda_1^{b(1)}) \otimes (\Lambda_0^{c(1)} \gotimes \Gamma_1^{d(1)}).
\]
\end{remark}

\subsection{The super Koszul kernel subcomplex} \label{subsec:koszulkernel}

Given $i,n \in \N$ with $i \leq n$, set
\[
\bsK_n^i = \ker \{ \bsk: \bso_n^i \rightarrow \bso_n^{i-1} \}.
\]
It follows from Lemma \ref{lem:homotopy} that $(\bsK_{pn},\bsd)$ is a subcomplex of $(\bso_{pn},\bsd)$. We call $(\bsK_{pn},\bsd)$ the \emph{Koszul kernel subcomplex} of $\bso_{pn}$. On $\bsvzero$, $\bsK_n^i$ restricts to the Koszul kernel subfunctor $K_n^i$ defined in \cite{Franjou:1994,Friedlander:1997}. In particular, $(\bsK_{pn},\bsd)$ restricts to the ordinary Koszul kernel subcomplex $(K_{pn},d)$. On $\bsvone$, $\bsK_n^i$ restricts to $(K_n^{n-i-1})^\#$, and $(\bsK_{pn},\bsd)$ restricts to $(K_{pn}^\#,d^\#)$.

Since $(\bso,\bsk)$ is acyclic, there exist for each $0 \leq i \leq n$ and $r \geq 1$  short exact sequences in $\bsp_\ev$
\begin{equation} \label{eq:superKoszulses}
\begin{gathered} 
0 \rightarrow \bsK_n^i \rightarrow \bso_n^i \stackrel{\bsk}{\rightarrow} \bsK_n^{i-1} \rightarrow 0, \\
0 \rightarrow \bsK_n^i \circ \bsi_0^{(r)} \rightarrow \bso_n^i \circ \bsi_0^{(r)} \stackrel{\bsk^{(r)}}{\rightarrow} \bsK_n^{i-1} \circ \bsi_0^{(r)} \rightarrow 0, \\
0 \rightarrow \bsK_n^i \circ \bsi_1^{(r)} \rightarrow \bso_n^i \circ \bsi_1^{(r)} \stackrel{\bsk^{(r)}}{\rightarrow} \bsK_n^{i-1} \circ \bsi_1^{(r)} \rightarrow 0.
\end{gathered}
\end{equation}
Since $\bsK_n^0 = \bso_n^0 \cong \bss^n$, $\bsK_n^n = 0$, and $\bso_n^n \cong \bsa^n$, the associated long exact sequences in cohomology together with Theorem \ref{thm:vanishing} then imply:

\begin{lemma} \label{lem:degreeshift}
Suppose $F \in \bsp$ is additive. Then for all $i,n,r \in \N$ with $i < n$ and $r \geq 1$,
\begin{gather}
\Ext_{\bsp}^s(F,\bss^n) \cong \Ext_{\bsp}^{s+i}(F,\bsK_n^i) \cong \Ext_{\bsp}^{s+n-1}(F,\bsa^n),  \label{eq:degreeshift} \\
\Ext_{\bsp}^s(F,S_0^{n(r)}) \cong \Ext_{\bsp}^{s+i}(F,\bsK_n^i \circ \bsi_0^{(r)}) \cong \Ext_{\bsp}^{s+n-1}(F,\Lambda_0^{n(r)}), \label{eq:evendegreeshift} \\
\Ext_{\bsp}^s(F,\Lambda_1^{n(r)}) \cong \Ext_{\bsp}^{s+i}(F,\bsK_n^i \circ \bsi_1^{(r)}) \cong \Ext_{\bsp}^{s+n-1}(F,\Gamma_1^{n(r)}). \label{eq:odddegreeshift}
\end{gather}
\end{lemma}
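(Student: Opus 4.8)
The plan is to deduce all six isomorphisms from the short exact sequences \eqref{eq:superKoszulses} by feeding them into long exact $\Ext$-sequences and using the vanishing theorem (Theorem \ref{thm:vanishing}) to kill the middle terms. The key observation is that for $1 \le i \le n-1$ the functor $\bso_n^i = \bss^{n-i} \otimes \bsa^i$ is the tensor product of the two homogeneous strict polynomial superfunctors $\bss^{n-i}$ and $\bsa^i$, both of positive degree since $n-i \ge 1$ and $i \ge 1$; hence Theorem \ref{thm:vanishing} gives $\Ext_{\bsp}^\bullet(F,\bso_n^i) = 0$ for every additive $F$. For the twisted cases I would use that precomposition distributes over internal tensor products, so that $\bso_n^i \circ \bsirzero \cong (\bss^{n-i} \circ \bsirzero) \otimes (\bsa^i \circ \bsirzero)$ and $\bso_n^i \circ \bsirone \cong (\bss^{n-i} \circ \bsirone) \otimes (\bsa^i \circ \bsirone)$; for $1 \le i \le n-1$ these again exhibit the functor in question as a tensor product of two positive-degree homogeneous superfunctors (of degrees $(n-i)p^r$ and $ip^r$), so Theorem \ref{thm:vanishing} yields $\Ext_{\bsp}^\bullet(F,\bso_n^i \circ \bsirzero) = \Ext_{\bsp}^\bullet(F,\bso_n^i \circ \bsirone) = 0$.

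Next, since the sequences in \eqref{eq:superKoszulses} are short exact in the abelian category $\bsp_\ev$, applying the derived functors $\Ext_{\bsp}^\bullet(F,-)\colon \bsp_\ev \to \fsvec_\ev$ produces long exact sequences. Whenever the middle term $\bso_n^j$ (or one of its twists) has vanishing $\Ext_{\bsp}^\bullet(F,-)$, that is, for $1 \le j \le n-1$, the connecting homomorphism becomes an isomorphism
\[
\Ext_{\bsp}^s(F,\bsK_n^{j-1}) \stackrel{\sim}{\longrightarrow} \Ext_{\bsp}^{s+1}(F,\bsK_n^j),
\]
and likewise with $\bsK_n^\bullet$ replaced throughout by $\bsK_n^\bullet \circ \bsirzero$ or by $\bsK_n^\bullet \circ \bsirone$.

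The final step is to concatenate these isomorphisms. Starting from $\bsK_n^0 = \bso_n^0 \cong \bss^n$ and composing the connecting isomorphisms over $j = 1,\dots,i$ (legitimate exactly because $i \le n-1$) gives $\Ext_{\bsp}^s(F,\bss^n) \cong \Ext_{\bsp}^{s+i}(F,\bsK_n^i)$, the left-hand isomorphism of \eqref{eq:degreeshift}. For the right-hand one, I would use that $\bsK_n^n = 0$ while $\bso_n^n \cong \bsa^n$, so the $i = n$ instance of \eqref{eq:superKoszulses} degenerates to an isomorphism $\bsa^n \cong \bsK_n^{n-1}$; combining this with the $i = n-1$ case of the chain gives $\Ext_{\bsp}^s(F,\bss^n) \cong \Ext_{\bsp}^{s+n-1}(F,\bsK_n^{n-1}) \cong \Ext_{\bsp}^{s+n-1}(F,\bsa^n)$. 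The twisted statements \eqref{eq:evendegreeshift} and \eqref{eq:odddegreeshift} then follow by running the identical argument along the twisted Koszul kernel complexes, using the identifications $\bsK_n^0 \circ \bsirzero \cong S_0^{n(r)}$, $\bsa^n \circ \bsirzero = \Lambda_0^{n(r)}$, $\bsK_n^0 \circ \bsirone \cong \Lambda_1^{n(r)}$, and $\bsa^n \circ \bsirone = \Gamma_1^{n(r)}$ coming from \eqref{eq:twistcomponents}.

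I do not anticipate any serious obstacle: this is routine homological bookkeeping once Theorem \ref{thm:vanishing} is available. The only points requiring a moment's care are confirming that the index range over which the connecting map is an isomorphism is precisely $1 \le j \le n-1$ (matching the hypothesis $i < n$), and checking the degenerate case $n = 1$, in which the chain is empty and both claimed isomorphisms are tautologies since $\bss^1 = \bsa^1 = \bsi$.
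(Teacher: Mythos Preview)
Your proposal is correct and is essentially the same argument the paper gives: use the short exact sequences \eqref{eq:superKoszulses}, kill the middle terms for $1 \le j \le n-1$ via Theorem \ref{thm:vanishing}, and iterate the resulting connecting isomorphisms, using $\bsK_n^0 \cong \bss^n$, $\bsK_n^n = 0$, and $\bso_n^n \cong \bsa^n$ at the ends. The paper compresses this into a single sentence before the lemma statement; your version simply spells out the details.
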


\begin{corollary} \label{cor:degreeshift}
Suppose $F \in \bsp$ is additive. Then for all $n,r \in \N$ with $r \geq 1$,
\begin{align*}
\Ext_{\bsp}^s(F,S_1^{n(r)}) &\cong \Ext_{\bsp}^{s+n-1}(F,\Lambda_1^{n(r)}) \quad \text{and} \\
\Ext_{\bsp}^s(F,\Lambda_0^{n(r)}) &\cong \Ext_{\bsp}^{s+n-1}(F,\Gamma_0^{n(r)}).
\end{align*}
\end{corollary}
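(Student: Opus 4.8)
The plan is to run the argument of Lemma~\ref{lem:degreeshift} with the super Koszul complex $(\bso,\bsk)$ replaced by its Kuhn dual. Concretely, I would first apply the exact contravariant functor $(-)^\#$ of Section~\ref{subsubsec:dualityiso} to the untwisted short exact sequences $0 \rightarrow \bsK_n^i \rightarrow \bso_n^i \stackrel{\bsk}{\rightarrow} \bsK_n^{i-1} \rightarrow 0$ of \eqref{eq:superKoszulses}. Since $\bso_n^i = \bss^{n-i} \otimes \bsa^i$, since $\bss^\# \cong \bsg$ and $\bsa^\# \cong \bsl$ (Section~\ref{subsec:duality}), and since duality commutes with tensor products, one has $(\bso_n^i)^\# \cong \bsg^{n-i} \otimes \bsl^i$, so dualizing yields short exact sequences in $\bsp_\ev$
\[
0 \rightarrow (\bsK_n^{i-1})^\# \rightarrow \bsg^{n-i} \otimes \bsl^i \rightarrow (\bsK_n^i)^\# \rightarrow 0 .
\]
Precomposing with the exact endofunctors $(-) \circ \bsi_0^{(r)}$ and $(-) \circ \bsi_1^{(r)}$ and using $(\bsg^{n-i} \otimes \bsl^i) \circ \bsi_j^{(r)} = (\bsg^{n-i} \circ \bsi_j^{(r)}) \otimes (\bsl^i \circ \bsi_j^{(r)})$ exhibits, for $0 < i < n$, each middle term as a tensor product of two homogeneous strict polynomial superfunctors of positive degrees $p^r(n-i)$ and $p^r i$.

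Next, for $F \in \bsp$ additive, Theorem~\ref{thm:vanishing} gives $\Ext_{\bsp}^\bullet(F, (\bsg^{n-i} \otimes \bsl^i) \circ \bsi_j^{(r)}) = 0$ for all $0 < i < n$, so the long exact sequences in $\Ext_{\bsp}^\bullet(F,-)$ associated with these (dualized, twisted) short exact sequences degenerate to isomorphisms
\[
\Ext_{\bsp}^s(F, (\bsK_n^i)^\# \circ \bsi_j^{(r)}) \cong \Ext_{\bsp}^{s+1}(F, (\bsK_n^{i-1})^\# \circ \bsi_j^{(r)}) \qquad (0 < i < n).
\]
Recalling from the proof of Lemma~\ref{lem:degreeshift} that $\bsK_n^0 = \bss^n$ and $\bsK_n^{n-1} \cong \bsa^n$, so that $(\bsK_n^0)^\# \cong \bsg^n$ and $(\bsK_n^{n-1})^\# \cong \bsl^n$, I would splice these isomorphisms for $i = n-1, n-2, \dots, 1$ to obtain
\[
\Ext_{\bsp}^s(F, \bsl^n \circ \bsi_j^{(r)}) \cong \Ext_{\bsp}^{s+n-1}(F, \bsg^n \circ \bsi_j^{(r)}) \qquad (j = 0, 1) .
\]

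It remains to identify the two sides with the functors of \eqref{eq:twistcomponents}. By definition $\bsg^n \circ \bsi_0^{(r)} = \Gamma_0^{n(r)}$ and $\bsl^n \circ \bsi_1^{(r)} = S_1^{n(r)}$, and Remark~\ref{rem:symmetrizationmaps} supplies the other two: the canonical map $\bsl \rightarrow \bsa$ restricts to an isomorphism $\bsl(\Vzero) \cong \bsa(\Vzero)$, whence $\bsl^n \circ \bsi_0^{(r)} \cong \bsa^n \circ \bsi_0^{(r)} = \Lambda_0^{n(r)}$, and the canonical map $\bss \rightarrow \bsg$ restricts to an isomorphism $\bss(\Vone) \cong \bsg(\Vone)$, whence $\bsg^n \circ \bsi_1^{(r)} \cong \bss^n \circ \bsi_1^{(r)} = \Lambda_1^{n(r)}$. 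Substituting these into the last display for $j = 0$ and $j = 1$ respectively gives the two asserted isomorphisms (the case $n = 0$ being trivial, since both sides vanish for $F$ of positive degree). The one genuinely new point relative to Lemma~\ref{lem:degreeshift} --- and hence the step I would treat most carefully --- is precisely this last identification of endpoints: the Kuhn dual super Koszul complex intrinsically interpolates between $\bsl^n$ and $\bsg^n$ (after precomposition with $\bsi_j^{(r)}$), which are not literally the paired functors named in the corollary, and it is the symmetrization isomorphisms of Remark~\ref{rem:symmetrizationmaps} that bridge the gap.
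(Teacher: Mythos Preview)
Your proof is correct, but it takes a genuinely different route from the paper's. The paper's argument is a one-liner: it observes that $F$ is additive if and only if $\Pi \circ F$ and $F \circ \Pi$ are additive, and then obtains the two isomorphisms directly from \eqref{eq:evendegreeshift} and \eqref{eq:odddegreeshift} by applying conjugation (or precomposition) by $\Pi$ together with the identities \eqref{eq:conjugateiso}. In other words, the paper exploits the $\Pi$-symmetry already built into the setup rather than building a new complex.

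Your approach instead Kuhn-dualizes the super Koszul complex, obtaining short exact sequences with middle terms $\bsg^{n-i} \otimes \bsl^i$ (precomposed with $\bsi_j^{(r)}$), reruns the vanishing argument of Lemma~\ref{lem:degreeshift}, and then uses the symmetrization isomorphisms of Remark~\ref{rem:symmetrizationmaps} to match the endpoints with the functors named in the corollary. This is longer but has the virtue of making the isomorphisms explicit: they are left Yoneda multiplication by the extension class of the dualized Koszul complex $(\bs{Kz}_n)^\#$ precomposed with $\bsi_j^{(r)}$, just as the isomorphisms in Lemma~\ref{lem:degreeshift} come from $\bs{Kz}_n$ itself (cf.\ Remark~\ref{rem:Yonedacompatible}). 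The paper's method is slicker and better highlights the role of parity change, while yours is more self-contained and yields the explicit extension class without passing through $\Pi$.
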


\begin{proof}
The reader can check that $F \in \bsp$ is additive if and only if $F \circ \Pi$ and $\Pi \circ F$ are additive. Then the isomorphisms follow from \eqref{eq:evendegreeshift}, \eqref{eq:odddegreeshift}, and \eqref{eq:conjugateiso} by considering the conjugation action of $\Pi$ on extension groups in $\bsp$.
\end{proof}

\begin{remark} \label{rem:Yonedacompatible}
The connecting homomorphisms that induce the isomorphisms in Lemma \ref{lem:degreeshift} can be realized as left Yoneda products by the extension classes of the short exact sequences in \eqref{eq:superKoszulses}; cf.\ \cite[IV.9]{Hilton:1997}. Thus, the isomorphism $\Ext_{\bsp}^s(F,\bss^n) \cong \Ext_{\bsp}^{s+n-1}(F,\bsa^n)$ identifies with left multiplication by the extension class of the super Koszul complex $\bs{Kz}_n$. Using \eqref{eq:evenlift} to consider the $r$-th Frobenius twists of the ordinary Koszul complex $Kz_n$ and its dual ${Kz_n}^\#$ as exact sequences in $\bsp$, the composite isomorphism in \eqref{eq:evendegreeshift} identifies with left multiplication by the extension class in $\Ext_{\bsp}^{n-1}({S_0^n}^{(r)},{\Lambda_0^n}^{(r)})$ of ${Kz_n}^{(r)}$, and the composite isomorphism in \eqref{eq:odddegreeshift} identifies with left multiplication by the extension class in  $\Ext_{\bsp}^{n-1}({\Lambda_1^n}^{(r)},{\Gamma_1^n}^{(r)})$ of
\[
\begin{cases}
Kz_n^{\#(r)} \circ \Pi & \text{if $n$ is even,} \\
\Pi \circ Kz_n^{\#(r)} \circ \Pi & \text{if $n$ is odd.}
\end{cases}
\]
Conjugating by $\Pi$ (if $n$ is odd), or pre-composing with $\Pi$ (if $n$ is even), we see that similar remarks also apply to the isomorphisms in Corollary \ref{cor:degreeshift}. Since the Yoneda product is assoc\-iative, it follows that the isomorphisms in Lemma \ref{lem:degreeshift} and Corollary \ref{cor:degreeshift} commute with right multiplication by elements of the Yoneda algebra $\Ext_{\bsp}^\bullet(F,F)$.
\end{remark}

\begin{remark} \label{rem:restrictcommute}
Since the restriction functor $\bsp \rightarrow \cp$ maps $\bs{Kz}_n$ to the ordinary Koszul complex $Kz_n$, and since it forgets the fact that ${Kz_n}^{(r)}$ had been lifted to $\bsp$, it follows that the isomorphisms in \eqref{eq:degreeshift} and \eqref{eq:evendegreeshift} fit into commutative diagrams
\[
\xymatrix{
\Ext_{\bsp}^\bullet(F,\bss^n) \ar@{->}[r]^-{\sim} \ar@{->}[d] & \Ext_{\bsp}^{\bullet+n-1}(F,\bsa^n) \ar@{->}[d] \\
\Ext_{\cp}^\bullet(F|_{\bsvzero},S^n) \ar@{->}[r]^-{\sim} & \Ext_{\cp}^{\bullet+n-1}(F|_{\bsvzero},\Lambda^n)
} \qquad
\xymatrix{
\Ext_{\bsp}^\bullet(F,S_0^{n(r)}) \ar@{->}[r]^-{\sim} \ar@{->}[d] & \Ext_{\bsp}^{\bullet+p^r-1}(F,\Lambda_0^{n(r)}) \ar@{->}[d] \\
\Ext_{\cp}^\bullet(F|_{\bsvzero},S^{n(r)}) \ar@{->}[r]^-{\sim} & \Ext_{\cp}^{\bullet+p^r-1}(F|_{\bsvzero},\Lambda^{n(r)})
}
\]
in which the vertical arrows are the corresponding restriction homomorphisms. The isomorphisms in the bottom rows of each diagram are the isomorphisms in  \cite[Proposition 4.4]{Friedlander:1997}, which also admit descriptions as left multiplication by the extension classes of $Kz_n$ and ${Kz_n}^{(r)}$, respectively.
\end{remark}

It is straightforward to check that the  Koszul differential is compatible with the super Cartier isomorphism. For example, if $\ol{v}_i' = \ol{1}$ and $e \geq 0$, then
\begin{align*}
[\theta(V) \circ \bsk(V^{(1)})](1 \otimes \gamma_{p^e}(v_i')) &= \theta(V)(v_i' \otimes \gamma_{p^e-1}(v_i')) \\
&= \theta(V)(v_i' \otimes 1) \cdot \theta(V)(1 \otimes \gamma_{p^e-1}(v_i')) \\
&= (v_i \otimes \gamma_{p-1}(v_i)) \cdot (1 \otimes \gamma_{p^{e+1}-p}(v_i)) \\
&= v_i \otimes \gamma_{p^{e+1}-1}(v_i) \\
&= [\bsk(V) \circ \theta(V)](1 \otimes \gamma_{p^e}(v_i')).
\end{align*}
Then one has the following analogue of \cite[Proposition 3.5]{Franjou:1994}:

\begin{proposition} \label{prop:Cartierkernel}
The super Cartier isomorphism restricts for each $n \in \N$ to an isomorphism
\[
\bsK_n^{(1)} \cong \Hbul(\bsK_{pn}).
\]
In particular, the inclusion of complexes $\bsK_{pn} \hookrightarrow \bso_{pn}$ induces an inclusion $\Hbul(\bsK_{pn}) \hookrightarrow \Hbul(\bso_{pn})$.
\end{proposition}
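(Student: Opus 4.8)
The plan is to start from the compatibility of the super Cartier isomorphism with the Koszul differential. The displayed computation preceding the statement should first be upgraded to the assertion that the algebra embedding $\theta(V)\colon\bso(V^{(1)})\to\bso(V)$ of \eqref{eq:Cartieriso} is a chain map from $(\bso(V^{(1)}),\bsk^{(1)})$ to $(\bso(V),\bsk)$: since $\bsk^{(1)}$ and $\bsk$ are (graded) super-derivations and $\theta(V)$ is an algebra homomorphism, both $\theta(V)\circ\bsk^{(1)}$ and $\bsk\circ\theta(V)$ are twisted derivations, so equality need only be checked on algebra generators of $\bso(V^{(1)})$ — essentially the identity already displayed, together with the easier analogous checks on the even generators $v_i^p\otimes 1$, $v_i^{p-1}\otimes v_i$ and on $v_i\otimes\gamma_{p-1}(v_i)$. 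Granting this, $\theta(V)$ carries $\bsK_n^{(1)}(V)=\ker\bsk^{(1)}$ into $\bsK_{pn}(V)=\ker\bsk$, and since $\theta(V)$ has image in the $\bsd$-cocycles it descends to a natural transformation $\bar\theta\colon\bsK_n^{(1)}\to\Hbul(\bsK_{pn})$. The composite of $\bar\theta$ with the map $\Hbul(\bsK_{pn})\to\Hbul(\bso_{pn})$ induced by the inclusion is precisely the restriction to $\bsK_n^{(1)}$ of the super Cartier isomorphism of Theorem \ref{thm:Cartieriso}, hence injective; so $\bar\theta$ is injective, and once it is shown to be surjective it follows both that $\bar\theta$ is an isomorphism and (the ``in particular'') that $\Hbul(\bsK_{pn})\to\Hbul(\bso_{pn})$ is injective.

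The remaining and main point is surjectivity of $\bar\theta$, i.e.\ $\Hbul(\bsK_{pn}(V))=\theta(V)\big(\bsK_n^{(1)}(V)\big)$ for every finite-dimensional $V$. For this I would use, for each $V$, a decomposition
\[
\bso(V)=\theta(V)\big(\bso(V^{(1)})\big)\oplus\mathcal{C}(V)
\]
in which $\mathcal{C}(V)$ is a sub-bicomplex for both $\bsd$ and $\bsk$. Such a $\mathcal{C}$ is built by reducing to $\dim V=1$ via the exponential isomorphism for $\bso=\bss\otimes\bsa$ (which intertwines $\bsd$, $\bsk$ and $\theta$ with their graded-tensor-product analogues, so that $\mathcal{C}(V\oplus W)$ is assembled additively from $\mathcal{C}(V)$ and $\mathcal{C}(W)$) and then exhibiting $\mathcal{C}$ explicitly in the one-variable even and odd cases: for $V$ one-dimensional, $\theta(V)\big(\bso(V^{(1)})\big)$ is the span of the monomials built from $x^p$ and $x^{p-1}\,dx$ (even case), resp.\ from $1\otimes\gamma_{p}(v)$ and $v\otimes\gamma_{p-1}(v)$ (odd case), and the complementary span of the remaining monomials is directly checked to be stable under $\bsd$ and $\bsk$ and acyclic for each of them in every positive total degree. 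Restricting to total degree $pn$ with $n\geq1$, the summand $\mathcal{C}(V)\cap\bso_{pn}(V)$ is $\bsd$-acyclic (because, by the Cartier isomorphism, all of $\Hbul(\bso_{pn}(V))$ already comes from the other summand, on which $\bsd=0$) and $\bsk$-acyclic (because $(\bso_{pn}(V),\bsk)$ and $(\bso_n(V^{(1)}),\bsk^{(1)})$ are acyclic for $n\geq1$), and $\bsd\bsk+\bsk\bsd=0$ there by Lemma \ref{lem:homotopy}.

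The argument is then finished by an elementary fact: if $(C,\bsd,\bsk)$ is a bounded complex of finite-dimensional spaces with $\bsd^2=\bsk^2=\bsd\bsk+\bsk\bsd=0$ and both $(C,\bsd)$ and $(C,\bsk)$ acyclic, then $(\ker\bsk,\bsd)$ is acyclic — given $z\in\ker\bsk$ with $\bsd z=0$, write $z=\bsk y_0$, inductively solve $\bsd y_j=\bsk y_{j+1}$ (possible since $\bsk\bsd y_j=-\bsd\bsk y_j=-\bsd^2y_{j-1}=0$ puts $\bsd y_j$ in $\ker\bsk=\im\bsk$), observe that this zig-zag terminates for degree reasons, and back-substitute using $\bsd$-acyclicity to reach $z=\bsd(-\bsk s_0)$ with $-\bsk s_0\in\ker\bsk$. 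Applying this to $C=\mathcal{C}(V)\cap\bso_{pn}(V)$ and using the induced splitting $\bsK_{pn}(V)=\theta(V)\big(\bsK_n^{(1)}(V)\big)\oplus\ker\!\big(\bsk|_{\mathcal{C}(V)\cap\bso_{pn}(V)}\big)$ gives $\Hbul(\bsK_{pn}(V),\bsd)=\theta(V)\big(\bsK_n^{(1)}(V)\big)$; the case $n=0$ is trivial, and naturality of $\bar\theta$ promotes the pointwise isomorphism to an isomorphism of functors.

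The main obstacle is exactly the surjectivity of $\bar\theta$ — concretely, producing the $(\bsd,\bsk)$-stable complement $\mathcal{C}$ and reducing the acyclicity of $(\ker\bsk|_{\mathcal{C}},\bsd)$ to the elementary bicomplex lemma above; everything else is bookkeeping around the already-displayed Koszul-compatibility of $\theta$. This is the step where \cite[Proposition 3.5]{Franjou:1994} is the relevant precedent, with the extra wrinkle in the super setting that $\theta$ does not preserve the cohomological degree, so the splitting must be tracked by total degree rather than cohomological degree.
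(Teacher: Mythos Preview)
Your approach is correct and genuinely different from the paper's. You build an explicit $(\bsd,\bsk)$-stable complement $\mathcal{C}$ to $\im\theta$ via the exponential isomorphism and one-variable checks, then use a direct zig-zag (staircase) argument to show $(\ker\bsk|_{\mathcal{C}},\bsd)$ is acyclic. The paper instead, following \cite[Proposition~3.5]{Franjou:1994}, never constructs a complement: it treats $\bso_{pn}$ as a module over $D=k[x]/(x^2)$ with $x$ acting by $\bsk$, observes that $\bso_{pn}$ is free (hence injective) over the self-injective algebra $D$, and compares the two hypercohomology spectral sequences for the double complex $\Hom_D(P_\bullet,Q^\bullet)$ with $P_\bullet$ a projective resolution of $k$ and $Q^j=\bso_{pn}$ (differential $(-1)^j\bsd$). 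One spectral sequence collapses to $\Hbul(\bso_{pn})^D$, the other to $\Hbul(\bsK_{pn})$, so the two spaces have the same finite dimension and $\Hbul(\iota)$ is an isomorphism.

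Your route is more elementary---no spectral sequences---but pays for this with the combinatorics of building $\mathcal{C}$ and verifying its stability under both differentials (the one-dimensional checks are easy, but you do need to confirm that the exponential isomorphism intertwines $\theta$, $\bsd$, and $\bsk$ so that the tensor-product splitting inherits both stabilities). The paper's route is slicker and works uniformly without tracking the image of $\theta$ explicitly. At bottom both arguments exploit the same phenomenon: a bounded $\bsd$-acyclic complex of free $D$-modules is contractible $D$-linearly, so taking $D$-invariants (i.e., $\ker\bsk$) preserves acyclicity. Your bicomplex lemma is the hands-on version of this; the paper's spectral sequence comparison is the abstract version.
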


\begin{proof}
As in \cite{Franjou:1994}, we consider $\bso_{pn}$ as a module over the self-injective algebra $D := k[x]/(x^2)$, with $x$ acting via the Koszul differential $\bsk$. More accurately, for each $V \in \bsv$ the superspace $\bso_{pn}(V)$ is naturally a $D$-module, with $x$ acting via $\bsk(V)$, but for the sake of legibility we will omit explicit references to $V$ in most of the rest of the proof. Then $\bso_{pn}^D = \bsK_{pn}$ and $[\bso_n^{(1)}]^D = \bsK_n^{(1)}$. Moreover, since the action of $\bsk$ on $\bso$ is acyclic, $\bso$ is free, hence injective, as a $D$-module. The super Cartier isomorphism $\theta: \bso_n^{(1)} \simrightarrow \Hbul(\bso_{pn})$ restricts to an isomorphism $\bsK_n^{(1)} \simrightarrow \Hbul(\bso_{pn})^D$, which we denote by $\theta^D$. Evidently, $\theta^D$ factors through the composite
\[
\bsK_n^{(1)} \stackrel{\theta_K}{\longrightarrow} \Hbul(\bsK_{pn}) \stackrel{\Hbul(\iota)}{\longrightarrow} \Hbul(\bso_{pn})^D,
\]
where $\theta_K$ is the natural map induced by $\theta$, and $\Hbul(\iota)$ is the map in cohomology induced by the inclusion of complexes $\iota: \bsK_{pn} = \bso_{pn}^D \hookrightarrow \bso_{pn}$. Then to show that $\theta_K$ is an isomorphism, it suffices to show that $\Hbul(\iota)$ is an isomorphism.

For $j \in \Z$, set $Q^j = \bso_{pn}$, and let $d_Q: Q^j \rightarrow Q^{j+1}$ be $(-1)^j \bsd$, where $\bsd: \bso_{pn} \rightarrow \bso_{pn}$ is the de~Rham differential on $\bso_{pn}$. Then by Lemma \ref{lem:homotopy}, $Q^\bullet$ is a cochain complex of $D$-modules, and $\opH^j(Q) = \Hbul(\bso_{pn})$ for each $j \in \Z$. Also, $\opH^j(Q^D) = \Hbul(\bso_{pn}^D) = \Hbul(\bsK_{pn})$. Next, let $(P,d_P)$ be a projective resolution of the trivial $D$-module $k$. Now consider the right half-plane double complex $C^{i,j} = \Hom_D(P_i,Q^j)$. The row-wise and column-wise filtrations of the total complex $\Tot(C)$ are both exhaustive and weakly convergent in the sense of \cite[\S3.1]{Mccleary:2001}, so by \cite[Theorem 3.2]{Mccleary:2001} they give rise, as in Section \ref{subsec:hypercohomology}, to a pair of spectral sequences ${}^IE$ and ${}^{II}E$ that each converge to $\Hbul(\Tot(C))$. First computing the cohomology of $C$ along columns, and using the fact that $\opH^j(Q) = \Hbul(\bso_{pn}) \cong \bso_n^{(1)}$ is injective for the action of $D$, one gets
\[
{}^IE_1^{i,j} = \Hom_D(P_i,\opH^j(Q)),
\quad \text{hence} \quad
{}^IE_2^{i,j} = \begin{cases}
\Hom_D(k,\opH^j(Q)) = \Hbul(\bso_{pn})^D & \text{if $i =0$,} \\
0 & \text{if $i \neq 0$.}
\end{cases}
\]
On the other hand, first computing the cohomology of $C$ along rows, and using the injectivity of $Q^j = \bso_{pn}$ as a $D$-module, one gets
\[
{}^{II}E_1^{i,j} = \begin{cases}
\Hom_D(k,Q^j) & \text{if $i =0$,} \\
0 & \text{if $i \neq 0$,}
\end{cases} \quad \text{hence} \quad
{}^{II}E_2^{i,j} = \begin{cases}
\opH^j(Q^D) = \Hbul(\bsK_{pn}) & \text{if $i =0$,} \\
0 & \text{if $i \neq 0$.}
\end{cases}
\]
In particular, both spectral sequences collapse at the $E_2$-page to the column $i=0$, so the map
\[
\Hbul(\bsK_{pn}) = {}^{II}E_2^{0,j} = {}^{II}E_\infty^{0,j} \simrightarrow \opH^j(\Tot(C)) \simrightarrow {}^IE_\infty^{0,j} = {}^IE_2^{0,j} = \Hbul(\bso_{pn})^D
\]
is an isomorphism. In other words, for each $V \in \bsv$, the spaces $\Hbul(\bsK_{pn}(V))$ and $\Hbul(\bso_{pn}(V))^D$ are each of the same finite dimension. Then for each $V \in \bsv$, the map $\Hbul(\iota)$ must be an isomorphism, and hence so must $\theta_K$.
\end{proof}

Replacing $\bsK_n$ by $\Hbul(\bsK_{pn})$, there is the following analogue of Lemma \ref{lem:degreeshift}:

\begin{lemma} \label{lem:Kdegreeshift}
Let $n \in \N$ and let $F \in \bsp_{pn}$ be an additive functor. Then for $0 \leq t \leq n-1$,
\begin{align}
\Ext_{\bsp}^s(F,S_0^{n(1)}) &\cong \Ext_{\bsp}^s(F,\opH^0(\bsK_{pn})) \cong \Ext_{\bsp}^{s+t}(F,\opH^t(\bsK_{pn})), \label{eq:lowdegreeshift} \\
\Ext_{\bsp}^s(F,\Lambda_1^{n(1)}) &\cong \Ext_{\bsp}^s(F,\opH^{n(p-1)}(\bsK_{pn})) \cong \Ext_{\bsp}^{s+t}(F,\opH^{t+n(p-1)}(\bsK_{pn})), \label{eq:highdegreeshift}
\end{align}
and $\Ext_{\bsp}^s(F,\opH^t(\bsK_{pn})) = 0$ if $n \leq t < n(p-1)$ or if $t \geq pn$.
\end{lemma}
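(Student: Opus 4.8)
The plan is to compute the cohomology superfunctors $\opH^t(\bsK_{pn})$ explicitly and read the three assertions off. By \eqref{eq:oddtoevenext} I may work in the abelian category $\bsp_\ev$. By Proposition \ref{prop:Cartierkernel} the super Cartier isomorphism $\theta$ identifies $\opH^\bullet(\bsK_{pn})$ with $\bsK_n^{(1)}$, the cohomological grading being transported along $\theta$. Since $\bso=\bss\otimes\bsa$ is exponential and $\bsk$ is an algebra derivation, the functorial splitting $\bsi^{(1)}=\bsi_0^{(1)}\oplus\bsi_1^{(1)}$ gives an isomorphism of complexes $(\bso_n^{(1)},\bsk)\cong\bigoplus_{\alpha+\beta=n}(\bso_\alpha\circ\bsi_0^{(1)},\bsk)\otimes(\bso_\beta\circ\bsi_1^{(1)},\bsk)$, hence a splitting of $\bsK_n^{(1)}$ into the $\bsk$-kernels of the summands. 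The $(\alpha,\beta)=(n,0)$ summand is, with $\bsk$, the Frobenius twist (via \eqref{eq:evenlift}) of the ordinary Koszul complex, so its $\bsk$-kernel is $\bigoplus_{i=0}^{n-1}\bsK_n^i\circ\bsi_0^{(1)}$ (note $\bsK_n^i|_{\bsvzero}=K_n^i$ vanishes for $i\ge n$); symmetrically the $(\alpha,\beta)=(0,n)$ summand is the twist of the dual Koszul complex, with $\bsk$-kernel $\bigoplus_{i=0}^{n-1}\bsK_n^i\circ\bsi_1^{(1)}$. The key point is the placement of these pieces in cohomological degree: the explicit action \eqref{eq:Cartieriso} of $\theta$ on generators, together with $\theta(V)(1\otimes\gamma_m(v'))=1\otimes\gamma_{pm}(v)$, shows that $\theta$ preserves cohomological degree on the all-even summand but carries cohomological degree $i$ to $n(p-1)+i$ on the all-odd summand. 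Hence $\opH^t(\bsK_{pn})$ contains $\bsK_n^t\circ\bsi_0^{(1)}$ for $0\le t\le n-1$ and $\bsK_n^{t-n(p-1)}\circ\bsi_1^{(1)}$ for $n(p-1)\le t\le pn-1$, with no other contribution from these two summands; in particular $\opH^0(\bsK_{pn})=\bsK_n^0\circ\bsi_0^{(1)}=S_0^{n(1)}$ and the all-odd part of $\opH^{n(p-1)}(\bsK_{pn})$ is $\bsK_n^0\circ\bsi_1^{(1)}=\Lambda_1^{n(1)}$.

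The mixed summands ($\alpha,\beta\ge 1$) are invisible to $\Ext_{\bsp}^\bullet(F,-)$. Indeed, in each cohomological degree the mixed part of $\bso_n^i\circ\bsi^{(1)}$ is a direct sum of internal tensor products of two superfunctors of positive degree (group the even-variable and odd-variable factors of each summand), which $\Ext_{\bsp}^\bullet(F,-)$ annihilates by Theorem \ref{thm:vanishing} since $F$ is additive; feeding the $\bsi^{(1)}$-twists of \eqref{eq:superKoszulses} (which respect the $(\alpha,\beta)$-splitting) into the long exact sequences and descending from $i$ to $0$ — where the mixed part of $\bsK_n^0\circ\bsi^{(1)}=\bss^n\circ\bsi^{(1)}$ is again such a sum — shows that $\Ext_{\bsp}^\bullet(F,-)$ kills the mixed part of $\bsK_n^i\circ\bsi^{(1)}$ for every $i$, hence the mixed part of $\opH^t(\bsK_{pn})$ for every $t$. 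A short interval count completes the analysis: the $\bsk$-kernel summand indexed by $(\alpha,\beta)$ occupies cohomological degrees $\beta(p-1)\le t\le\beta(p-1)+n$, so the all-even part reaches only $t\le n-1$, the all-odd part only $t\ge n(p-1)$, and no mixed part reaches $t\ge pn$. Therefore $\opH^t(\bsK_{pn})=0$ for $t\ge pn$, $\Ext_{\bsp}^\bullet(F,\opH^t(\bsK_{pn}))=0$ for $n\le t<n(p-1)$, and otherwise $\Ext_{\bsp}^\bullet(F,\opH^t(\bsK_{pn}))$ equals $\Ext_{\bsp}^\bullet(F,\bsK_n^t\circ\bsi_0^{(1)})$ for $0\le t\le n-1$ and $\Ext_{\bsp}^\bullet(F,\bsK_n^{t-n(p-1)}\circ\bsi_1^{(1)})$ for $n(p-1)\le t\le pn-1$.

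Finally, \eqref{eq:evendegreeshift} gives $\Ext_{\bsp}^s(F,S_0^{n(1)})\cong\Ext_{\bsp}^{s+t}(F,\bsK_n^t\circ\bsi_0^{(1)})$ for $0\le t\le n-1$, which together with $\opH^0(\bsK_{pn})=S_0^{n(1)}$ yields \eqref{eq:lowdegreeshift}, and \eqref{eq:odddegreeshift} gives $\Ext_{\bsp}^s(F,\Lambda_1^{n(1)})\cong\Ext_{\bsp}^{s+i}(F,\bsK_n^i\circ\bsi_1^{(1)})$ for $0\le i\le n-1$, which upon putting $i=t-n(p-1)$ yields \eqref{eq:highdegreeshift}. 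The step I expect to be hardest is the cohomological-degree bookkeeping in the first paragraph: since the super Cartier isomorphism does not preserve cohomological degree, one must verify carefully — via the generator formula \eqref{eq:Cartieriso} — that the even and odd halves of $\bsK_n^{(1)}$ land exactly in the ranges $[0,n-1]$ and $[n(p-1),pn-1]$ and that nothing from those halves spills into the middle range $n\le t<n(p-1)$.
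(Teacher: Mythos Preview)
Your argument is correct and takes a genuinely different route from the paper's. The paper does not decompose $\bsK_n^{(1)}$ explicitly; instead it observes (via Lemma~\ref{lem:homotopy}) that there is a short exact sequence of cochain complexes
\[
0 \rightarrow \bsK_{pn}^\bullet \rightarrow \bso_{pn}^\bullet \stackrel{\bsk}{\rightarrow} \bsK_{pn}^{\bullet-1} \rightarrow 0,
\]
whose long exact sequence in cohomology breaks, thanks to the injectivity in Proposition~\ref{prop:Cartierkernel}, into short exact sequences $0 \to \opH^t(\bsK_{pn}) \to \opH^t(\bso_{pn}) \to \opH^{t-1}(\bsK_{pn}) \to 0$ with the surjection identified with $\bsk^{(1)}$. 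Applying $\Ext_{\bsp}^\bullet(F,-)$ and invoking Theorem~\ref{thm:vanishing} on $\opH^t(\bso_{pn})$ via Remark~\ref{remark:Cartierrestriction} yields the degree shifts; the boundary cases $t=n$ and $t=n(p-1)$ require a separate finite-dimensionality argument and an appeal to \eqref{eq:evendegreeshift}.

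Your approach trades this structural setup for explicit bookkeeping: you split $\bsK_n^{(1)}$ along the exponential decomposition into $(\alpha,\beta)$-pieces, compute from \eqref{eq:Cartieriso} that the $(\alpha,\beta)$-piece lands in cohomological degrees shifted by exactly $\beta(p-1)$ under $\theta$, kill the mixed pieces via Theorem~\ref{thm:vanishing} and an induction on $i$ through the twisted sequences \eqref{eq:superKoszulses}, and then read off the result from Lemma~\ref{lem:degreeshift}. This is more hands-on but avoids the extra work at the boundary degrees and the dimension comparison. The paper's route has the advantage that the short exact sequences it produces, with their identification of the quotient map as $\bsk^{(1)}$, are reused downstream (e.g., in the analysis of \eqref{eq:ellKprspecseq}); your decomposition does not package that information as directly. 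One small point: your notation in the last paragraph conflates the $t$ of the lemma statement (ranging over $[0,n-1]$) with the cohomological index $t$ in $\opH^t(\bsK_{pn})$; writing ``put $i=t$'' rather than ``$i=t-n(p-1)$'' would match the statement's variable.
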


\begin{proof}
It follows from Lemma \ref{lem:homotopy} that there exists a short exact sequence of cochain complexes
\[
0 \rightarrow \bsK_{pn}^\bullet \rightarrow \bso_{pn}^\bullet \stackrel{\bsk}{\rightarrow} \bsK_{pn}^{\bullet-1} \rightarrow 0,
\]
provided the differentials $\bsd: \bso_{pn}^i \rightarrow \bso_{pn}^{i+1}$ and $\bsd: \bsK_{pn}^i \rightarrow \bsK_{pn}^{i+1}$ are each replaced by $(-1)^i \bsd$. Then the corresponding long exact sequence in cohomology takes the form
\[
\cdots \rightarrow \opH^{t-1}(\bsK_{pn}^{\bullet-1}) \rightarrow \opH^t(\bsK_{pn}^\bullet) \rightarrow \opH^t(\bso_{pn}^\bullet) \rightarrow \opH^t(\bsK_{pn}^{\bullet-1}) \rightarrow \opH^{t+1}(\bsK_{pn}^\bullet) \rightarrow \cdots.
\]
Since $\opH^t(\bsK_{pn}^{\bullet-1}) = \opH^{t-1}(\bsK_{pn}^\bullet)$, and since by Proposition \ref{prop:Cartierkernel} the inclusion of complexes $\bsK_{pn}^\bullet \hookrightarrow \bso_{pn}^\bullet$ induces an inclusion at the level of cohomology groups, the long exact sequence decomposes into a family of short exact sequences of the form
\begin{equation} \label{eq:sescohomology}
0 \rightarrow \opH^t(\bsK_{pn}) \rightarrow \opH^t(\bso_{pn}) \rightarrow \opH^{t-1}(\bsK_{pn}) \rightarrow 0.
\end{equation}
Identifying $\opH^t(\bso_{pn})$ with a subfunctor of ${\bso_n}^{(1)}$ as in Remark \ref{remark:Cartierrestriction}, and identifying $\opH^{t-1}(\bsK_{pn})$ with a subfunctor of ${\bsK_n}^{(1)}$, the map $\opH^t(\bso_{pn}) \rightarrow \opH^{t-1}(\bsK_{pn})$ in \eqref{eq:sescohomology} identifies with $\bsk^{(1)}$.

Now consider the long exact sequence in cohomology obtained by applying $\Hom_{\bsp}(F,-)$ to \eqref{eq:sescohomology}. It follows from Theorem \ref{thm:vanishing} and Remark \ref{remark:Cartierrestriction} that $\Ext_{\bsp}^s(F,\opH^t(\bso_{pn})) = 0$, and hence that $\Ext_{\bsp}^s(F,\opH^{t-1}(\bsK_{pn})) \cong \Ext_{\bsp}^{s+1}(F,\opH^t(\bsK_{pn}))$, except perhaps if $t$ equals $0$, $n$, $n(p-1)$, or $pn$. Since $\opH^0(\bsK_{pn}) = \opH^0(\bso_{pn}) \cong {S_0^n}^{(1)}$ and $\opH^{pn}(\bsK_{pn}) = 0$, this establishes \eqref{eq:lowdegreeshift}, the second isomorphism in \eqref{eq:highdegreeshift}, and the equality $\Ext_{\bsp}^s(F,\opH^t(\bsK_{pn})) = 0$ for $t \geq pn$.

Next recall that the Koszul differential defines an isomorphism $\bso_n^n \cong \bsK_n^{n-1}$. Then the family of maps $\bsk^{(1)}: \opH^t(\bso_{pn}) \rightarrow \opH^{t-1}(\bsK_{pn})$ in \eqref{eq:sescohomology} restricts to an isomorphism from the subfunctor ${\bso_n^n}^{(1)}$ of $\Hbul(\bso_{pn})$ to the subfunctor $\bsK_n^{n-1(1)}$ of $\Hbul(\bsK_{pn})$. By Remark \ref{remark:Cartierrestriction} and Theorem \ref{thm:vanishing},
\begin{align*}
\Ext_{\bsp}^s(F,\bso_n^{n(1)}) &\cong \Ext_{\bsp}^s(F,\opH^n(\bso_{pn})) \oplus \Ext_{\bsp}^s(F,\opH^{pn}(\bso_{pn})). 
\end{align*}
Then taking $t = n$ in \eqref{eq:sescohomology}, it follows in the long exact sequence in cohomology that the map
\[
\Ext_{\bsp}^s(F,\bsk^{(1)}): \Ext_{\bsp}^s(F,\opH^n(\bso_{pn})) \rightarrow \Ext_{\bsp}^s(F,\opH^{n-1}(\bsK_{pn}))
\]
is injective. On the other hand,
\[
\Ext_{\bsp}^{s-(n-1)}(F,S_0^{n(1)}) \cong \Ext_{\bsp}^s(F,\Lambda_0^{n(1)}) \cong \Ext_{\bsp}^s(F,\opH^n(\bso_{pn})) 
\]
by \eqref{eq:evendegreeshift}, and $\Ext_{\bsp}^{s-(n-1)}(F,S_0^{n(1)}) \cong \Ext_{\bsp}^s(F,\opH^{n-1}(\bsK_{pn}))$ by \eqref{eq:lowdegreeshift}. Moreover, extension groups between homogeneous functors in $\bsp$ are always finite-dimensional, since by Theorem \ref{thm:enoughprojectives} they can be identified with extension groups between finite-dimensional modules for a finite-dimensional algebra. Then $\Ext_{\bsp}^s(F,\bsk^{(1)})$ must be an isomorphism, and it follows from the long exact sequence in cohomology that $\Ext_{\bsp}^s(F,\opH^n(\bsK_{pn})) = 0$. Now with the observations of the previous paragraph, this shows that $\Ext_{\bsp}^\bullet(F,\opH^t(\bsK_{pn})) = 0$ for $n \leq t < n(p-1)$.

Finally, taking $t = n(p-1)$ in \eqref{eq:sescohomology}, it now follows from the long exact sequence in cohomology, Remark \ref{remark:Cartierrestriction}, and Theorem \ref{thm:vanishing}, and the previous observations that the inclusion $\bsK_{pn} \hookrightarrow \bso_{pn}$ induces isomorphisms
\begin{equation} \label{eq:rowthreeiso}
\Ext_{\bsp}^\bullet(F,\opH^{n(p-1)}(\bsK_{pn})) \cong \Ext_{\bsp}^\bullet(F,\opH^{n(p-1)}(\bso_{pn})) \cong \Ext_{\bsp}^\bullet(F,\Lambda_1^{n(1)}).
\qedhere
\end{equation}
\end{proof}

\subsection{Vector space structure of \texorpdfstring{$\Ext_{\bsp}^\bullet(\bsir,{S_0}^{p^{r-1}(1)})$}{ExtP(Ir,S0pr-1(1))}} \label{subsec:vectorspacej=1}

Our goal in this section is to describe for $r \geq 1$ the $E_2$-pages of the spectral sequence
\begin{align}
\Ebar_2^{s,t} &= \Ext_{\bsp}^s(\bsir,\opH^t(\bso_{p^r})) \Rightarrow \bbExt_{\bsp}^{s+t}(\bsir,\bso_{p^r}), \quad \text{and} \label{eq:Oprspecseq} \\
E_2^{s,t} &= \Ext_{\bsp}^s(\bsir,\opH^t(\bsK_{p^r})) \Rightarrow \bbExt_{\bsp}^{s+t}(\bsir,\bsK_{p^r}) \label{eq:Kprspecseq}
\end{align}
obtained by taking $A = \bsir$ and $C = \bso_{p^r}$ or $C = \bsK_{p^r}$ in \eqref{eq:secondhypercohomology}. Since $\bsir = \bsirzero \oplus \bsirone$, it follows that \eqref{eq:Oprspecseq} and \eqref{eq:Kprspecseq} each decompose into a direct sum of hypercohomology spectral sequences. Specifically, given $\ell \in \set{0,1}$, let
\begin{align}
\Ebar_{2,\ell}^{s,t} &= \Ext_{\bsp}^s(\bsi_\ell^{(r)},\opH^t(\bso_{p^r})) \Rightarrow \bbExt_{\bsp}^{s+t}(\bsi_\ell^{(r)},\bso_{p^r}), \quad \text{and} \label{eq:ellOprspecseq} \\
E_{2,\ell}^{s,t} &= \Ext_{\bsp}^s(\bsi_\ell^{(r)},\opH^t(\bsK_{p^r})) \Rightarrow \bbExt_{\bsp}^{s+t}(\bsi_\ell^{(r)},\bsK_{p^r}) \label{eq:ellKprspecseq}
\end{align}
be the hypercohomology spectral sequences obtained by taking $A = \bsilr$ and $C = \bso_{p^r}$ or $C = \bsK_{p^r}$ in \eqref{eq:secondhypercohomology}, respectively. Then \eqref{eq:Oprspecseq} is the direct sum of the spectral sequences obtained by taking $\ell = 0$ and $\ell = 1$ in \eqref{eq:ellOprspecseq}, and similarly for \eqref{eq:Kprspecseq} and \eqref{eq:ellKprspecseq}. We will exploit these direct sum decompositions to make explicit calculations. As a byproduct of our work, we will obtain the vector space structure of the extension group $\Ext_{\bsp}^\bullet(\bsir,{S_0}^{p^{r-1}(1)})$. First we compute the abutments of the spectral sequences \eqref{eq:ellOprspecseq} and \eqref{eq:ellKprspecseq}.

\begin{lemma} \label{lem:abutment}
Let $r \geq 1$, and let $\ell \in \set{0,1}$. Then:
\begin{align*}
\bbExt_{\bsp}^s(\bsi_\ell^{(r)},\bso_{p^r}) &= \begin{cases} k & \text{if $\ell = 0$ and either $s = 0$ or $s = 2p^r-1$,} \\ 0 & \text{otherwise.} \end{cases} \\
\bbExt_{\bsp}^s(\bsi_\ell^{(r)},\bsK_{p^r}) &= \begin{cases} k & \text{if $\ell = 0$ and $s$ is even with $0 \leq s < 2p^r$,} \\ 0 & \text{otherwise.} \end{cases}
\end{align*}
\end{lemma}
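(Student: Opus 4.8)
The plan is to obtain all four abutments from a single device, the first hypercohomology spectral sequence \eqref{eq:firsthypercohomology} applied with $A=\bsilr$ and with $C=\bso_{p^r}$ or $C=\bsK_{p^r}$ (each viewed as a cochain complex via the de~Rham differential $\bsd$). Its $E_1$-page is ${}^{I}E_1^{s,t}=\Ext_{\bsp}^t(\bsilr,C^s)$, with $d_1$ induced by $\bsd$, and the whole argument hinges on showing that these input extension groups are tiny—concentrated in a single cohomological degree—which will force the spectral sequences to degenerate for purely numerical reasons.

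The key computation is $\Ext_{\bsp}^\bullet(\bsilr,\bss^{p^r})$. By the duality isomorphism $\bss\cong\bsg^\#$ we have $\bss^{p^r}\cong(\bsg^{p^r})^\#$; since $\bsg^{p^r}$ is projective in $\bsp_{p^r}$ and the Kuhn dual interchanges projectives and injectives, $\bss^{p^r}$ is injective in $\bsp_{p^r}$, so $\Ext_{\bsp}^{>0}(\bsilr,\bss^{p^r})=0$. Moreover $\bss^{p^r}=\bss^{p^r,k}$, so by \eqref{eq:Yonedalemma} one has $\Hom_{\bsp}(\bsilr,\bss^{p^r})\cong(\bsilr)^\#(k)$, which is one-dimensional when $\ell=0$ (as $\bsirzero(k)=k^{(r)}\ne 0$) and zero when $\ell=1$ (as $\bsirone(k)=0$). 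Hence $\Ext_{\bsp}^\bullet(\bsi_0^{(r)},\bss^{p^r})=k$ concentrated in degree $0$, while $\Ext_{\bsp}^\bullet(\bsi_1^{(r)},\bss^{p^r})=0$. Since $\bsilr$ is additive, Lemma~\ref{lem:degreeshift} (specifically \eqref{eq:degreeshift} with $n=p^r$) then gives, for $0\le i\le p^r-1$, isomorphisms $\Ext_{\bsp}^\bullet(\bsilr,\bsK_{p^r}^i)\cong\Ext_{\bsp}^{\bullet-i}(\bsilr,\bss^{p^r})$ and $\Ext_{\bsp}^\bullet(\bsilr,\bsa^{p^r})\cong\Ext_{\bsp}^{\bullet-(p^r-1)}(\bsilr,\bss^{p^r})$; so when $\ell=0$ these are $k$ concentrated in degrees $i$ and $p^r-1$ respectively, and when $\ell=1$ they all vanish. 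Finally $\Ext_{\bsp}^\bullet(\bsilr,\bso_{p^r}^i)=0$ for $0<i<p^r$ by the vanishing Theorem~\ref{thm:vanishing}, since $\bso_{p^r}^i=\bss^{p^r-i}\otimes\bsa^i$ is then a tensor product of functors of positive degrees.

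Now read off the abutments. For $C=\bsK_{p^r}$ and $\ell=0$: ${}^{I}E_1^{s,t}=k$ if $t=s$ with $0\le s\le p^r-1$ and is $0$ otherwise, so the $E_1$-page is supported on the diagonal $t=s$; every differential $d_r\colon{}^{I}E_r^{s,s}\to{}^{I}E_r^{s+r,\,s-r+1}$ lands off this diagonal (one would need $s-r+1=s+r$) and so vanishes, whence ${}^{I}E_\infty={}^{I}E_1$ and $\bbExt_{\bsp}^n(\bsi_0^{(r)},\bsK_{p^r})={}^{I}E_\infty^{n/2,n/2}$ is $k$ for $n$ even with $0\le n\le 2p^r-2$ and $0$ otherwise. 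For $C=\bso_{p^r}$ and $\ell=0$: ${}^{I}E_1^{s,t}$ is nonzero only at $(s,t)=(0,0)$ and $(p^r,p^r-1)$, where it is $k$; no differential can connect these (this would require $r=p^r$ and a source at $(0,2p^r-2)$, which is zero), so ${}^{I}E_\infty={}^{I}E_1$ and $\bbExt_{\bsp}^n(\bsi_0^{(r)},\bso_{p^r})$ is $k$ for $n\in\{0,\,2p^r-1\}$ and $0$ otherwise. When $\ell=1$, every entry of ${}^{I}E_1$ is $0$ for both choices of $C$, so both abutments vanish. This proves the lemma.

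The only genuine content is the second paragraph—the recognition that $\bss^{p^r}=(\bsg^{p^r})^\#$ is injective with $\Hom_{\bsp}(\bsilr,\bss^{p^r})$ of dimension $\delta_{\ell,0}$—which pins the input extension groups to a single degree. With that in hand the spectral sequences collapse automatically, so that, in contrast to the inductive Koszul/de~Rham analysis of the analogous classical abutments in \cite{Friedlander:1997}, no study of the higher differentials $d_r$ is needed; the ``new'' phenomenon, namely the total vanishing of the $\ell=1$ abutments, is visible already from $\bsirone(k)=0$.
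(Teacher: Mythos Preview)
Your proof is correct and follows essentially the same argument as the paper: compute the $E_1$-page of the first hypercohomology spectral sequence using the injectivity of $\bss^{p^r}$, the Yoneda lemma, the vanishing theorem, and the degree-shifting isomorphism \eqref{eq:degreeshift}, then observe that the nonzero terms lie in total degrees of a fixed parity so that the spectral sequence degenerates at $E_1$. Your exposition is somewhat more explicit than the paper's, but the logic is identical.
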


\begin{proof}
First take $A = {\bsi_\ell}^{(r)}$ and $C = \bso_{p^r}$ in \eqref{eq:firsthypercohomology}. Then it follows from Theorem \ref{thm:vanishing}, \eqref{eq:degreeshift}, the injectivity of $\bss^n$, and \eqref{eq:Yonedalemma} that ${}^IE_1^{s,t} = k$ if $\ell = 0$ and $s = t = 0$, or if $\ell = 0$, $s = p^r$, and $t = p^r-1$, but that ${}^IE_1^{s,t} = 0$ otherwise. Since the nonzero terms in the spectral sequence are of non-adjacent total degrees, it follows that the spectral sequence collapses at the $E_1$-page and that $\bbExt_{\bsp}^s({\bsi_\ell}^{(r)},\bso_{p^r})$ is described as claimed. Now take $A = {\bsi_\ell}^{(r)}$ and $C = \bsK_{p^r}$ in \eqref{eq:firsthypercohomology}. Then as above, it follows from \eqref{eq:degreeshift} that ${}^IE_1^{s,t} = k$ if $\ell = 0$ and $s=t$ with $0 \leq s < p^r$, but that ${}^IE_1^{s,t} = 0$ otherwise. Again, the nonzero terms in the spectral are of non-adjacent total degrees, so it follows that $\bbExt_{\bsp}^s({\bsi_\ell}^{(r)},\bsK_{p^r})$ is described as in the statement of the lemma.
\end{proof}

Our focus for the rest of this section is on analyzing the spectral sequences \eqref{eq:ellOprspecseq} and \eqref{eq:ellKprspecseq}. \textbf{For the rest of this section}, set $q = p^{r-1}$. Theorem \ref{thm:vanishing} and Remark \ref{remark:Cartierrestriction} imply that
\begin{equation} \label{eq:Ebaridentification}
\begin{aligned}
\Ebar_{2,\ell}^{s,0} &\cong \Ext_{\bsp}^s(\bsi_\ell^{(r)},S_0^{q(1)}), &
\Ebar_{2,\ell}^{s,q} &\cong \Ext_{\bsp}^s(\bsi_\ell^{(r)},\Lambda_0^{q(1)}), \\
\Ebar_{2,\ell}^{s,(p-1)q} &\cong \Ext_{\bsp}^s(\bsi_\ell^{(r)},\Lambda_1^{q(1)}), &
\Ebar_{2,\ell}^{s,pq} &\cong \Ext_{\bsp}^s(\bsi_\ell^{(r)},\Gamma_1^{q(1)}),
\end{aligned}
\end{equation}
and $\Ebar_{2,\ell}^{s,t} = 0$ otherwise. Then Lemma \ref{lem:degreeshift}, \eqref{eq:conjugatebsir}, and \eqref{eq:conjugateiso} imply
\begin{equation} \label{eq:rowisomorphisms}
\begin{split}
\Ebar_{2,0}^{s,0} &\cong \Ebar_{2,0}^{s+q-1,q} \cong \Ebar_{2,1}^{s+q-1,(p-1)q} \cong \Ebar_{2,1}^{s+2(q-1),pq}, \quad \text{and} \\
\Ebar_{2,1}^{s,0} &\cong \Ebar_{2,1}^{s+q-1,q} \cong \Ebar_{2,0}^{s+q-1,(p-1)q} \cong \Ebar_{2,0}^{s+2(q-1),pq}.
\end{split}
\end{equation}
Similarly, Lemma \ref{lem:Kdegreeshift} implies that
\begin{equation} \label{eq:Eisovanishing}
\begin{aligned}
E_{2,\ell}^{s,0} &\cong E_{2,\ell}^{s+t,t} \text{ and } E_{2,\ell}^{s,(p-1)q} \cong E_{2,\ell}^{s+t,(p-1)q+t} & \text{for $0 \leq t \leq q-1$, but} \\
E_{2,\ell}^{s,t} &=0 & \text{if $q \leq t < (p-1)q$ or if $t \geq pq$.}
\end{aligned}
\end{equation}
The proof of Lemma \ref{lem:Kdegreeshift} shows that the inclusion $\bsK_{p^r} \hookrightarrow \bso_{p^r}$ induces isomorphisms
\begin{equation} \label{eq:EEbariso}
E_{2,\ell}^{s,0} \cong \Ebar_{2,\ell}^{s,0} \quad \text{and} \quad E_{2,\ell}^{s,(p-1)q} \cong \Ebar_{2,\ell}^{s,(p-1)q}.
\end{equation}
Finally, Lemma \ref{lem:abutment} implies that
\begin{equation} \label{eq:abutments}
\begin{split}
\textstyle \bigoplus_{i+j=s} \Ebar_{\infty,\ell}^{i,j} &= \begin{cases} k & \text{if $\ell = 0$ and $s = 0$ or $s = 2p^r-1$,} \\ 0 & \text{otherwise}, \end{cases}  \\
\textstyle \bigoplus_{i+j=s} E_{\infty,\ell}^{i,j} &= \begin{cases} k & \text{if $\ell = 0$ and $s$ is even with $0 \leq s < 2p^r$,} \\ 0 & \text{otherwise.} \end{cases}
\end{split}
\end{equation}

\begin{theorem} \label{thm:quadruplecalculation}
Set $q = p^{r-1}$. In the spectral sequences $\Ebar_{2,0}$ and $\Ebar_{2,1}$, one has
\begin{align}
\Ebar_{2,0}^{s,0} \cong \Ebar_{2,0}^{s+q-1,q} \cong \Ebar_{2,1}^{s+q-1,(p-1)q} \cong \Ebar_{2,1}^{s+2(q-1),pq} &\cong \begin{cases} k & \text{if $s \equiv 0 \mod 2q$ and $s \geq 0$,} \\ 0 & \text{otherwise,} \end{cases} \label{eq:quadrupleeven} \\
\Ebar_{2,1}^{s,0} \cong \Ebar_{2,1}^{s+q-1,q} \cong \Ebar_{2,0}^{s+q-1,(p-1)q} \cong \Ebar_{2,0}^{s+2(q-1),pq} &\cong \begin{cases} k  & \text{if $s \equiv p^r \mod 2q$ and $s \geq p^r$,} \\ 0 & \text{otherwise.} \end{cases} \label{eq:quadrupleodd}
\end{align}
\end{theorem}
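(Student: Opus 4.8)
The plan is to reduce the theorem to the determination of the two extension groups sitting in the bottom rows, and then to run the hypercohomology spectral sequences of Section~\ref{subsec:hypercohomology}. Write $q=p^{r-1}$ as in the statement, and set $X^\bullet:=\Ebar_{2,0}^{\bullet,0}\cong\Ext_{\bsp}^\bullet(\bsi_0^{(r)},S_0^{q(1)})$ and $Y^\bullet:=\Ebar_{2,1}^{\bullet,0}\cong\Ext_{\bsp}^\bullet(\bsi_1^{(r)},S_0^{q(1)})$. By the chains of isomorphisms already recorded in \eqref{eq:rowisomorphisms}, all four groups in the first line there are copies of $X^\bullet$ and all four in the second line are copies of $Y^\bullet$, so \eqref{eq:quadrupleeven} and \eqref{eq:quadrupleodd} together amount to the single assertion that $X^s\cong k$ exactly when $2q\mid s$ and $s\ge 0$, that $Y^s\cong k$ exactly when $s\equiv p^r\pmod{2q}$ and $s\ge p^r$, and that $X^s=Y^s=0$ otherwise. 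I would prove this by induction on $r$. The ground facts are: $X^s=Y^s=0$ for $s<0$ (trivial); $Y^0=0$, since any homomorphism $\bsi_1^{(r)}\to S_0^{q(1)}$ must vanish on every purely odd superspace (where $S_0^{q(1)}$ is zero) and hence, by naturality along the inclusions $\Vone\hookrightarrow V$, vanishes identically; and $X^0\cong k$, by restriction from $\bsp$ to $\cp$ together with the Friedlander--Suslin computation $\Hom_{\cp}(I^{(r)},S^{p^{r-1}(1)})\cong k$.

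For the base case $r=1$ (so $q=1$ and $S_0^{1(1)}=\bsi_0^{(1)}$) I would use the Koszul-kernel spectral sequences $E_{2,0}$ and $E_{2,1}$ of \eqref{eq:ellKprspecseq}. By \eqref{eq:Eisovanishing}, \eqref{eq:EEbariso} and \eqref{eq:rowisomorphisms} each of these has exactly two nonzero rows, $t=0$ and $t=p-1$: for $\ell=0$ they are $X^\bullet$ and $Y^\bullet$, and for $\ell=1$ they are $Y^\bullet$ and $X^\bullet$ (no shift, since $q-1=0$). In a two-row spectral sequence the unique possibly-nonzero differential is $d_p\colon E_p^{s,p-1}\to E_p^{s+p,0}$, with $E_2=E_p$ and $E_{p+1}=E_\infty$. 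Since $E_{2,1}$ abuts to $0$ by Lemma~\ref{lem:abutment}, this forces $d_p\colon X^s\to Y^{s+p}$ to be an isomorphism for every $s$, so $Y^s\cong X^{s-p}$. Substituting this into $E_{2,0}$, whose abutment is $k$ in each even degree $0\le s<2p$ and $0$ otherwise, and running the spectral sequence by induction on the total degree — starting from $X^0\cong k$, using that all groups in negative degree vanish, and using at each stage that the rank of $d_p$ is bounded by $\dim X^0=\dim X^2=\cdots=1$ — forces $\dim X^s=1$ for $s$ even and $\ge 0$ and $\dim X^s=0$ for $s$ odd. Since $p$ is odd, the relation $Y=X[-p]$ then gives the stated description of $Y$.

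For the inductive step $r-1\Rightarrow r$, now $q=p^{r-1}\ge p$, and by \eqref{eq:Eisovanishing} the spectral sequence $E_{2,\ell}$ acquires $2q$ nonzero rows, grouped into a lower band $t\in[0,q-1]$ and an upper band $t\in[(p-1)q,pq-1]$; by \eqref{eq:Eisovanishing}, \eqref{eq:EEbariso} and \eqref{eq:rowisomorphisms} every row in the lower band is a degree shift of $X^\bullet$ and every row in the upper band is a degree shift of $Y^\bullet$ (for $\ell=0$; the two bands are interchanged for $\ell=1$), while the abutments remain as in Lemma~\ref{lem:abutment}. Following the de Rham complex machinery of \cite[\S4]{Friedlander:1997} adapted to $\bsp$, I would identify the ``short'' differentials internal to each band with twisted copies of the differentials of the corresponding spectral sequence for $r-1$ — known by the inductive hypothesis — by comparing with the $(r-1)$-level situation via precomposition with the Frobenius twist functor $\bsi_0^{(1)}$ (Section~\ref{subsubsec:precomposition}) and the super Cartier isomorphism (Theorem~\ref{thm:Cartieriso}); this collapses each band to a single effective line, after which the remaining ``long'' differential between the two lines is pinned down against the known abutment exactly as in the base case, yielding $Y^s\cong X^{s-p^r}$ together with the asserted graded dimensions of $X^\bullet$. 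Unwinding the reduction of the first paragraph then gives \eqref{eq:quadrupleeven} and \eqref{eq:quadrupleodd}. I expect this differential analysis to be the main obstacle: one must verify that the within-band maps really are the expected twisted classical differentials, and one must be especially careful with the long differential in total degree near $2p^r$ — precisely the spot where, as the introduction records, an earlier version of the argument erred (cf.\ the discussion preceding Lemma~\ref{lem:differential}).
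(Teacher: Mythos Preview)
Your base case $r=1$ is fine, and your reduction to computing $X^\bullet=\Ext_{\bsp}^\bullet(\bsi_0^{(r)},S_0^{q(1)})$ and $Y^\bullet=\Ext_{\bsp}^\bullet(\bsi_1^{(r)},S_0^{q(1)})$ via \eqref{eq:rowisomorphisms} is the right way to frame the problem. (Minor point: your justification of $X^0\cong k$ via restriction to $\cp$ is circular, since the isomorphism of the restriction map in degree~$0$ is Remark~\ref{rem:restrictionj=1}, which is a \emph{consequence} of this theorem. Just read $X^0\cong k$ off the abutment \eqref{eq:abutments}, as the paper does.)

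The real gap is your inductive step $r-1\Rightarrow r$. You propose to identify the within-band differentials of the level-$r$ Koszul-kernel spectral sequence with twisted copies of the level-$(r-1)$ differentials, using precomposition with $\bsi^{(1)}$ and the super Cartier isomorphism. Neither tool delivers this. Precomposition gives a map $\Ext_{\bsp}^\bullet(\bsi_\ell^{(r-1)},F)\to\Ext_{\bsp}^\bullet(\bsi_\ell^{(r)},F^{(1)})$, but you have no control over its injectivity or surjectivity at this point in the argument; in the classical case that is \cite[Corollary 4.9]{Friedlander:1997}, proved \emph{after} the analogue of this theorem, not before. And the super Cartier isomorphism $\Hbul(\bsK_{p^r})\cong\bsK_{p^{r-1}}^{(1)}$ (Proposition~\ref{prop:Cartierkernel}) does \emph{not} preserve cohomological degree, so it does not yield a map of spectral sequences relating the within-band differentials at level $r$ to anything at level $r-1$. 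There is no evident filtered-complex comparison that turns the lower band of $E_{2,\ell}$ for $\bsK_{p^r}$ into the full $(r-1)$ spectral sequence, and even if the bands did ``collapse to a single effective line,'' that line would carry an unknown graded vector space $X^\bullet$ rather than the inductively known $X_{r-1}^\bullet$.

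The paper's proof avoids induction on $r$ entirely. It runs induction on the cohomological degree $s$, bootstrapping between the $\ell=0$ and $\ell=1$ spectral sequences via the cross-band differential $d_{(p-1)q+1}$ (your ``long'' differential). The key mechanism is that the vanishing of the $\ell=1$ abutment forces $d_{(p-1)q+1}$ in $E_{2,1}$ to be an isomorphism at each new step, which feeds a nonzero term into the upper band of $E_{2,0}$; the vanishing of the $\ell=0$ abutment in high degree then forces the next $d_{(p-1)q+1}$ in $E_{2,0}$ to be an isomorphism, producing the next nonzero $X^s$. Your $r=1$ argument is exactly this bootstrap when the two bands are single rows; the content of the general case is making the same bootstrap work when the bands are thick, which requires the careful degree-by-degree accounting of Steps~2--5 (including the delicate total degree $2p^r-1$ in Step~4), not a comparison to level $r-1$. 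Incidentally, the earlier error you allude to was in Lemma~\ref{lem:differential}, which concerns the $\Ebar_{2,0}$ differential $\dbar_{q+1}$ used for the \emph{module} structure in Section~\ref{subsec:modulej=1}; it is not invoked in the proof of Theorem~\ref{thm:quadruplecalculation} itself.
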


\begin{proof}
The proof is by induction on $s$. To help avoid getting lost in a sea of spectral sequence notation, we break the proof into four steps that we illustrate in Figures \ref{fig:step1}--\ref{fig:step5}. The figures are drawn for the case $p=5$ and $r=2$, but are representative (via an appropriate rescaling) of the general situation. The figures illustrate the information about each spectral sequence that has been explicitly determined, or can be deduced using the isomorphisms and equalities preceding the theorem, at the completion of that step of the proof. In each figure, a solid horizontal line indicates a row in which terms may be nonzero. On those lines, an open dot at the point $(s,t)$ means that the corresponding term is zero, a closed dot at $(s,t)$ means that the corresponding term is isomorphic to $k$, and a lack of any dot indicates that no information about that term has yet been determined. Arrows indicate that the corresponding differential has been determined to be an isomorphism. In the diagrams for $\Ebar_{2,0}$, a dashed line passes through terms of total degree $2p^r-1$.

\emph{Step 1.} Since \eqref{eq:rowisomorphisms} is valid for all $s \in \Z$, and since \eqref{eq:ellOprspecseq} and \eqref{eq:ellKprspecseq} are first quadrant spectral sequences, \eqref{eq:quadrupleeven} and \eqref{eq:quadrupleodd} are true for $s < 0$. Next, it follows from \eqref{eq:abutments} that $\Ebar_{2,0}^{0,0} = k$ and $\Ebar_{2,1}^{0,0} = 0$, and hence that \eqref{eq:quadrupleeven} and \eqref{eq:quadrupleodd} are true if $s = 0$. See Figure \ref{fig:step1}.

\emph{Step 2.} Since $\Ebar_{\infty,1} = 0$, it follows that $\Ebar_{2,1}^{s,0} \neq 0$ only if there exist integers $i,j$ with $i+j=s-1$ and $i \leq s-2$ such that $\Ebar_{2,1}^{i,j} \neq 0$. Now an induction argument using the information from Step 1 and the isomorphism $\Ebar_{2,1}^{s,0} \cong \Ebar_{2,1}^{s+q-1,q}$ shows that $\Ebar_{2,1}^{s,0} = 0$ for $s \leq p^r-1$. Then \eqref{eq:quadrupleodd} is true for $s \leq p^r-1$. See Figure \ref{fig:step2}.

\emph{Step 3.} Since $\Ebar_{\infty,0}^{s,t} = 0$ if $s+t \notin \{0,2p^r-1 \}$, and since $\Ebar_{2,0}^{s,(p-1)q} = \Ebar_{2,0}^{s,pq} = 0$ for $s \leq p^r+q-2$ by Step 2, it follows for $0 < s < 2p^r-1$ that the differential on the $(q+1)$-th page of \eqref{eq:ellOprspecseq} induces an isomorphism $\dbar_{q+1}: \Ebar_{2,0}^{s-q-1,q} \simrightarrow \Ebar_{2,0}^{s,0}$. Then an induction argument like that in the proof of \cite[(4.5.5)]{Friedlander:1997} shows that \eqref{eq:quadrupleeven} is true for $s < 2p^r-1$. See Figure \ref{fig:step3}.

\emph{Step 4.} Now combining the validity of \eqref{eq:quadrupleeven} for $s < 2p^r-1$ with \eqref{eq:Eisovanishing}, \eqref{eq:EEbariso}, and the validity of \eqref{eq:quadrupleodd} for $s \leq p^r-1$, it follows that all terms of total degree $2p^r-1$ in $E_{2,0}$ are zero except perhaps $E_{2,0}^{2p^r-1,0}$ and $E_{2,0}^{p^r+q-1,(p-1)q}$, and that $E_{2,0}^{2p^r-q-1,q-1} \cong k$ is the only nonzero term of total degree $2p^r-2$ in $E_{2,0}$. But $E_{\infty,0}$ is one-dimensional in total degree $2p^r-2$ by \eqref{eq:abutments}, so $E_{2,0}^{2p^r-q-1,q-1}$ must survive to the abutment, and hence the differential $d_{q+1}: E_{2,0}^{2p^r-q-1,q-1} \rightarrow E_{2,0}^{2p^r-1,0}$ must be trivial. Now since $E_{\infty,0} = 0$ in all odd total degrees, it follows that $E_{2,0}^{2p^r-1,0} = 0$. Then by \eqref{eq:EEbariso}, the statement \eqref{eq:quadrupleeven} is true for $s = 2p^r-1$ as well. See Figure \ref{fig:step4}.

\emph{Step 5.} We have established so far that \eqref{eq:quadrupleeven} is true for $s < 2p^r$, and that \eqref{eq:quadrupleodd} is true for $s < p^r$. Given $m \in \N$, set $u = p^r+2qm$ and set $v = u+p^r = 2p^r+2qm$. We now proceed by induction on $m$ to show that if \eqref{eq:quadrupleeven} is true for $s < v$ and if \eqref{eq:quadrupleodd} is true for $s < u$, then \eqref{eq:quadrupleeven} is true for $s < v+2q$ and \eqref{eq:quadrupleodd} is true for $s < u+2q$.

First, it follows from the induction hypothesis and \eqref{eq:Eisovanishing} and \eqref{eq:EEbariso} that the nonzero terms $E_{2,1}^{s,t}$ (resp.\ $E_{2,0}^{s,t}$) with $(p-1)q \leq t < pq$ that have been explicitly determined so far are in distinct even (resp.\ odd) total degrees. Similarly, the nonzero terms $E_{2,1}^{s,t}$ (resp.\ $E_{2,0}^{s,t}$) with $0 \leq t < q$ that have been explicitly determined so far are in distinct odd (resp.\ even) total degrees. Then since $E_{\infty,1} = 0$, there must be a nontrivial differential originating at the term $E_{2,1}^{u-(p-1)q-1,(p-1)q} \cong k$. By the induction hypothesis, $E_{2,1}^{s,t} = 0$ for all $s,t$ with $0 < t < q$ and $s+t=u$. Then the differential
\begin{equation} \label{eq:Kpr1differential}
d_{(p-1)q+1}: E_{2,1}^{u-(p-1)q-1,(p-1)q} \rightarrow E_{2,1}^{u,0}
\end{equation}
must be nontrivial. By the induction hypothesis there are no other nontrivial terms of total degree $u-1$ in $E_{2,1}$, so since $E_{\infty,1} = 0$, this differential must be an isomorphism. Then $k \cong E_{2,1}^{u,0} \cong \Ebar_{2,1}^{u,0}$, and by a similar argument as for Step 2, we deduce from the induction hypothesis that $\Ebar_{2,1}^{s,0} = 0$ for $u < s < u+2q$. Then \eqref{eq:quadrupleodd} is true for $s < u+2q$.

Applying \eqref{eq:rowisomorphisms}, \eqref{eq:Eisovanishing}, and  \eqref{eq:EEbariso}, we now get that $E_{2,0}^{u+q-1,(p-1)q} \cong k$. Reasoning as in the previous paragraph, we also see that this term cannot be the image of any nontrivial differential. But $E_{\infty,0}$ is zero in all total degrees $\geq 2p^r-1$, so we deduce that there must be a nontrivial differential originating at $E_{2,0}^{u+q-1,(p-1)q}$. By the induction hypothesis, $E_{2,0}^{s,t} = 0$ for all $s,t$ with $0 < t < q$ and $s+t=u+pq-1$. Then as in the previous paragraph, we deduce that the differential
\begin{equation} \label{eq:Kpr0differential}
d_{(p-1)q+1}: E_{2,0}^{u+q-1,(p-1)q} \rightarrow E_{2,0}^{u+pq,0} = E_{2,0}^{v,0}
\end{equation}
is an isomorphism. So $k \cong E_{2,0}^{v,0} \cong \Ebar_{2,0}^{v,0}$. Finally, again arguing as in Step 2, it follows from the induction hypothesis that $\Ebar_{2,0}^{s,0} = 0$ for $v < s < v+2q$ and hence that \eqref{eq:quadrupleeven} is true for $s < v+2q$; see Figure \ref{fig:step5}. This completes the induction argument laid out at the beginning of Step 5, and hence completes the inductive proof that \eqref{eq:quadrupleeven} and \eqref{eq:quadrupleodd} are true for all $s$; see Figure \ref{fig:endsteps}.
\end{proof}

\subsection{Module structure of \texorpdfstring{$\Ext_{\bsp}^\bullet(\bsir,{S_0}^{p^{r-1}(1)})$}{ExtP(Ir,S0pr-1(1))}} \label{subsec:modulej=1}

In this section we continue our investigation of the hypercohomology spectral sequences \eqref{eq:ellOprspecseq} and \eqref{eq:ellKprspecseq}, with the goal of describing the structure of $\Ext_{\bsp}^\bullet(\bsir,{S_0}^{p^{r-1}(1)})$ as a right module over the Yoneda algebra $\Ext_{\bsp}^\bullet(\bsir,\bsir)$.

\textbf{In this section}, set $q = p^{r-1}$.

\begin{lemma} \label{lem:differential}
In the spectral sequence $\Ebar_{2,0}^{s,t} \Rightarrow \bbExt_{\bsp}^{s+t}(\bsirzero,\bso_{p^r})$, the differential
\[
\dbar_{q+1}: \Ebar_{2,0}^{s-q-1,q} \rightarrow \Ebar_{2,0}^{s,0}
\]
is an isomorphism if $s \equiv 0 \mod 2p^{r-1}$ and $s > 0$, and hence is an isomorphism for all $s > 0$.
\end{lemma}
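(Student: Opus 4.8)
The plan is to split on the value of $s$. Write $q = p^{r-1}$. By Theorem~\ref{thm:quadruplecalculation}, both the source $\Ebar_{2,0}^{s-q-1,q}$ and the target $\Ebar_{2,0}^{s,0}$ of $\dbar_{q+1}$ are isomorphic to $k$ whenever $s \equiv 0 \mod 2q$ and $s \geq 2q$ (apply \eqref{eq:quadrupleeven} to $\Ebar_{2,0}^{s,0}$, and to $\Ebar_{2,0}^{s-2q,0} \cong \Ebar_{2,0}^{s-q-1,q}$), and every positive multiple of $2q$ is at least $2q$; so the lemma asserts exactly that $\dbar_{q+1}$ is nonzero in these degrees. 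It is convenient to reformulate this: tracing $\dbar_{q+1}$ through the degree-shift isomorphisms of Lemma~\ref{lem:degreeshift} and Remark~\ref{rem:Yonedacompatible} and the identifications \eqref{eq:Ebaridentification}, one sees that in total degree $s$ the differential $\dbar_{q+1}$ becomes right multiplication by a fixed class $e \in \Ext_{\bsp}^{2p^{r-1}}(\bsirzero,\bsirzero)$ from $\Ext_{\bsp}^{s-2q}(\bsirzero,S_0^{q(1)})$ to $\Ext_{\bsp}^{s}(\bsirzero,S_0^{q(1)})$; since $e$ restricts under $\bsp \to \cp$ to the Friedlander--Suslin class $e_r$, it is a unit multiple of the distinguished class $\bse_r'$. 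Thus the lemma is equivalent to the statement that $\Ext_{\bsp}^\bullet(\bsirzero,S_0^{q(1)})$ is free of rank one over $k[\bse_r']$, generated by the iterated Frobenius homomorphism $\bsirzero \to S_0^{q(1)}$.

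For $0 < s < 2p^r - 1$ this is precisely Step~3 of the proof of Theorem~\ref{thm:quadruplecalculation} (equivalently, it follows by restriction from $\bsp$ to $\cp$ and the calculations of \cite{Friedlander:1997}). Since $2p^r = 2qp$, the positive multiples of $2q$ below $2p^r - 1$ are exactly $2q, 4q, \dots, 2p^r - 2q$, so all that remains is the sequence of degrees $s = 2p^r + 2qm$, $m \geq 0$.

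For these degrees the formal argument fails, and this is the crux. Here the abutment $\bbExt_{\bsp}^{s}(\bsirzero,\bso_{p^r})$ vanishes (Lemma~\ref{lem:abutment}), so $\Ebar_{2,0}^{s,0} \cong k$ must be killed by some incoming differential, but besides $\dbar_{q+1}$ there is a competing differential $\dbar_{(p-1)q+1}$ whose source is also one-dimensional for such $s$ by \eqref{eq:Ebaridentification}, \eqref{eq:rowisomorphisms} and \eqref{eq:quadrupleodd}; a bookkeeping check shows that the abutment does not distinguish the two scenarios --- and it is exactly the wrong choice here that spoiled the earlier draft. In the reformulation above, what must be excluded is that $\bse_r'$ kills $\Ext_{\bsp}^{2p^r - 2q}(\bsirzero,S_0^{q(1)})$, i.e.\ that $(\bse_r')^p$ annihilates the degree-zero generator --- and this cannot be decided within $\bsp$ or $\cp$, since $\bse_r'$ restricts to the nilpotent class $e_r$ with $e_r^{\,p}=0$. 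The resolution is to restrict to the $r$-th Frobenius kernel of $GL(m|n)$: fixing $m,n \geq p^r$ and $V = k^{m|n}$, evaluation on $V$ (Theorem~\ref{thm:enoughprojectives}) followed by restriction to $GL(m|n)_{(r)}$ --- on which $\bsirzero(V) = (\Vzero)^{(r)}$ is trivial --- carries $\Ext_{\bsp}^\bullet(\bsirzero,S_0^{q(1)})$ to $\Hom_k((\Vzero)^{(r)},k) \otimes \Hbul(GL(m|n)_{(r)},S^q((\Vzero)^{(1)}))$, compatibly with multiplication by the restriction of $\bse_r'$. By the analysis of the restrictions of the distinguished classes carried out in Section~\ref{subsec:fgpgeq3} together with the cohomology computations of \cite{Drupieski:2013b}, that restriction is a \emph{non-nilpotent} class in $\Hbul(GL(m|n)_{(r)},k)$, realizing one of the extension classes predicted in \cite[\S5.4]{Drupieski:2013b}, and it acts freely in the degrees at hand; hence $(\bse_r')^p$ does not annihilate the generator, $\dbar_{q+1}$ is nonzero at $s = 2p^r$, and then, multiplication by $\bse_r'$ being an isomorphism on each of the one-dimensional pieces in play, the cases $s = 2p^r + 2qm$ follow by induction on $m$. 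Equivalently, $\Ext_{\bsp}^\bullet(\bsirzero,S_0^{q(1)})$ is free of rank one over $k[\bse_r']$, which proves the lemma.

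The genuine obstacle is this last step: one must identify the restriction of $\bse_r'$ to $GL(m|n)_{(r)}$ with a specific non-nilpotent cohomology class and invoke the computations of \cite{Drupieski:2013b} and of Section~\ref{subsec:fgpgeq3} with enough precision to conclude that it acts freely on the relevant cohomology. Everything preceding it is formal spectral-sequence bookkeeping, closely following \cite[\S4]{Friedlander:1997}.
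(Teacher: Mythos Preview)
Your strategy matches the paper's: reinterpret $\dbar_{q+1}$ as right multiplication by $\bse_r$ on $\Ext_{\bsp}^\bullet(\bsirzero,S_0^{q(1)})$ (so the lemma is equivalent to $(\bse_r)^\ell \neq 0$ for all $\ell$), use Step~3 of Theorem~\ref{thm:quadruplecalculation} for $\ell < p$, and then show $(\bse_r)^p \neq 0$ by restricting to a Frobenius kernel of $GL(m|n)$. For the passage to $\ell > p$ you should invoke the $2p^r$-periodicity \eqref{eq:2prshifteven} of Lemma~\ref{lem:Kprdegreeshift}, which is independent of the present lemma; that is what the paper uses, not a bare induction on $m$.

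The gap is in the final step. The paper restricts to $G_1 = GL(m|n)_1$, not $G_r$, and the analysis in Section~\ref{subsec:fgpgeq3} that you cite concerns restriction to $G_1$ only. More to the point, neither Section~\ref{subsec:fgpgeq3} nor \cite{Drupieski:2013b} yields non-nilpotence as a black box. Condition~\ref{eq:ertoG1} identifies $\bse_r|_{G_1}$, viewed as a map $(\g_m^*)^{(r)} \to \Hbul(G_1,k)$, with the composite of the $p^{r-1}$-power map and the horizontal edge map of the May spectral sequence \eqref{eq:Mayspecseq}; but that edge map is \emph{not} injective (compare the role of the supertrace in the proof of Proposition~\ref{prop:GLmncohomologyring}), so \ref{eq:ertoG1} alone does not show any cup power is nonzero. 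The paper's actual argument writes $\bse_r|_{G_1} = \lambda \sum_{i,j} X_{ij}^{p^{r-1}} \otimes e_{ij}^{(r)}$ in $\opH^{2p^{r-1}}(G_1,k)\otimes\g_m^{(r)}$, computes $(\bse_r|_{G_1})^p$ as the corresponding matrix product, and then restricts \emph{further} to the two-dimensional restricted sub-Lie-superalgebra $\mathfrak{s}\subset\glmn$ spanned by $x = e_{m,m}+e_{m+1,m+1}$ and $y = e_{m+1,m}+e_{m,m+1}$, where the $(m,m)$-entry of that product becomes the image of $(x^*)^{p^r}$ under the edge map for $\mathfrak{s}$. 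The explicit cochain computation of Example~\ref{ex:twodim} then shows this class is nonzero. That sub-superalgebra trick is the missing idea you are pointing toward but not supplying; it is new to this paper and is not to be found in \cite{Drupieski:2013b}.
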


\begin{proof}
We defer the proof until Section \ref{subsec:fgpgeq3}, though we observe that the last claim follows from the first since by Theorem \ref{thm:quadruplecalculation} the only time $s > 0$ and the domain and codomain of the differential are not both $0$ is when $s \equiv 0 \mod 2p^{r-1}$. 
\end{proof}

\begin{remark} \label{rem:restrictionj=1}
Let $\Ebar$ be the spectral sequence considered in Lemma \ref{lem:differential}, and for the purposes of this remark only, let
\[
E_2^{s,t} = \Ext_{\cp}^s(I^{(r)},\Omega_q^{t(1)}) \Rightarrow \bbExt_{\cp}^{s+t}(I^{(r)},\Omega_{p^r})
\]
be the second hypercohomology spectral sequence in the category $\cp$ for the ordinary de Rham complex $\Omega_{p^r}$; this is one of the spectral sequences that was the focus of attention in the proof of the case $j=1$ of \cite[Theorem 4.5]{Friedlander:1997}. By general abstract nonsense, the restriction functor $\bsp \rightarrow \cp$ induces a homomorphism of spectral sequences $\Ebar \rightarrow E$ such that the induced map $\Ebar_2 \rightarrow E_2$ identifies with the restriction homomorphism described in Remark \ref{rem:restriction}. Then it follows from Remark \ref{rem:restrictcommute} that this homomorphism gives rise for each $s$ to a commutative diagram
\[
\xymatrix{
\Ebar_2^{s,0} \ar@{->}[r]^-{\sim} \ar@{->}[d] & \Ebar_2^{s+q-1,q} \ar@{->}[d] \ar@{->}[r]^-{\dbar_{q+1}} & \Ebar_2^{s+2q,0} \ar@{->}[d] \\
E_2^{s,0} \ar@{->}[r]^-{\sim} & E_2^{s+q-1,q} \ar@{->}[r]^-{d_{q+1}} & E_2^{s+2q,0}
}
\]
in which the vertical arrows are the corresponding restriction homomorphisms. The $p^{r-1}$-power map $\varphi_{r-1}: \bsirzero \rightarrow {S_0}^{p^{r-1}(1)}$ spans $\Hom_{\bsp}(\bsirzero,{S_0}^{p^{r-1}(1)})$, and its restriction spans $\Hom_{\cp}(I^{(r)},S^{p^{r-1}(1)})$, so the restriction map $\Ebar_2^{0,0} \rightarrow E_2^{0,0}$ is an isomorphism. Then applying Lemma \ref{lem:differential}, and using the fact from the $j=0$ case of \cite[(4.5.6)]{Friedlander:1997} that $d_{q+1}: E_2^{s-q-1,q} \rightarrow E_2^{s,0}$ is an isomorphism for all $0 < s < 2p^r$, it follows via induction that the restriction map
\[
\Ext_{\bsp}^s(\bsi_0^{(r)},S_0^{p^{r-1}(1)}) \rightarrow \Ext_{\cp}^s(I^{(r)},S^{p^{r-1}(1)})
\]
is an isomorphism for all $0 \leq s < 2p^r$.
\end{remark}

\begin{lemma} \label{lem:Kprdegreeshift}
Let $r,s \in \N$ with $r \geq 1$. There exist isomorphisms of right $\Ext_{\bsp}^\bullet(\bsir,\bsir)$-modules
\begin{align}
\Ext_{\bsp}^s(\bsir,S_1^{q(1)}) &\cong \Ext_{\bsp}^{s+q-1}(\bsir,\Lambda_1^{q(1)}) \cong \Ext_{\bsp}^{s+p^r}(\bsir,S_0^{q(1)}), \text{ and} \label{eq:prshift110} \\
\Ext_{\bsp}^s(\bsir,S_0^{q(1)}) &\cong \Ext_{\bsp}^{s+q-1}(\bsir,\Lambda_0^{q(1)}) \cong \Ext_{\bsp}^{s+p^r}(\bsir,S_1^{q(1)}). \label{eq:prshift001}
\end{align}
In particular, there exist isomorphisms of right $\Ext_{\bsp}^\bullet(\bsir,\bsir)$-modules
\begin{align}
\Ext_{\bsp}^s(\bsir,S_1^{q(1)}) &\cong \Ext_{\bsp}^{s+2p^r}(\bsir,S_1^{q(1)}), \text{ and} \label{eq:2prshiftodd} \\
\Ext_{\bsp}^s(\bsir,S_0^{q(1)}) &\cong \Ext_{\bsp}^{s+2p^r}(\bsir,S_0^{q(1)}). \label{eq:2prshifteven}
\end{align}
\end{lemma}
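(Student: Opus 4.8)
The plan is to assemble each chain in \eqref{eq:prshift110} and \eqref{eq:prshift001} from two ingredients — a ``short'' degree shift furnished by a twisted Koszul complex, and a ``long'' shift extracted from the hypercohomology of the super Koszul kernel complex — and then to obtain \eqref{eq:2prshiftodd} and \eqref{eq:2prshifteven} by splicing the resulting chains together. Throughout, the guiding principle is Remark \ref{rem:Yonedacompatible}: a degree-shift isomorphism realized as left Yoneda multiplication by a fixed extension class is automatically an isomorphism of right $\Ext_{\bsp}^\bullet(\bsir,\bsir)$-modules, since left and right Yoneda multiplication commute by associativity of the product \eqref{eq:Yonedaproduct}; and likewise every differential in one of our hypercohomology spectral sequences — which, by Section \ref{subsec:hypercohomology}, is a spectral sequence of right $\Ext_{\bsp}^\bullet(\bsir,\bsir)$-modules — is a morphism of right modules.

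First I would dispose of the leftmost isomorphism in each line. Since $\bsir = \bsi^{(r)}$ is additive, the first isomorphism of \eqref{eq:prshift001} is the case $F = \bsir$, $n = q$ of \eqref{eq:evendegreeshift} with the twisting exponent there set to $1$, and the first isomorphism of \eqref{eq:prshift110} is the corresponding case of Corollary \ref{cor:degreeshift}; both are realized (after possibly conjugating by $\Pi$, which is also module-compatible by Section \ref{subsubsec:conjugationbyPi}) as left Yoneda multiplication by the extension class of a twisted ordinary Koszul complex, hence are isomorphisms of right $\Ext_{\bsp}^\bullet(\bsir,\bsir)$-modules. It then remains to produce the ``linking'' isomorphisms $\Ext_{\bsp}^s(\bsir,\Lambda_1^{q(1)}) \cong \Ext_{\bsp}^{s+(p-1)q+1}(\bsir,S_0^{q(1)})$ and $\Ext_{\bsp}^s(\bsir,\Lambda_0^{q(1)}) \cong \Ext_{\bsp}^{s+(p-1)q+1}(\bsir,S_1^{q(1)})$, the number $(p-1)q+1 = p^r-(q-1)$ accounting exactly for the difference between the two shifts in each line. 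Via the conjugation action of $\Pi$ — using $\Pi \circ \bsir \circ \Pi = \bsir$ together with $(\Lambda_1^{q(1)})^\Pi \cong \Lambda_0^{q(1)}$ and $(S_0^{q(1)})^\Pi \cong S_1^{q(1)}$ from \eqref{eq:conjugatebsir} and \eqref{eq:conjugateiso}, valid since $q = p^{r-1}$ is odd — the second linking isomorphism follows from the first, so I need only the first.

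For that I would use the hypercohomology spectral sequence \eqref{eq:Kprspecseq}, $E_2^{s,t} = \Ext_{\bsp}^s(\bsir,\opH^t(\bsK_{p^r})) \Rightarrow \bbExt_{\bsp}^{s+t}(\bsir,\bsK_{p^r})$, a spectral sequence of right $\Ext_{\bsp}^\bullet(\bsir,\bsir)$-modules. By Lemma \ref{lem:Kdegreeshift} and Remark \ref{rem:Yonedacompatible}, its $E_2$-page is concentrated in the ``low band'' $0 \le t \le q-1$, where $E_2^{s,t}$ is a right-module degree shift of $\Ext_{\bsp}^\bullet(\bsir,S_0^{q(1)})$, and the ``high band'' $q(p-1) \le t \le pq-1$, where $E_2^{s,t}$ is a right-module degree shift of $\Ext_{\bsp}^\bullet(\bsir,\Lambda_1^{q(1)})$; all other columns vanish. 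Summing \eqref{eq:quadrupleeven}--\eqref{eq:quadrupleodd} over $\ell$ determines all of these terms, and — exactly as in Steps 4--5 of the proof of Theorem \ref{thm:quadruplecalculation}, where the differentials \eqref{eq:Kpr1differential} and \eqref{eq:Kpr0differential} were shown to be isomorphisms by means of Lemma \ref{lem:differential} — this information forces every differential of page $\le (p-1)q$ on the two columns $t = 0$ and $t = q(p-1)$ to vanish, and forces $d_{(p-1)q+1}$ to be an isomorphism in every bidegree in which its source or target is nonzero. Composing this $d_{(p-1)q+1}$ with the band identifications of Lemma \ref{lem:Kdegreeshift} gives the first linking isomorphism as a right-module map; together with the previous paragraph this yields \eqref{eq:prshift110} and \eqref{eq:prshift001}, and composing the two chains gives \eqref{eq:2prshiftodd} and \eqref{eq:2prshifteven}.

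The hard part is the middle of the last paragraph: showing that on the two distinguished columns of \eqref{eq:Kprspecseq} the only surviving differential is $d_{(p-1)q+1}$ and that it is an isomorphism in \emph{all} the relevant bidegrees, not merely in the finitely many ones isolated while proving Theorem \ref{thm:quadruplecalculation}. This is precisely where Lemma \ref{lem:differential} (proved afterwards, in Section \ref{subsec:fgpgeq3}) is indispensable; the remainder is routine bookkeeping with the two hypercohomology spectral sequences and the arithmetic of their $E_2$-pages already recorded in \eqref{eq:rowisomorphisms}--\eqref{eq:abutments}.
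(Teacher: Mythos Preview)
Your overall strategy matches the paper's: use twisted Koszul complexes for the short shifts, extract the long shift from the differential $d_{(p-1)q+1}$ in the spectral sequence \eqref{eq:Kprspecseq}, use $\Pi$-conjugation to get \eqref{eq:prshift001} from \eqref{eq:prshift110}, and compose. The module compatibility via Remark \ref{rem:Yonedacompatible} and Section \ref{subsec:hypercohomology} is exactly right.

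However, you have misidentified the key input, and the error is not harmless. Lemma \ref{lem:differential} is \emph{not} used here, and invoking it would make the paper circular: the proof of Lemma \ref{lem:differential} in Section \ref{subsec:fgpgeq3} explicitly appeals to the isomorphism \eqref{eq:2prshifteven}, with the parenthetical remark ``whose validity does not depend on Lemma \ref{lem:differential}.'' Your assertion that Steps 4--5 of the proof of Theorem \ref{thm:quadruplecalculation} proceed ``by means of Lemma \ref{lem:differential}'' is also wrong; those steps are a self-contained induction on $m$ using only the abutments \eqref{eq:abutments}, and they determine the $E_2$-page in \emph{all} bidegrees, not just finitely many.

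What actually forces $d_{(p-1)q+1}$ to be an isomorphism everywhere is the following parity argument, which is the content of the paper's proof. By Theorem \ref{thm:quadruplecalculation} together with \eqref{eq:Eisovanishing}--\eqref{eq:EEbariso}, the nonzero terms of $E_{2,\ell}$ lie in pairwise distinct total degrees, all of a single parity depending on $\ell$. Since $E_{\infty,1} = 0$ and $E_{\infty,0}^{s,t} = 0$ for $s+t \geq 2p^r$ by \eqref{eq:abutments}, every nonzero term outside the surviving range must be killed; but there is at most one nonzero term in each total degree, so the only available differential $d_{(p-1)q+1}$ must be an isomorphism whenever its source or target is nonzero. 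No appeal to Lemma \ref{lem:differential} (which concerns $\dbar_{q+1}$ in the \emph{other} spectral sequence \eqref{eq:Oprspecseq}) is needed.
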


\begin{proof}
It suffices to prove \eqref{eq:prshift110}, since then \eqref{eq:prshift001} is defined to be the isomorphism obtained from \eqref{eq:prshift110} by conjugating by the parity change functor $\Pi$, and then \eqref{eq:2prshiftodd} and \eqref{eq:2prshifteven} follow from composing \eqref{eq:prshift110} and \eqref{eq:prshift001}. The first isomorphism in \eqref{eq:prshift110} holds by Corollary \ref{cor:degreeshift} and Remark \ref{rem:Yonedacompatible}, so it suffices to establish the second isomorphism in \eqref{eq:prshift110}.

Theorem \ref{thm:quadruplecalculation} implies that the nonzero terms in the $E_2$-page of \eqref{eq:ellKprspecseq} are in distinct total degrees, and the total degree of a nonzero term satisfies a parity condition depending on the term's row; cf.\ Step 5 of the proof of Theorem \ref{thm:quadruplecalculation}. Then since $E_{\infty,0}^{s,t} = 0$ if $s+t \geq 2p^r$, and since $E_{\infty,1} = 0$, it follows that each differential in \eqref{eq:ellKprspecseq} that can possibly be nontrivial must be nontrivial. Specifically, if $0 \leq t < q $ and $s \equiv 0 \mod 2q$ with $s \geq 2p^r$, then
\[
d_{(p-1)q+1}: E_{2,0}^{s-(p-1)q-1+t,(p-1)q+t} \rightarrow E_{2,0}^{s+t,t}
\]
must be an isomorphism, and similarly, if $s \equiv p^r \mod 2q$ and $s \geq p^r$, then
\[
d_{(p-1)q+1}: E_{2,1}^{s-(p-1)q-1+t,(p-1)q+t} \rightarrow E_{2,1}^{s+t,t}
\]
must be an isomorphism. Combining the $t=0$ cases of these two observations, it follows for all $s \geq 0$ that the differential
\[
d_{(p-1)q+1}: E_2^{s+q-1,(p-1)q} \rightarrow E_2^{s+p^r,0}
\]
in \eqref{eq:Kprspecseq} defines an isomorphism $\Ext_{\bsp}^{s+q-1}(\bsir,\Lambda_1^{q(1)}) \cong \Ext_{\bsp}^{s+p^r}(\bsir,S_0^{q(1)})$. This isomorphism is moreover compatible with the right action of $\Ext_{\bsp}^\bullet(\bsir,\bsir)$, since \eqref{eq:Kprspecseq} is a spectral sequence of right $\Ext_{\bsp}^\bullet(\bsir,\bsir)$-modules.
\end{proof}

In the rest of this section we apply Lemmas \ref{lem:differential} and \ref{lem:Kprdegreeshift}, and the results of Section \ref{subsec:vectorspacej=1}, to describe $\Ext_{\bsp}^\bullet(\bsir,{S_0}^{q(1)})$ as a right module over the Yoneda algebra $\Ext_{\bsp}^\bullet(\bsir,\bsir)$. We also make use of Corollaries \ref{cor:inductionresult} and \ref{cor:powermap}, which do not rely on the results of this section.

Observe that $\Ext_{\bsp}^\bullet(\bsir,S_0^{q(1)})$ identifies with the bottom row of the spectral sequence \eqref{eq:Oprspecseq}. Applying \eqref{eq:evendegreeshift}, the differential $\dbar_{q+1}: \Ebar_2^{s-q-1,q} \rightarrow \Ebar_2^{s,0}$ in \eqref{eq:Oprspecseq} identifies with an $\Ext_{\bsp}^\bullet(\bsir,\bsir)$-equivariant map
\begin{equation} \label{eq:dq+1}
\dbar_{q+1}: \Ext_{\bsp}^{s-2q}(\bsir,S_0^{q(1)}) \rightarrow \Ext_{\bsp}^s(\bsir,S_0^{q(1)}).
\end{equation}
Theorem \ref{thm:quadruplecalculation} and Corollary \ref{cor:powermap} imply that $\Ext_{\bsp}^\bullet(\bsir,S_0^{q(1)})$ is generated over $\Ext_{\bsp}^\bullet(\bsir,\bsir)$ by the $p^{r-1}$-power map $\varphi_{r-1}: \bsirzero \rightarrow S_0^{q(1)}$, which spans $\Hom_{\bsp}(\bsir,S_0^{q(1)})$. More specifically, we get that each $z' \in \Ext_{\bsp}^s(\bsir,S_0^{q(1)})$ can be uniquely expressed in the form $z' = \varphi_{r-1} \cdot z$ for some $z \in \Ext_{\bsp}^s(\bsirzero,\bsirzero)$ or $z \in \Ext_{\bsp}^s(\bsirone,\bsirzero)$. Now fix $\bse_r \in \Ext_{\bsp}^{2q}(\bsirzero,\bsirzero)$ such that
\begin{equation} \label{eq:erdefinition}
\dbar_{q+1}(\varphi_{r-1}) = \varphi_{r-1} \cdot \bse_r.
\end{equation}
Then \eqref{eq:dq+1} takes the form $\dbar_{q+1}(\varphi_{r-1} \cdot z) = \varphi_{r-1} \cdot \bse_r \cdot z$. In particular, applying Lemma \ref{lem:differential} we get by induction for all $\ell \geq 0$ that $\Ext_{\bsp}^{2q\ell}(\bsirzero,S_0^{q(1)})$ is spanned by $\varphi_{r-1} \cdot (\bse_r)^\ell$, and hence that $\Ext_{\bsp}^{2q\ell}(\bsirzero,\bsirzero)$ is spanned by $(\bse_r)^\ell$. (Even without Lemma \ref{lem:differential}, whose proof we have not yet given, Step 3 in the proof of Theorem \ref{thm:quadruplecalculation} shows that this holds for $0 \leq \ell < p$.)

Now define $\bsc_r \in \Ext_{\bsp}^{p^r}(\bsirone,\bsirzero)$ such that $\varphi_{r-1} \cdot \bsc_r$ is the image of $\varphi_{r-1}^\Pi = \Pi \circ \varphi_{r-1} \circ \Pi$ under the composite isomorphism \eqref{eq:prshift110}. In other words,
\begin{equation} \label{eq:crdefinition}
d_{(p-1)q+1} \left( (Kz_q^{(1)})^\Pi \cdot \varphi_{r-1}^\Pi \right) = \varphi_{r-1} \cdot \bsc_r.
\end{equation}
Then $\varphi_{r-1}^\Pi \cdot \bsc_r^\Pi = (\varphi_{r-1} \cdot \bsc_r)^\Pi$ is the image of $\varphi_{r-1}$ under \eqref{eq:prshift001}, and $\varphi_{r-1} \cdot (\bsc_r \cdot \bsc_r^\Pi)$ is the image of $\varphi_{r-1}$ under \eqref{eq:2prshifteven}. In particular, $\bsc_r \cdot \bsc_r^\Pi$ spans $\Ext_{\bsp}^{2p^r}(\bsirzero,\bsirzero)$. But by the previous paragraph, $(\bse_r)^p$ also spans $\Ext_{\bsp}^{2p^r}(\bsirzero,\bsirzero)$, so $\bsc_r \cdot \bsc_r^\Pi = \mu_r \cdot (\bse_r)^p$ for some nonzero scalar $\mu_r \in k$.

\begin{proposition} \label{prop:basesj=1}
Let $\bse_r$ and $\bsc_r$ be as defined in \eqref{eq:erdefinition} and \eqref{eq:crdefinition}. Then:
\begin{enumerate}
\item The set $\{ \varphi_{r-1} \cdot (\bse_r)^\ell: \ell \in \N \}$ is a basis for $\Ext_{\bsp}^\bullet(\bsi_0^{(r)},S_0^{p^{r-1}(1)})$.

\item The set $\{ \varphi_{r-1} \cdot (\bse_r)^\ell \cdot \bsc_r : \ell \in \N \}$ is a basis for $\Ext_{\bsp}^\bullet(\bsi_1^{(r)},S_0^{p^{r-1}(1)})$.
\end{enumerate}
\end{proposition}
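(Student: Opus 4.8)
The plan is to deduce both statements from Theorem~\ref{thm:quadruplecalculation} together with the relations among $\varphi_{r-1}$, $\bse_r$, $\bsc_r$, $\bsc_r^\Pi$ established in the paragraphs preceding the proposition; throughout, $q = p^{r-1}$. For (1): by \eqref{eq:Ebaridentification} the group $\Ext_{\bsp}^s(\bsirzero,S_0^{q(1)})$ is the bottom-row term $\Ebar_{2,0}^{s,0}$, which by Theorem~\ref{thm:quadruplecalculation} is one-dimensional when $s = 2q\ell$ ($\ell \in \N$) and zero otherwise, while the discussion before the proposition (via Lemma~\ref{lem:differential}) shows that $\varphi_{r-1} \cdot (\bse_r)^\ell$ spans the degree-$2q\ell$ component. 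As these elements lie in distinct degrees and exhaust the nonzero degrees, they form a basis, proving (1). Transporting along the isomorphism $\varphi_{r-1}\cdot \colon \Ext_{\bsp}^\bullet(\bsirzero,\bsirzero) \simrightarrow \Ext_{\bsp}^\bullet(\bsirzero,S_0^{q(1)})$, and conjugating by $\Pi$, one also records that $\{(\bse_r)^\ell\}$ and $\{(\bse_r^\Pi)^\ell\}$ are bases of the diagonal blocks $\Ext_{\bsp}^\bullet(\bsirzero,\bsirzero)$ and $\Ext_{\bsp}^\bullet(\bsirone,\bsirone)$.

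For (2), the same combination of \eqref{eq:Ebaridentification} and Theorem~\ref{thm:quadruplecalculation} gives that $\Ext_{\bsp}^\bullet(\bsirone,S_0^{q(1)}) \cong \Ebar_{2,1}^{\bullet,0}$ is one-dimensional in degrees $p^r + 2q\ell$ ($\ell \in \N$) and zero elsewhere, and $\varphi_{r-1}\cdot$ is again an isomorphism $\Ext_{\bsp}^\bullet(\bsirone,\bsirzero) \simrightarrow \Ext_{\bsp}^\bullet(\bsirone,S_0^{q(1)})$. Since $\varphi_{r-1} \cdot (\bse_r)^\ell \cdot \bsc_r$ has degree $p^r + 2q\ell$, it suffices to prove $(\bse_r)^\ell \cdot \bsc_r \neq 0$ in $\Ext_{\bsp}^\bullet(\bsirone,\bsirzero)$ for all $\ell$. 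First I would produce an auxiliary basis: conjugating the basis from (1) by $\Pi$ --- using $(\bsirzero)^\Pi = \bsirone$ from \eqref{eq:conjugatebsir} and $(S_0^{q(1)})^\Pi \cong S_1^{q(1)}$ from \eqref{eq:conjugateiso} (valid since $q = p^{r-1}$ is odd) --- shows $\{\varphi_{r-1}^\Pi \cdot (\bse_r^\Pi)^\ell\}$ is a basis of $\Ext_{\bsp}^\bullet(\bsirone,S_1^{q(1)})$; then applying the right $\Ext_{\bsp}^\bullet(\bsir,\bsir)$-module isomorphism \eqref{eq:prshift110}, which by the defining property \eqref{eq:crdefinition} of $\bsc_r$ sends $\varphi_{r-1}^\Pi$ to $\varphi_{r-1} \cdot \bsc_r$, shows $\{\varphi_{r-1} \cdot \bsc_r \cdot (\bse_r^\Pi)^\ell\}$ is a basis of $\Ext_{\bsp}^\bullet(\bsirone,S_0^{q(1)})$, and hence (undoing $\varphi_{r-1}\cdot$) that $\{\bsc_r \cdot (\bse_r^\Pi)^\ell\}$ is a basis of $\Ext_{\bsp}^\bullet(\bsirone,\bsirzero)$.

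It then remains to compare $(\bse_r)^\ell \cdot \bsc_r$ with the basis element $\bsc_r \cdot (\bse_r^\Pi)^\ell$. Conjugating $\bsc_r \cdot \bsc_r^\Pi = \mu_r (\bse_r)^p$ (with $\mu_r \neq 0$) by $\Pi$ gives $\bsc_r^\Pi \cdot \bsc_r = \mu_r (\bse_r^\Pi)^p$, and then associativity of the Yoneda product forces $(\bse_r)^p \cdot \bsc_r = \bsc_r \cdot (\bse_r^\Pi)^p$, which is nonzero because the right side is a basis element. Hence $\bse_r \cdot \bsc_r \neq 0$ (otherwise $(\bse_r)^p \cdot \bsc_r$ would vanish), and since $\bsc_r \cdot \bse_r^\Pi$ spans the one-dimensional group $\Ext_{\bsp}^{p^r+2q}(\bsirone,\bsirzero)$ we get $\bse_r \cdot \bsc_r = \nu \cdot \bsc_r \cdot \bse_r^\Pi$ for some nonzero $\nu \in k$. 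A trivial induction on $\ell$ then yields $(\bse_r)^\ell \cdot \bsc_r = \nu^\ell \cdot \bsc_r \cdot (\bse_r^\Pi)^\ell \neq 0$. Therefore $\varphi_{r-1} \cdot (\bse_r)^\ell \cdot \bsc_r$ is nonzero for each $\ell$, these elements lie in the distinct degrees $p^r + 2q\ell$ that exhaust the support of $\Ext_{\bsp}^\bullet(\bsirone,S_0^{q(1)})$, and so they form a basis, proving (2).

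I expect the main obstacle to be the bookkeeping in the second paragraph: confirming that conjugation by $\Pi$ interchanges the even and odd source functors and identifies $(S_0^{q(1)})^\Pi$ with $S_1^{q(1)}$, and that the module isomorphism \eqref{eq:prshift110} respects the block decomposition $\bsir = \bsirzero \oplus \bsirone$, so that it really carries the $\Ext_{\bsp}^\bullet(\bsir,\bsir)$-module generated by $\varphi_{r-1}^\Pi$ onto the one generated by $\varphi_{r-1} \cdot \bsc_r$. After that, the remaining arguments are purely formal manipulations with the Yoneda product.
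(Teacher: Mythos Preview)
Your proof is correct. Part~(1) is identical to the paper's argument. For part~(2) you take a genuinely different route: you first build an auxiliary basis $\{\varphi_{r-1}\cdot\bsc_r\cdot(\bse_r^\Pi)^\ell\}$ by conjugating~(1) by~$\Pi$ and transporting along the module isomorphism~\eqref{eq:prshift110}, then establish the commutation relation $(\bse_r)^\ell\cdot\bsc_r=\nu^\ell\,\bsc_r\cdot(\bse_r^\Pi)^\ell$ to pass from the auxiliary basis to the claimed one. The paper's argument is considerably shorter: it simply observes that right multiplication by~$\bsc_r^\Pi$ sends $\varphi_{r-1}\cdot(\bse_r)^\ell\cdot\bsc_r$ to $\mu_r\,\varphi_{r-1}\cdot(\bse_r)^{\ell+p}$, a nonzero member of the basis already established in~(1), so the set in~(2) is linearly independent and hence a basis by the dimension count from Theorem~\ref{thm:quadruplecalculation}. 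Your approach buys, as a by-product, the relation $\bse_r\cdot\bsc_r=\nu\,\bsc_r\cdot\bse_r^\Pi$ (essentially relation~(5) of Theorem~\ref{thm:Yonedaalgebra}), but at the cost of extra bookkeeping; the paper's one-line trick with~$\bsc_r^\Pi$ avoids that entirely. Your worry about whether \eqref{eq:prshift110} respects the block decomposition is not an issue: the isomorphism is right $\Ext_{\bsp}^\bullet(\bsir,\bsir)$-linear, and the blocks are cut out by the idempotents $\bse_0',\bse_0''$ acting on the right.
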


\begin{proof}
We have already observed that $\varphi_{r-1} \cdot (\bse_r)^\ell$ spans $\Ext_{\bsp}^{2q\ell}(\bsirzero,S_0^{q(1)})$, so (1) is immediate by dimension comparison in each degree. Next, as observed in the paragraph preceding the proposition, the product $\bsc_r \cdot \bsc_r^\Pi$ is a nonzero scalar multiple of $(\bse_r)^p$. Then, up to a nonzero scalar factor, right multiplication by $\bsc_r^\Pi$ maps the set in (2) to a subset of the set in (1). Then the set in (2) must be linearly independent, and hence by dimension comparison be a basis for $\Ext_{\bsp}^\bullet(\bsirone,{S_0}^{p^{r-1}(1)})$.
\end{proof}

\begin{remark} \label{rem:e1extension}
Observe that restriction from $\bsp$ to $\cp$ gives rise to a commutative diagram
\[
\xymatrix{
\Ext_{\bsp}^s(\bsi_0^{(r)},\bsi_0^{(r)}) \ar@{->}[r] \ar@{->}[d] & \Ext_{\bsp}^s(\bsi_0^{(r)},S_0^{p^{r-1}(1)}) \ar@{->}[d] \\
\Ext_{\cp}^s(I^{(r)},I^{(r)}) \ar@{->}[r] & \Ext_{\cp}^s(I^{(r)},S^{p^{r-1}(1)})
}
\]
in which the vertical arrows are the restriction maps and the horizontal arrows are induced by the corresponding $p^{r-1}$-power maps. By Remark \ref{rem:restrictionj=1}, the right-hand vertical arrow is an isomorphism whenever the terms in the right-hand column are both nonzero. More precisely, the argument in Remark \ref{rem:restrictionj=1} implies that the restriction functor maps $\varphi_{r-1} \cdot \bse_r$ to the extension class denoted $\ol{e}_r$ in \cite[p.\ 244]{Friedlander:1997}. Then the commutativity of the diagram implies that the restriction functor sends $\bse_r$ to the distinguished extension class denoted $e_r$ in \cite[p.\ 244]{Friedlander:1997}, i.e., $\bse_r|_{\bsvzero} = e_r$. On the other hand, $\bsc_r|_{\bsvzero} = 0$ because $\bsirone|_{\bsvzero} = 0$.

Now let $\alpha: \bss \rightarrow \bsg$ be the unique $\bsp$-algebra homomorphism extending the identification $\bss^1 = \bsi = \bsg^1$. The restriction of $\alpha$ to $\bss^p$ fits into an exact sequence
\begin{equation} \label{eq:e1extension}
0 \rightarrow \bsi_0^{(1)} \rightarrow \bss^p \stackrel{\alpha}{\rightarrow} \bsg^p \rightarrow \bsi_0^{(1)} \rightarrow 0
\end{equation}
in which ${\bsi_0}^{(1)} \rightarrow \bss^p$ is the $p$-power map and $\bsg^p \rightarrow {\bsi_0}^{(1)}$ is the dual Frobenius map. By \cite[Lemma 4.12]{Friedlander:1997}, the restriction of \eqref{eq:e1extension} to $\bsvzero$ represents the element $e_1 \in \Ext_{\cp}^2(I^{(1)},I^{(1)})$. Since $\bse_1|_{\bsvzero} = e_1$, it follows that \eqref{eq:e1extension} is a representative extension for the cohomology class $\bse_1$ under the bijection $\theta_0$ of Proposition \ref{prop:extnandnextensions}. A representative extension for (a scalar multiple of) the cohomology class $\bsc_1$ is given in the proof of Lemma \ref{lem:c1restriction}.
\end{remark}

\begin{remark} \label{rem:twistedclasses}
Let $j \geq 1$. Recall from Section \ref{subsubsec:precomposition} that precomposition with $\bsi^{(j)}$ extends to an even linear map on extension groups in $\bsp$. We denote this map by $z \mapsto z^{(j)}$. Since for each $F \in \bsp$ one has $(F \circ \bsi^{(j)})|_{\bsvzero} = (F|_{\bsvzero}) \circ I^{(j)}$, it follows that the map $z \mapsto z^{(j)}$ is compatible with the restriction functor $\bsp \rightarrow \cp$. Now suppose $1 \leq i < r$. Then taking $j = r-i$, there exists a commutative diagram
\[
\xymatrix@C+1em{
\Ext_{\bsp}^{2p^{i-1}}(\bsi_0^{(i)},\bsi_0^{(i)}) \ar@{->}[r]^{z \mapsto z^{(r-i)}} \ar@{->}[d] & \Ext_{\bsp}^{2p^{i-1}}(\bsi_0^{(r)},\bsi_0^{(r)}) \ar@{->}[d] \\
\Ext_{\cp}^{2p^{i-1}}(I^{(i)},I^{(i)}) \ar@{->}[r]^{z \mapsto z^{(r-i)}} & \Ext_{\cp}^{2p^{i-1}}(I^{(r)},I^{(r)}).
}
\]
Moreover, the bottom arrow of this diagram is an injection by \cite[Corollary 4.9]{Friedlander:1997}. Since for $i \geq 1$ the extension class $\bse_i \in \Ext_{\bsp}^{2p^{i-1}}({\bsi_0}^{(i)},{\bsi_0}^{(i)})$ restricts to the nonzero class $e_i \in \Ext_{\cp}^{2p^{i-1}}(I^{(i)},I^{(i)})$ by Remark \ref{rem:e1extension}, it follows from the commutativity of the diagram that ${\bse_i}^{(r-i)} \neq 0$. On the other hand, ${\bsc_i}^{(r-i)} = 0$ because $\Ext_{\bsp}^{p^i}(\bsirone,\bsirzero) = 0$ by Theorem \ref{thm:quadruplecalculation} and the assumption $i < r$.
\end{remark}

\subsection{Vector space structure of \texorpdfstring{$\Ext_{\bsp}^\bullet(\bsir,{S_0}^{p^{r-j}(j)})$, $j \geq 1$}{ExtP(Ir,S0pr-j(j)), jgeq1}} \label{subsec:vectorspacejgeq1}

Recall for $n \in \N$ that $\Omega_n$ denotes the component of total degree $n$ of the ordinary de Rham complex functor $\Omega$, and $K_n$ denotes the Koszul kernel subfunctor of $\Omega_n$ defined in \cite[\S4]{Friedlander:1997}. Given $j \geq 1$, we use \eqref{eq:evenlift} to consider ${\Omega_n}^{(j)}$ and ${K_n}^{(j)}$ as strict polynomial superfunctors. Then for $1 \leq i \leq n$ one has
\begin{equation} \label{eq:Onij}
\Omega_n^{i(j)} = (S^{n-i} \otimes \Lambda^{i}) \circ \bsijzero = S_0^{n-i(j)} \otimes \Lambda_0^{i(j)}.
\end{equation}
Now let $j,r \in \N$ with $1 \leq j \leq r$. Our goal in this section is to describe the spectral sequences
\begin{align}
\Ebar_2^{s,t} &= \Ext_{\bsp}^s(\bsir,\opH^t(\Omega_{p^{r-j}}^{(j)})) \Rightarrow \bbExt_{\bsp}^{s+t}(\bsir,\Omega_{p^{r-j}}^{(j)}), \text{ and} \label{eq:Oprjspecseq} \\
E_2^{s,t} &= \Ext_{\bsp}^s(\bsir,\opH^t(K_{p^{r-j}}^{(j)})) \Rightarrow \bbExt_{\bsp}^{s+t}(\bsir,K_{p^{r-j}}^{(j)}) \label{eq:Kprjspecseq}
\end{align}
obtained by taking $A = \bsir$ and $C = \Omega_{p^{r-j}}^{(j)}$ or $C = K_{p^{r-j}}^{(j)}$ in \eqref{eq:secondhypercohomology}. As in Section \ref{subsec:vectorspacej=1}, we will exploit the fact that for $\ell \in \set{0,1}$, there exist spectral sequences 
\begin{align}
\Ebar_{2,\ell}^{s,t} &= \Ext_{\bsp}^s(\bsi_\ell^{(r)},\opH^t(\Omega_{p^{r-j}}^{(j)})) \Rightarrow \bbExt_{\bsp}^{s+t}(\bsi_\ell^{(r)},\Omega_{p^{r-j}}^{(j)}), \text{ and} \label{eq:ellOprjspecseq} \\
E_{2,\ell}^{s,t} &= \Ext_{\bsp}^s(\bsi_\ell^{(r)},\opH^t(K_{p^{r-j}}^{(j)})) \Rightarrow \bbExt_{\bsp}^{s+t}(\bsi_\ell^{(r)},K_{p^{r-j}}^{(j)}) \label{eq:ellKprjspecseq}
\end{align}
that are direct summands of \eqref{eq:Oprjspecseq} and \eqref{eq:Kprjspecseq}, respectively. One of the results of our investigation will be the following analogue of \cite[Theorem 4.5]{Friedlander:1997}, which computes the row $t=0$ of \eqref{eq:Oprjspecseq}:

\begin{theorem} \label{thm:constructionbyinduction}
Let $j,r \in \N$ with $1 \leq j \leq r$. Then
\begin{align*}
\Ext_{\bsp}^s(\bsi_0^{(r)},S_0^{p^{r-j}(j)}) \cong \Ext_{\bsp}^{s+p^{r-j}-1}(\bsi_0^{(r)},\Lambda_0^{p^{r-j}(j)}) &\cong \begin{cases} k & \text{if $s \equiv 0 \mod 2p^{r-j}$ and $s \geq 0$,} \\ 0 & \text{otherwise.} \end{cases} \\
\Ext_{\bsp}^s(\bsi_1^{(r)},S_0^{p^{r-j}(j)}) \cong \Ext_{\bsp}^{s+p^{r-j}-1}(\bsi_1^{(r)},\Lambda_0^{p^{r-j}(j)}) &\cong \begin{cases} k & \text{if $s \equiv p^r \mod 2p^{r-j}$ and $s \geq p^r$,} \\ 0 & \text{otherwise.} \end{cases}
\end{align*}
\end{theorem}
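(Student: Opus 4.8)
The plan is to run the inductive machinery of \cite[\S4]{Friedlander:1997}, carrying the two cases $\ell\in\{0,1\}$ in parallel, with the base case supplied by Theorem \ref{thm:quadruplecalculation}. The induction is on $j$. For $j=1$ one has $p^{r-j}=p^{r-1}$, and $\Ext_{\bsp}^s(\bsi_\ell^{(r)},S_0^{p^{r-1}(1)})$ is precisely the bottom row $\Ebar_{2,\ell}^{s,0}$ of the hypercohomology spectral sequence for $\bso_{p^r}$ under the identification \eqref{eq:Ebaridentification}, so Theorem \ref{thm:quadruplecalculation} gives the asserted values; the $\Lambda_0$ statement then follows from \eqref{eq:evendegreeshift} with $n=p^{r-1}$. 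So suppose $1\le j<r$ and the theorem holds at level $j$; the plan is to deduce it at level $j+1$.

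The inductive step rests on two facts. First, since precomposition with $\bsi^{(j)}$ is exact, the classical Cartier isomorphism $\opH^t(\Omega_{pn})\cong\Omega_n^{t(1)}$ (restrict Theorem \ref{thm:Cartieriso} and Remark \ref{remark:Cartierrestriction} to $\bsvzero$) twists up to $\opH^t(\Omega_{p^{r-j}}^{(j)})\cong\Omega_{p^{r-j-1}}^{t(j+1)}=S_0^{p^{r-j-1}-t(j+1)}\otimes\Lambda_0^{t(j+1)}$ for $0\le t\le p^{r-j-1}$ and $0$ otherwise, with the parallel statement for $K_{p^{r-j}}^{(j)}$ coming from the sequences \eqref{eq:superKoszulses} and the argument of Proposition \ref{prop:Cartierkernel}. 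Second, $\bsi_0^{(r)}$ and $\bsi_1^{(r)}$ are additive (this is easily checked, since $(V\oplus W)_{\ol{0}}=V_{\ol{0}}\oplus W_{\ol{0}}$ functorially), so Theorem \ref{thm:vanishing} gives $\Ext_{\bsp}^\bullet(\bsi_\ell^{(r)},T\otimes T')=0$ whenever $T,T'$ have positive degree. Combined with \eqref{eq:Onij}, the second fact collapses the first hypercohomology spectral sequence \eqref{eq:firsthypercohomology} for $\bbExt_{\bsp}^\bullet(\bsi_\ell^{(r)},\Omega_{p^{r-j}}^{(j)})$ onto its columns $s=0$ and $s=p^{r-j}$, and the second spectral sequence \eqref{eq:ellOprjspecseq} onto its rows $t=0$ (the row we want) and $t=p^{r-j-1}$; the inductive hypothesis together with \eqref{eq:evendegreeshift} identifies all the surviving $E_1$/$E_2$-terms in terms of known data. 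The same holds for $K_{p^{r-j}}^{(j)}$, where the sequences $0\to K_n^i\to\Omega_n^i\to K_n^{i-1}\to 0$ express every surviving column of its first spectral sequence, and every surviving row of its second, as a shifted copy of the row $t=0$.

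With these reductions the remaining argument is the spectral-sequence bookkeeping of \cite[proof of Theorem 4.5]{Friedlander:1997}. The first step is to compute the abutments $\bbExt_{\bsp}^\bullet(\bsi_\ell^{(r)},\Omega_{p^{r-j}}^{(j)})$ and $\bbExt_{\bsp}^\bullet(\bsi_\ell^{(r)},K_{p^{r-j}}^{(j)})$: their first spectral sequences have $E_1$-pages known by induction, and a total-degree/parity count (as in Step 4 of the proof of Theorem \ref{thm:quadruplecalculation}; here $p>2$ is used) shows the single potentially nonzero differential between the two surviving columns cannot link nonzero terms, so these spectral sequences degenerate. Feeding the abutments into the second spectral sequences \eqref{eq:ellOprjspecseq} and \eqref{eq:ellKprjspecseq} — whose nonzero $E_2$-terms lie in distinct total degrees subject to a row-dependent parity — forces the one family of possibly nonzero differentials (from the $\Lambda_0$-row, resp.\ its Koszul-kernel analogue, to the $S_0$-row) to be isomorphisms in exactly the degrees prescribed by the abutment, using also that all $\Ext$-groups in play are finite-dimensional (Theorem \ref{thm:enoughprojectives}). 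The seed is the $\Hom$-group: $\Hom_{\bsp}(\bsi_0^{(r)},S_0^{p^{r-j-1}(j+1)})$ is spanned by the $p^{r-j-1}$-power map (Corollary \ref{cor:powermap}), while for $\ell=1$ the group $\Hom_{\bsp}(\bsi_1^{(r)},S_0^{p^{r-j-1}(j+1)})$ and all lower-degree groups vanish, the vanishing below degree $p^r$ propagating from the inductive hypothesis through the first spectral sequence. Induction on cohomological degree now delivers the claimed $1$-dimensionality and $2p^{r-j-1}$-periodicity for $\Ext_{\bsp}^\bullet(\bsi_\ell^{(r)},S_0^{p^{r-j-1}(j+1)})$, and the $\Lambda_0^{p^{r-j-1}(j+1)}$-statement follows from \eqref{eq:evendegreeshift}.

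The main obstacle is exactly the one in \cite{Friedlander:1997}: showing that the key differential in the second spectral sequence is an \emph{isomorphism}, not merely nonzero. This is what makes it necessary to play the two hypercohomology spectral sequences (for $\Omega_{p^{r-j}}^{(j)}$ and for $K_{p^{r-j}}^{(j)}$) against one another rather than relying on either alone, and it is the only point at which finite-dimensionality of the $\Ext$-groups is used. Beyond \cite{Friedlander:1997} the extra work is light: one carries the parity-shifted case $\bsi_1^{(r)}$ (with its degree offset $p^r$) alongside $\bsi_0^{(r)}$ throughout, and notes that the superfunctors $S_0^{a(j+1)}$, $\Lambda_0^{a(j+1)}$, and $K_n^{i(j)}$ produced by the even lift \eqref{eq:evenlift} are all purely even, so no unwanted parity changes enter the argument.
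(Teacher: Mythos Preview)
Your overall framework matches the paper: induction on $j$ with the base case given by Theorem~\ref{thm:quadruplecalculation}, and the inductive step run through the second hypercohomology spectral sequences \eqref{eq:ellOprjspecseq} and \eqref{eq:ellKprjspecseq} after computing their abutments from the first spectral sequences (this is Lemma~\ref{lem:abutmentsinduction}). The gap is in how you extract $\Ext_{\bsp}^s(\bsi_\ell^{(r)},S_0^{q(j+1)})$ from the $K$ spectral sequence. Your claim that the nonzero $E_2$-terms ``lie in distinct total degrees subject to a row-dependent parity'' is circular: since $E_{2,\ell}^{s,t} \cong \Ext_{\bsp}^{s-t}(\bsi_\ell^{(r)},S_0^{q(j+1)})$ for $0 \le t < q$ (with $q=p^{r-j-1}$), the parity structure of the $E_2$-page \emph{is} the parity structure of the unknown $\Ext$-group you are trying to compute. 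And your framing of the obstacle as ``showing the key differential is an isomorphism'' misidentifies it: in \eqref{eq:ellKprjspecseq} there is not one differential into row $0$ but $q-1$ families of them ($d_2,\ldots,d_q$), and what must be shown is that they are all \emph{zero}, not isomorphisms. (In \eqref{eq:ellOprjspecseq} the single differential $\dbar_{q+1}$ is sometimes zero and sometimes an isomorphism --- see Theorem~\ref{thm:FS456} --- so it alone cannot drive the induction either.)

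The step you are missing is Proposition~\ref{prop:FS454}, the direct analogue of \cite[(4.5.4)]{Friedlander:1997}, and this is precisely where the two spectral sequences are played against one another. The inclusion $K_{p^{r-j}}^{(j)} \hookrightarrow \Omega_{p^{r-j}}^{(j)}$ induces a map of spectral sequences that is an isomorphism on row $t=0$ (since $K_q^0 = \Omega_q^0$), while rows $1,\ldots,q-1$ of \eqref{eq:ellOprjspecseq} vanish by Theorem~\ref{thm:vanishing}; inducting on the page, this comparison forces every $d_r$ with $2 \le r \le q$ landing in row $0$ of \eqref{eq:ellKprjspecseq} to be zero (and for $r > q$ the source row is already zero), so $E_{\infty,\ell}^{s,0} \cong E_{2,\ell}^{s,0}$. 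Only \emph{after} this does the parity statement follow (Corollary~\ref{cor:collapse}): the abutment forces nonzero $E_\infty$-terms into a single total-degree parity, hence likewise the $E_2$-terms (each a shift of some $E_{\infty,\ell}^{s,0}$ by the even integer $2t$), hence the spectral sequence collapses, and then the dimension count against Lemma~\ref{lem:abutmentsinduction} finishes as you describe.
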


Theorem \ref{thm:constructionbyinduction} is true if $j=1$ by Theorem \ref{thm:quadruplecalculation} and \eqref{eq:Ebaridentification}, so for the rest of this section let us assume by way of induction that $1 \leq j < r$ and that Theorem \ref{thm:constructionbyinduction} is true for the given values of $j$ and $r$. Using this assumption, we can compute the abutments of \eqref{eq:Oprjspecseq} and \eqref{eq:Kprjspecseq}.

\begin{lemma} \label{lem:abutmentsinduction}
Let $j,r \in \N$ with $1 \leq j < r$. Then:
\begin{align*}
\bbExt_{\bsp}^s(\bsi_0^{(r)},K_{p^{r-j}}^{(j)}) &= \begin{cases} k & \text{if $s \geq 0$ is even,} \\ 0 & \text{otherwise.} \end{cases} \\
\bbExt_{\bsp}^s(\bsi_0^{(r)},\Omega_{p^{r-j}}^{(j)}) &= \begin{cases} k & \text{if $s \geq 0$ and $s \equiv 0 \mod 2p^{r-j}$ or $s \equiv -1 \mod 2p^{r-j}$,} \\ 0 & \text{otherwise.} \end{cases} \\
\bbExt_{\bsp}^s(\bsi_1^{(r)},K_{p^{r-j}}^{(j)}) &= \begin{cases} k & \text{if $s \geq p^r$ and $s$ is odd,} \\ 0 & \text{otherwise.} \end{cases} \\
\bbExt_{\bsp}^s(\bsi_1^{(r)},\Omega_{p^{r-j}}^{(j)}) &= \begin{cases} k & \text{if $s \equiv p^r \mod 2p^{r-j}$ and $s \geq p^r$,} \\ k & \text{if $s \equiv p^r-1 \mod 2p^{r-j}$ and $s \geq p^r-1+2p^{r-j}$,} \\ 0 & \text{otherwise.} \end{cases}
\end{align*}
\end{lemma}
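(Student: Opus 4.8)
The plan is to read off both families of groups from the first hypercohomology spectral sequence \eqref{eq:firsthypercohomology}, taking $A = \bsi_\ell^{(r)}$ (for each $\ell \in \set{0,1}$) and letting $C$ be $K_{p^{r-j}}^{(j)}$ or $\Omega_{p^{r-j}}^{(j)}$. Since we are assuming Theorem \ref{thm:constructionbyinduction} for the pair $j,r$ with $1 \leq j < r$, the groups $\Ext_{\bsp}^\bullet(\bsi_\ell^{(r)},S_0^{p^{r-j}(j)})$ and $\Ext_{\bsp}^\bullet(\bsi_\ell^{(r)},\Lambda_0^{p^{r-j}(j)})$ are known to be at most one-dimensional in each degree, and together with Theorem \ref{thm:vanishing} and the degree-shift isomorphisms \eqref{eq:evendegreeshift} this completely determines the $E_1$-page in each case. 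A feature we exploit repeatedly is that $p$ is odd, so that $p^r$ is odd.

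\emph{The Koszul kernel complex.} The complex $K_{p^{r-j}}^{(j)}$ has nonzero terms only in cohomological degrees $0 \leq s \leq p^{r-j}-1$, namely $K_{p^{r-j}}^{s(j)} = \bsK_{p^{r-j}}^{s}\circ\bsi_0^{(j)}$. Hence, applying \eqref{eq:evendegreeshift} with $j$ in place of $r$, the first-quadrant $E_1$-page satisfies
\[
{}^IE_1^{s,t} = \Ext_{\bsp}^t(\bsi_\ell^{(r)},K_{p^{r-j}}^{s(j)}) \cong \Ext_{\bsp}^{t-s}(\bsi_\ell^{(r)},S_0^{p^{r-j}(j)}) \qquad (0 \leq s \leq p^{r-j}-1),
\]
which by the inductive hypothesis is $k$ exactly on the diagonals $t - s \equiv 0 \mod 2p^{r-j}$ with $t - s \geq 0$ (when $\ell = 0$), resp.\ $t - s \equiv p^r \mod 2p^{r-j}$ with $t - s \geq p^r$ (when $\ell = 1$), and is $0$ otherwise. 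Because $p^r$ is odd, every nonzero entry ${}^IE_1^{s,t}$ has total degree $s + t$ of a single fixed parity --- even if $\ell = 0$, odd if $\ell = 1$ --- so every differential is zero and the spectral sequence collapses at $E_1$. A short count shows each admissible total degree is hit by exactly one nonzero $E_1$-entry, which yields the two stated formulas for $\bbExt_{\bsp}^\bullet(\bsi_\ell^{(r)},K_{p^{r-j}}^{(j)})$.

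\emph{The de Rham complex.} Here $C^s = \Omega_{p^{r-j}}^{s(j)} = S_0^{p^{r-j}-s(j)}\otimes\Lambda_0^{s(j)}$ by \eqref{eq:Onij}, so Theorem \ref{thm:vanishing} forces ${}^IE_1^{s,t} = 0$ for $0 < s < p^{r-j}$: the $E_1$-page is concentrated in the columns $s = 0$, where $C^0 = S_0^{p^{r-j}(j)}$, and $s = p^{r-j}$, where $C^{p^{r-j}} = \Lambda_0^{p^{r-j}(j)}$, and both columns are described by the inductive hypothesis. Since the intermediate columns vanish, ${}^IE_r = {}^IE_1$ for $r \leq p^{r-j}$, and the only differential that can be nonzero anywhere in the spectral sequence is
\[
\dbar_{p^{r-j}} \colon {}^IE_{p^{r-j}}^{0,t} \longrightarrow {}^IE_{p^{r-j}}^{p^{r-j},\,t - p^{r-j}+1}.
\]
Comparing residues modulo $2p^{r-j}$ shows that the set of $t$ for which the source is nonzero and the set for which the target is nonzero differ by $\pm 2$, hence are disjoint (as $2p^{r-j} > 2$ since $p$ is odd). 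Therefore $\dbar_{p^{r-j}} = 0$, the spectral sequence collapses at $E_1$, and in total degree $n$ the abutment $\bbExt_{\bsp}^n(\bsi_\ell^{(r)},\Omega_{p^{r-j}}^{(j)})$ is the direct sum of ${}^IE_1^{0,n}$ and ${}^IE_1^{p^{r-j},\,n-p^{r-j}}$. The first summand contributes a copy of $k$ exactly when $n \equiv 0 \mod 2p^{r-j}$ with $n \geq 0$ (resp.\ $n \equiv p^r \mod 2p^{r-j}$ with $n \geq p^r$), and the second exactly when $n \equiv -1 \mod 2p^{r-j}$ with $n \geq 2p^{r-j}-1$ (resp.\ $n \equiv p^r-1 \mod 2p^{r-j}$ with $n \geq p^r-1+2p^{r-j}$); these two residue classes are distinct, so at most one summand is ever nonzero, and we obtain the claimed descriptions.

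The computation is essentially bookkeeping, parallel to the $j \geq 1$ steps in the proof of \cite[Theorem 4.5]{Friedlander:1997}, with the $\ell = 1$ cases and the second residue class in the last formula being the new ``super'' contributions. The one point demanding genuine care --- and the step I expect to be the main obstacle --- is the vanishing of the single potential differential $\dbar_{p^{r-j}}$ in the de Rham spectral sequence: it rests on the congruence $2 \not\equiv 0 \mod 2p^{r-j}$, i.e.\ on $p$ being odd. Once that is in hand, everything else follows by collapse and the counting arguments sketched above.
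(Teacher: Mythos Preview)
Your proof is correct and follows essentially the same approach as the paper's own proof: both compute the abutments via the first hypercohomology spectral sequence \eqref{eq:firsthypercohomology}, identify the $E_1$-page using \eqref{eq:evendegreeshift} and the inductive hypothesis (Theorem \ref{thm:constructionbyinduction}), and observe that the spectral sequence collapses at $E_1$ by a parity/congruence argument. One small remark: in the de Rham case you attribute the non-coincidence of residues (i.e., $2 \not\equiv 0 \bmod 2p^{r-j}$) to ``$p$ odd,'' but the relevant hypothesis is actually $j < r$, which forces $p^{r-j} > 1$; the paper's footnote makes this explicit.
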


\begin{proof}
In this proof set $q = p^{r-j}$ and let $\ell \in \{0,1\}$. First take $A = \bsilr$ and $C = {K_q}^{(j)}$ in \eqref{eq:firsthypercohomology}. If $s \geq q$, then $K_q^s = 0$, and hence $E_1^{s,t} = 0$. On the other hand, if $s < q$, then \eqref{eq:evendegreeshift} implies that
\[
E_1^{s,t} \cong E_1^{0,t-s} \cong \Ext_{\bsp}^{t-s}(\bsi_\ell^{(r)},S_0^{q(j)}).
\]
By assumption, Theorem \ref{thm:constructionbyinduction} is true for the given values of $r$ and $j$, so we can apply it via the above isomorphism to explicitly describe the vector space $E_1^{s,t}$. Specifically, if $0 \leq s < q$, then
\[
E_1^{s,s+t} = \begin{cases} k & \text{if $\ell = 0$, $t \geq 0$, and $t \equiv 0 \mod 2q$,} \\ k & \text{if $\ell = 1$, $t \geq p^r$, and $t \equiv p^r \mod 2q$,} \\ 0 & \text{otherwise.} \end{cases}
\]
In particular, the total degrees of any two nonzero terms in the $E_1$-page of the spectral sequence must be of the same parity. Then it follows that all differentials in the spectral sequence are zero, and hence that $E_1 \cong E_\infty$. But $E_\infty \cong \bbExt_{\bsp}^\bullet(\bsilr,{K_q}^{(j)})$, so the calculation of $\bbExt_{\bsp}^\bullet(\bsilr,{K_q}^{(j)})$ follows from the explicit calculation of the $E_1$-page as a total vector space.

Now take $A = \bsilr$ and $C = {\Omega_q}^{(j)}$ in \eqref{eq:firsthypercohomology}. Then applying Theorem \ref{thm:vanishing} and Theorem \ref{thm:constructionbyinduction} for the given values of $r$ and $j$, the calculation of $\bbExt_{\bsp}^\bullet(\bsirzero,{\Omega_q}^{(j)})$ follows from a repetition of the proof of \cite[(4.5.2)]{Friedlander:1997}. Specifically, Theorem \ref{thm:vanishing} implies that $E_1^{s,t} = 0$ unless $s = 0$ or $s=q$, and the differential between these two columns fits into an exact sequence
\[
0 \rightarrow E_\infty^{0,t} \rightarrow \Ext_{\bsp}^t(\bsi_\ell^{(r)},S_0^{q(j)}) \rightarrow \Ext_{\bsp}^{t+1-q}(\bsi_\ell^{(r)},\Lambda_0^{q(j)}) \rightarrow E_\infty^{q,t+1-q} \rightarrow 0.
\]
From Theorem \ref{thm:constructionbyinduction} it follows that the second and third terms in this exact sequence are never simultaneously nonzero.\footnote{This uses the assumption $r > j$ and its consequence $2q = 2p^{r-j} > 2$.} Then the differential between the columns $s=0$ and $s=q$ vanishes, which implies that $E_1 \cong E_\infty$. Now the calculation of $E_\infty \cong \bbExt_{\bsp}^\bullet(\bsilr,{\Omega_q}^{(j)})$ follows from Theorem \ref{thm:constructionbyinduction} and the above four-term exact sequence. In particular, if $\ell = 0$ and $s \equiv 0 \mod 2q$ with $s \geq 0$, or if $\ell = 1$ and $s \equiv p^r \mod 2q$ with $s \geq p^r$, then the edge maps
\[
\bbExt_{\bsp}^s(\bsi_\ell^{(r)},\Omega_q^{(j)}) \twoheadrightarrow E_\infty^{0,s} \hookrightarrow E_1^{0,s} \cong \Ext_{\bsp}^s(\bsi_\ell^{(r)},S_0^{q(j)})
\]
are isomorphisms of one-dimensional spaces.
\end{proof}

Our focus for the rest of this section is on analyzing the spectral sequences \eqref{eq:ellOprjspecseq} and \eqref{eq:ellKprjspecseq}. Whereas the super Cartier isomorphism does not preserve the cohomological degree, the ordinary Cartier isomorphism \cite[3.3]{Franjou:1994} induces isomorphisms of strict polynomial superfunctors
\[
\Hbul(\Omega_{p^{r-j}}^{(j)}) \cong \Omega_{p^{r-j-1}}^{\bullet(j+1)} \quad \text{and} \quad \Hbul(K_{p^{r-j}}^{(j)}) \cong K_{p^{r-j-1}}^{\bullet(j+1)}
\]
that preserve the cohomological degree. \textbf{For the rest of this section}, set $q = p^{r-j-1}$. Then
\[
\Ebar_{2,\ell}^{s,t} \cong \Ext_{\bsp}^s(\bsi_\ell^{(r)},\Omega_q^{t(j+1)}) \quad \text{and} \quad E_{2,\ell}^{s,t} \cong \Ext_{\bsp}^s(\bsi_\ell^{(r)},K_q^{t(j+1)}).
\]
Now \eqref{eq:Onij} and Theorem \ref{thm:vanishing} imply that $\Ebar_{2,\ell}^{s,t} = 0$ unless $t = 0$ or $t=q$. Next, the inclusion of complexes ${K_{p^{r-j}}}^{(j)} \hookrightarrow {\Omega_{p^{r-j}}}^{(j)}$ induces a map of spectral sequences $E \rightarrow \Ebar$ that on the $E_2$-page identifies with the map in cohomology induced by the inclusion ${K_q}^{(j+1)} \hookrightarrow {\Omega_q}^{(j+1)}$. In particular, since $K_q^0 = \Omega_q^0$, the induced map $E_{2,\ell}^{s,0} \rightarrow \Ebar_{2,\ell}^{s,0}$ is an isomorphism. The following lemma now follows from a word-for-word repetition of the proof of \cite[(4.5.4)]{Friedlander:1997}.

\begin{proposition} \label{prop:FS454}
In \eqref{eq:ellKprjspecseq}, all differentials to terms in the row $t=0$ are zero. Hence,
\[
E_{\infty,\ell}^{s,0} \cong E_{2,\ell}^{s,0} \cong \Ext_{\bsp}^s(\bsi_\ell^{(r)},S_0^{q(j+1)}).
\]
\end{proposition}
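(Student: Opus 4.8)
The plan is to transplant the proof of \cite[(4.5.4)]{Friedlander:1997} into the category $\bsp$, essentially verbatim. Fix $\ell\in\set{0,1}$ and keep the notation $q=p^{r-j-1}$ of the discussion preceding the statement. Recall from there that the ordinary (cohomological-degree-preserving) Cartier isomorphism identifies $\Ebar_{2,\ell}^{s,t}$ with $\Ext_{\bsp}^s(\bsi_\ell^{(r)},\Omega_q^{t(j+1)})$ and $E_{2,\ell}^{s,t}$ with $\Ext_{\bsp}^s(\bsi_\ell^{(r)},K_q^{t(j+1)})$. By \eqref{eq:Onij} and Theorem \ref{thm:vanishing}, $\Ebar_{2,\ell}^{s,t}=0$ unless $t\in\set{0,q}$, while $K_q^t=0$ for $t\ge q$ gives $E_{2,\ell}^{s,t}=0$ for $t\ge q$. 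In $\Ebar_{2,\ell}$ the only differential that can be nonzero is therefore $\dbar_{q+1}:\Ebar_{q+1,\ell}^{s-q-1,q}\to\Ebar_{q+1,\ell}^{s,0}$, and in particular $\Ebar_{r,\ell}^{s,0}=\Ebar_{2,\ell}^{s,0}$ for all $r\le q$ and all $s$.

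The second ingredient is the morphism of hypercohomology spectral sequences $\Phi:E_\ell\to\Ebar_\ell$ induced by the inclusion of complexes $K_{p^{r-j}}^{(j)}\hookrightarrow\Omega_{p^{r-j}}^{(j)}$ (the $\ell$-summand of the comparison map recorded before the proposition). On $E_2$-pages $\Phi$ is the map in cohomology induced by $K_q^{t(j+1)}\hookrightarrow\Omega_q^{t(j+1)}$, and since $K_q^0=\Omega_q^0$ its bottom-row component $\Phi_2^{s,0}:E_{2,\ell}^{s,0}\simrightarrow\Ebar_{2,\ell}^{s,0}$ is an isomorphism.

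I would then run an induction on the page number $r\in\set{2,\dots,q}$ to show that every differential $d_r:E_{r,\ell}^{s-r,r-1}\to E_{r,\ell}^{s,0}$ with target in the bottom row vanishes. (If $q=1$ no such $r$ exists and the whole spectral sequence is concentrated in the row $t=0$, so $E_{\infty,\ell}=E_{2,\ell}$ already; so assume $q\ge2$.) Every differential emanating from the bottom row lands in a negative row, so together with the inductive hypothesis this gives $E_{r,\ell}^{s,0}=E_{2,\ell}^{s,0}$ for all $s$; since $\Ebar_{r,\ell}^{s,0}=\Ebar_{2,\ell}^{s,0}$ for $r\le q$, the component $\Phi_r^{s,0}$ coincides with the isomorphism $\Phi_2^{s,0}$. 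The source $E_{r,\ell}^{s-r,r-1}$ lies in row $r-1\in\set{1,\dots,q-1}$, so for $x$ in it $\Phi_r(x)$ lies in $\Ebar_{r,\ell}^{s-r,r-1}$, which is a subquotient of $\Ebar_{2,\ell}^{s-r,r-1}=0$; hence $\Phi_r^{s,0}(d_r(x))=\dbar_r(\Phi_r(x))=0$, and injectivity of $\Phi_r^{s,0}$ yields $d_r(x)=0$. For $r=q+1$ the source $E_{q+1,\ell}^{s-q-1,q}$ is a subquotient of $E_{2,\ell}^{s-q-1,q}=0$, and for $r>q+1$ the source again sits in a row $\ge q$ and vanishes; so no differential into the bottom row is nonzero. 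Therefore $E_{2,\ell}^{s,0}$ survives to $E_\infty$, and $E_{\infty,\ell}^{s,0}=E_{2,\ell}^{s,0}\cong\Ext_{\bsp}^s(\bsi_\ell^{(r)},S_0^{q(j+1)})$.

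Almost all of this is formal diagram-chasing with spectral sequences. The only steps that need genuine attention are the identification of $\Phi$ on $E_2$-pages with the map induced by $K_q^{\bullet(j+1)}\hookrightarrow\Omega_q^{\bullet(j+1)}$ (which rests on the naturality of the ordinary Cartier isomorphism in the complex) and the bookkeeping of which pages leave the bottom rows of $E_\ell$ and $\Ebar_\ell$ undisturbed; with those in hand the passage from $\cp$ to $\bsp$ brings nothing new, which is why \cite[(4.5.4)]{Friedlander:1997} can be quoted essentially word for word.
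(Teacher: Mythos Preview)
Your proposal is correct and is exactly the argument the paper intends: it simply says the result ``follows from a word-for-word repetition of the proof of \cite[(4.5.4)]{Friedlander:1997},'' and what you have written is that repetition, transplanted to $\bsp$ using the two facts already recorded before the proposition (the Cartier identification of the $E_2$-pages and the isomorphism $\Phi_2^{s,0}$ on the bottom row). One cosmetic point: you use $r$ for the spectral-sequence page index, which clashes with the ambient Frobenius-twist parameter $r$; pick a different letter to avoid confusion.
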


\begin{corollary} \label{cor:collapse}
On the $E_2$-page of \eqref{eq:ellKprjspecseq}, the total degrees of any two nonzero terms must be of the same parity. Consequently, all differentials in \eqref{eq:ellKprjspecseq} are zero, and $E_{2,\ell} \cong E_{\infty,\ell}$.
\end{corollary}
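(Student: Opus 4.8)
The plan is to deduce the corollary purely from the parity of the abutment, combined with the degree-shift isomorphisms already in hand. Fix $\ell\in\{0,1\}$ and keep the notation $q=p^{r-j-1}$. Since $\opH^t(K_{p^{r-j}}^{(j)})\cong K_q^{t(j+1)}$ vanishes for $t\geq q$, the nonzero rows of the $E_2$-page of \eqref{eq:ellKprjspecseq} are $t=0,1,\ldots,q-1$. First I would observe that $K_q^{t(j+1)}=\bsK_q^t\circ\bsi_0^{(j+1)}$, so that for $0\leq t\leq q-1$ the additive functor $\bsi_\ell^{(r)}$ together with the inequality $t<q$ puts us exactly in the situation of \eqref{eq:evendegreeshift} (with total degree $n=q$, cohomological index $i=t$, and Frobenius twist $j+1$). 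That isomorphism then gives
\[
E_{2,\ell}^{s,t}\cong\Ext_{\bsp}^s(\bsi_\ell^{(r)},K_q^{t(j+1)})\cong\Ext_{\bsp}^{s-t}(\bsi_\ell^{(r)},S_0^{q(j+1)})\cong E_{2,\ell}^{s-t,0},
\]
so every nonzero term of $E_{2,\ell}$ lies in a total degree $s+t$ congruent modulo $2$ to the total degree $s-t$ of a nonzero term in the bottom row $t=0$.

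It therefore suffices to show that the nonzero terms in the row $t=0$ all occur in total degrees of a single parity. By Proposition \ref{prop:FS454} every term in that row is a permanent cycle with $E_{\infty,\ell}^{s,0}\cong E_{2,\ell}^{s,0}$, so $E_{2,\ell}^{s,0}$ is a subquotient of the abutment $\bbExt_{\bsp}^s(\bsi_\ell^{(r)},K_{p^{r-j}}^{(j)})$. By Lemma \ref{lem:abutmentsinduction} this abutment is nonzero only when $s$ is even (if $\ell=0$) or odd (if $\ell=1$); hence $E_{2,\ell}^{s,0}=0$ unless $s$ has the corresponding parity, and combining this with the first step shows that every nonzero term of $E_{2,\ell}$ has total degree of one fixed parity (even for $\ell=0$, odd for $\ell=1$).

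Finally, each differential $d_m$ in \eqref{eq:ellKprjspecseq} raises total degree by $1$ and so connects terms of opposite total-degree parity; as all nonzero terms of $E_{2,\ell}$ share one parity, every differential on the $E_2$-page vanishes, whence $E_{3,\ell}=E_{2,\ell}$ and the same argument repeats verbatim on each later page. Thus all differentials in \eqref{eq:ellKprjspecseq} are zero and $E_{2,\ell}\cong E_{\infty,\ell}$, which is the corollary. I do not expect a genuine obstacle here: the substance of the induction is carried by Proposition \ref{prop:FS454} and Lemma \ref{lem:abutmentsinduction}, and the only point requiring care is the opening bookkeeping step—recognizing $\opH^t(K_{p^{r-j}}^{(j)})$ as the functor $\bsK_q^t\circ\bsi_0^{(j+1)}$ to which \eqref{eq:evendegreeshift} directly applies—which is routine.
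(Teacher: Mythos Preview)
Your proof is correct and follows essentially the same route as the paper: reduce to the bottom row via the degree-shift isomorphism $E_{2,\ell}^{s,t}\cong E_{2,\ell}^{s-t,0}$, invoke Proposition~\ref{prop:FS454} to identify $E_{2,\ell}^{s,0}$ with $E_{\infty,\ell}^{s,0}$, and then read off the parity constraint from Lemma~\ref{lem:abutmentsinduction}. Your identification $K_q^{t(j+1)}=\bsK_q^t\circ\bsi_0^{(j+1)}$ (so that \eqref{eq:evendegreeshift} applies) is the same bookkeeping the paper does implicitly by referring back to the proof of Lemma~\ref{lem:abutmentsinduction}.
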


\begin{proof}
As in the proof of Lemma \ref{lem:abutmentsinduction}, we have $E_{2,\ell}^{s,t} = 0$ if $t \geq q$, and $E_{2,\ell}^{s,t} \cong E_{2,\ell}^{s-t,0}$ if $0 \leq t < q$. Now $E_{2,\ell}^{s-t,0} \cong E_{\infty,\ell}^{s-t,0}$ by Proposition \ref{prop:FS454}, and the total vector space structure of $E_{\infty,\ell}$ is given by Lemma \ref{lem:abutmentsinduction}. In particular, the total degrees of any two nonzero terms in $E_{\infty,\ell}$ must be of the same parity. Then the same conclusion also follows for $E_{2,\ell}$, whence the conclusion of the corollary. 
\end{proof}

We can now complete the proof of Theorem \ref{thm:constructionbyinduction}.

\begin{proof}[Proof of Theorem \ref{thm:constructionbyinduction}]
By assumption, the theorem is true for the given values of $r$ and $j$. Then by induction, it suffices to show that the theorem is also true for $j+1$. By Corollary \ref{cor:collapse} and its proof, we have $E_{2,\ell} \cong E_{\infty,\ell}$, $E_{2,\ell}^{s,0} \cong E_{2,\ell}^{s+t,t}$ for $0 \leq t < q$, and $E_{2,\ell}^{s,t} = 0$ for $t \geq q$. Also, $E_{2,\ell}^{s,0} = 0$ for $s < 0$. Then by Lemma \ref{lem:abutmentsinduction} and induction on $s$, we must have
\[
E_{2,\ell}^{s,0} \cong \Ext_{\bsp}^s(\bsi_\ell^{(r)},S_0^{q(j+1)}) \cong \begin{cases} k & \text{if $\ell = 0$, $s \geq 0$, and $s \equiv 0 \mod 2q$,} \\ k & \text{if $\ell = 1$, $s \geq p^r$, and $s \equiv p^r \mod 2q$,} \\ 0 & \text{otherwise,} \end{cases}
\]
in order for $\bigoplus_{u+v=s} E_{2,\ell}^{u,v}$ to have the same dimension as $\bigoplus_{u+v=s} E_{\infty,\ell}^{u,v} \cong \bbExt_{\bsp}^s(\bsilr,{K_{p^{r-j}}}^{(j)})$. With Lemma \ref{lem:degreeshift}, this completes the proof.
\end{proof}

\begin{corollary} \label{cor:inductionresult}
Let $r \geq 1$. Then
\begin{align*}
\Ext_{\bsp}^s(\bsi_1^{(r)},\bsi_1^{(r)}) \cong \Ext_{\bsp}^s(\bsi_0^{(r)},\bsi_0^{(r)}) &\cong \begin{cases} k & \text{if $s \geq 0$ is even,} \\ 0 & \text{otherwise.} \end{cases} \\
\Ext_{\bsp}^s(\bsi_0^{(r)},\bsi_1^{(r)}) \cong \Ext_{\bsp}^s(\bsi_1^{(r)},\bsi_0^{(r)}) &\cong \begin{cases} k & \text{if $s \geq p^r$ is odd,} \\ 0 & \text{otherwise.} \end{cases}
\end{align*}
\end{corollary}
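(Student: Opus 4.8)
The plan is to deduce this corollary almost entirely from Theorem~\ref{thm:constructionbyinduction}, specialized to $j = r$, together with the conjugation action of the parity-change functor $\Pi$ on extension groups in $\bsp$.

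First I would set $j = r$ in Theorem~\ref{thm:constructionbyinduction}. Then $p^{r-j} = p^0 = 1$, so $S_0^{1(r)} = \bss^1 \circ \bsirzero$ and $\Lambda_0^{1(r)} = \bsa^1 \circ \bsirzero$; since $\bss^1 = \bsa^1 = \bsi$, each of these is simply $\bsirzero$. The theorem then reads off at once that $\Ext_{\bsp}^s(\bsirzero,\bsirzero) \cong k$ for $s$ even with $s \geq 0$ and vanishes otherwise, and that $\Ext_{\bsp}^s(\bsirone,\bsirzero) \cong k$ for $s \equiv p^r \bmod 2$ with $s \geq p^r$ and vanishes otherwise. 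Because $p$ is odd (a standing convention), $p^r$ is odd, so the congruence $s \equiv p^r \bmod 2$ says precisely that $s$ is odd, and this gives the stated formulas for the two groups whose target is $\bsirzero$.

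To obtain the remaining two groups I would invoke \eqref{eq:conjugatebsir}, namely $\bsi_1^{(r)} = \Pi \circ \bsi_0^{(r)} \circ \Pi = (\bsirzero)^\Pi$, which also forces $(\bsirone)^\Pi = \bsirzero$ since conjugation by $\Pi$ is an involution ($\Pi \circ \Pi = \bsi$). By Section~\ref{subsubsec:conjugationbyPi}, conjugation by $\Pi$ gives an isomorphism $\Ext_{\bsp}^\bullet(F,G) \simrightarrow \Ext_{\bsp}^\bullet(F^\Pi,G^\Pi)$ for all $F,G \in \bsp$. Taking $(F,G) = (\bsirzero,\bsirzero)$ yields $\Ext_{\bsp}^\bullet(\bsirone,\bsirone) \cong \Ext_{\bsp}^\bullet(\bsirzero,\bsirzero)$, and taking $(F,G) = (\bsirone,\bsirzero)$ yields $\Ext_{\bsp}^\bullet(\bsirzero,\bsirone) \cong \Ext_{\bsp}^\bullet(\bsirone,\bsirzero)$; together with the previous paragraph this establishes all four of the claimed isomorphisms.

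Since every ingredient is already in hand, there is no substantive obstacle here: this corollary is essentially bookkeeping. The only points requiring attention are the elementary identifications $S_0^{1(r)} = \Lambda_0^{1(r)} = \bsirzero$ and the observation that the ``$\bmod\, 2p^{r-j}$'' congruences of Theorem~\ref{thm:constructionbyinduction} degenerate to parity conditions when $j = r$, which uses that $p$ is odd.
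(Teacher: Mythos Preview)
Your proposal is correct and matches the paper's own proof essentially line for line: the paper also takes $j=r$ in Theorem~\ref{thm:constructionbyinduction} to obtain the groups with target $\bsirzero$, and then applies conjugation by $\Pi$ to obtain the remaining two isomorphisms. Your additional remarks making explicit the identification $S_0^{1(r)} = \Lambda_0^{1(r)} = \bsirzero$ and the degeneration of the congruence conditions to parity conditions (using that $p$ is odd) are exactly the minor bookkeeping the paper leaves implicit.
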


\begin{proof}
The first isomorphism in each line follows from conjugating by $\Pi$, while the second in each line is true by the case $j=r$ of Theorem \ref{thm:constructionbyinduction}.
\end{proof}

\subsection{Module structure of \texorpdfstring{$\Ext_{\bsp}^\bullet(\bsir,{S_0}^{p^{r-j}(j)})$, $j \geq 1$}{ExtP(Ir,S0pr-j(j)), jgeq1}} \label{subsec:modulejgeq1}

Again let $j,r \in \N$ with $1 \leq j < r$, and set $q = p^{r-j-1}$. In this section we continue our investigation of the spectral sequence \eqref{eq:ellOprjspecseq}, with the goal of describing $\Ext_{\bsp}^\bullet(\bsir,{S_0}^{p^{r-j}(j)})$ as a right module over the Yoneda algebra $\Ext_{\bsp}^\bullet(\bsir,\bsir)$. The spectral sequence \eqref{eq:ellOprjspecseq} has only two nonzero rows,
\begin{equation} \label{eq:Oqjrows}
\begin{aligned}
\Ebar_{2,\ell}^{s,0} &\cong \Ext_{\bsp}^s(\bsi_\ell^{(r)},S_0^{q(j+1)}) & \text{and} \\
\Ebar_{2,\ell}^{s,q} &\cong \Ext_{\bsp}^s(\bsi_\ell^{(r)},\Lambda_0^{q(j+1)}) \cong \Ext_{\bsp}^{s-q+1}(\bsi_\ell^{(r)},S_0^{q(j+1)}),
\end{aligned}
\end{equation}
and hence only one nontrivial differential, which identifies with a map
\begin{equation} \label{eq:differentialjgeq1}
\dbar_{q+1}: \Ext_{\bsp}^{s-2q}(\bsi_\ell^{(r)},S_0^{q(j+1)}) \rightarrow \Ext_{\bsp}^s(\bsi_\ell^{(r)},S_0^{q(j+1)})
\end{equation}
that fits into a four-term exact sequence
\begin{equation} \label{eq:fourterm}
0 \rightarrow \Ebar_{\infty,\ell}^{s-q-1,q} \rightarrow \Ext_{\bsp}^{s-2q}(\bsi_\ell^{(r)},S_0^{q(j+1)}) \stackrel{\dbar_{q+1}}{\rightarrow} \Ext_{\bsp}^s(\bsi_\ell^{(r)},S_0^{q(j+1)}) \rightarrow \Ebar_{\infty,\ell}^{s,0} \rightarrow 0.
\end{equation}

Suppose $\ell = 0$. By Theorem \ref{thm:constructionbyinduction}, the second and third terms of this exact sequence are both isomorphic to $k$ if $s-2q \geq 0$ and $s \equiv 0 \mod 2q$; the second (and hence also the first) term is zero, but the third (and hence also the fourth) term is isomorphic to $k$ if $s = 0$; and the second and third terms (and hence the end terms as well) are zero for all other values of $s$. So suppose $s-2q \geq 0$ and $s \equiv 0 \mod 2q$. If $s \not\equiv 0 \mod 2p^{r-j}$, then Lemma \ref{lem:abutmentsinduction} implies that the end terms of \eqref{eq:fourterm} are both zero, and hence that the differential is an isomorphism. Now suppose $s \equiv 0 \mod 2p^{r-j}$. Since $\Ebar_{2,0}^{s-1,0} = 0 = \Ebar_{2,0}^{s-q,q}$, it follows from Lemma \ref{lem:abutmentsinduction} that the end terms of \eqref{eq:fourterm} must both be isomorphic to $k$, and hence the differential must be trivial. In particular, if $s \equiv 0 \mod 2p^{r-j}$, then the  maps $\Ebar_{2,0}^{s,0} \twoheadrightarrow \Ebar_{\infty,0}^{s,0} \hookrightarrow \bbExt_{\bsp}^s(\bsirzero,{\Omega_{p^{r-j}}}^{(j)})$ are isomorphisms. Combined with the observations from the end of the proof of Lemma \ref{lem:abutmentsinduction}, this implies for $s \equiv 0 \mod 2p^{r-j}$ that the composite map \eqref{eq:secondfirstcomposite} is an isomorphism. A similar analysis can also be applied if $\ell = 1$. We summarize the results of both analyses in the following theorem (cf.\ \cite[(4.5.6)]{Friedlander:1997}):

\begin{theorem} \label{thm:FS456}
Let $j,r \in \N$ with $1 \leq j < r$, and identify the nonzero rows of \eqref{eq:ellOprjspecseq} as in \eqref{eq:Oqjrows}.
\begin{enumerate}
\item Suppose $\ell = 0$. If $s \not\equiv 0 \mod 2p^{r-j}$, then \eqref{eq:differentialjgeq1} is an isomorphism. If $s \equiv 0 \mod 2p^{r-j}$, then \eqref{eq:differentialjgeq1} is trivial, and the $p$-power map $\varphi: {S_0}^{p^{r-j-1}(j+1)} \hookrightarrow {S_0}^{p^{r-j}(j)}$ induces an isomorphism
\[
\Ext_{\bsp}^s(\bsi_0^{(r)},S_0^{p^{r-j-1}(j+1)}) \simrightarrow \Ext_{\bsp}^s(\bsi_0^{(r)},S_0^{p^{r-j}(j)}).
\]

\item Suppose $\ell = 1$. If $s \not\equiv p^r \mod 2p^{r-j}$, then \eqref{eq:differentialjgeq1} is an isomorphism. If $s \equiv p^r \mod 2p^{r-j}$, then \eqref{eq:differentialjgeq1} is trivial, and $\varphi: {S_0}^{p^{r-j-1}(j+1)} \hookrightarrow {S_0}^{p^{r-j}(j)}$ induces an isomorphism
\[
\Ext_{\bsp}^s(\bsi_1^{(r)},S_0^{p^{r-j-1}(j+1)}) \simrightarrow \Ext_{\bsp}^s(\bsi_1^{(r)},S_0^{p^{r-j}(j)}).
\]

\item $(\ol{d}_{q+1})^p : \Ext_{\bsp}^{s-2p^{r-j}}(\bsi_\ell^{(r)},S_0^{q(j+1)}) \rightarrow \Ext_{\bsp}^s(\bsi_\ell^{(r)},S_0^{q(j+1)})$ is the zero map.
\end{enumerate}
\end{theorem}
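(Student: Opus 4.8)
\textbf{Proof proposal for Theorem~\ref{thm:FS456}(3).}
The plan is to deduce part~(3) from parts~(1) and~(2) by a counting argument on cohomological degrees, after first disposing of the degenerate cases. First I would note that by Theorem~\ref{thm:constructionbyinduction}, applied with $j$ replaced by $j+1$ (recall $q=p^{r-j-1}$, so $S_0^{q(j+1)}=S_0^{p^{r-(j+1)}(j+1)}$), the group $\Ext_{\bsp}^s(\bsi_\ell^{(r)},S_0^{q(j+1)})$ vanishes unless $s\equiv 0\mod 2q$ (when $\ell=0$) resp.\ $s\equiv p^r\mod 2q$ (when $\ell=1$). In particular the $p$-fold composite $(\dbar_{q+1})^p$ is automatically zero whenever its target group $\Ext_{\bsp}^s(\bsi_\ell^{(r)},S_0^{q(j+1)})$ vanishes, so from now on I may assume that $s$ lies in the relevant residue class modulo $2q$.

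Next I would unwind $(\dbar_{q+1})^p$ as the composite
\[
\Ext_{\bsp}^{s-2p^{r-j}}(\bsi_\ell^{(r)},S_0^{q(j+1)})
\xrightarrow{\ \dbar_{q+1}\ }\cdots
\xrightarrow{\ \dbar_{q+1}\ }
\Ext_{\bsp}^{s}(\bsi_\ell^{(r)},S_0^{q(j+1)})
\]
of $p$ instances of the differential \eqref{eq:differentialjgeq1}, each raising the cohomological degree by $2q$ (here $2p^{r-j}=2pq$). Its $p$ constituent maps have target degrees $s-2q(p-1),\,s-2q(p-2),\,\ldots,\,s-2q,\,s$, which are $p$ consecutive terms of the arithmetic progression of step $2q$ passing through $s$. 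Since any two of these differ by a nonzero multiple of $2q$ strictly smaller than $2p^{r-j}=2pq$, they represent $p$ \emph{distinct} residues modulo $2p^{r-j}$, all congruent to $s$ modulo $2q$; and because the class of $s$ modulo $2q$ contains exactly $p$ residues modulo $2p^{r-j}$, these target degrees realize \emph{every} residue modulo $2p^{r-j}$ that is congruent to $s$ modulo $2q$.

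Finally I would invoke parts~(1) and~(2). When $\ell=0$ we have $s\equiv 0\mod 2q$, so $0$ is one of the residues modulo $2p^{r-j}$ realized by the target degrees; the corresponding factor of the composite has target degree $\equiv 0\mod 2p^{r-j}$ and hence is the zero map by part~(1). When $\ell=1$ we have $s\equiv p^r\mod 2q$, so $p^r$ reduces to one of the realized residues modulo $2p^{r-j}$; the corresponding factor has target degree $\equiv p^r\mod 2p^{r-j}$ and hence is zero by part~(2). In either case $(\dbar_{q+1})^p$ factors through a zero map and is therefore itself zero. The step I expect to require the most care is the elementary combinatorial observation that $p$ consecutive steps of size $2q$ exhaust the residues modulo $2p^{r-j}$ in a fixed class modulo $2q$; everything else is formal once parts~(1)--(2) and Theorem~\ref{thm:constructionbyinduction} are in hand.
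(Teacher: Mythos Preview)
Your argument is correct and is essentially the approach the paper has in mind: the paper states Theorem~\ref{thm:FS456} as a summary of the preceding four-term exact sequence analysis, and part~(3) is meant to follow immediately from parts~(1)--(2) by exactly the counting observation you give (among $p$ successive iterates of $\dbar_{q+1}$, each shifting degree by $2q$, one target degree must land in the residue class where the differential vanishes). The paper does not spell out this step; your write-up makes it explicit and is fine as written.
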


Repeated application of Theorem \ref{thm:FS456} yields:

\begin{corollary} \label{cor:powermap}
Let $1 \leq j < r$. The $p^{r-j}$-power map $\varphi_{r-j}: \bsi_0^{(r)} \hookrightarrow S_0^{p^{r-j}(j)}$ induces isomorphisms
\begin{align*}
\Ext_{\bsp}^s(\bsi_0^{(r)},\bsi_0^{(r)}) &\simrightarrow \Ext_{\bsp}^s(\bsi_0^{(r)},S_0^{p^{r-j}(j)}) \quad \text{if $s \equiv 0 \mod 2p^{r-j}$, and} \\
\Ext_{\bsp}^s(\bsi_1^{(r)},\bsi_0^{(r)}) &\simrightarrow \Ext_{\bsp}^s(\bsi_1^{(r)},S_0^{p^{r-j}(j)}) \quad \text{if $s \equiv p^r \mod 2p^{r-j}$ and $s \geq p^r$}.
\end{align*}
\end{corollary}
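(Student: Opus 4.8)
The plan is to factor the $p^{r-j}$-power map $\varphi_{r-j}$ as a composite of $r-j$ successive $p$-power maps and to invoke Theorem \ref{thm:FS456} once for each factor. For $1 \leq i \leq r-j$, let $\psi_i : S_0^{p^{i-1}(r-i+1)} \hookrightarrow S_0^{p^i(r-i)}$ be the $p$-power map; this is precisely the map ``$\varphi$'' of Theorem \ref{thm:FS456} with the parameter $j$ there specialized to $r-i$, so that $p^{r-j-1}$ and $p^{r-j}$ in the notation of that theorem become $p^{i-1}$ and $p^i$. Because $j \geq 1$, the value $r-i$ ranges over $\{j, j+1, \dots, r-1\}$ as $i$ ranges over $\{1, \dots, r-j\}$, so the hypotheses $1 \leq r-i < r$ of Theorem \ref{thm:FS456} are met at every stage. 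Using the identification $S_0^{p^0(r)} = S_0^{1(r)} = \bsi_0^{(r)}$, the composite $\psi_{r-j} \circ \cdots \circ \psi_1$ is the $p^{r-j}$-power map $\varphi_{r-j} : \bsi_0^{(r)} \hookrightarrow S_0^{p^{r-j}(j)}$, since stringing together $p$-power maps between symmetric powers of Frobenius twists produces a power map (this is just associativity of Frobenius twists; cf.\ Section \ref{subsec:Frobenius}). It therefore suffices to show that each $(\psi_i)_*$ is an isomorphism on the relevant $\Ext$-group.

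For the first isomorphism of the corollary, suppose $s \equiv 0 \mod 2p^{r-j}$. For each $1 \leq i \leq r-j$ we have $2p^i \mid 2p^{r-j}$, hence $s \equiv 0 \mod 2p^i$; so part (1) of Theorem \ref{thm:FS456} (with $\ell = 0$ and its parameter $j$ equal to $r-i$) shows that $(\psi_i)_*$ gives an isomorphism
\[
\Ext_{\bsp}^s(\bsi_0^{(r)}, S_0^{p^{i-1}(r-i+1)}) \simrightarrow \Ext_{\bsp}^s(\bsi_0^{(r)}, S_0^{p^i(r-i)}).
\]
Composing these for $i = 1, \dots, r-j$ yields the first isomorphism. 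For the second, suppose $s \equiv p^r \mod 2p^{r-j}$; then $s \equiv p^r \mod 2p^i$ for each such $i$, and part (2) of Theorem \ref{thm:FS456} (with $\ell = 1$) shows that each $(\psi_i)_*$ is an isomorphism $\Ext_{\bsp}^s(\bsi_1^{(r)}, S_0^{p^{i-1}(r-i+1)}) \simrightarrow \Ext_{\bsp}^s(\bsi_1^{(r)}, S_0^{p^i(r-i)})$; composing over $i$ gives the claim. The side condition $s \geq p^r$ in the statement is recorded only to identify when both sides are nonzero, by Theorem \ref{thm:constructionbyinduction}; the map is an isomorphism whenever the congruence holds.

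There is no real obstacle here: the argument is a bookkeeping exercise on top of Theorem \ref{thm:FS456}. The two points to get right are that the concatenation of the $p$-power maps of Theorem \ref{thm:FS456} really does recover the $p^{r-j}$-power map of Section \ref{subsec:Frobenius}, and that the congruence hypothesis modulo $2p^{r-j}$ degrades correctly to the weaker congruence modulo $2p^i$ needed at each intermediate twist level; both are routine, and no new homological input is required.
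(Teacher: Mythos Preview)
Your proof is correct and is precisely the intended argument: the paper's own proof consists of the single phrase ``Repeated application of Theorem \ref{thm:FS456} yields,'' and you have simply written out what that repetition looks like, with the correct reindexing $j' = r-i$ at each step. The indexing, the range check $1 \leq r-i < r$, and the observation that the congruence modulo $2p^{r-j}$ implies the congruence modulo $2p^i$ for each $1 \leq i \leq r-j$ are all exactly what is needed.
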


Theorem \ref{thm:FS456} also implies the following direct analogue of \cite[Corollary 4.6]{Friedlander:1997}:

\begin{corollary} \label{cor:stableimpliesall}
Let $j,r \in \N$ with $1 \leq j < r$. Let $\ell \in \set{0,1}$, and let
\[
V \subseteq \Ext_{\bsp}^\bullet(\bsi_\ell^{(r)},S_0^{p^{r-j-1}(j+1)})
\]
be a graded subspace such that the $p$-power map $\varphi: S_0^{p^{r-j-1}(j+1)} \rightarrow S_0^{p^{r-j}(j)}$ induces a surjection from $V$ onto $\Ext_{\bsp}^\bullet(\bsilr,{S_0}^{p^{r-j}(j)})$, and such that $V$ is stable with respect to the endomorphism
\[
\dbar_{p^{r-j-1}+1}: \Ext_{\bsp}^{\bullet-2p^{r-j-1}}(\bsi_\ell^{(r)},S_0^{p^{r-j-1}(j+1)}) \rightarrow \Ext_{\bsp}^\bullet(\bsi_\ell^{(r)},S_0^{p^{r-j-1}(j+1)}).
\]
Then $V = \Ext_{\bsp}^\bullet(\bsilr,{S_0}^{p^{r-j-1}(j+1)})$.
\end{corollary}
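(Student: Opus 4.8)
The plan is to deduce this from Theorems \ref{thm:constructionbyinduction} and \ref{thm:FS456}, which together pin down completely both the graded vector space $\Ext_{\bsp}^\bullet(\bsi_\ell^{(r)},S_0^{p^{r-j-1}(j+1)})$ and the action on it of the differential $\dbar_{p^{r-j-1}+1}$ and of the $p$-power map $\varphi_*$. Set $q = p^{r-j-1}$, so $2pq = 2p^{r-j}$, and abbreviate $A = \Ext_{\bsp}^\bullet(\bsi_\ell^{(r)},S_0^{q(j+1)})$ and $B = \Ext_{\bsp}^\bullet(\bsi_\ell^{(r)},S_0^{pq(j)})$. By Theorem \ref{thm:constructionbyinduction}, $A^s$ is one-dimensional when $s$ lies in the arithmetic progression $P$ consisting of the nonnegative integers $\equiv 0 \bmod 2q$ (if $\ell = 0$) or of the integers $\equiv p^r \bmod 2q$ that are $\geq p^r$ (if $\ell = 1$), and $A^s = 0$ otherwise; likewise $B^s$ is one-dimensional exactly when $s$ lies in the sub-progression $P' \subseteq P$ obtained by replacing $2q$ by $2pq$, and is zero otherwise. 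Note that every element of $P$ is of the form $s_0 + 2kq$ for a \emph{unique} $s_0 \in P'$ and a \emph{unique} $k$ with $0 \leq k \leq p-1$.

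First I would extract from Theorem \ref{thm:FS456} the two structural facts we need. (i) For $s \in P'$ the $p$-power map $\varphi_* \colon A^s \to B^s$ is an isomorphism of one-dimensional spaces (this is the $s \equiv 0 \bmod 2pq$, resp.\ $s \equiv p^r \bmod 2pq$, clause of Theorem \ref{thm:FS456}(1), resp.\ (2)); of course $\varphi_*$ vanishes on $A^s$ for $s \in P \setminus P'$, since then $B^s = 0$. (ii) The differential $\dbar_{q+1} \colon A^{s-2q} \to A^s$ is an isomorphism for every $s \in P \setminus P'$ and is zero for $s \in P'$ (the $s \not\equiv 0 \bmod 2pq$, resp.\ $s \not\equiv p^r \bmod 2pq$, clause of Theorem \ref{thm:FS456}). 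Consequently, for each $s_0 \in P'$ and each $0 \leq k \leq p-1$, the iterate $\dbar_{q+1}^{\,k}$ restricts to an isomorphism $A^{s_0} \simrightarrow A^{s_0 + 2kq}$; and Theorem \ref{thm:FS456}(3), $(\dbar_{q+1})^p = 0$, is the consistent statement that the $p$-th iterate dies, since in each block of $p$ consecutive nonzero degrees of $A$ exactly one carries a zero differential.

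Next I would feed in the hypotheses on $V$. Since $\varphi_*(V) = B$ and $\varphi_*$ is a graded map, $\varphi_*(V^{s}) = B^{s}$ for every $s$; taking $s \in P'$ and using (i), the one-dimensionality of $A^{s}$ forces $V^{s} = A^{s}$ for all $s \in P'$. Because $V$ is stable under $\dbar_{q+1}$, fact (ii) then gives $A^{s_0 + 2kq} = \dbar_{q+1}^{\,k}(V^{s_0}) \subseteq V$ for every $s_0 \in P'$ and every $0 \leq k \leq p-1$. Since the degrees $s_0 + 2kq$ arising this way exhaust $P$, and $A$ vanishes outside $P$, we conclude $V = A$, as asserted. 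The argument is identical for $\ell = 0$ and $\ell = 1$, the progression $P$ merely being shifted by the constant $p^r$; I would phrase facts (i) and (ii) so that this uniformity is manifest.

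I do not anticipate a genuine obstacle: all of the content sits in Theorems \ref{thm:constructionbyinduction} and \ref{thm:FS456}, and what is left is the elementary observation that $\dbar_{q+1}$ behaves as a ``shift to the next nonzero degree'' which fails to be surjective exactly at the degrees where $\varphi_*$ is nonzero, so that $\dbar_{q+1}$-stability together with surjectivity onto $B$ suffices to force $V$ to be everything. The only place demanding care is the bookkeeping of which degrees are ``$\dbar$-isomorphism degrees'' versus ``$\varphi$-isomorphism degrees'' across the two cases of $\ell$; cross-checking the propagation picture against Theorem \ref{thm:FS456}(3) serves as a useful sanity check. This is precisely the reasoning of \cite[Corollary 4.6]{Friedlander:1997}, transplanted to $\bsp$.
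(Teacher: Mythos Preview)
Your proposal is correct and follows essentially the same approach as the paper: use Theorem \ref{thm:FS456} to see that $\varphi_*$ is an isomorphism precisely in degrees $s \in P'$, conclude from surjectivity that $V$ contains $A^{s_0}$ for every $s_0 \in P'$, and then use $\dbar_{q+1}$-stability together with the fact that the differential is an isomorphism between consecutive nonzero degrees (except into degrees of $P'$) to propagate to all of $P$. The paper's proof is simply a terser version of this same argument, written out only for $\ell = 1$.
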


\begin{proof}
We give the proof in the case $\ell=1$, the proof for $\ell = 0$ being entirely analogous. The first assumption on $V$ implies that $\Ext_{\bsp}^s(\bsirone,{S_0}^{p^{r-j-1}(j+1)}) \subseteq V$ for all $s \geq p^r$ with $s \equiv p^r \mod 2p^{r-j}$. Then the second assumption on $V$ implies that $\Ext_{\bsp}^s(\bsirone,{S_0}^{p^{r-j-1}(j+1)}) \subseteq V$ for all $s \geq p^r$ with $s \equiv p^r \mod 2p^{r-j-1}$, and hence that $V = \Ext_{\bsp}^\bullet(\bsirone,{S_0}^{p^{r-j-1}(j+1)})$.
\end{proof}

Recall the cohomology classes $\bse_1^{(r-1)},\bse_2^{(r-2)},\ldots,\bse_{r-1}^{(1)},\bse_r,\bsc_r$ that were defined in Section \ref{subsec:modulej=1}. We can now use these classes to describe a basis for $\Ext_{\bsp}^\bullet(\bsir,{S_0}^{p^{r-j}(j)})$. 

\begin{proposition} \label{prop:basisprj}
Let $j,r \in \N$ with $1 \leq j \leq r$. Then the set of monomials
\begin{equation} \label{eq:evenbasisprj}
\{ \varphi_{r-j} \cdot (\bse_{r-j+1}^{(j-1)})^{\ell_1} \cdots (\bse_{r-1}^{(1)})^{\ell_{j-1}} (\bse_r)^{\ell_j}: 0 \leq \ell_1,\ldots,\ell_{j-1} < p,\, \ell_j \geq 0 \}
\end{equation}
is a basis for $\Ext_{\bsp}^\bullet(\bsirzero,S_0^{p^{r-j}(j)})$, and the set of monomials
\begin{equation} \label{eq:oddbasisprj}
\{ \varphi_{r-j} \cdot (\bse_{r-j+1}^{(j-1)})^{\ell_1} \cdots (\bse_{r-1}^{(1)})^{\ell_{j-1}} (\bse_r)^{\ell_j} \cdot \bsc_r: 0 \leq \ell_1,\ldots,\ell_{j-1} < p,\, \ell_j \geq 0 \}
\end{equation}
is a basis for $\Ext_{\bsp}^\bullet(\bsirone,S_0^{p^{r-j}(j)})$.
\end{proposition}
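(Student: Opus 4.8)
The plan is to prove both statements simultaneously by induction on $j$, following the pattern of the proof of Proposition \ref{prop:basesj=1}, which is precisely the base case $j=1$: for $j=1$ the monomial sets \eqref{eq:evenbasisprj} and \eqref{eq:oddbasisprj} collapse to $\{\varphi_{r-1}\cdot(\bse_r)^{\ell}:\ell\geq 0\}$ and $\{\varphi_{r-1}\cdot(\bse_r)^{\ell}\cdot\bsc_r:\ell\geq 0\}$. So assume $1\leq j<r$ and that the proposition holds for $j$; I will deduce it for $j+1$. Throughout set $q=p^{r-j-1}$, and write $\varphi\colon S_0^{q(j+1)}\hookrightarrow S_0^{p^{r-j}(j)}$ for the $p$-power map, so that $\varphi\circ\varphi_{r-j-1}=\varphi_{r-j}$ and the induced map $\varphi_*$ on extension groups satisfies $\varphi_*(\varphi_{r-j-1}\cdot z)=\varphi_{r-j}\cdot z$ for $z\in\Ext_{\bsp}^\bullet(\bsirzero,\bsirzero)$, since $\varphi_*$ is compatible with right Yoneda multiplication.

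First I would handle the even case. Let $V$ be the span of the monomials \eqref{eq:evenbasisprj} written for $j+1$, where $\bse_{r-j}^{(j)}\in\Ext_{\bsp}^{2q}(\bsirzero,\bsirzero)$ is the class determined by $\dbar_{q+1}(\varphi_{r-j-1})=\varphi_{r-j-1}\cdot\bse_{r-j}^{(j)}$ in \eqref{eq:differentialjgeq1}; this makes sense because $\Ext_{\bsp}^{2q}(\bsirzero,\bsirzero)\cong k$ by Corollary \ref{cor:inductionresult}, and one identifies this class with the $\bsi^{(j)}$-twist of the level-$(r-j)$ class $\bse_{r-j}$ by a naturality argument as in Remarks \ref{rem:restrictionj=1} and \ref{rem:twistedclasses}. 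I claim $V$ satisfies the hypotheses of Corollary \ref{cor:stableimpliesall} with $\ell=0$. Surjectivity of $\varphi_*\colon V\to\Ext_{\bsp}^\bullet(\bsirzero,S_0^{p^{r-j}(j)})$ holds because the monomials in \eqref{eq:evenbasisprj} for $j+1$ in which the new generator $\bse_{r-j}^{(j)}$ occurs with exponent $0$ are carried by $\varphi_*$ exactly onto the monomials \eqref{eq:evenbasisprj} for $j$, which form a basis of the target by the induction hypothesis. Stability of $V$ under $\dbar_{q+1}$ follows from $\dbar_{q+1}\bigl(\varphi_{r-j-1}\cdot(\bse_{r-j}^{(j)})^{\ell_1}m\bigr)=\varphi_{r-j-1}\cdot(\bse_{r-j}^{(j)})^{\ell_1+1}m$ (using equivariance of $\dbar_{q+1}$ and the choice of $\bse_{r-j}^{(j)}$), together with the identity $\varphi_{r-j-1}\cdot(\bse_{r-j}^{(j)})^p=(\dbar_{q+1})^p(\varphi_{r-j-1})=0$ from Theorem \ref{thm:FS456}(3), which is exactly what bounds the exponent of the new generator by $p-1$. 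Corollary \ref{cor:stableimpliesall} then gives $V=\Ext_{\bsp}^\bullet(\bsirzero,S_0^{q(j+1)})$. Finally, in each cohomological degree there is at most one monomial of \eqref{eq:evenbasisprj} for $j+1$, since the exponents $(\ell_1,\dots,\ell_j,\ell_{j+1})$ encode the base-$p$ expansion of an arbitrary nonnegative integer with $\ell_{j+1}$ carrying the unbounded high-order part; comparing with the dimensions computed in Theorem \ref{thm:constructionbyinduction} (applied with $j+1$) shows these monomials are linearly independent, hence a basis.

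For the odd case I would argue exactly as in the proof of Proposition \ref{prop:basesj=1}(2): right multiplication by $\bsc_r^\Pi\in\Ext_{\bsp}^{p^r}(\bsirzero,\bsirone)$ sends $\varphi_{r-j-1}\cdot m\cdot\bsc_r$ to $\varphi_{r-j-1}\cdot m\cdot(\bsc_r\cdot\bsc_r^\Pi)=\mu_r\cdot\varphi_{r-j-1}\cdot m\cdot(\bse_r)^p$, which is $\mu_r$ times a monomial of \eqref{eq:evenbasisprj} for $j+1$ (the exponent of $\bse_r$, the rightmost factor, increasing by $p$, all other exponents unchanged). Since the even monomials are linearly independent and distinct odd monomials go to distinct even monomials, the odd monomials \eqref{eq:oddbasisprj} for $j+1$ are linearly independent; as their cohomological degrees run through exactly $\{s:\ s\equiv p^r\ \mod 2q,\ s\geq p^r\}$, each once, comparison with Theorem \ref{thm:constructionbyinduction} (with $j+1$, $\ell=1$) shows they span $\Ext_{\bsp}^\bullet(\bsirone,S_0^{q(j+1)})$. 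This completes the induction.

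The step I expect to be the main obstacle is the identification of the equivariant differential $\dbar_{q+1}$ on $\Ext_{\bsp}^\bullet(\bsirzero,S_0^{q(j+1)})$ with right multiplication by the class $\bse_{r-j}^{(j)}$ previously defined (via the level-$(r-j)$ construction and precomposition with $\bsi^{(j)}$), rather than merely by some generator of the one-dimensional space $\Ext_{\bsp}^{2q}(\bsirzero,\bsirzero)$; this compatibility, which is what makes the monomials in the statement the correct ones, requires tracking the naturality of the hypercohomology spectral sequences under precomposition with Frobenius twists, in the spirit of the commutative diagrams in Remarks \ref{rem:restrictionj=1} and \ref{rem:twistedclasses}. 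Once that is in place, the remainder is bookkeeping with the equivariant spectral sequence and dimension counts against Theorem \ref{thm:constructionbyinduction}.
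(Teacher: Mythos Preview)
Your proposal is correct and follows essentially the same approach as the paper: induction on $j$ with base case Proposition \ref{prop:basesj=1}, and the inductive step carried out via Corollary \ref{cor:stableimpliesall}, Theorem \ref{thm:FS456}, and the twisting functor $F\mapsto F\circ\bsi^{(j)}$; the paper's proof is a terse pointer to exactly this argument (in the style of \cite[Corollary 4.7]{Friedlander:1997}), and you have unpacked it faithfully. The technical point you flag as the main obstacle---identifying the equivariant differential with right multiplication by the previously defined twisted class $\bse_{r-j}^{(j)}$ rather than by an unspecified generator of $\Ext_{\bsp}^{2q}(\bsirzero,\bsirzero)$---is precisely what the paper handles by invoking the twisting functor, and your remark that it comes down to naturality of the hypercohomology spectral sequence under precomposition with $\bsi^{(j)}$ is on target.
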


\begin{proof}
The proof is by induction on $j$ in a direct generalization of the argument used for the proof of \cite[Corollary 4.7]{Friedlander:1997}. The base case of the induction argument is handled using Proposition \ref{prop:basesj=1}. Then the induction step is handled using Theorem \ref{thm:FS456}, Corollary \ref{cor:stableimpliesall}, and the twisting functor $F \mapsto F \circ \bsi^{(j)}$ in the same way that the proof of \cite[Corollary 4.7]{Friedlander:1997} uses \cite[Corollary 4.6]{Friedlander:1997} and the twisting functor $F \mapsto F \circ I^{(j)}$.
\end{proof}

The case $j=r$ of Proposition \ref{prop:basisprj} immediately gives:

\begin{corollary} \label{cor:basis}
Let $r \geq 1$. Then the set of monomials
\begin{equation}
\{ (\bse_1^{(r-1)})^{\ell_1} \cdots (\bse_{r-1}^{(1)})^{\ell_{r-1}} (\bse_r)^{\ell_r}: 0 \leq \ell_1,\ldots,\ell_{r-1} < p,\, \ell_r \geq 0 \}
\end{equation}
is a basis for $\Ext_{\bsp}^\bullet(\bsirzero,\bsirzero)$, and the set of monomials
\begin{equation}
\{ (\bse_1^{(r-1)})^{\ell_1} \cdots (\bse_{r-1}^{(1)})^{\ell_{r-1}} (\bse_r)^{\ell_r} \cdot \bsc_r: 0 \leq \ell_1,\ldots,\ell_{r-1} < p,\, \ell_r \geq 0 \}
\end{equation}
is a basis for $\Ext_{\bsp}^\bullet(\bsirone,\bsirzero)$.
\end{corollary}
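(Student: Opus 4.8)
The plan is to obtain this corollary directly as the special case $j = r$ of Proposition~\ref{prop:basisprj}. First I would unwind what the objects in that proposition become when $j = r$. Since $p^{r-j} = p^{0} = 1$, the functor $S_0^{p^{r-j}(j)}$ is $S_0^{1(r)} = \bss^1 \circ \bsi_0^{(r)} = \bsi \circ \bsi_0^{(r)} = \bsirzero$, where I use the identification $\bss^1 = \bsi$ from Section~\ref{subsec:examples}. Likewise the $p^{r-j}$-power map $\varphi_{r-j} = \varphi_0 \colon \bsirzero \hookrightarrow S_0^{1(r)}$ is the $1$st-power map, i.e.\ the identity morphism of $\bsirzero$, so the leading factor $\varphi_{r-j} \cdot (-)$ appearing in the monomials \eqref{eq:evenbasisprj} and \eqref{eq:oddbasisprj} may simply be deleted.

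With these identifications the monomial set \eqref{eq:evenbasisprj} for $j = r$ is precisely $\{(\bse_1^{(r-1)})^{\ell_1} \cdots (\bse_{r-1}^{(1)})^{\ell_{r-1}} (\bse_r)^{\ell_r} : 0 \le \ell_1,\ldots,\ell_{r-1} < p,\ \ell_r \ge 0\}$ --- the subscript of $\bse$ running from $1$ to $r$ and the Frobenius twist running from $r-1$ down to $0$ --- and Proposition~\ref{prop:basisprj} states that it is a basis for $\Ext_{\bsp}^\bullet(\bsirzero, S_0^{1(r)}) = \Ext_{\bsp}^\bullet(\bsirzero,\bsirzero)$; similarly \eqref{eq:oddbasisprj} specializes to the claimed basis for $\Ext_{\bsp}^\bullet(\bsirone,\bsirzero)$, the extra factor $\bsc_r$ being exactly the class fixed in \eqref{eq:crdefinition}. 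So the proof amounts to recording the two elementary identifications above and noting that the right $\Ext_{\bsp}^\bullet(\bsir,\bsir)$-module structure used in Proposition~\ref{prop:basisprj} is the Yoneda structure in the present statement; there is no genuine obstacle here, since the entire content is already contained in Proposition~\ref{prop:basisprj}.

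For completeness I would add a sentence pointing to where the real work sits: Proposition~\ref{prop:basisprj} is itself proved by induction on $j$, the base case $j=1$ coming from Proposition~\ref{prop:basesj=1} and the inductive step from Theorem~\ref{thm:FS456}, Corollary~\ref{cor:stableimpliesall}, and the twisting functor $F \mapsto F \circ \bsi^{(j)}$, which is how the asymmetry between the bounded exponents $\ell_i < p$ on the twisted classes $\bse_i^{(r-i)}$ for $i < r$ and the unbounded exponent on $\bse_r$ is produced one twist at a time. No further argument is needed for the corollary beyond reading off the $j=r$ case.
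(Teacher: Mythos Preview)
Your proposal is correct and matches the paper's approach exactly: the paper simply states that the corollary is the case $j=r$ of Proposition~\ref{prop:basisprj}, and your unwinding of the identifications $S_0^{1(r)} = \bsirzero$ and $\varphi_0 = \id$ makes explicit precisely what the paper leaves implicit. Your added remarks on where the real work lies are accurate but optional.
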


\subsection{Algebra structure of \texorpdfstring{$\Ext_{\bsp}^\bullet(\bsir,\bsir)$}{ExtP(Ir,Ir)}} \label{subsec:algebrastructure}

Modulo certain structure constants that are equal to $\pm 1$, and modulo the proof of Lemma \ref{lem:differential} (which will be given in Section \ref{subsec:fgpgeq3}), we can now describe the multiplicative structure of the Yoneda algebra $\Ext_{\bsp}^\bullet(\bsir,\bsir)$. We make use of the operations on cohomology groups discussed in Sections \ref{subsubsec:dualityiso}--\ref{subsubsec:conjugationbyPi}. Define $\bse_0$ and $\bse_0^\Pi$ to be the identity elements of $\Hom_{\bsp}(\bsirzero,\bsirzero)$ and $\Hom_{\bsp}(\bsirone,\bsirone)$, respectively.

\begin{theorem} \label{thm:Yonedaalgebra}
Let $r \geq 1$. Then $\Ext_{\bsp}^\bullet(\bsir,\bsir)$ is generated as a $k$-algebra by extension classes $\bse_0',\bse_1',\ldots,\bse_r',\bsc_r,\bse_0'',\bse_1'',\ldots,\bse_r'',\bsc_r^\Pi$ such that, for each $1 \leq i \leq r$, $\bse_i'$ and $\bse_i''$ are nonzero scalar multiples of ${\bse_i}^{(r-i)}$ and $({\bse_i}^{(r-i)})^\Pi$, respectively, and such that only the following relations hold:
\begin{enumerate}
\item[(0)] $\bse_0'$ and $\bse_0''$ are orthogonal idempotents that sum to the identity.

\item $(\bse_r')^p = \bsc_r \cdot \bsc_r^\Pi$ and $(\bse_r'')^p = \bsc_r^\Pi \cdot \bsc_r$.

\item For each $1 \leq i < r$, $(\bse_i')^p = (\bse_i'')^p = 0$.

\item For each $0 \leq i \leq r$, $\bsc_r \cdot \bse_i' = \bse_i'' \cdot \bsc_r = \bsc_r^\Pi \cdot \bse_i'' = \bse_i' \cdot \bsc_r^\Pi = 0$.

\item For each $0 \leq i,j \leq r$, $\bse_i' \cdot \bse_j'' = \bse_j'' \cdot \bse_i' = 0$.

\item For each $0 \leq i \leq r$, there exists $\lambda_i \in \set{\pm 1}$, with $\lambda_0 = 1$, such that
\[
\bse_i' \cdot \bsc_r = \lambda_i \cdot (\bsc_r \cdot \bse_i'') \quad \text{and} \quad \bse_i'' \cdot \bsc_r^\Pi = \lambda_i \cdot (\bsc_r^\Pi \cdot \bse_i').
\]

\item The subalgebra generated by $\bse_0',\bse_1',\ldots,\bse_r',\bse_0'',\bse_1'',\ldots,\bse_r''$ is commutative.
\end{enumerate}
In particular, restriction from $\bsp$ to $\cp$ induces a surjection $\Ext_{\bsp}^\bullet(\bsirzero,\bsirzero) \rightarrow \Ext_{\cp}^\bullet(I^{(r)},I^{(r)})$.
\end{theorem}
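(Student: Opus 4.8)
The plan is to bootstrap from the additive structure already in hand. By Corollary~\ref{cor:basis}, the monomials $(\bse_1^{(r-1)})^{\ell_1}\cdots(\bse_{r-1}^{(1)})^{\ell_{r-1}}(\bse_r)^{\ell_r}$ with $0\le\ell_1,\dots,\ell_{r-1}<p$ and $\ell_r\ge 0$ form a $k$-basis of $\Ext_{\bsp}^\bullet(\bsirzero,\bsirzero)$, and appending $\bsc_r$ on the right gives a $k$-basis of $\Ext_{\bsp}^\bullet(\bsirone,\bsirzero)$. The conjugation operation $z\mapsto z^\Pi$ of Section~\ref{subsubsec:conjugationbyPi} is an algebra automorphism of the matrix ring~\eqref{eq:matrixring} interchanging its two rows and columns, since $(\bsirzero)^\Pi=\bsirone$ and $(\bsirone)^\Pi=\bsirzero$ by~\eqref{eq:conjugatebsir} and $\Pi\circ\Pi=\bsi$, and it carries $\bse_i^{(r-i)}$, $\bsc_r$ to $(\bse_i^{(r-i)})^\Pi$, $\bsc_r^\Pi$; applying it to the two bases above produces $k$-bases of $\Ext_{\bsp}^\bullet(\bsirone,\bsirone)$ and $\Ext_{\bsp}^\bullet(\bsirzero,\bsirone)$. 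Setting $\bse_i'=\bse_i^{(r-i)}$ for $1\le i<r$, $\bse_i''=(\bse_i')^\Pi$, $\bse_0'=\id_{\bsirzero}$, $\bse_0''=\id_{\bsirone}$, and (using that $k$ is perfect) $\bse_r'=\mu_r^{1/p}\bse_r$ with $\mu_r\in k^\times$ the scalar from the paragraph preceding Proposition~\ref{prop:basesj=1}, these four bases together show that the listed classes generate $\Ext_{\bsp}^\bullet(\bsir,\bsir)$ and exhibit an explicit $k$-basis of it.

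I would then verify relations (0)--(6). Relation (0) just records that $\bsirzero$ and $\bsirone$ are complementary summands of $\bsir$. Relations (3) and (4) are immediate from the matrix-ring structure: in each product the column index of the left factor differs from the row index of the right factor, so the composite vanishes. Relation (1) holds by the choice of $\bse_r'$: $(\bse_r')^p=\mu_r(\bse_r)^p=\bsc_r\cdot\bsc_r^\Pi$, and conjugating with $\Pi$ gives $(\bse_r'')^p=\bsc_r^\Pi\cdot\bsc_r$. For relation (2), Corollary~\ref{cor:basis} shows $\Ext_{\bsp}^{2p^i}(\bsirzero,\bsirzero)$ is one-dimensional, spanned by $\bse_{i+1}^{(r-i-1)}$, which restricts to the nonzero class $e_{i+1}^{(r-i-1)}$ in $\Ext_{\cp}^\bullet(I^{(r)},I^{(r)})$ by Remark~\ref{rem:twistedclasses}; since restriction from $\bsp$ to $\cp$ is a ring homomorphism (Remark~\ref{rem:restriction}) and $(\bse_i')^p$ restricts to a multiple of $e_i^p=0$ (the vanishing $e_i^p=0$ for $i<r$ is part of the structure of $\Ext_{\cp}^\bullet(I^{(r)},I^{(r)})$ computed in~\cite{Friedlander:1997}, and may alternatively be read off from $(\dbar_{q+1})^p=0$ in Theorem~\ref{thm:FS456}(3) applied to the appropriate module of Section~\ref{subsec:modulejgeq1}), it follows that $(\bse_i')^p=0$. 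For relation (5), both $\bse_i'\cdot\bsc_r$ and $\bsc_r\cdot\bse_i''$ lie in $\Ext_{\bsp}^{2p^{i-1}+p^r}(\bsirone,\bsirzero)$, which is one-dimensional by Corollary~\ref{cor:basis}, and both are nonzero (the module-structure results of Section~\ref{subsec:modulej=1} show $\Ext_{\bsp}^\bullet(\bsirone,\bsirzero)$ is free of rank one over $\Ext_{\bsp}^\bullet(\bsirone,\bsirone)$ on the generator $\bsc_r$); hence $\bse_i'\cdot\bsc_r=\lambda_i(\bsc_r\cdot\bse_i'')$ with $\lambda_i\in k^\times$, and $\lambda_0=1$ because $\bse_0',\bse_0''$ are identities. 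Applying the Kuhn-duality anti-involution $z\mapsto z^\#$ of Section~\ref{subsubsec:dualityiso}---using $\bsirzero^\#\cong\bsirzero$, $\bsirone^\#\cong\bsirone$, the fact that $z\mapsto z^\#$ commutes with $z\mapsto z^\Pi$ (so $\bse_i'$ and $\bse_i''=(\bse_i')^\Pi$ are sent to the \emph{same} scalar multiple of themselves), and that $\bsc_r^\#$ is a scalar multiple of $\bsc_r^\Pi$---turns the pair of relations in (5) into itself with $\lambda_i$ replaced by $\lambda_i^{-1}$, forcing $\lambda_i^2=1$. Finally, for relation (6): by (4) the subalgebra generated by the $\bse_i'$ and $\bse_i''$ is the product of the subalgebra of $\Ext_{\bsp}^\bullet(\bsirzero,\bsirzero)$ generated by the $\bse_i'$ and that of $\Ext_{\bsp}^\bullet(\bsirone,\bsirone)$ generated by the $\bse_i''$, and in the first distinct generators commute because $\Ext_{\bsp}^{2p^{i-1}+2p^{j-1}}(\bsirzero,\bsirzero)$ is one-dimensional while the two orderings restrict to the commuting nonzero classes $e_i^{(r-i)}e_j^{(r-j)}$ in $\Ext_{\cp}^\bullet(I^{(r)},I^{(r)})$; the second factor is handled by $\Pi$-conjugation.

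To conclude completeness, let $\mathcal A$ be the $k$-algebra presented by these generators and relations (0)--(6). The relations hold in $\Ext_{\bsp}^\bullet(\bsir,\bsir)$, so there is a surjective graded algebra homomorphism $\mathcal A\twoheadrightarrow\Ext_{\bsp}^\bullet(\bsir,\bsir)$. Using (0), (3), (4) one rewrites any word in $\mathcal A$ as a combination of words with no mixed $\bse'$- and $\bse''$-factors and containing at most one $\bsc_r$ or $\bsc_r^\Pi$; using (2) the intermediate exponents can be taken $<p$; using (1) any occurrence of $\bsc_r\bsc_r^\Pi$ or $\bsc_r^\Pi\bsc_r$ is absorbed into a power of $\bse_r'$ or $\bse_r''$; and using (5), (6) a surviving $\bsc_r$ (resp.\ $\bsc_r^\Pi$) is moved to the far right. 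The resulting standard monomials coincide with the four $k$-bases of the first paragraph, so $\mathcal A$ and $\Ext_{\bsp}^\bullet(\bsir,\bsir)$ agree in every graded degree and the surjection is an isomorphism. The concluding surjectivity assertion is then immediate: restriction from $\bsp$ to $\cp$ is a ring homomorphism $\Ext_{\bsp}^\bullet(\bsirzero,\bsirzero)\to\Ext_{\cp}^\bullet(I^{(r)},I^{(r)})$ (Remark~\ref{rem:restriction}) sending each algebra generator $\bse_i'$ to a nonzero multiple of $e_i^{(r-i)}$ by Remarks~\ref{rem:e1extension} and~\ref{rem:twistedclasses}, and the classes $e_r,e_{r-1}^{(1)},\dots,e_1^{(r-1)}$ generate $\Ext_{\cp}^\bullet(I^{(r)},I^{(r)})$ by~\cite[\S4]{Friedlander:1997}.

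The main obstacle is pinning down the structure constants $\lambda_i$ to $\pm 1$ in relation (5)---for which the duality-plus-conjugation argument above is the delicate point---and carrying out the combinatorial normalization in the completeness step; the genuinely hard input, Lemma~\ref{lem:differential}, is assumed here, being proved in Section~\ref{subsec:fgpgeq3}.
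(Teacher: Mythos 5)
Your proposal is correct and follows essentially the same route as the paper: choose $\bse_i'$ as a rescaling of $\bse_i^{(r-i)}$ to force relation (1), use the matrix-ring decomposition and Corollary~\ref{cor:basis} for generation and relations (0), (3), (4), use the anti-involutions $z\mapsto z^\#$ and $z\mapsto z^{\Pi}$ for relation (5), and use the degree-$<2p^r$ isomorphism with $\Ext_{\cp}^\bullet(I^{(r)},I^{(r)})$ for relations (2) and (6). One caveat: your justification that $\bsc_r\cdot\bse_i''\neq 0$ invokes a ``free of rank one over $\Ext_{\bsp}^\bullet(\bsirone,\bsirone)$ on the generator $\bsc_r$'' right-module statement that Section~\ref{subsec:modulej=1} does not actually prove---what is shown there (Corollary~\ref{cor:basis}) is freeness as a \emph{left} $\Ext_{\bsp}^\bullet(\bsirzero,\bsirzero)$-module; the nonzeroness you need is true but requires an extra step (say, applying $z\mapsto z^{\#}$ or $z\mapsto z^{\#\Pi}$ as the paper does, which simultaneously pins down $\lambda_i\in\{\pm1\}$ and makes the separate nonzeroness argument unnecessary). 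Your explicit completeness verification via standard monomials is a welcome elaboration that the paper leaves implicit in Corollary~\ref{cor:basis}.
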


\begin{proof}
Set $\bse_0' = \bse_0$ and $\bse_0'' = \bse_0^\Pi$. Then by the matrix ring decomposition \eqref{eq:matrixring} of $\Ext_{\bsp}^\bullet(\bsir,\bsir)$, $\bse_0'$ and $\bse_0''$ are orthogonal idempotents that sum to the identity. Next, it was observed in the paragraph preceding Proposition \ref{prop:basesj=1} that $\bsc_i \cdot \bsc_i^\Pi = \mu_i \cdot (\bse_i)^p$ for some nonzero scalar factor $\mu_i \in k$. By the standing assumption that $k$ is a perfect field, there exists a unique $p$-th root ${\mu_i}^{1/p}$ of $\mu_i$ in $k$. Now for each $1 \leq i \leq r$, set $\bse_i' = ({\mu_i}^{-1/p} \cdot \bse_i)^{(r-i)}$, and set $\bse_i'' = (\bse_i')^\Pi$. Then $(\bse_r')^p = \bsc_r \cdot \bsc_r^\Pi$ and $(\bse_r'')^p = [(\bse_r')^p]^\Pi = [\bsc_r \cdot \bsc_r^\Pi]^\Pi = \bsc_r^\Pi \cdot \bsc_r$, proving (1). We also get for $1 \leq i < r$ that $(\bse_i')^p = (\bsc_i \cdot \bsc_i^\Pi)^{(r-i)} = (\bsc_i)^{(r-i)} \cdot (\bsc_i^\Pi)^{(r-i)} = 0$ by Remark \ref{rem:twistedclasses}, and similarly that $(\bse_i'')^p = 0$, so (2) is true as well. Next, the relations in (3) and (4), and the fact that $\Ext_{\bsp}^\bullet(\bsir,\bsir)$ is generated as a $k$-algebra by the indicated extension classes, follows from the matrix ring decomposition \eqref{eq:matrixring}, Corollary \ref{cor:basis}, and from the conjugation action of $\Pi$. Now consider the anti-involutions $z \mapsto z^\#$ and $z \mapsto z^{\#\Pi} := (z^\#)^\Pi = (z^\Pi)^\#$ on $\Ext_{\bsp}^\bullet(\bsir,\bsir)$. By the uni-dimensionality of the ambient extension groups, it follows for each $1 \leq i \leq r$ that $(\bse_i')^\# = \pm \bse_i'$, $(\bse_i' \cdot \bsc_r)^{\#\Pi} = \pm \bse_i' \cdot \bsc_r$, and $(\bsc_r)^{\#\Pi} = \pm \bsc_r$. But $(\bse_i' \cdot \bsc_r)^{\#\Pi} = (\bsc_r)^{\#\Pi} \cdot (\bse_i')^{\#\Pi} = \pm \bsc_r \cdot \bse_i''$. Thus, $\bse_i' \cdot \bsc_r = \lambda_i \cdot (\bsc_r \cdot \bse_i'')$ for some $\lambda_i \in \set{\pm 1}$, and conjugating by $\Pi$ we get $\bse_i'' \cdot \bsc_r^\Pi = \lambda_i \cdot (\bsc_r^\Pi \cdot \bse_i')$, proving (5). Finally, restriction from $\bsp$ to $\cp$ induces an algebra homomorphism
\begin{equation} \label{eq:resrictiontoFS}
\Ext_{\bsp}^\bullet(\bsi_0^{(r)},\bsi_0^{(r)}) \rightarrow \Ext_{\cp}^\bullet(I^{(r)},I^{(r)}).
\end{equation}
Since $(\bse_i)^{(r-i)}$ restricts to $(e_i)^{(r-i)}$ by Remarks \ref{rem:e1extension} and \ref{rem:twistedclasses}, and since the classes ${e_1}^{(r-1)},\ldots,e_r$ generate $\Ext_{\cp}^\bullet(I^{(r)},I^{(r)})$ as an algebra, \eqref{eq:resrictiontoFS} is a surjection. Then by dimension comparison, \eqref{eq:resrictiontoFS} is an isomorphism in cohomological degrees $s < 2p^r$. Since $\Ext_{\cp}^\bullet(I^{(r)},I^{(r)})$ is a commutative algebra, this implies for all $1 \leq i,j \leq r$ that $\bse_i' \cdot \bse_j' = \bse_j' \cdot \bse_i'$. Conjugating by $\Pi$, we get $\bse_i'' \cdot \bse_j'' = \bse_j'' \cdot \bse_i''$ as well. This completes the proof of (6).
\end{proof}

\begin{remark}
We expect that the scalar factors $\mu_1,\ldots,\mu_r$ and $\lambda_1,\ldots,\lambda_r$ are probably all equal to $1$, and hence for each $1 \leq i \leq r$ that $\bse_i' = {\bse_i}^{(r-i)}$ and $\bse_i'' = ({\bse_i}^{(r-i)})^\Pi$.
\end{remark}

\section{Applications of the universal extension classes} \label{sec:applications}

In this section we present our main application of the extension classes $\bse_r$ and $\bsc_r$ exhibited in Section \ref{subsec:modulej=1}, namely, that the cohomology ring of a finite $k$-supergroup scheme is a finitely-generated $k$-superalgebra. The proof of this result involves a detailed analysis of how the classes $\bse_r$ and $\bsc_r$ restrict to the Frobenius kernel $GL(m|n)_1$ of $GL(m|n)$. Besides the cohomological finite-generation result, we obtain from this analysis a proof of Lemma \ref{lem:differential}, which is a key step in describing the multiplicative structure of $\Ext_{\bsp}^\bullet(\bsir,\bsir)$. We also obtain the curious result that the rational cohomology group $\opH^2(GL(m|n),k)$ is nonzero. We begin in Section \ref{subsec:recollections} by recalling some of our previous work on the cohomology of finite supergroup schemes.

\subsection{Recollections on the cohomology of finite supergroup schemes} \label{subsec:recollections}

An affine $k$-super\-group scheme $G$ is equivalent to the data of its coordinate superalgebra $k[G]$, a commutative $k$-Hopf superalgebra. The commutativity of $k[G]$ implies for each $r \in \N$ that the $p^r$-power map defines a Hopf superalgebra homomorphism $k[G]^{(r)} \rightarrow k[G]$. Then the $r$-th Frobenius kernel $G_r$ of $G$ is the scheme-theoretic kernel of the corresponding comorphism $F_G^r: G \rightarrow G^{(r)}$. In other words, $G_r$ is the affine $k$-supergroup scheme with coordinate superalgebra $k[G_r] = k[G]/(\sum_{f \in I_\ve} k[G] f^{p^r})$. Here $I_\ve$ denotes the augmentation ideal of $k[G]$. An affine $k$-super\-group scheme is \emph{algebraic} if $k[G]$ is a finitely-generated $k$-algebra, is \emph{finite} if $k[G]$ is finite-dimensional, and is \emph{infinitesimal} if it is finite and if $I_\ve$ is nilpotent. If $G$ is infinitesimal, then the minimal non-negative integer $r$ such that $f^{p^r} = 0$ for all $f \in I_\ve$ is the \emph{height} of $G$. For example, if $G$ is an affine $k$-supergroup scheme, then $G_r$ is infinitesimal of height $r$.

Let $G$ be a finite $k$-supergroup scheme. Then the dual superalgebra $k[G]^*$ is a finite-dimensional cocommutative Hopf superalgebra, and the category of left $G$-supermodules is isomorphic to the category of left $k[G]^*$-supermodules. Using the fact that the supertwist map makes $\bsv_\ev$ into an abelian braided monoidal category, one gets from \cite[Theorem 3.12]{Mastnak:2010} that the cohomology ring $\Hbul(G,k) \cong \Hbul(k[G]^*,k)$ is a graded-commutative superalgebra, i.e., if $w \in \opH^i(G,k)$ and $z \in \opH^j(G,k)$, then $z \cdot w = (-1)^{ij} (-1)^{\ol{w} \cdot \ol{z}} w \cdot z$. More generally, let $M$ be a trivial $G$-supermodule. Then there exist natural even isomorphisms
\begin{equation} \label{eq:classtolinearmap}
\opH^i(G,M) \cong \opH^i(G,k) \otimes M \cong \Hom_k(M^*,\opH^i(G,k)).
\end{equation}
Considering a homogeneous element $z \in \opH^i(G,M)$ as a linear map $z: M^* \rightarrow \opH^i(G,k)$ of the same parity, the graded-commutativity of $\Hbul(G,k)$ then implies that $z$ extends uniquely to an even homomorphism of graded superalgebras
\begin{align}
z &:\bss(M^*(i)) \rightarrow \Hbul(G,k), \quad \text{if $i$ is even, or} \label{eq:eveninducedhomomorphism} \\
z &: \bsl(M^*(i)) \rightarrow \Hbul(G,k), \quad \text{if $i$ is odd.} \label{eq:oddinducedhomomorphism}
\end{align}
Here $\bss(V(i))$ and $\bsl(V(i))$ denote the ``$i$-rarefactions'' of the graded superspaces $\bss(V)$ and $\bsl(V)$, i.e., $\bss^{ij}(V(i)) = \bss^j(V)$ for each $j\in \N$, while $\bss^j(V(i)) = 0$ if $j \notin i\Z$, and similarly for $\bsl(V(i))$.

In \cite[\S\S5.3--5.4]{Drupieski:2013b}, we considered for each finite $k$-supergroup scheme $G$ and each finite-dimensional $G$-supermodule $M$ the following questions:
\begin{enumerate}[label=(\thesubsection.\arabic*)]
\setcounter{enumi}{\value{equation}}
\item Is $\Hbul(G,k)$ a finitely-generated $k$-algebra?
\item Is $\Hbul(G,M)$ finitely-generated under the cup product action of $\Hbul(G,k)$?
\setcounter{equation}{\value{enumi}}
\end{enumerate}
In \cite[Theorem 5.3.3]{Drupieski:2013b}, we showed that if the answers to these questions are yes whenever $G$ is an infinitesimal $k$-supergroup scheme, then the answers are yes for every finite $k$-supergroup scheme. In \cite[Theorem 5.4.2]{Drupieski:2013b}, we showed further that if $G$ is infinitesimal of height $r$, then the answers to the above two questions are yes provided that there exists a closed embedding $G \hookrightarrow GL(m|n)_r$ for some $m,n \in \N$, and if for these values of $m$ and $n$ there exist certain conjectured cohomology classes $e_r^{m,n}$ and $c_r^{m,n}$ for $GL(m|n)$ whose restrictions to $GL(m|n)_1$ admit particular descriptions. To precisely state the required conditions for $e_r^{m,n}|_{G_1}$ and $c_r^{m,n}|_{G_1}$, we first recall the May spectral sequence constructed in \cite[\S5.2]{Drupieski:2013b}.

Let $G$ be an algebraic $k$-supergroup scheme, and let $\g = \Lie(G)$ be the Lie superalgebra of $G$. Then by \cite[Corollary 5.2.3]{Drupieski:2013b}, there exists a spectral sequence of rational $G$-supermodules
\begin{equation} \label{eq:Mayspecseq}
E_0^{i,j} = \bsl^j(\g^*) \otimes S^i(\gzero^*(2))^{(1)} \Rightarrow \opH^{i+j}(G_1,k).
\end{equation}
Here $S(\gzero^*(2))^{(1)}$ denotes the $2$-rarefaction of $S(\gzero^*)$, considered as a rational $G$-supermodule via the Frobenius morphism $F_G: G \rightarrow G^{(1)}$. Identifying $\bsl(\g^*)$ with the graded tensor product of algebras $\Lambda(\gzero^*) \gotimes S(\gone^*)$, we get from \cite[Proposition 3.5.3 and Lemma 5.2.2]{Drupieski:2013b} that the subalgebra $S(\gone^*)^p$ of $\bsl(\g^*)$ consisting of the $p$-th powers in $S(\gone^*)$ is a subalgebra of permanent cycles in $E_0$, and that $S(\gone^*)^p \cong S(\gone^*(p))^{(1)}$ as graded $G$-supermodules.

Now let $m$ and $n$ be positive integers, let $G = GL(m|n)$ be the corresponding general linear supergroup, and let $\g = \glmn = \Hom_k(k^{m|n},k^{m|n})$ be the Lie superalgebra of $G$. The conjectured classes $e_r^{m,n}$ and $c_r^{m,n}$ mentioned above are cohomology classes
\[
e_r^{m,n} \in \opH^{2p^{r-1}}(GL(m|n),\glzero^{(r)}) \quad \text{and} \quad
c_r^{m,n} \in \opH^{p^r}(GL(m|n),\glone^{(r)})
\]
whose restrictions $e_r^{m,n}|_{G_1}$ and $c_r^{m,n}|_{G_1}$ admit the following descriptions:
\begin{enumerate}[label=(\thesubsection.\arabic*)]
\setcounter{enumi}{\value{equation}}
\item The $G$-supermodule homomorphism $e_r^{m,n}|_{G_1} : S(\gzero^*(2p^{r-1}))^{(r)} \rightarrow \Hbul(G_1,k)$ induced by \eqref{eq:eveninducedhomomorphism} is equal to the composition of the $p^{r-1}$-power map $S(\gzero^*(2p^{r-1}))^{(r)} \rightarrow S(\gzero^*(2))^{(1)}$ with the horizontal edge map $S(\gzero^*(2))^{(1)} \rightarrow \Hbul(G_1,k)$ of \eqref{eq:Mayspecseq}. \label{eq:ertoG1}

\item The composition of the $G$-supermodule homomorphism $c_r^{m,n}|_{G_1}: S(\gone^*(p^r))^{(r)} \rightarrow \Hbul(G_1,k)$ induced by \eqref{eq:oddinducedhomomorphism} with the vertical edge map of \eqref{eq:Mayspecseq} has image equal to the subalgebra of $\bsl(\g^*)$ generated by all $p^r$-th powers in $S(\gone^*) \subset \bsl(\g^*)$. \label{eq:crtoG1}
\setcounter{equation}{\value{enumi}}
\end{enumerate}

Our goal is to show that the extension classes $\bse_r$, $\bse_r^\Pi$, $\bsc_r$, and $\bsc_r^\Pi$ exhibited in Section \ref{subsec:modulej=1} provide in a natural way the conjectured classes $e_r^{m,n}$ and $c_r^{m,n}$. Specifically, evaluation on the superspace $k^{m|n}$ defines an exact functor from $\bsp$ to the category of rational $G$-supermodules.\footnote{For each $d \in \N$, evaluation on $k^{m|n}$ defines an exact functor from $\bsp_d$ to the category of finite-dimensional supermodules for the Schur superalgebra $S(m|n,d)$, which is a quotient of the superalgebra $\Dist(G)$ of distributions for the supergroup $G = GL(m|n)$; cf.\ \cite[Theorem 3.2]{Brundan:2003a} and \cite[\S2]{El-Turkey:2013}. Specifically, the image of the evaluation functor lies in the subcategory of integrable $\Dist(G)$-supermodules. Since the category of integrable $\Dist(G)$-supermodules is isomorphic to the category of rational $G$-supermodules \cite[Corollary 3.5]{Brundan:2003a}, we thus get an exact functor from $\bsp_d$ to the category of rational $G$-supermodules.} This functor then induces for each $T,T' \in \bsp$ a natural even linear map $\Ext_{\bsp}^\bullet(T,T') \rightarrow \Ext_G^\bullet(T(k^{m|n}),T'(k^{m|n}))$, denoted $z \mapsto z|_G$. Observe that
\begin{equation} \label{eq:Zgradingglmn}
\begin{aligned}
\g_m &= \Hom_k(k^{m|0},k^{m|0}), & \g_{+1} &= \Hom_k(k^{0|n},k^{m|0}),\\
\g_n &= \Hom_k(k^{0|n},k^{0|n}), & \g_{-1} &= \Hom_k(k^{m|0},k^{0|n})
\end{aligned}
\end{equation}
are each naturally subspaces of $\g = \glmn$, with $\g_m \oplus \g_n = \gzero$, and $\g_{-1} \oplus \g_{+1} = \gone$. Restricting $\bse_r$, $\bse_r^\Pi$, $\bsc_r$, and $\bsc_r^\Pi$ to $G$, we get cohomology classes
\[
\begin{aligned}
\bse_r|_G &\in \Ext_G^{2p^{r-1}}(k^{m|0(r)},k^{m|0(r)}) &&\cong \opH^{2p^{r-1}}(G,\g_m^{(r)}), \\
\bse_r^\Pi|_G &\in \Ext_G^{2p^{r-1}}(k^{0|n(r)},k^{0|n(r)}) &&\cong \opH^{2p^{r-1}}(G,\g_n^{(r)}), \\
\bsc_r|_G &\in \Ext_G^{p^{r+1}}(k^{0|n(r)},k^{m|0(r)}) &&\cong \opH^{p^{r+1}}(G,\g_{+1}^{(r)}), \text{ and} \\
\bsc_r^\Pi|_G &\in \Ext_G^{p^{r+1}}(k^{m|0(r)},k^{0|n(r)}) &&\cong \opH^{p^{r+1}}(G,\g_{-1}^{(r)}).
\end{aligned}
\]
Our goal is to show that, up to rescaling, $(\bse_r + \bse_r^\Pi)|_G \in \opH^{2p^{r-1}}(G,\gzero)$ provides the conjectured class $e_r^{m,n}$, and $(\bsc_r + \bsc_r^\Pi)|_G \in \opH^{p^r}(G,\gone)$ provides the conjectured class $c_r^{m,n}$. We begin in Section \ref{subsec:Xg} with some additional preliminary details relevant to height-$1$ infinitesimal supergroup schemes. In Sections \ref{subsec:restrictionc1} and \ref{subsec:restrictioncr} we show for each $r \geq 1$ that $\bsc_r$ and $\bsc_r^\Pi$ restrict nontrivially to $G_1$. Then in Section \ref{subsec:fgpgeq3} we verify the conditions \ref{eq:ertoG1} and \ref{eq:crtoG1}.

\subsection{Some recollections regarding \texorpdfstring{$X(\g)$}{X(g)}} \label{subsec:Xg}

Let $G$ be a height-$1$ infinitesimal $k$-super\-group scheme. Then $\Hbul(G,k)$ identifies with the cohomology ring $\Hbul(V(\g),k)$ for the restricted enveloping superalgebra $V(\g)$ of $\g = \Lie(G)$ (cf.\ \cite[Remark 4.4.3]{Drupieski:2013b}), and $\Hbul(V(\g),k)$ can be computed using the projective resolution $X(\g)$ of $k$ discussed in \cite[\S3.3]{Drupieski:2013b}; see also \cite{Iwai:1965} and \cite[\S6]{May:1966}. We will need a few specific details concerning this resolution. First, write $U(\g)$ for the universal enveloping superalgebra of $\g$. Set $Y(\g) = U(\g) \otimes \bsa(\g)$, and set $W(\g) = V(\g) \otimes \bsa(\g)$. We consider $Y(\g)$ and $W(\g)$ as homologically graded superspaces with $U(\g)$ and $V(\g)$ concentrated in homological degree $0$ and $\bsa^i(\g)$ in homological degree $i$. Then $Y(\g)$ identifies with the Koszul resolution for $\g$ as discussed in \cite[\S3.1]{Drupieski:2013b}. The differential on $Y(\g)$ induces a differential on $W(\g)$, which we denote by $\partial$. Then $\partial$ makes $W(\g)$ into a differential graded superalgebra.

As a graded superspace, $X(\g) = W(\g) \otimes \Gamma(\gzero(2))$. The differential $d$ on $X(\g)$ is constructed in terms of a fixed homogeneous basis for $\g$ and a linear map $t: \Gamma(\gzero(2))^{(1)} \rightarrow W(\gzero)$ of homological degree $-1$. The map $t$ is constructed to satisfy the following properties:
	\begin{itemize}
	\item If $\ve: W(\g) \rightarrow k$ denotes the natural augmentation map on $W(\g)$, then $\ve \circ t = 0$.

	\item If $x$ is one of the fixed basis vectors for $\gzero$, then $t(\gamma_1(x)) = x^{p-1} \subgrp{x} - \subgrp{x^{[p]}}$. Here $x^{[p]}$ is the image of $x$ under the $p$-map making $\gzero$ a restricted Lie algebra, $x^{p-1}$ is the obvious monomial in $V(\g)$, and $\subgrp{x}$ and $\subgrp{x^{[p]}}$ are the obvious monomials in $\Lambda^1(\gzero) \subset \bsa^1(\g) \subset W^1(\g)$.
	
	\item If $\gzero$ is abelian, then $t$ can be constructed so that $t(\Gamma^i(\gzero(2))) \equiv 0$ for $i > 1$; cf.\ \cite[Remark 3.3.2]{Drupieski:2013b}. (In the notation of that remark, if $\gzero$ is abelian, then $r_2 = 0$ regardless of whether or not the restriction mapping is trivial.)
	\end{itemize}
Now the action of $d$ on $X(\g)$ is described in terms of the differential $\partial: W(\g) \rightarrow W(\g)$, the algebra structure of $W(\g)$, and the map $t$ as follows: Let $w \in W(\g)$ and $\gamma \in \Gamma(\gzero(2))$ be homogeneous elements. Denote the homological degree of $w$ by $\deg(w)$. Recall that $\Gamma(\gzero(2))$ is naturally a coalgebra, and write $\Delta_\Gamma(\gamma) = \sum \gamma' \otimes \gamma''$ for the coproduct of $\gamma$. Then
\begin{equation} \label{eq:dtdifferential} \textstyle
d(w \otimes \gamma) = \partial(w) \otimes \gamma + (-1)^{\deg(w)} \sum [w \cdot t(\gamma')] \otimes \gamma''.
\end{equation}

Though not discussed in \cite{Drupieski:2013b}, there exists a diagonal approximation $\Delta_s: X(\g) \rightarrow X(\g) \otimes X(\g)$ defined in terms of the natural coproducts $\Delta_W$ and $\Delta_\Gamma$ on $W(\g)$ and $\Gamma(\gzero(2))$, the algebra structure of $W(\g)$, and a linear map $s: \Gamma(\gzero(2)) \rightarrow W(\gzero) \otimes W(\gzero)$ of homological degree $0$; the map $s$ is called a \emph{twisting diagonal cochain} in \cite{Iwai:1965}, and is called a \emph{$t$-twisting coproduct} in \cite{May:1966}. We will not go into the details of the particular properties that $s$ must satisfy, but given the map $s$, and given $w \in W(\g)$ and $\gamma \in \Gamma(\gzero(2))$ as before, $\Delta_s$ is defined by
\begin{equation} \label{eq:Xgcoproduct} \textstyle
\Delta_s(w \otimes \gamma) = \sum [\Delta_W(w) \cdot s(\gamma')] \cdot \Delta_\Gamma(\gamma'').
\end{equation}

Now $\Hbul(V(\g),k)$ can be computed as the cohomology of the cochain complex $\Hom_{V(\g)}(X(\g),k)$. Applying the isomorphisms of Section \ref{subsec:duality}, there exists an isomorphism of graded superspaces
\begin{equation} \label{eq:HomXg}
\Hom_{V(\g)}(X(\g),k) \cong \Hom_k(\bsa(\g) \otimes \Gamma(\gzero(2)),k) \cong \bsl(\g^*) \otimes S(\gzero^*(2)).
\end{equation}
The diagonal approximation $\Delta_s$ induces a (typically) nonassociative product on the cochain complex $\Hom_{V(\g)}(X(\g),k)$. In particular, the induced product on cochains does not agree in general with the natural superalgebra structure of the tensor product $\bsl(\g^*) \otimes S(\gzero^*(2))$. However, using the fact that the twisting diagonal cochain $s$ has image in the subalgebra $W(\gzero) \otimes W(\gzero)$ of $W(\g) \otimes W(\g)$, one can show that, when restricted to the subspace $S(\gone^*) \otimes S(\gzero^*(2))$ of $\bsl(\g^*) \otimes S(\gzero^*(2))$, the induced product on cochains does agree with the natural superalgebra structure on $S(\gone^*) \otimes S(\gzero^*(2))$.

\begin{example} \label{ex:twodim}
Let $\g$ be the two-dimensional restricted Lie superalgebra generated by an even element $x$ and an odd element $y$ such that $[x,y] = 0$, $[y,y] = 2x$, and $x^{[p]} = x$. Then a typical monomial in $X(\g)$ has the form $x^a y^b \subgrp{x^c} \gamma_d(y) \gamma_e(x)$ for some $a,b,c,d,e \in \N$. (For legibility we have omitted the tensor product symbol between the factors $W(\g)$ and $\Gamma(\gzero(2))$.) Here $x^a y^b$ represents a monomial in $V(\g)$, $\subgrp{x^c}$ represents a monomial in $\Lambda^c(\gzero)$, $\gamma_d(y)$ represents a monomial in $\Gamma^d(\gone)$, and $\gamma_e(x)$ represents a monomial in $\Gamma^{2e}(\gzero(2))$, i.e., $\gamma_e(x) \in \Gamma^e(\gzero)$. Now since $\gzero$ is abelian and $x^{[p]} = x$, the differential $d: X(\g) \rightarrow X(\g)$ takes the form
\begin{align*}
d \left( x^a y^b \subgrp{x^c} \gamma_d(y) \gamma_e(x) \right) &= \partial \left(x^a y^b \subgrp{x^c} \gamma_d(y) \right) \gamma_e(x) \\
&\phantom{=} + (-1)^{c+d}\left[(x^a y^b \subgrp{x^c} \gamma_d(y)) \cdot (x^{p-1}\subgrp{x} - \subgrp{x}) \right] \gamma_{e-1}(x) \\
&= x^{a+1} y^b \subgrp{x^{c-1}} \gamma_d(y) \gamma_e(x) \\
&\phantom{=} + (-1)^c x^ay^{b+1} \subgrp{x^c}\gamma_{d-1}(y)\gamma_e(x) \\
&\phantom{=} - x^ay^b\subgrp{x^{c+1}}\gamma_{d-2}(y)\gamma_e(x) \\
&\phantom{=} + (-1)^c x^{a+p-1}y^b \subgrp{x^{c+1}}\gamma_d(y)\gamma_{e-1}(x) \\
&\phantom{=} -(-1)^c x^a y^b \subgrp{x^{c+1}}\gamma_d(y) \gamma_{e-1}(x).
\end{align*}

Let $x^*,y^*$ be the dual basis for $\g$. Then as above, a typical monomial in $\bsl(\g^*) \otimes S(\gzero^*(2))$ can be written in the form $\subgrp{(x^*)^c}(y^*)^d(x^*)^e$ for some $c,d,e \in \N$, where $\subgrp{(x^*)^c} \in \Lambda^c(\gzero^*)$, $(y^*)^d \in S^d(\gone^*)$, and $(x^*)^e \in S^{2e}(\gzero^*(2))$, i.e., $(x^*)^e \in S^e(\gzero^*)$. Making the identification \eqref{eq:HomXg}, one can check that the differential $d^*$ on $\Hom_{V(\g)}(X(\g),k)$ takes the following form:
\begin{align*}
(y^*)^d(x^*)^e &\stackrel{d^*}{\mapsto} 0 \\
\subgrp{(x^*)}(y^*)^d(x^*)^e &\stackrel{d^*}{\mapsto} (-1)^d (y^*)^d(x^*)^{e+1} -(-1)^d(y^*)^{d+2}(x^*)^e.
\end{align*}
This implies that $\Hbul(V(\g),k) \cong k[y^*,x^*]/\subgrp{x^*-(y^*)^2}$. In particular, no element of the subspace $S(\gzero^*(2))$ of $\Hom_{V(\g)}(X(\g),k)$ is a coboundary.
\end{example}

\subsection{Restriction of \texorpdfstring{$\bsc_1$}{c1}} \label{subsec:restrictionc1}

In this section, fix positive integers $m$ and $n$, let $G = GL(m|n)$ be the corresponding general linear supergroup, and let $G_1$ be its first Frobenius kernel.

\begin{lemma} \label{lem:c1restriction}
$\bsc_1|_{G_1} \neq 0$ and $\bsc_1^\Pi|_{G_1} \neq 0$.
\end{lemma}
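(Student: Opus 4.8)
The plan is to produce an explicit representative extension for a nonzero scalar multiple of $\bsc_1$ and then restrict it to $G_1 = GL(m|n)_1$, comparing the result against the model of $\Hbul(G_1,k)$ provided by the resolution $X(\g)$ from Section \ref{subsec:Xg}. First I would unwind the definition \eqref{eq:crdefinition} of $\bsc_1$ in the case $r=1$: there $q = p^{r-1} = 1$, so $\varphi_{r-1} = \varphi_0$ is the identity of $\bsi$, the de Rham complex in play is $\bso_p$, and the defining differential is the one coming from the super Koszul kernel sequences \eqref{eq:superKoszulses}. Since $\bsc_1 \in \Ext_{\bsp}^p(\bsi_1^{(1)},\bsi_0^{(1)})$ and $\Ext_{\bsp}^p(\bsi_1^{(1)},\bsi_0^{(1)}) \cong k$ by Corollary \ref{cor:inductionresult}, it is enough to exhibit \emph{any} nonzero class in this group and check it restricts nontrivially to $G_1$; concretely I expect a representative $p$-extension built from the subfunctors $\bss^i \otimes \bsa^{p-i}$ of $\bso_p$ together with the $p$-power and dual Frobenius maps, analogous to how \eqref{eq:e1extension} represents $\bse_1$. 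The analogue $\bsc_1^\Pi$ is then handled by conjugating by $\Pi$, using that conjugation by $\Pi$ is compatible with restriction to $G$ (it just swaps the roles of $k^{m|0}$ and $k^{0|n}$), so it suffices to treat $\bsc_1$.

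Next I would evaluate this representative extension on $k^{m|n}$ to obtain an extension of rational $G$-supermodules, restrict along $G_1 \hookrightarrow G$, and splice it with the canonical extension classes to read off the corresponding element of $\opH^p(G_1,k) \otimes \gone^{(1)}$ under the identification $\opH^p(G_1,\gone^{(1)}) \cong \Hom_k((\gone^*)^{(1)},\opH^p(G_1,k))$ of \eqref{eq:classtolinearmap}. The key computational input is the May spectral sequence \eqref{eq:Mayspecseq}: its vertical edge map identifies $E_\infty^{0,p}$ with a subquotient of $\bsl^p(\g^*)$, and by \cite[Proposition 3.5.3 and Lemma 5.2.2]{Drupieski:2013b} the $p$-th powers of elements of $S^1(\gone^*) \subset \bsl(\g^*)$ are permanent cycles spanning a copy of $S(\gone^*(p))^{(1)}$. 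I would show that the class $\bsc_1|_{G_1}$, viewed through the vertical edge map, lands on (a nonzero multiple of) the subspace of $p$-th powers of odd coordinate functions — this is exactly condition \eqref{eq:crtoG1} in the case $r=1$, and proving it here for $r=1$ is the base case of the induction that Section \ref{subsec:restrictioncr} will carry out.

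Concretely, the cleanest route is probably to reduce to a rank-one odd calculation: restrict further to the subsupergroup scheme corresponding to a one-dimensional odd root space $\g_{+1}$-direction, where the relevant restricted Lie superalgebra is (up to adding a central even element) the two-dimensional algebra $\g$ of Example \ref{ex:twodim}. There I would use the explicit differential $d^*$ on $\Hom_{V(\g)}(X(\g),k)$ computed in that example, which shows $\Hbul(V(\g),k) \cong k[y^*,x^*]/\langle x^* - (y^*)^2\rangle$ and in particular that no nonzero element of $S(\gzero^*(2))$ is a coboundary; this rules out the degenerate possibility that $\bsc_1$ restricts to zero, and pins down that the surviving class is the expected power of $y^*$. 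Functoriality of the evaluation-and-restriction map $z \mapsto z|_{G_1}$ with respect to the inclusion of this rank-one subalgebra then propagates nonvanishing back up to $GL(m|n)_1$.

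The main obstacle I anticipate is bookkeeping rather than conceptual: matching the sign and grading conventions between the super de Rham/Koszul picture in which $\bsc_1$ is defined (where the $p^r$-power Cartier isomorphism shifts cohomological degree) and the $X(\g)$-resolution picture in which $\Hbul(G_1,k)$ is computed, and in particular verifying that the representative extension I write down is genuinely exact as a sequence in $\bsp_\ev$ (using acyclicity of the super Koszul complex, i.e., Lemma \ref{lem:homotopy} and the discussion after it) and that its image under evaluation on $k^{m|n}$ is the extension of $G$-supermodules I claim. Once those compatibilities are in place, the nonvanishing is forced by the one-dimensional Example \ref{ex:twodim} computation together with Corollary \ref{cor:inductionresult}.
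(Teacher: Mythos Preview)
Your overall strategy---construct an explicit representative $p$-extension and restrict to a small sub(super)group where the computation is tractable---matches the paper's. The paper's representative is the augmented super Koszul kernel complex
\[
\bsK': \quad 0 \rightarrow \bsi_0^{(1)} \rightarrow \bsK_p^0 \rightarrow \cdots \rightarrow \bsK_p^{p-1} \rightarrow \bsi_1^{(1)} \rightarrow 0,
\]
which by Proposition~\ref{prop:Cartierkernel} has cohomology concentrated in $\bsi_0^{(1)}$ and $\bsi_1^{(1)}$ and hence represents a nonzero multiple of $\bsc_1$. This is the explicit extension you anticipate but do not write down.

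Where you diverge is in the small subgroup and the machinery invoked there. The paper bypasses $X(\g)$ and the May spectral sequence entirely: it restricts straight to the one-dimensional odd unipotent subgroup $U \subset G_1$ with $U(A) = \{I + a\,e_{m,m+1} : a \in A_{\ol 1}\}$, so that $k[U]^* \cong \Lambda(z)$ with its obvious periodic resolution $P_\bullet$, and then writes down by hand a chain map $P_\bullet \otimes k^{0|n(1)} \to \bsK'(k^{m|n})$ lifting the identity. Its degree-$p$ component is visibly nonzero, and since the differential on $\Hom_{\Lambda(z)}(P_\bullet,k^{m|0(1)})$ vanishes, this already gives $\bsc_1|_U \neq 0$; the argument for $\bsc_1^\Pi$ uses the transpose subgroup $U'$.

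Your proposed reduction contains a concrete misidentification. A single odd root vector $e_{m,m+1} \in \glmn$ satisfies $[y,y] = 2e_{m,m+1}^2 = 0$, so its restricted Lie superalgebra is \emph{not}---even after adjoining a central even element---the algebra of Example~\ref{ex:twodim}, which has $[y,y] = 2x \neq 0$. That example is used elsewhere in the paper (the proof of Lemma~\ref{lem:differential}, for $(\bse_r)^p$) and its punchline concerns $S(\gzero^*(2))$, the even edge, not the odd one you need. Moreover, routing through the vertical edge map of \eqref{eq:Mayspecseq} for the full $G_1$ risks inverting the paper's logic: the identification of the image of $\bsc_1|_{G_1}$ with \eqref{eq:powertoedge} (condition~\ref{eq:crtoG1}) is established in the proof of Theorem~\ref{thm:fgpgeq3} \emph{after} nonvanishing is known, via irreducibility of $(\g_{+1}^*)^{(r)}$ and a $T$-weight count, not as a means of proving nonvanishing.
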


\begin{proof}
Given $1 \leq i,j \leq m+n$, write $e_{i,j}$ for the corresponding $(m+n) \times (m+n)$ matrix unit whose $(i,j)$-entry is equal to $1$ and whose other entries are $0$. Let $U$ be the one-dimensional odd subsupergroup scheme of $G$ such that for each commutative $k$-superalgebra $A$,
\begin{equation} \label{eq:subgroupU}
U(A) = \set{I_{m+n} + a \cdot e_{m,m+1}: a \in A_{\ol{1}}},
\end{equation}
where $I_{m+n}$ denotes the $(m+n) \times (m+n)$ identity matrix. Then $U$ is a subsupergroup scheme of $G_1$. We will show that $\bsc_1|_{G_1} \neq 0$ by showing that $\bsc_1|_U \neq 0$. First, consider the super Koszul kernel complex $\bsK_p$. Proposition \ref{prop:Cartierkernel} implies that $\opH^0(\bsK_p) \cong {\bsi_0}^{(1)}$, $\opH^{p-1}(\bsK_p) \cong {\bsi_1}^{(1)}$, and $\opH^i(\bsK_p) = 0$ for $i \notin \set{0,p-1}$. Then by Proposition \ref{prop:extnandnextensions}, the augmented complex
\[
\bsK': 0 \rightarrow \bsi_0^{(1)} \rightarrow \bsK_p^0 \rightarrow \bsK_p^1 \rightarrow \cdots \rightarrow \bsK_p^{p-2} \rightarrow \bsK_p^{p-1} \rightarrow \bsi_1^{(1)} \rightarrow 0
\]
represents an element $\wt{\bsc}_1$ of the one-dimensional space $\Ext_{\bsp}^p({\bsi_1}^{(1)},{\bsi_0}^{(1)})$. In particular, $\wt{\bsc}_1$ is a scalar multiple of $\bsc_1$. We will show that $\wt{\bsc}_1|_U \neq 0$, and hence that $\bsc_1|_U \neq 0$.

Let $x_1,\ldots,x_m$ and $y_1,\ldots,y_n$ be the standard bases for $k^m$ and $k^n$, respectively. Then we consider $x_1,\ldots,x_m,y_1,\ldots,y_n$ as a homogeneous basis for $k^{m|n}$, with $\ol{x}_i = \ol{0}$ and $\ol{y}_j = \ol{1}$ for each $i$ and $j$. Let $\Lambda(z)$ be the ordinary exterior algebra over a one-dimensional odd superspace spanned by the vector $z$. The category of left $U$-supermodules is isomorphic to the category of left supermodules for $k[U]^* \cong \Lambda(z)$. Under this isomorphism, the action of $\Lambda(z)$ on $k^{m|n}$ is defined by $z.x_i = 0$ and $z.y_j = \delta_{j,1} \cdot x_m$. This equivalence also induces an isomorphism
\begin{equation} \label{eq:equivalenceiso}
\Ext_U^\bullet({\bsi_1}^{(1)}(k^{m|n}),{\bsi_0}^{(1)}(k^{m|n})) \cong \Ext_{\Lambda(z)}^\bullet(k^{0|n(1)},k^{m|0(1)}),
\end{equation}
where $k^{0|n(1)}$ and $k^{m|0(1)}$ are considered as trivial $\Lambda(z)$-supermodules. A projective resolution of the trivial $\Lambda(z)$-module $k$ is given by the complex $P_\bullet$, where $P_i = \Lambda(z)$ for each $i \geq 0$, and for $i \geq 1$ the differential $P_i \rightarrow P_{i-1}$ is multiplication by $z$. Then $P'_\bullet := P_\bullet \otimes k^{0|n(1)}$ is a projective resolution of $k^{0|n(1)}$, and the image of $\wt{\bsc}_1|_U$ under the isomorphism \eqref{eq:equivalenceiso} is computed by lifting the identity map $k^{0|n(1)} \rightarrow k^{0|n(1)}$ to a chain map $\varphi: P' \rightarrow \bsK'(k^{m|n})$, and then taking the cohomology class in $\Hom_{\Lambda(z)}(P_\bullet',k^{m|0(1)})$ of the cocycle $\varphi_p: P_p' \rightarrow k^{m|0(1)}$.

For each $i \geq 0$, $P_i'$ is a free $\Lambda(z)$-supermodule of rank $n$ with a homogeneous $\Lambda(z)$-basis given by the vectors $1 \otimes {y_j}^{(1)}$ for $1 \leq j \leq n$. By definition, $\bsK_p^i(k^{m|n})$ is a subspace of $\bss^{p-i}(k^{m|n}) \otimes \bsa^i(k^{m|n})$. We express elements of $\bss(k^{m|n})$ and $\bsa(k^{m|n})$ in terms of the bases \eqref{eq:bssbasis} and \eqref{eq:bsabasis}. Then the action of $z$ on $\bss(k^{m|n}) \otimes \bsa(k^{m|n})$ is given by
\[
\begin{aligned}
z. &\left[(x_1^{a_1} \cdots x_m^{a_m} y_1^{b_1} \cdots y_n^{b_n}) \otimes (x_1^{c_1} \cdots x_m^{c_m} \gamma_{d_1}(y_1) \cdots \gamma_{d_n}(y_n)) \right] \\
&= b_1 \cdot \left[(x_1^{a_1} \cdots x_m^{a_m+1} y_1^{b_1-1} \cdots y_n^{b_n}) \otimes (x_1^{c_1} \cdots x_m^{c_m} \gamma_{d_1}(y_1) \cdots \gamma_{d_n}(y_n)) \right] \\
&\phantom{=}+(-1)^{b_1+\cdots+b_n} \left[(x_1^{a_1} \cdots x_m^{a_m} y_1^{b_1} \cdots y_n^{b_n}) \otimes (x_1^{c_1} \cdots x_m^{c_m+1} \gamma_{d_1-1}(y_1) \cdots \gamma_{d_n}(y_n)) \right].
\end{aligned}
\]
We now define a family of $\Lambda(z)$-supermodule homomorphisms $\varphi_i: P_i' \rightarrow \bsK_p^{p-i-1}(k^{m|n})$ as follows: For $0 \leq i \leq p-1$, define $\varphi_i$ such that $\varphi_i(1 \otimes {y_j}^{(1)}) = \delta_{j,1} \cdot \frac{1}{i!} \cdot [x_m^i y_1 \otimes \gamma_{p-i-1}(y_1)]$. Define $\varphi_p$ such that $\varphi_p(1 \otimes {y_j}^{(1)}) = \delta_{j,1} \cdot \frac{1}{(p-1)!} \cdot {x_m}^{(1)}  = -\delta_{j,1} \cdot {x_m}^{(1)} \in k^{m|0(1)}$, and set $\varphi_i = 0$ for $i > p$. Then one can check that this family of homomorphisms defines a chain map $\varphi: P' \rightarrow \bsK'(k^{m|n})$ lifting the identity map $k^{0|n(1)} \rightarrow k^{0|n(1)}$. Since $\varphi_p$ is nonzero, and since the differential on the complex $\Hom_{\Lambda(z)}(P_\bullet',k^{m|0(1)})$ is trivial, it follows that $\wt{\bsc}_1|_U \neq 0$, and hence $\bsc_1|_{G_1} \neq 0$. The proof that $\bsc_1^\Pi|_{G_1} \neq 0$ is entirely analogous, replacing $U$ by the transpose subsupergroup $U'$ of $G_1$ defined by $U'(A) = \set{I_{m+n}+a \cdot e_{m+1,m} : a \in A_{\ol{1}}}$.
\end{proof}

\subsection{Restriction of \texorpdfstring{$\bsc_r$}{cr}} \label{subsec:restrictioncr}

Again, fix positive integers $m$ and $n$, and set $G = GL(m|n)$. Let $U$ be be as defined in \eqref{eq:subgroupU}, and let $\Hbul(U,k)$ denote the cohomology ring $\Ext_U^\bullet(k,k) \cong \Ext_{\Lambda(z)}^\bullet(k,k)$, which is known to be a polynomial algebra generated by an odd generator $\beta$ of cohomological degree $1$. For each $r \geq 1$ and for each pair of trivial $U$-supermodules $V^{(r)}$ and $W^{(r)}$, there exists a natural identification
\begin{equation} \label{eq:matrixidentification}
\Ext_U^\bullet(V^{(r)},W^{(r)}) \cong \Hbul(U,k) \otimes \Hom_k(V,W)^{(r)}.
\end{equation}
Taking $r=1$, the proof of Lemma \ref{lem:c1restriction} shows that, up to a nonzero scalar factor, $\bsc_1|_U$ identifies with $\beta^p \cdot \alpha^{(1)}$, where $\alpha: k^n \rightarrow k^m$ is the linear map defined with respect to the standard bases for $k^n$ and $k^m$ by the $m \times n$ unit matrix $e_{m,1}$. Similarly, replacing $U$ by its transpose $U'$, $\bsc_1^\Pi|_{U'}$ identifies with a nonzero scalar multiple of $\beta^p \cdot \alpha^{t(1)}$, where $\alpha^t: k^m \rightarrow k^n$ is the transpose of $\alpha$. Our goal in this section is to extend these identifications to all $r \geq 1$. Specifically, we will show by induction on $r$ that, under the identification \eqref{eq:matrixidentification}, $\bsc_r|_U$ identifies with a nonzero scalar multiple of $\beta^{p^r} \cdot \alpha^{(r)}$, and by symmetry that $\bsc_r^\Pi|_{U'}$ identifies with a nonzero scalar multiple of $\beta^{p^r} \cdot \alpha^{t(r)}$, and hence that $\bsc_r|_{G_1} \neq 0$ and $\bsc_r^\Pi|_{G_1} \neq 0$. Our approach in this section is strongly influenced by the methods of Franjou, Friedlander, Scorichenko, and Suslin \cite{Franjou:1999}.

Before delving into details, let us first indicate the general strategy of the induction argument. First, consider $\bsirone$ and $\bsirzero$ as subfunctors of ${\Gamma_1}^{(r)} := \bsa \circ \bsirone$ and ${S_0}^{(r)} := \bss \circ \bsirzero$, respectively. Then for each $i \geq 1$, we get the $i$-fold cup product $\bsc_r^{\cup i} \in \Ext_{\bsp}^{p^r i}(\Gamma_1^{i(r)},S_0^{i(r)})$. Assuming by way of induction that $\bsc_r|_U$ identifies with a nonzero scalar multiple of $\beta^{p^r} \cdot \alpha^{(r)}$, it follows that $(\bsc_r^{\cup i})|_U$ identifies with a nonzero scalar multiple of $\beta^{p^r i} \cdot {\alpha_i}^{(r)}$, where $\alpha_i$ denotes the composite map
\[
\Gamma^i(k^n) = ((k^n)^{\otimes i})^{\fS_i} \hookrightarrow (k^n)^{\otimes i} \stackrel{\alpha^{\otimes i}}{\longrightarrow} (k^m)^{\otimes i} \twoheadrightarrow S^i(k^m).
\]
In particular, $(\bsc_r^{\cup i})|_U \neq 0$, hence $\bsc_r^{\cup i} \neq 0$. One immediately checks that $\alpha_p$ equals the composite
\[
\Gamma^p(k^n) \stackrel{\varphi^\#}{\longrightarrow} (k^n)^{(1)} \stackrel{\alpha^{(1)}}{\longrightarrow} (k^m)^{(1)} \stackrel{\varphi}{\longrightarrow} S^p(k^m),
\]
where $\varphi^\#$ and $\varphi$ denote the dual Frobenius map and the $p$-power map, respectively. Then $\beta^{p^{r+1}} \cdot {\alpha_p}^{(r)}$ is the image of $\beta^{p^{r+1}} \cdot \alpha^{(r+1)}$ under the map in cohomology
\begin{equation} \label{eq:Uphistarphi}
(\varphi^\#,\varphi): \Ext_U^{p^{r+1}}(k^{n(r+1)},k^{m(r+1)}) \rightarrow \Ext_U^{p^{r+1}}(\Gamma^p(k^{n(r)}),S^p(k^{m(r)}))
\end{equation}
induced in the obvious way by $\varphi^\#$ and $\varphi$. Our plan is to show that the corresponding homomorphism between extension groups in $\bsp$,
\begin{equation} \label{eq:bspphistarphi}
(\varphi^\#,\varphi): \Ext_{\bsp}^{p^{r+1}}(\bsi_1^{(r+1)},\bsi_0^{(r+1)}) \rightarrow \Ext_{\bsp}^{p^{r+1}}(\Gamma_1^{p(r)},S_0^{p(r)}),
\end{equation}
maps $\bsc_{r+1}$ to a nonzero scalar multiple of $\bsc_r^{\cup p}$; cf. \cite[Corollary 5.9]{Franjou:1999}. Then, using the fact that \eqref{eq:Uphistarphi} is an injection, which can be seen from \eqref{eq:matrixidentification} by the fact that $\varphi^\#: \Gamma^p(k^n) \rightarrow k^{n(1)}$ is a surjection and $\varphi: k^{m(1)} \rightarrow S^p(k^m)$ is an injection, and using the compatibility of $(\varphi^\#,\varphi)$ with the restriction map $z \mapsto z|_U$, it follows that $\bsc_{r+1}|_U$ identifies with a nonzero multiple of $\beta^{p^{r+1}} \cdot {e_{m,1}}^{(r+1)}$.

The homomorphism \eqref{eq:bspphistarphi} factors as a composite of homomorphisms
\begin{equation} \label{eq:phistarphicomposite}
\Ext_{\bsp}^{p^{r+1}}(\bsi_1^{(r+1)},\bsi_0^{(r+1)}) \stackrel{(\varphi^\#)^*}{\longrightarrow} \Ext_{\bsp}^{p^{r+1}}(\Gamma_1^{p(r)},\bsi_0^{(r+1)}) \stackrel{\varphi_*}{\longrightarrow} \Ext_{\bsp}^{p^{r+1}}(\Gamma_1^{p(r)},S_0^{p(r)})
\end{equation}
induced first by $\varphi^\#$ and then by $\varphi$. Corollary \ref{cor:powermap} implies via conjugation by $\Pi$ and via duality that $(\varphi^\#)^*$ is an isomorphism in cohomological degree $p^{r+1}$. Then to complete the induction argument, it suffices to show that the image of $\varphi_*$ is the subspace spanned by $\bsc_r^{\cup p}$. First, recall the map \eqref{eq:secondfirstcomposite} that relates the first and second hypercohomology spectral sequences; taking $A = \Gamma_1^{p(r)}$ and $C = {\Omega_p}^{(r)}$, this map identifies with $\varphi_*$. We will prove that $\bsc_r^{\cup p} \in \im(\varphi_*)$ by analyzing \eqref{eq:secondfirstcomposite}.

\begin{proposition} \label{prop:imphistar}
In \eqref{eq:phistarphicomposite}, the image of $\varphi_*$ is the subspace spanned by $\bsc_r^{\cup p}$.
\end{proposition}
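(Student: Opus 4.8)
The plan is to reduce Proposition~\ref{prop:imphistar} to the single assertion that $\bsc_r^{\cup p}\in\im(\varphi_*)$, and then to extract the latter from a comparison of the two hypercohomology spectral sequences converging to $\bbExt_{\bsp}^\bullet(\Gamma_1^{p(r)},{\Omega_p}^{(r)})$. First I would record the reduction. By Corollary~\ref{cor:powermap} together with conjugation by $\Pi$ and duality, $(\varphi^\#)^*$ is an isomorphism in cohomological degree $p^{r+1}$, so the source of $\varphi_*$ in \eqref{eq:phistarphicomposite} is $\Ext_{\bsp}^{p^{r+1}}(\Gamma_1^{p(r)},\bsi_0^{(r+1)})\cong\Ext_{\bsp}^{p^{r+1}}(\bsi_1^{(r+1)},\bsi_0^{(r+1)})$, which is one-dimensional by Corollary~\ref{cor:inductionresult}; hence $\dim_k\im(\varphi_*)\leq 1$. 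On the other hand, the discussion preceding the proposition shows, using the inductive description of $\bsc_r|_U$ and the fact that the composite $\alpha_p\colon\Gamma^p(k^n)\to S^p(k^m)$ (which sends $\gamma_p(y_1)$ to $x_m^p$) is nonzero, that $\bsc_r^{\cup p}\neq 0$. So it suffices to prove $\bsc_r^{\cup p}\in\im(\varphi_*)$; then $\im(\varphi_*)=k\cdot\bsc_r^{\cup p}$, as claimed.

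To prove $\bsc_r^{\cup p}\in\im(\varphi_*)$, recall that $\varphi_*$ has been identified with the composite \eqref{eq:secondfirstcomposite} for $A=\Gamma_1^{p(r)}$ and $C={\Omega_p}^{(r)}$, equivalently with the map on $\Ext_{\bsp}^{p^{r+1}}(\Gamma_1^{p(r)},-)$ induced by the inclusion $\opH^0({\Omega_p}^{(r)})\hookrightarrow\Omega_p^{0(r)}=S_0^{p(r)}$. By the ordinary Cartier isomorphism \cite[3.3]{Franjou:1994}, $\opH^\bullet({\Omega_p}^{(r)})=\opH^\bullet(\Omega_p)\circ\bsi_0^{(r)}\cong\Omega_1^{\bullet(r+1)}$ is concentrated in cohomological degrees $0$ and $1$, both isomorphic as superfunctors to $\bsi_0^{(r+1)}$, and the inclusion $\opH^0({\Omega_p}^{(r)})\hookrightarrow S_0^{p(r)}$ is the $p$-power map $\varphi$ itself. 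Hence the second hypercohomology spectral sequence ${}^{II}E_2^{s,t}=\Ext_{\bsp}^s(\Gamma_1^{p(r)},\opH^t({\Omega_p}^{(r)}))$ is supported in rows $t=0,1$, with both rows equal to $\Ext_{\bsp}^s(\Gamma_1^{p(r)},\bsi_0^{(r+1)})$, and degenerates at ${}^{II}E_3$ (the only possibly nonzero differential being $d_2\colon{}^{II}E_2^{s,1}\to{}^{II}E_2^{s+2,0}$).

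The heart of the argument is to show that $\bsc_r^{\cup p}$, regarded as an element of ${}^{I}E_1^{0,p^{r+1}}=\Ext_{\bsp}^{p^{r+1}}(\Gamma_1^{p(r)},S_0^{p(r)})$ of the first hypercohomology spectral sequence ${}^{I}E_1^{s,t}=\Ext_{\bsp}^t(\Gamma_1^{p(r)},\Omega_p^{s(r)})$, is a permanent cycle whose class in $\bbExt_{\bsp}^{p^{r+1}}(\Gamma_1^{p(r)},{\Omega_p}^{(r)})$ lies in the image of ${}^{II}E_\infty^{p^{r+1},0}$. For the permanent-cycle part, the coalgebra structure of the exponential superfunctor $\Gamma_1^{(r)}=\bsa\circ\bsi_1^{(r)}$ and the graded-commutative differential graded $\bsp$-algebra $\Omega^{(r)}=\bigoplus_{d\geq0}{\Omega_d}^{(r)}$ equip $\bigoplus_d\bigl({}^{I}E_1\text{ for }{\Omega_d}^{(r)}\bigr)$ with a multiplicative structure for which $d_1$ is a derivation; since $\bsc_r^{\cup p}$ is the $p$-fold cup power of the class of $\bsc_r$ in bidegree $(0,p^r)$ of the $d=1$ spectral sequence, the Leibniz rule gives $d_1(\bsc_r^{\cup p})=0$ in characteristic $p$. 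That the higher differentials out of $(0,p^{r+1})$ vanish as well, and that the surviving class is accounted for by the bottom row of ${}^{II}E$, is then obtained by pinning down the relevant part of the ${}^{I}E_1$-page: by the exponential cup-product isomorphism \eqref{eq:firstcupiso} and \eqref{eq:Onij} one has ${}^{I}E_1^{s,\bullet}\cong\Ext_{\bsp}^\bullet(\Gamma_1^{p-s(r)},S_0^{p-s(r)})\otimes\Ext_{\bsp}^\bullet(\Gamma_1^{s(r)},\Lambda_0^{s(r)})$, and the two tensor factors are controlled by the degree-shift isomorphisms of Lemma~\ref{lem:degreeshift} and Corollary~\ref{cor:degreeshift}, the vanishing Theorem~\ref{thm:vanishing}, and Corollary~\ref{cor:inductionresult}. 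Comparing the resulting $E_1$-page with the abutment and with the second spectral sequence then forces ${}^{I}E_\infty^{0,p^{r+1}}$ to be one-dimensional, to coincide with the image of \eqref{eq:secondfirstcomposite}, and to be spanned by the nonzero class $\bsc_r^{\cup p}$. The computation runs parallel, term by term, to the proof of \cite[Corollary~5.9]{Franjou:1999}; together with the first paragraph it proves the proposition.

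The step I expect to be the main obstacle is the simultaneous bookkeeping of the two hypercohomology spectral sequences, and in particular confirming that the one-dimensional surviving quotient ${}^{I}E_\infty^{0,p^{r+1}}$ contains $\bsc_r^{\cup p}$ rather than some unrelated permanent cycle: the spectral sequence machinery by itself only bounds $\dim\im(\varphi_*)$, and it is the two external inputs---the nonvanishing $\bsc_r^{\cup p}|_U\neq 0$ coming from the inductive hypothesis on $r$, and the isomorphism $(\varphi^\#)^*$ in degree $p^{r+1}$---that identify the image exactly. A secondary technical point is to check that $d_1$ really is a derivation for the cup-product structure on ${}^{I}E_1$, so that the characteristic-$p$ Leibniz computation applies; this is the coproduct analogue of the multiplicativity discussed after Lemma~\ref{lem:cupcommute}.
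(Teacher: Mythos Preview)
Your outline follows the same strategy as the paper: identify $\varphi_*$ with the composite \eqref{eq:secondfirstcomposite} for $A=\Gamma_1^{p(r)}$, $C=\Omega_p^{(r)}$, and then show that $\bsc_r^{\cup p}$ is a permanent cycle in ${}^{I}E_1^{0,p^{r+1}}$ spanning the image. Your reduction paragraph is fine, and invoking the outer inductive hypothesis to get $\bsc_r^{\cup p}\neq 0$ is legitimate in context.

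However, the spectral-sequence analysis you sketch has two concrete gaps. First, the tools you cite to ``control'' the columns ${}^{I}E_1^{s,\bullet}\cong \Ext_{\bsp}^\bullet(\Gamma_1^{p-s(r)},S_0^{p-s(r)})\otimes \Ext_{\bsp}^\bullet(\Gamma_1^{s(r)},\Lambda_0^{s(r)})$ do not apply: Lemma~\ref{lem:degreeshift} and Corollary~\ref{cor:degreeshift} require the first argument to be \emph{additive}, and $\Gamma_1^{d(r)}$ is not additive for $d\geq 2$. The paper handles this by an inner induction on $d$: setting $V=\Ext_{\bsp}^\bullet(\bsirone,\bsirzero)$, it proves that the cup-product maps $\Theta_S^d:S^d(V)\to\Ext_{\bsp}^\bullet(\Gamma_1^{d(r)},S_0^{d(r)})$ and $\Theta_\Lambda^d:\Lambda^d(V)\to\Ext_{\bsp}^\bullet(\Gamma_1^{d(r)},\Lambda_0^{d(r)})$ are isomorphisms for $1\leq d<p$, so that the intermediate columns are literally $\Omega_p^s(V)$ with $d_1$ the de~Rham differential. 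Your Leibniz argument for $d_1(\bsc_r^{\cup p})=0$ is then exactly the observation that $(\bsc_r)^p\in S^p(V)$ lies in $\ker d$, transported along this identification; but without the $\Theta$-isomorphisms you have no handle on $d_1$ beyond column $0$.

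Second, and more seriously, the de~Rham spectral sequence alone is not enough to force ${}^{I}E_2^{0,p^{r+1}}$ to consist of permanent cycles. The paper introduces a \emph{second} first-hypercohomology spectral sequence $\bsE=E(p,Kz_p^{(r)})$ for the Koszul complex, and plays $E$ and $\bsE$ off one another: exactness of $Kz_p(V)$ pins down the $\bsE_2$-columns, and a bootstrap using the identifications $\bsE_2^{0,\bullet}\cong E_2^{p,\bullet}\cong\coker(\Theta_\Lambda^p)$ together with the parity constraints \eqref{eq:Evanishing}--\eqref{eq:chainofisos} forces $E_2^{p,i}=0$ for $i\leq p^{r+1}-p+1$. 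Only then does one know that $E_2^{0,p^{r+1}}$ is the unique nonzero term in its total degree and survives to $E_\infty$. Your proposal mentions only $\Omega_p^{(r)}$; without the companion Koszul analysis there is no mechanism to kill the potential targets $E_p^{p,p^{r+1}-p+1}$ of the higher differential out of $(0,p^{r+1})$. Citing \cite[Corollary~5.9]{Franjou:1999} is the right pointer, but the super adaptation is precisely the content of the paper's proof and is not free.
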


\begin{proof}
Set $V = \Ext_{\bsp}^\bullet(\bsirone,\bsirzero)$. Taking $A = {\Gamma_1}^{(r)} := \bsa \circ \bsirone$, and taking $B = {S_0}^{(r)} := \bss \circ \bsirzero$ or $B = {\Lambda_0}^{(r)} := \bsa \circ \bsirzero$, respectively, the cup product \eqref{eq:ABcupproduct} induces linear maps
\[
V^{\otimes d} \rightarrow \Ext_{\bsp}^\bullet(\Gamma_1^{d(r)},S_0^{d(r)}), \quad \text{and} \quad V^{\otimes d} \rightarrow \Ext_{\bsp}^\bullet(\Gamma_1^{d(r)},\Lambda_0^{d(r)}),
\]
By Lemma \ref{lem:cupcommute}, these maps factor, respectively, through linear maps
\[
\Theta_S^d : S^d(V) \rightarrow \Ext_{\bsp}^\bullet(\Gamma_1^{d(r)},S_0^{d(r)}), \quad \text{and} \quad \Theta_\Lambda^d : \Lambda^d(V) \rightarrow \Ext_{\bsp}^\bullet(\Gamma_1^{d(r)},\Lambda_0^{d(r)}).
\]
As an inductive tool for analyzing the terms appearing in \eqref{eq:secondfirstcomposite}, we will first prove by induction on $d$ that $\Theta_S^d$ and $\Theta_\Lambda^d$ are isomorphisms for $1 \leq d < p$. The case $d=1$ is a tautology, so suppose that $1 < d < p$. Then the composite homomorphism
\[
S_0^{d(r)} \rightarrow (S_0^{1(r)})^{\otimes d} = (\bsi_0^{(r)})^{\otimes d} \rightarrow S_0^{d(r)}
\]
induced by the coproduct and product morphisms for $S_0^{(r)}$ is equal to multiplication by the nonzero scalar $d!$. This implies that the map in cohomology
\[
\Ext_{\bsp}^\bullet(\Gamma_1^{d(r)},(\bsi_0^{(r)})^{\otimes d}) \rightarrow \Ext_{\bsp}^\bullet(\Gamma_1^{d(r)},S_0^{d(r)})
\]
induced by the product in ${S_0}^{(r)}$ is a surjection, and hence by \eqref{eq:firstcupiso} that $\Theta_S^d$ is a surjection as well. Replacing ${S_0}^{(r)}$ with ${\Lambda_0}^{(r)}$, we get by the same reasoning that $\Theta_\Lambda^d$ is a surjection. Then to prove that $\Theta_S^d$ and $\Theta_\Lambda^d$ are isomorphisms, it suffices to show that they are injections.

Given a cochain complex $C$ in $\bsp_\ev$, write $E(d,C)$ for the corresponding first hypercohomology spectral sequence obtained by taking $A = \Gamma_1^{d(r)}$ in \eqref{eq:firsthypercohomology}. As in \cite[Proposition 4.1]{Franjou:1999}, if $C'$ and $C''$ are cochain complexes in $\bsp_\ev$, then the cup product isomorphism \eqref{eq:firstcupiso} induces an isomorphism of spectral sequences $E(d',C') \otimes E(d'',C'') \cong E(d'+d'',C' \otimes C'')$. In particular, using \eqref{eq:evenlift} to consider the $r$-th Frobenius twist of the ordinary de Rham complex $\Omega$ as a complex of strict polynomial superfunctors, the cup product induces an isomorphism $E(1,{\Omega_1}^{(r)})^{\otimes d} \cong E(d,({\Omega_1}^{(r)})^{\otimes d})$. We then get a homomorphism of spectral sequences $E(1,{\Omega_1}^{(r)})^{\otimes d} \rightarrow E(d,{\Omega_d}^{(r)})$ by composing with the map of chain complexes $({\Omega_1}^{(r)})^{\otimes d} \rightarrow {\Omega_d}^{(r)}$ induced by the multiplication morphism for $\Omega$.

Set $\Ebar = E(1,{\Omega_1}^{(r)})$, and set $E = E(d,{\Omega_d}^{(r)})$. The only nonzero columns in $\Ebar_1$ are $\Ebar_1^{0,\bullet} \cong V$ and $\Ebar_1^{1,\bullet} \cong V$. With these identifications, the differential $\dbar_1: \Ebar_1^{0,\bullet} \rightarrow \Ebar_1^{1,\bullet}$ is the identity. Next, using \eqref{eq:firstcupiso} we can write
\[
E_1^{s,\bullet} \cong \Ext_{\bsp}^\bullet(\Gamma_1^{d(r)},S_0^{d-s(r)} \otimes \Lambda_0^{s(r)}) \cong \Ext_{\bsp}^\bullet(\Gamma_1^{d-s(r)},S_0^{d-s(r)}) \otimes \Ext_{\bsp}^\bullet(\Gamma_1^{s(r)},\Lambda_0^{s(r)}).
\]
Thus, if $0 < s < d$, then $E_1^{s,\bullet} \cong S^{d-s}(V) \otimes \Lambda^s(V)$ by induction on $d$. It now follows that the homomorphism of spectral sequences $(\Ebar)^{\otimes d} \rightarrow E$ factors on the $E_1$-page as the composition of two maps, $\rho: (\Ebar_1)^{\otimes d} \rightarrow \Omega_d(V)$ and $\sigma: \Omega_d(V) \rightarrow E_1$, such that:
\begin{itemize}
\item $\rho$ is induced by the identifications $\Ebar_1^{0,\bullet} = S^1(V)$ and $\Ebar_1^{1,\bullet} = \Lambda^1(V)$, and by multiplication in $\Omega(V)$; and
\item $\sigma$ restricts for each $s$ to a map $\sigma_s: \Omega_d^s(V) \rightarrow E_1^{s,\bullet}$ such that $\sigma_0 = \Theta_S^d$, $\sigma_d = \Theta_\Lambda^d$, and for $0 < s < d$, $\sigma_s$ is the isomorphism $S^{d-s}(V) \otimes \Lambda^s(V) \cong E_1^{s,\bullet}$.
\end{itemize}
Using the above factorization, it follows that $\sigma$ fits into a commutative diagram
\[
\vcenter{\xymatrix{
S^d(V) \ar@{->}[r] \ar@{->}[d]_{\Theta_S^d} & S^{d-1}(V) \otimes \Lambda^1(V) \ar@{->}[r] \ar@{->}[d]^{\cong}_{\sigma_1} & \cdots \ar@{->}[r] & S^1(V) \otimes \Lambda^{d-1}(V) \ar@{->}[r] \ar@{->}[d]^{\cong}_{\sigma_{d-1}} & \Lambda^d(V) \ar@{->}[d]_{\Theta_\Lambda^d} \\
E_1^{0,\bullet} \ar@{->}[r]^{d_1} & E_1^{1,\bullet} \ar@{->}[r]^{d_1} & \cdots \ar@{->}[r]^{d_1} & E_1^{d-1,\bullet} \ar@{->}[r]^{d_1} & E_1^{d,\bullet}
}}
\]
in which the top row is the de Rham complex $\Omega_d(V)$. In other words, $\sigma$ defines a map of cochain complexes. Since $\Omega_d(V)$ is an exact complex unless $d \equiv 0 \mod p$, this implies, by considering the left-most square in the diagram, that $\Theta_S^d$ is an injection, hence an isomorphism. The proof that $\Theta_\Lambda^d$ is an isomorphism is completely parallel to the argument given for $\Theta_S^d$: replace the de Rham complex $\Omega$ with the Koszul complex $Kz$, and interchange the roles of $S$ and $\Lambda$.

Now suppose $d=p$. Consider momentarily the second hypercohomology spectral sequence
\[
{}^{II}E_2^{s,t} = \Ext_{\bsp}^s(\Gamma_1^{p(r)},\opH^t(\Omega_p^{(r)})) \Rightarrow \bbExt_{\bsp}^{s+t}(\Gamma_1^{p(r)},\Omega_p^{(r)}).
\]
Applying the ordinary Cartier isomorphism, Theorem \ref{thm:constructionbyinduction}, conjugation by $\Pi$, and duality,
\[
{}^{II}E_2^{s,t} \cong \Ext_{\bsp}^s(\Gamma_1^{p(r)},\bsi_0^{(r+1)}) \cong \begin{cases} k & \text{if $s \equiv p^{r+1} \mod 2p$, $s \geq p^{r+1}$, and $t \in \set{0,1}$,} \\ 0 & \text{otherwise.} \end{cases}
\]
This implies that all differentials in the spectral sequence are trivial, and that the composite
\[
{}^{II}E_2^{p^{r+1},0} \twoheadrightarrow {}^{II}E_\infty^{p^{r+1},0} \hookrightarrow \bbExt_{\bsp}^{p^{r+1}}(\Gamma_1^{p(r)},\Omega_p^{(r)})
\]
is an isomorphism of one-dimensional spaces. Next consider the first hypercohomology spectral sequences $E := E(p,{\Omega_p}^{(r)})$ and $\bsE := E(p,{Kz_p}^{(r)})$. Then $\bsc_r^{\cup p} \in E_1^{0,p^{r+1}}$. By playing the spectral sequences $E$ and $\bsE$ off each other, we will show that $\bsc_r^{\cup p}$ spans the space of permanent cycles in $E_1^{0,p^{r+1}}$, and hence that the composite
\[
\bbExt_{\bsp}^{p^{r+1}}(\Gamma_1^{p(r)},\Omega_p^{(r)}) \twoheadrightarrow E_\infty^{0,p^{r+1}} \hookrightarrow E_1^{0,p^{r+1}}
\]
is an isomorphism of one-dimensional spaces, finishing the proof of the theorem.

As in the case $1 \leq d < p$, we get by induction on $d$ the existence of chain maps $\sigma: \Omega_p(V) \rightarrow E_1$ and $\tau: Kz_p(V) \rightarrow \bsE_1$ such that
\begin{itemize}
\item $\sigma$ satisfies the same properties as in the case $1 \leq d < p$,
\item the restriction $\tau_0: \Lambda^p(V) = Kz_p^0(V) \rightarrow \bsE_1^{0,\bullet}$ is equal to $\Theta_\Lambda^p$,
\item the restriction $\tau_p: S^p(V) = Kz_p^p(V) \rightarrow \bsE_1^{p,\bullet}$ is equal to $\Theta_S^p$, and
\item for $0 < s < p$, the restriction $\tau_s: Kz_p^s(V) \rightarrow \bsE_1^{s,\bullet}$ is an isomorphism.
\end{itemize}
The exactness of $Kz_p(V)$ then implies that $\bsE_2^{s,\bullet} = 0$ for $1 \leq s \leq p-2$, and that $\Theta_\Lambda^p$ is an injection. We also get $\bsE_2^{p-1,\bullet} \cong \ker(\Theta_S^p)$ and $\bsE_2^{p,\bullet} \cong \coker(\Theta_S^p)$. Since $\opH^i(\Omega_p(V)) = 0$ for $i > 1$ by the Cartier isomorphism, it follows that $E_2^{p,\bullet} \cong \coker(\Theta_\Lambda^p)$ and $E_2^{p-1,\bullet} \cong \ker(\Theta_\Lambda^p)$, and hence $E_2^{s,\bullet} = 0$ for $2 \leq s \leq p-1$. Then $\bsE_2^{0,\bullet} \cong \coker(\Theta_\Lambda^p) \cong E_2^{p,\bullet}$ by the commutativity of the diagram
\[
\xymatrix{
\Lambda^d(V) \ar@{^(->}[r]^{\kappa_p^1(V)} \ar@{->}[d]_{\Theta_\Lambda^p} & Kz_p^1(V) \ar@{->}[d]^{\cong}_{\tau_1} \\
\bsE_1^{0,\bullet} \ar@{->}[r]^{\bsd_1} & \bsE_1^{1,\bullet}
}
\]

Now the only nonzero columns in the two spectral sequences are $\bsE_2^{0,\bullet} \cong E_2^{p,\bullet}$, $\bsE_2^{p-1,\bullet}$, $\bsE_2^{p,\bullet}$, $E_2^{0,\bullet}$, and $E_2^{1,\bullet}$. Since $V$ is concentrated in cohomological degrees $m \equiv p^r \mod 2$, it follows that
\begin{equation} \label{eq:Evanishing}
\bsE_2^{p-1,m} = 0 = E_2^{1,m} \quad \text{if $m \not \equiv p^{r+1} \mod 2p$,}
\end{equation}
and hence that
\begin{equation} \label{eq:E2pmE20m}
\bsE_2^{p,m} \cong \Ext_{\bsp}^m(\Gamma_1^{p(r)},S_0^{p(r)}) \cong E_2^{0,m} \quad \text{if $m \not \equiv p^{r+1} \mod 2p$.}
\end{equation}
Given a $\Z$-graded vector space $W$, write $[W]^m$ for the homogeneous component of degree $m$ in $W$. Then combining \eqref{eq:Evanishing} with the fact that $E_2^{s,\bullet} = 0$ for $2 \leq s \leq p-1$, and the fact that $[E_\infty]^m = 0$ for $m < p^{r+1}$ by the calculations in the previous paragraph, it follows that
\begin{equation} \label{eq:E2isos}
E_2^{0,i} \cong E_p^{0,i} \cong E_p^{p,i-p+1} \cong E_2^{p,i-p+1} \quad \text{for all $i \leq p^{r+1}-2$,}
\end{equation}
with the middle isomorphism induced by the differential $d_p: E_p^{0,i} \rightarrow E_p^{p,i-p+1}$. Similarly, using \eqref{eq:Evanishing}, the fact that $\bsE_2^{s,\bullet} = 0$ for $1 \leq s \leq p-2$, and the fact that $\bsE \Rightarrow 0$ by the exactness of the complex ${Kz_p}^{(r)}$, it follows that there exist isomorphisms
\begin{equation} \label{eq:bsE2isos}
\bsE_2^{0,i} \cong \bsE_p^{0,i} \cong \bsE_p^{p,i-p+1} \cong \bsE_2^{p,i-p+1} \quad \text{for all $i \leq p^{r+1}+p-3$.}
\end{equation}
In particular, \eqref{eq:E2pmE20m}, \eqref{eq:E2isos}, and \eqref{eq:bsE2isos} imply that
\begin{equation} \label{eq:chainofisos}
\bsE_2^{0,i} \cong \bsE_2^{p,i-p+1} \cong E_2^{0,i-p+1} \cong E_2^{p,i-2p+2} \quad \text{for all $i \leq p^{r+1}+p-3$.}
\end{equation}
Trivially, $\bsE_2^{0,i} = 0 = E_2^{0,i}$ if $i < 0$. Then an induction argument using \eqref{eq:chainofisos} and the isomorphism $\bsE_2^{0,\bullet} \cong E_2^{p,\bullet}$ shows that $E_2^{p,i} = 0$ for all $i \leq p^{r+1}+p-3$. In particular, $E_2^{p,i} = 0$ for $i = p^{r+1}-p$ and $i=p^{r+1}-p+1$. This implies that $E_2^{0,p^{r+1}}$ is the only nonzero term of total degree $p^{r+1}$ in the $E_2$-page of its spectral sequence, and that $E_2^{0,p^{r+1}}$ consists of permanent cycles. Since $[E_\infty]^{p^{r+1}} \cong k$ by the calculations in the previous paragraph, we conclude then that $E_2^{0,p^{r+1}} \cong k$. Finally, write $(\bsc_r)^p$ for the $p$-th power of $\bsc_r$ in $S^p(V)$. Then $(\bsc_r)^p$ is in the kernel of the de Rham differential $S^p(V) \rightarrow S^{p-1}(V) \otimes \Lambda^1(V)$. Since $\sigma: \Omega_p(V) \rightarrow E_1$ is a chain map, and since $\sigma_0 = \Theta_S^p$, this implies that $\bsc_r^{\cup p}$ is a nonzero element of  the one-dimensional space $E_2^{0,p^{r+1}}$. Thus, $\bsc_r^{\cup p}$ spans the space of permanent cycles in $E_1^{0,p^{r+1}}$. This completes the proof.
\end{proof}

We have now completed the proof outlined at the start of this section. Thus:

\begin{theorem} \label{thm:nontrivialrestriction}
For each $r \geq 1$, the classes $\bsc_r$ and $\bsc_r^\Pi$ restrict nontrivially to $GL(m|n)_1$.
\end{theorem}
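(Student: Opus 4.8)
The plan is to prove the stronger assertion by induction on $r$: under the identification \eqref{eq:matrixidentification}, the restriction $\bsc_r|_U$ to the odd one-parameter subsupergroup scheme $U \subseteq GL(m|n)_1$ of \eqref{eq:subgroupU} is a nonzero scalar multiple of $\beta^{p^r} \cdot \alpha^{(r)}$, where $\alpha\colon k^n \rightarrow k^m$ is the linear map corresponding to the matrix unit $e_{m,1}$; by the transpose symmetry (replacing $U$ by the subsupergroup $U'$), the analogous statement holds simultaneously for $\bsc_r^\Pi|_{U'}$ with $\alpha$ replaced by its transpose $\alpha^t$. Since $\beta^{p^r} \neq 0$ in $\Hbul(U,k)$ and $\alpha^{(r)} \neq 0$, this at once forces $\bsc_r|_{G_1} \neq 0$ and $\bsc_r^\Pi|_{G_1} \neq 0$, which is the theorem. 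The base case $r = 1$ is Lemma \ref{lem:c1restriction} together with the explicit cocycle computation in its proof, which identifies $\bsc_1|_U$ (resp.\ $\bsc_1^\Pi|_{U'}$), up to a nonzero scalar, with $\beta^p \cdot \alpha^{(1)}$ (resp.\ $\beta^p \cdot \alpha^{t(1)}$).

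For the inductive step I would assume the claim for $r$ and pass to the $p$-fold cup product $\bsc_r^{\cup p} \in \Ext_{\bsp}^{p^{r+1}}(\Gamma_1^{p(r)},S_0^{p(r)})$. Using the compatibility of the cup product with the restriction functor $z \mapsto z|_U$, the multiplicativity of \eqref{eq:matrixidentification}, and the induction hypothesis, one sees that $(\bsc_r^{\cup p})|_U$ is a nonzero scalar multiple of $\beta^{p^{r+1}} \cdot (\alpha_p)^{(r)}$, where $\alpha_p\colon \Gamma^p(k^n) \rightarrow S^p(k^m)$ is the composite described in the section; in particular $\bsc_r^{\cup p} \neq 0$. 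A direct check gives $\alpha_p = \varphi \circ \alpha^{(1)} \circ \varphi^\#$, so that $\beta^{p^{r+1}} \cdot (\alpha_p)^{(r)}$ is the image of $\beta^{p^{r+1}} \cdot \alpha^{(r+1)}$ under the map \eqref{eq:Uphistarphi}, which is an injection because $\varphi^\#$ is surjective and $\varphi$ is injective. I would then invoke Proposition \ref{prop:imphistar}: in the factorization \eqref{eq:phistarphicomposite}, $(\varphi^\#)^*$ is an isomorphism in cohomological degree $p^{r+1}$ (by Corollary \ref{cor:powermap} via conjugation by $\Pi$ and via duality), and $\im(\varphi_*)$ is the line spanned by $\bsc_r^{\cup p}$. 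Since $\Ext_{\bsp}^{p^{r+1}}(\bsi_1^{(r+1)},\bsi_0^{(r+1)})$ is one-dimensional by Corollary \ref{cor:inductionresult} and is spanned by $\bsc_{r+1}$, and since $\varphi_*$ is then forced to be injective on the one-dimensional space $\Ext_{\bsp}^{p^{r+1}}(\Gamma_1^{p(r)},\bsi_0^{(r+1)})$, the composite \eqref{eq:bspphistarphi} carries $\bsc_{r+1}$ to a nonzero scalar multiple of $\bsc_r^{\cup p}$. Restricting to $U$ and using the compatibility of \eqref{eq:bspphistarphi} with the restriction maps $z \mapsto z|_U$ and \eqref{eq:Uphistarphi}, it follows that the image of $\bsc_{r+1}|_U$ under the injection \eqref{eq:Uphistarphi} is a nonzero multiple of $\beta^{p^{r+1}} \cdot \alpha^{(r+1)}$, hence $\bsc_{r+1}|_U$ is itself such a multiple. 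The transpose argument gives the companion statement for $\bsc_{r+1}^\Pi|_{U'}$, completing the induction and the proof.

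Since the technical core of the argument --- the computation of $\im(\varphi_*)$ via the spectral sequence comparison --- has already been carried out in Proposition \ref{prop:imphistar}, the work remaining here is essentially organizational: carefully tracking the explicit identifications under \eqref{eq:matrixidentification}, checking that the maps $\varphi^\#$, $\varphi$, the cup products, and the restriction to $U$ are mutually compatible, and assembling the one-dimensionality inputs from Corollaries \ref{cor:inductionresult} and \ref{cor:powermap}. The single point that genuinely deserves a sentence of explanation is why the composite \eqref{eq:bspphistarphi} does not annihilate $\bsc_{r+1}$; as noted above, this is forced by the facts that $(\varphi^\#)^*$ is an isomorphism in degree $p^{r+1}$ and that the image of $\varphi_*$ is the nonzero line $k \cdot \bsc_r^{\cup p}$.
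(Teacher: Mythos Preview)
Your proposal is correct and follows essentially the same approach as the paper: the paper's proof of this theorem is precisely the inductive argument outlined at the start of Section~\ref{subsec:restrictioncr}, with Lemma~\ref{lem:c1restriction} supplying the base case and Proposition~\ref{prop:imphistar} supplying the key step in passing from $r$ to $r+1$. Your organization of the one-dimensionality inputs from Corollaries~\ref{cor:inductionresult} and~\ref{cor:powermap}, the injectivity of \eqref{eq:Uphistarphi}, and the compatibility of $(\varphi^\#,\varphi)$ with restriction to $U$ matches the paper's reasoning exactly.
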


\begin{remark} \label{rem:ercupproduct}
A version of Proposition \ref{prop:imphistar} also holds for the cohomology classes $\bse_{r+1}$ and $\bse_r^{\cup p}$, i.e., the image of $\bse_{r+1}$ in $\Ext_{\bsp}^{2p^r}(\Gamma_0^{p(r)},S_0^{p(r)})$ is a nonzero scalar multiple of the cup product $\bse_r^{\cup p}$. This can be proved through an argument similar to that given for Proposition \ref{prop:imphistar}, or it can be deduced from \cite[Corollary 5.9]{Franjou:1999} using the fact that $\bse_r|_{\bsvzero} = e_r$ for each $r \geq 1$. More generally, it is natural to expect that analogues of the $\Ext$-group calculations in \cite[\S\S4--5]{Franjou:1999} should hold for the exponential superfunctors defined in \eqref{eq:twistcomponents}.
\end{remark}

\subsection{Cohomological finite generation in positive characteristic} \label{subsec:fgpgeq3}

Our main theorem is:

\begin{theorem} \label{thm:fgpgeq3}
Let $k$ be a field of characteristic $p \geq 3$. Let $G$ be a finite $k$-supergroup scheme, and let $M$ be a finite-dimensional $G$-supermodule. Then $\Hbul(G,k)$ is a finitely-generated $k$-algebra, and $\Hbul(G,M)$ is finitely-generated under the cup product action of $\Hbul(G,k)$.
\end{theorem}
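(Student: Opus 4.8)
The plan is to deduce the theorem from the structural reduction results of \cite{Drupieski:2013b} together with the functor-cohomology classes constructed above. First I would apply \cite[Theorem 5.3.3]{Drupieski:2013b} to reduce to the case in which $G$ is an infinitesimal $k$-supergroup scheme of some height $r$. Such a $G$ acts faithfully on its finite-dimensional coordinate superalgebra (equivalently on $k[G]^*$), giving a closed embedding $G \hookrightarrow GL(m|n)$ for appropriate $m,n$; since $G$ has height $r$ this restricts to a closed embedding $G \hookrightarrow GL(m|n)_r$. By \cite[Theorem 5.4.2]{Drupieski:2013b} it then suffices to produce, for these values of $m$ and $n$, cohomology classes $e_r^{m,n} \in \opH^{2p^{r-1}}(GL(m|n),\glzero^{(r)})$ and $c_r^{m,n} \in \opH^{p^r}(GL(m|n),\glone^{(r)})$ satisfying the restriction conditions \ref{eq:ertoG1} and \ref{eq:crtoG1} relative to the May spectral sequence \eqref{eq:Mayspecseq}. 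I would take $e_r^{m,n}$ and $c_r^{m,n}$ to be appropriate nonzero scalar multiples of the images of $\bse_r + \bse_r^\Pi$ and $\bsc_r + \bsc_r^\Pi$ under the evaluation-on-$k^{m|n}$ map $z \mapsto z|_{GL(m|n)}$; the decomposition \eqref{eq:Zgradingglmn} of $\glmn$ shows these land in the stated cohomology groups.

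Condition \ref{eq:crtoG1} I would verify using the explicit restriction computations of Section \ref{subsec:restrictioncr}. Theorem \ref{thm:nontrivialrestriction} gives that $\bsc_r$ and $\bsc_r^\Pi$ restrict nontrivially to $GL(m|n)_1$, and, more precisely, under the identification \eqref{eq:matrixidentification} along the odd root subgroups $U$ and $U'$, $\bsc_r|_U$ is a nonzero multiple of $\beta^{p^r}\cdot\alpha^{(r)}$ and $\bsc_r^\Pi|_{U'}$ a nonzero multiple of $\beta^{p^r}\cdot\alpha^{t(r)}$. Combined with the identification of the $E_0$-page of \eqref{eq:Mayspecseq} with $\bsl(\g^*) \cong \Lambda(\gzero^*)\gotimes S(\gone^*)$ and the fact that $S(\gone^*)^p \cong S(\gone^*(p))^{(1)}$ is a subalgebra of permanent cycles, the root-subgroup calculation determines the image of $c_r^{m,n}|_{G_1}$ under the vertical edge map on a single highest odd matrix coordinate; the adjoint action of $GL(m|n)$ permuting the odd root spaces then propagates this to the entire subalgebra generated by the $p^r$-th powers in $S(\gone^*)$, and naturality of the edge map along $G \hookrightarrow GL(m|n)_1$ yields \ref{eq:crtoG1}.

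The genuinely hard part is condition \ref{eq:ertoG1}, which I would establish simultaneously with Lemma \ref{lem:differential}. The tool is the explicit projective resolution $X(\g) = W(\g) \otimes \Gamma(\gzero(2))$ of Section \ref{subsec:Xg}, whose differential \eqref{eq:dtdifferential} is assembled from the twisting cochain $t$ encoding the $p$-restriction of $\gzero$; the horizontal edge map $S(\gzero^*(2))^{(1)} \to \Hbul(G_1,k)$ of \eqref{eq:Mayspecseq} is precisely the contribution of $t$ to this differential, dualized. The key observation is that the de~Rham differential $\dbar_{q+1}$ in the definition \eqref{eq:erdefinition} of $\bse_r$ corresponds, after evaluation on $k^{m|n}$ and composition with the $p^{r-1}$-power map, to this edge map; compare Remarks \ref{rem:restrictionj=1} and \ref{rem:e1extension}. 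I would then reduce, using $GL(m|n)$-equivariance, to a rank-one non-degeneracy statement of the kind worked out in Example \ref{ex:twodim} --- where $\Hbul(V(\g),k) \cong k[y^*,x^*]/\langle x^*-(y^*)^2\rangle$ and no nonzero element of $S(\gzero^*(2))$ is a coboundary --- so that the relevant edge map is injective in the bidegrees in question. This forces $\dbar_{q+1}$ to be an isomorphism whenever $s \equiv 0 \bmod 2p^{r-1}$ and $s > 0$, which is Lemma \ref{lem:differential}, and simultaneously identifies $e_r^{m,n}|_{G_1}$ with the composite of the $p^{r-1}$-power map with the horizontal edge map, i.e.\ with condition \ref{eq:ertoG1}.

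Once \ref{eq:ertoG1} and \ref{eq:crtoG1} are verified, \cite[Theorem 5.4.2]{Drupieski:2013b} yields that $\Hbul(G,k)$ is a finitely-generated $k$-algebra and that $\Hbul(G,M)$ is finitely generated over it whenever $G$ is infinitesimal, and the reduction \cite[Theorem 5.3.3]{Drupieski:2013b} then gives the full statement for all finite $k$-supergroup schemes. I expect the main obstacle to be the bookkeeping in the third paragraph: tracking the correspondence between the functor-theoretic differential $\dbar_{q+1}$ and the differential of $X(\g)$, and controlling the edge maps of \eqref{eq:Mayspecseq} bidegree by bidegree, is delicate, and it is where the hypothesis $p \geq 3$ is genuinely needed (through the structure of $\bsa$, $\bsl$, and the $p$-power maps).
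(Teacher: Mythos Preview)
Your overall strategy matches the paper's: reduce to perfect $k$ (which you omit; the paper does this via \cite[Remark 5.4.3]{Drupieski:2013b}), then to infinitesimal $G$ via \cite[Theorem 5.3.3]{Drupieski:2013b}, and then invoke \cite[Theorem 5.4.2]{Drupieski:2013b} after verifying \ref{eq:ertoG1} and \ref{eq:crtoG1} for $e_r^{m,n}=(\bse_r+\bse_r^\Pi)|_G$ and $c_r^{m,n}=(\bsc_r+\bsc_r^\Pi)|_G$. For \ref{eq:crtoG1} the paper's argument is close to yours but sharper: it uses that $(\g_{+1}^*)^{(r)}$ and $(\g_{-1}^*)^{(r)}$ are irreducible $G$-supermodules, so nontrivial restriction (Theorem \ref{thm:nontrivialrestriction}) forces the induced maps $(\g_{\pm 1}^*)^{(r)}\to\Hbul(G_1,k)$ to be injections; then a direct $T$-weight count shows that any vector on the $E_0$-page of \eqref{eq:Mayspecseq} in total degree $p^r$ and weight $p^r\alpha$ with $\alpha\in\Phi_\odd$ already lies in the image of \eqref{eq:powertoedge}. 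Your ``propagate by the adjoint action from a single root vector'' is the right intuition but would need this weight bookkeeping to be made rigorous.

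Your treatment of \ref{eq:ertoG1}, however, has a genuine gap, and the logical order relative to Lemma \ref{lem:differential} is inverted. The paper does \emph{not} work via an abstract correspondence between the functor-theoretic differential $\dbar_{q+1}$ and the twisting-cochain differential of $X(\g)$; your ``key observation'' that $\dbar_{q+1}$ corresponds after evaluation to the horizontal edge map is never established and is not obviously true. Instead the paper proves \ref{eq:ertoG1} by induction on $r$: the inductive step uses Remark \ref{rem:ercupproduct} (the image of $\bse_{r+1}$ in $\Ext_{\bsp}^{2p^r}(\Gamma_0^{p(r)},S_0^{p(r)})$ is a nonzero scalar multiple of $\bse_r^{\cup p}$), and the base case $r=1$ is done by writing down an explicit chain map $\varphi\colon X(\g)\otimes k^{m|0(1)}\to\bse_1(k^{m|n})$ over the representative extension \eqref{eq:e1extension}, from which one reads off that $\bse_1|_{G_1}$ sends $(e_{ij}^*)^{(1)}$ to the class of the cocycle $g_{ij}$ dual to $1\otimes 1\otimes e_{ij}$. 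Only \emph{after} \ref{eq:ertoG1} is in hand does the paper prove Lemma \ref{lem:differential}, and by a different mechanism than you suggest: using \ref{eq:ertoG1} one writes $(\bse_r|_G)^p\in\Hbul(G_1,k)\otimes\g_m^{(r)}$ explicitly as the matrix-entry sum $\sum X_{i,i_1}^{p^{r-1}}\cdots X_{i_{p-1},j}^{p^{r-1}}\otimes e_{ij}^{(r)}$, recognizes the $i=j=m$ coefficient as (a $p^{r-1}$-th power of) the polynomial function picking out the $(m,m)$-entry of the $p$-th matrix power, and shows this is nonzero by restricting to the two-dimensional subalgebra $\mathfrak{s}=\langle e_{m,m}+e_{m+1,m+1},\ e_{m+1,m}+e_{m,m+1}\rangle$ of Example \ref{ex:twodim}. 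So Example \ref{ex:twodim} enters only in the proof of Lemma \ref{lem:differential}, not in the verification of \ref{eq:ertoG1}, and the two are not established simultaneously.
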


\begin{proof}
By \cite[Remark 5.4.3]{Drupieski:2013b} we may assume that $k$ is perfect, and by \cite[Theorem 5.3.3]{Drupieski:2013b} we may assume that $G$ is infinitesimal. Then by \cite[Theorem 5.4.2]{Drupieski:2013b}, it suffices to show for $m,n,r \geq 1$ that, up to rescaling, $e_r^{m,n}:=(\bse_r + \bse_r^\Pi)|_{GL(m|n)}$ and $c_r^{m,n}:=(\bsc_r+\bsc_r^\Pi)|_{GL(m|n)}$ satisfy \ref{eq:ertoG1} and \ref{eq:crtoG1}.\footnote{The assumption $m,n \not\equiv 0 \mod p$ in \cite[Conjecture 5.4.1]{Drupieski:2013b}, and hence also in \cite[Theorem 5.4.2]{Drupieski:2013b}, turns out to be unnecessary, though it should be assumed that $m,n,r \geq 1$.} Since \ref{eq:ertoG1} and \ref{eq:crtoG1} are conditions in terms of superalgebra homomorphisms, it suffices by multiplicativity and linearity to consider $\bse_r$, $\bse_r^\Pi$, $\bsc_r$, and $\bsc_r^\Pi$ separately.

Set $G = GL(m|n)$, and let $\g_{+1}$ and $\g_{-1}$ be the subalgebras of $\g = \glmn$ defined in \eqref{eq:Zgradingglmn}. As in \cite[\S2]{Brundan:2003a}, let $T$ be the maximal torus in $G$ consisting of diagonal matrices, and let $\ve_i: T \rightarrow \G_m$ be the homomorphism that picks out the $i$-th entry of a diagonal matrix. Then the character group $X(T)$ of $T$ is the free abelian group generated by $\ve_1,\ldots,\ve_{m+n}$, and each rational $G$-supermodule $M$ decomposes into a direct sum of $T$-weight spaces. In particular, the matrix unit $e_{ij} \in \glmn$ is of weight $\ve_i - \ve_j$. Now $(\g_{+1}^*)^{(r)}$ is the irreducible $G$-supermodule of highest weight $p^r(\ve_{m+1}-\ve_m)$, and $(\g_{-1}^*)^{(r)}$ is the irreducible $G$-supermodule of highest weight $p^r(\ve_1-\ve_{m+n})$. Since $\bsc_r$ and $\bsc_r^\Pi$ each restrict nontrivially to $G_1$, it follows that the induced $G$-super\-module homomorphisms $\bsc_r|_{G_1} : (\g_{+1}^*)^{(r)} \rightarrow \opH^{p^r}(G_1,k)$ and
$\bsc_r^\Pi|_{G_1} : (\g_{-1}^*)^{(r)} \rightarrow \opH^{p^r}(G_1,k)$ are injections.

Write $\Phi_\odd$ for the set of weights of $T$ in $\gone$. We claim that if $v$ is a vector in the $E_0$-page of \eqref{eq:Mayspecseq} of total degree $p^r$ and weight $p^r\alpha$ for some $\alpha \in \Phi_\odd$, then $v$ is in the image of the homomorphism
\begin{equation} \label{eq:powertoedge}
(\gone^*)^{(r)} \hookrightarrow S(\gone^*) \hookrightarrow \bsl^{p^r}(\g^*) \cong E_0^{0,p^r}
\end{equation}
where the first map raises elements to the $p^r$-power. To see this, suppose without loss of generality that $\alpha$ is a weight of $T$ in $\g_{+1}^*$, say, $\alpha = \ve_{m+j} - \ve_i$ for some $1 \leq i \leq m$ and $1 \leq j \leq n$, so that $v$ is of weight $p^r(\ve_{m+j} - \ve_i) = (-p^r)\ve_i + p^r \ve_{m+j}$. Next observe that $\bsl(\g^*) \cong \Lambda(\gzero^*) \gotimes S(\gone^*)$ and $S(\gone^*) \cong S(\g_{+1}^*) \otimes S(\g_{-1}^*)$ as $T$-algebras. If $w$ is a vector of weight $\sum_{s=1}^{m+n} a_s \ve_s$ in either $\Lambda(\gzero^*)$ or $S(\gzero^*(2))^{(1)}$, then $\sum_{s=1}^m a_s = 0$ and $\sum_{s=1}^n a_{m+s} = 0$. On the other hand, if $w$ is a weight vector in $S^c(\g_{+1}^*)$, then $\sum_{s=1}^m a_s = -c$ and $\sum_{s=1}^n a_{m+s} = c$, while if $w$ is a weight vector in $S^c(\g_{-1}^*)$, then $\sum_{s=1}^m a_s = c$ and $\sum_{s=1}^n a_{m+s} = -c$. Combining these observations, it follows first that $v$ must be a weight vector in $S^{p^r}(\g_{+1}^*)$, and then that $v$ must be in the image of \eqref{eq:powertoedge}. Now by the irreducibility of $(\g_{+1}^*)^{(r)}$ and $(\g_{-1}^*)^{(r)}$ as $G$-supermodules and by dimension comparison, it follows that the composition of the map $(\bsc_r+\bsc_r^\Pi)|_{G_1}: (\gone^*)^{(r)} \rightarrow \opH^{p^r}(G_1,k)$ with the vertical edge map of \eqref{eq:Mayspecseq} must have the same image as \eqref{eq:powertoedge}, and hence that $c_r^{m,n}:=(\bsc_r+\bsc_r^\Pi)|_G$ satisfies \ref{eq:crtoG1}.

Next we show by induction on $r$ that, up to rescaling, \ref{eq:ertoG1} is satisfied by $e_r^{m,n} := (\bse_r+\bse_r^\Pi)|_G$. The inductive step is handled by an argument like that in the second paragraph of Section \ref{subsec:restrictioncr}, using Remark \ref{rem:ercupproduct} instead of Proposition \ref{prop:imphistar}, so it suffices to consider the case $r=1$.

Recall the discussion from Section \ref{subsec:Xg}. We will work with the homogeneous basis for $\glmn$ of the matrix units $\set{e_{ij}: 1 \leq i,j \leq m+n}$. Then $d_1: X_1(\g) \rightarrow X_0(\g)$ and $d_2: X_2(\g) \rightarrow X_1(\g)$ satisfy
\begin{gather*}
d_1(1 \otimes e_{ij} \otimes 1) = e_{ij} \otimes 1 \otimes 1, \\
d_2(1 \otimes (e_{ij}\cdot e_{st}) \otimes 1) = e_{ij} \otimes e_{st} \otimes 1 - (-1)^{\ol{e_{ij}} \cdot \ol{e_{st}}} e_{st} \otimes e_{ij} \otimes 1 - 1 \otimes [e_{ij},e_{st}] \otimes 1, \quad \text{and} \\
d_2(1 \otimes 1 \otimes e_{ij}) = (e_{ij})^{p-1} \otimes e_{ij} \otimes 1 - 1 \otimes e_{ij}^{[p]} \otimes 1 \quad \text{for $\ol{e_{ij}} = \ol{0}$.}
\end{gather*}
Here $z \mapsto z^{[p]}$ is the map that sends $z \in \gzero$ to its associative $p$-th matrix power.

Now given $e_{ij} \in \g$, let $e_{ij}^*$ be the corresponding dual basis element, and given $e_{ij} \in \gzero$, let $g_{ij} \in \Hom_{V(\g)}(X_2(\g),k)$ be the homomorphism that is naturally dual to $1 \otimes 1 \otimes e_{ij}$. Then the $g_{ij}$ are cocycles in $\Hom_{V(\g)}(X_2(\g),k)$ by \cite[Lemma 3.5.2]{Drupieski:2013b}, and the proof of \cite[Proposition 3.5.3]{Drupieski:2013b} shows (modulo a reindexing of the spectral sequence) that the horizontal edge map of \eqref{eq:Mayspecseq} sends $(e_{ij}^*)^{(1)} \in (\gzero^*)^{(1)} \cong E_0^{2,0}$ to the cohomology class of $g_{ij}$. Then to finish the proof, it suffices to show that $(\bse_1 + \bse_1^\Pi)|_{G_1} : (\gzero^*)^{(1)} \rightarrow \opH^2(G_1,k)$ also sends $(e_{ij}^*)^{(1)}$ to the cohomology class of $g_{ij}$.

Recall that $\bse_1$ is the extension class in $\bsp$ of the exact sequence \eqref{eq:e1extension}. Then $\bse_1(k^{m|n})$ is made into an exact sequence of restricted $\g$-supermodules by having $\g$ act trivially on $k^{m|0(1)}$, and by giving $\bss^p(k^{m|n})$ and $\bsg^p(k^{m|n})$ the $\g$-supermodule structures induced by the natural action of $\g$ on $(k^{m|n})^{\otimes p}$. As in the proof of Lemma \ref{lem:c1restriction}, the cohomology class $\bse_1|_{G_1} \in \Ext_{G_1}^2(k^{m|0(1)},k^{m|0(1)})$ can be described by lifting the identity map $k^{m|0(1)} \rightarrow k^{m|0(1)}$ to a chain map
\begin{equation} \label{eq:e1chainmap}
\vcenter{\xymatrix{
\cdots \ar@{->}[r] & X_2(\g) \otimes k^{m|0(1)} \ar@{->}[r] \ar@{->}[d]^{\varphi_2} & X_1(\g) \otimes k^{m|0(1)} \ar@{->}[r] \ar@{->}[d]^{\varphi_1} & X_0(\g) \otimes k^{m|0(1)} \ar@{->}[r] \ar@{->}[d]^{\varphi_0} & k^{m|0(1)} \ar@{=}[d] \\
 & k^{m|0(1)} \ar@{->}[r] & \bss^p(k^{m|n}) \ar@{->}[r] & \bsg^p(k^{m|n}) \ar@{->}[r] & k^{m|0(1)},
}}
\end{equation}
and then taking the cohomology class of the map $\varphi_2$.

As in the proof of Lemma \ref{lem:c1restriction}, let $x_1,\ldots,x_m,y_1,\ldots,y_n$ denote the standard homogeneous basis for $k^{m|n}$. Then the reader can check that the following formulas uniquely define a chain homomorphism $\varphi: X(\g) \otimes k^{m|0(1)} \rightarrow \bse_1(k^{m|n})$ lifting the identity map on $k^{m|0(1)}$:
\begin{itemize}
\item $\varphi_0((1 \otimes 1 \otimes 1) \otimes x_s^{(1)}) = \gamma_p(x_s)$,
\item $\varphi_1((1 \otimes e_{ij} \otimes 1) \otimes x_s^{(1)}) = \delta_{j,s} \cdot \frac{1}{(p-1)!} \cdot x_i \cdot x_s^{p-1}$ for $e_{ij} \in \g$,
\item $\varphi_2((1 \otimes 1 \otimes e_{ij}^{(1)}) \otimes x_s^{(1)}) = \delta_{j,s} \cdot x_i^{(1)}$ for $e_{ij} \in \gzero$, and
\item $\varphi_2((1 \otimes z \otimes 1) \otimes x_s^{(1)}) = 0$ for all $z \in \bsa^2(\g)$.
\end{itemize}
This implies that if $e_{ij}^* \in \g_m^*$, then $\bse_1|_{G_1} : (\g_m^*)^{(1)} \rightarrow \opH^2(G_1,k)$ sends ${e_{ij}^*}^{(1)}$ to the cohomology class of $g_{ij}$. Similarly, if $e_{ij}^* \in \g_n^*$, then $\bse_1^\Pi|_{G_1} : (\g_n^*)^{(1)} \rightarrow \opH^2(G_1,k)$ sends ${e_{ij}^*}^{(1)}$ to the class of $g_{ij}$. Thus, $(\bse_1+\bse_1^\Pi)|_{G_1} : (\gzero^*)^{(1)} \rightarrow \opH^2(G_1,k)$ is equal to the corresponding edge map of \eqref{eq:Mayspecseq}, and extending multiplicatively, we conclude for $r=1$ that \ref{eq:ertoG1} is satisfied by $e_1^{m,n} := (\bse_1+\bse_1^\Pi)|_G$.
\end{proof}

Applying the equivalence between finite supergroup schemes over $k$ and finite-dimensional cocommutative Hopf superalgebras over $k$, we immediately get:

\begin{corollary} \label{cor:fgpgeq3}
Let $k$ be a field of characteristic $p \geq 3$, let $A$ be a finite-dimensional cocommutative Hopf superalgebra over $k$, and let $M$ be a finite-dimensional $A$-supermodule. Then the cohomology ring $\Hbul(A,k)$ is a finitely-generated $k$-algebra, and $\Hbul(A,M)$ is finitely-generated under the cup product action of $\Hbul(A,k)$.
\end{corollary}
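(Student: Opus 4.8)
The plan is to deduce the corollary directly from Theorem \ref{thm:fgpgeq3} by passing through the equivalence between finite $k$-supergroup schemes and finite-dimensional cocommutative $k$-Hopf superalgebras recalled at the beginning of Section \ref{subsec:recollections}. Given a finite-dimensional cocommutative $k$-Hopf superalgebra $A$, its linear dual $A^*$ is again a finite-dimensional $k$-Hopf superalgebra, the structure maps of $A^*$ being the transposes of those of $A$, and the cocommutativity of $A$ translates into the commutativity of $A^*$. Thus $A^*$ is the coordinate superalgebra of a finite $k$-supergroup scheme $G$ with $k[G] = A^*$, and the double-dual identification $A \cong A^{**}$ furnishes an isomorphism of Hopf superalgebras $k[G]^* = (A^*)^* \cong A$.

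First I would record that, under this identification, the category of left $A$-supermodules is isomorphic to the category of left $G$-supermodules: a left $G$-supermodule is by definition a left comodule over $k[G] = A^*$, equivalently (using finite-dimensionality) a left module over $(k[G])^* \cong A$. This isomorphism of categories is exact and monoidal, so it carries the finite-dimensional $A$-supermodule $M$ to a finite-dimensional $G$-supermodule and induces for every $i$ a natural isomorphism $\opH^i(A,M) \cong \opH^i(G,M)$, both groups being computed as $\Ext^i$-groups in the same abelian category. For $M = k$ this is an isomorphism of graded superalgebras, since the cup product on $\Hbul(A,k)$ is built from the coalgebra structure that, under $k[G]^* \cong A$, corresponds to the one defining the cup product on $\Hbul(G,k)$; more generally $\Hbul(A,M) \cong \Hbul(G,M)$ as modules over $\Hbul(A,k) \cong \Hbul(G,k)$ for the respective cup product actions.

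With these identifications in place the corollary is immediate: Theorem \ref{thm:fgpgeq3} asserts that $\Hbul(G,k)$ is a finitely-generated $k$-algebra and that $\Hbul(G,M)$ is finitely-generated over $\Hbul(G,k)$ under the cup product, and transporting along the isomorphisms above yields exactly the same statements for $\Hbul(A,k)$ and $\Hbul(A,M)$. There is no genuine obstacle here; the only point requiring attention is the bookkeeping that the superalgebra structure on $\Hbul(A,k)$ and the module structure on $\Hbul(A,M)$ are transported correctly through the equivalence, which follows routinely from the fact that all the relevant structure maps are defined functorially from the Hopf superalgebra $A \cong k[G]^*$.
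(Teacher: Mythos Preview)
Your proposal is correct and follows exactly the paper's approach: the paper simply states that the corollary follows from Theorem~\ref{thm:fgpgeq3} by applying the equivalence between finite $k$-supergroup schemes and finite-dimensional cocommutative $k$-Hopf superalgebras, and you have spelled out the details of that equivalence.
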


Using the observations from the proof of Theorem \ref{thm:fgpgeq3}, we can now prove Lemma \ref{lem:differential}.

\begin{proof}[Proof of \ref{lem:differential}]
We prove that if $s > 0$ and $s \equiv 0 \mod 2p^{r-1}$, then the differential must be an isomorphism of one-dimensional vector spaces. First, by the discussion preceding Proposition \ref{prop:basesj=1}, which reinterprets the indicated differential in terms of right multiplication by the extension class $\bse_r$, the claim of the lemma is equivalent to the claim that $(\bse_r)^\ell \neq 0$ for all $\ell \geq 0$. The observations in Step 3 of the proof of Theorem \ref{thm:quadruplecalculation} already imply that $(\bse_r)^\ell \neq 0$ for $0 \leq \ell < p$. Furthermore, the isomorphism \eqref{eq:2prshifteven}, whose validity does not depend on Lemma \ref{lem:differential}, implies by induction that if in addition $(\bse_r)^p \neq 0$, then $(\bse_r)^\ell \neq 0$ for all $\ell \geq 0$. So it suffices to show that $(\bse_r)^p \neq 0$.

Let $m,n \geq 1$, and set $G = GL(m|n)_1$, the first Frobenius kernel of $GL(m|n)$. We will show that $(\bse_r)^p \neq 0$ by proving that $(\bse_r)^p|_G = (\bse_r|_G)^p \neq 0$. As discussed in Section \ref{subsec:recollections}, the restriction $\bse_r|_G$ determines a cohomology class
\[
\bse_r|_G \in \opH^{2p^{r-1}}(G,\g_m^{(r)}) \cong \opH^{2p^{r-1}}(G,k) \otimes \g_m^{(r)} \cong \Hom_k((\g_m^*)^{(r)},\opH^{2p^{r-1}}(G,k)).
\]
Moreover, the proof of Theorem \ref{thm:fgpgeq3} asserts that, up to a nonzero scalar factor, the induced map $\bse_r|_G: (\g_m^*)^{(r)} \rightarrow \opH^{2p^{r-1}}(G,k)$ factors through the composite of the $p^{r-1}$-power map $(\g_m^*)^{(r)} \rightarrow S(\g_m^*)^{(1)}$ and the horizontal edge map $\Phi: S(\gzero^*)^{(1)} \rightarrow \Hbul(G,k)$ of \eqref{eq:Mayspecseq}. Let $\{e_{ij}: 1 \leq i,j \leq m+n \}$ be the basis of matrix units for $\g$, and let $\{e_{ij}^*: 1 \leq i,j \leq m+n \}$ be the dual basis. Given $e_{ij} \in \gzero$, set $X_{ij} = \Phi(e_{ij}^*)$. Then considering $\bse_r|_G$ as an element of $\opH^{2p^{r-1}}(G,k) \otimes \g_m^{(r)}$, we can write
\[
\bse_r|_G = \lambda \cdot \left( \sum_{1 \leq i,j \leq m} X_{ij}^{p^{r-1}} \otimes e_{ij}^{(r)} \right)
\]
for some nonzero $\lambda \in k$. Now it follows that $(\bse_r)^p|_G = (\bse_r|_G)^p \in \opH^{2p^r}(G,k) \otimes \g_m^{(r)}$ is given by
\[
\lambda^p \cdot \left( \sum_{1 \leq i,j \leq m} X_{ij}^{p^{r-1}} \otimes e_{ij}^{(r)} \right)^p = \lambda^p \cdot \left( \sum_{\substack{1 \leq i,j \leq m \\ 1 \leq i_1,\ldots,i_{p-1} \leq m}} X_{i,i_1}^{p^{r-1}} X_{i_1,i_2}^{p^{r-1}} \cdots X_{i_{p-1},j}^{p^{r-1}} \otimes e_{ij}^{(r)} \right).
\]
So to prove that $(\bse_r|_G)^p \neq 0$, it suffices to show for some $1 \leq i,j \leq m$ that $(Z_{ij}^p)^{p^{r-1}} \neq 0$, where
\[
Z_{ij}^p := \sum_{1 \leq i_1,\ldots,i_{p-1} \leq m} X_{i,i_1} X_{i_1,i_2} \cdots X_{i_{p-1},j} \in \opH^{2p}(G,k).
\]
Write $\Phi^{-1}(Z_{ij}^p)$ to denote the polynomial in $S^p(\g_m^*)$ obtained by replacing each $X_{uv}$ in $Z_{ij}^p$ with the original coordinate function $e_{uv}^*$. Then $\Phi^{-1}(Z_{ij}^p)$ is the natural polynomial function on $\g_m$ defining the condition that the $ij$-coordinate of the $p$-th power of a matrix in $\g_m$ should be equal to $0$.

We show that $(Z_{mm}^p)^{p^{r-1}} \neq 0$ by showing that its restriction to the cohomology of a particular restricted Lie sub-superalgebra of $\glmn$ is nonzero. Specifically, let $x:= e_{m,m}+e_{m+1,m+1}$ and let $y:=e_{m+1,m}+e_{m,m+1}$. Then $x$ and $y$ span a two-dimensional restricted Lie sub-superalgebra $\mathfrak{s}$ of $\glmn$ with $[x,y] = 0$, $x^{[p]} = x$, and $[y,y] = 2x$. Moreover, one can check that the restriction map $S(\glmn_{\ol{0}}^*) \rightarrow S(\mathfrak{s}_{\ol{0}}^*)$ sends the polynomial $\Phi^{-1}(Z_{mm}^p)$ to $(x^*)^p \in S^p(\mathfrak{s}_{\ol{0}}^*)$, where $x^* \in \mathfrak{s}_{\ol{0}}^*$ denotes the basis vector dual to $x$. Now the fact that $(Z_{mm}^p)^{p^{r-1}}$ restricts to a nontrivial class in $\Hbul(V(\mathfrak{s}),k)$ follows from the observation at the end of Example \ref{ex:twodim}.
\end{proof}

\subsection{Implications for the cohomology of \texorpdfstring{$GL(m|n)$}{GL(m|n)}} \label{subsec:implications}

Recall from \cite[Corollary 3.13]{Friedlander:1997} that if $T$ and $T'$ are homogeneous strict polynomial functors of degree $d$, and if $n \geq d$, then evaluation on the vector space $k^n$ induces an isomorphism $\Ext_{\cp_d}^\bullet(T,T') \cong \Ext_{GL_n}^\bullet(T(k^n),T'(k^n))$. Taking $d = 0$, the category $\cp_0$ identifies with the semisimple category of finite-dimensional $k$-vector spaces, and $GL_n$ acts trivially on $T(k^n)$ for each $T \in \cp_0$. Then a special case of the previous isomorphism is the well-known fact that $\Hom_{GL_n}(k,k) \cong k$, but $\Ext_{GL_n}^i(k,k) = 0$ for all $i > 0$. The next proposition implies that there is no analogue of \cite[Corollary 3.13]{Friedlander:1997} for $\bsp_d$ and $GL(m|n)$.

\begin{proposition} \label{prop:GLmncohomologyring}
Let $m$ and $n$ be positive integers. Then $\Ext_{GL(m|n)}^2(k,k) \neq 0$.
\end{proposition}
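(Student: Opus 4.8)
The plan is to produce the required nonzero class in $\opH^2(GL(m|n),k) = \Ext_{GL(m|n)}^2(k,k)$ from the universal extension class $\bse_1 \in \Ext_{\bsp}^2(\bsi_0^{(1)},\bsi_0^{(1)})$ of Section \ref{subsec:modulej=1}. Write $G = GL(m|n)$. Evaluation on $k^{m|n}$ followed by restriction to $G$ (Section \ref{subsec:recollections}) carries $\bse_1$ to a class
\[
\bse_1|_G \in \Ext_G^2(k^{m|0(1)},k^{m|0(1)}) \cong \opH^2(G,\g_m^{(1)}),
\]
where $\g_m = \Hom_k(k^{m|0},k^{m|0})$ as in \eqref{eq:Zgradingglmn}. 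Since $k^{m|0(1)}$, and hence $\g_m^{(1)}$, factors through the Frobenius morphism $F_G \colon G \to G^{(1)}$, and since the ordinary trace $\tr \colon \g_m \to k$ is invariant under the conjugation action of $GL_m$, the map $\tr^{(1)} \colon \g_m^{(1)} \to k$ is a homomorphism of rational $G$-supermodules. I would set $z = \tr_*(\bse_1|_G) \in \opH^2(G,k)$; the whole problem is to show $z \neq 0$. For this I would restrict to the first Frobenius kernel $G_1 = GL(m|n)_1$, on which $\g_m^{(1)}$ becomes a trivial supermodule, so that $\opH^2(G_1,\g_m^{(1)}) \cong \opH^2(G_1,k) \otimes \g_m^{(1)}$ and $\tr_*$ restricts to $\id \otimes \tr$.

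The key input is the computation already performed in the proof of Theorem \ref{thm:fgpgeq3} (the verification of \ref{eq:ertoG1} for $r=1$): the induced map $\bse_1|_{G_1} \colon (\g_m^*)^{(1)} \to \opH^2(G_1,k)$ sends $(e_{ij}^*)^{(1)}$ to the cohomology class of the cocycle $g_{ij} \in \Hom_{V(\g)}(X_2(\g),k)$ dual to $1 \otimes 1 \otimes e_{ij}$, where $\g = \glmn$ with its basis $\{e_{ij}\}$ of matrix units and $X(\g)$ is the resolution recalled in Sections \ref{subsec:Xg} and \ref{subsec:fgpgeq3}. Equivalently, under the tensor identification above, $\bse_1|_{G_1}$ is the element $\sum_{1 \le i,j \le m} [g_{ij}] \otimes e_{ij}^{(1)}$ of $\opH^2(G_1,k) \otimes \g_m^{(1)}$, so that $z|_{G_1} = \tr_*(\bse_1|_{G_1}) = \sum_{i=1}^m [g_{ii}]$. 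Thus the proposition reduces to showing that the cocycle $\sum_{i=1}^m g_{ii}$ is not a coboundary in $\Hom_{V(\g)}(X_2(\g),k)$.

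To prove this I would use the explicit description of $d_2 \colon X_2(\g) \to X_1(\g)$ recalled in the proof of Theorem \ref{thm:fgpgeq3}, together with $\Hom_{V(\g)}(X_1(\g),k) = \g^*$. For $\ell \in \g^*$, evaluating $d^*(\ell)$ on the two families of free-module generators of $X_2(\g)$ gives $d^*(\ell)(1 \otimes 1 \otimes e_{kl}) = -\ell(e_{kl}^{[p]}) = -\delta_{kl}\,\ell(e_{kk})$ (using $e_{kk}^{[p]} = e_{kk}$, $e_{kl}^{[p]} = 0$ for $k \neq l$, and $p \geq 2$) and $d^*(\ell)(1 \otimes (e_{ij} e_{st}) \otimes 1) = -\ell([e_{ij},e_{st}])$. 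Now suppose $d^*(\ell) = \sum_{i=1}^m g_{ii}$. Evaluating on the generators $1 \otimes 1 \otimes e_{kk}$ forces $\ell(e_{kk}) = -1$ for $1 \leq k \leq m$ and $\ell(e_{kk}) = 0$ for $m < k \leq m+n$; evaluating on the generators $1 \otimes (e_{ij} e_{st}) \otimes 1$ forces $\ell$ to vanish on $[\g,\g]$. But $[\glmn,\glmn]$ has codimension one: it contains every root space of $\glmn$, and its intersection with the diagonal Cartan is the codimension-one subspace $\{h : \str(h) = 0\}$ (the needed diagonal elements $e_{ii} + e_{m+j,m+j}$ arise as anticommutators of odd matrix units). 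Hence the linear forms annihilating $[\g,\g]$ form the line $k \cdot \str$ with $\str = \sum_{i=1}^m e_{ii}^* - \sum_{j=1}^n e_{m+j,m+j}^*$, so $\ell = \lambda\,\str$ for some $\lambda \in k$; but then $\ell(e_{m+1,m+1}) = -\lambda = 0$ forces $\lambda = 0$, contradicting $\ell(e_{11}) = -1$. Therefore $\sum_{i=1}^m g_{ii}$ is not a coboundary, so $z|_{G_1} \neq 0$ and $z \neq 0$.

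The step I expect to require the most care is the last one — identifying which linear forms on $\gzero$ survive in $\opH^2(GL(m|n)_1,k)$. The contrast with the classical situation is the whole point: for the purely even group $GL_{m,1} \times GL_{n,1}$ the partial trace on each block maps to zero (it vanishes on the restricted nullcone, which spans only the trace-zero matrices), which is why there is no analogue of \cite[Corollary 3.13]{Friedlander:1997}; for $GL(m|n)_1$, however, the odd anticommutators enlarge the derived subalgebra of $\glmn$ to precisely $\ker(\str)$, so that only the supertrace — not the block trace $\tr$ on $\g_m$ — dies. Once this is pinned down, the remaining verifications are routine bookkeeping with the resolution $X(\g)$.
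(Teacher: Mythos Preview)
Your argument is correct and follows essentially the same approach as the paper: you push $\bse_1|_G$ forward along $\tr_m^{(1)}$ to obtain a class in $\opH^2(G,k)$, then verify nonvanishing by restricting to $G_1$ and using that only the supertrace dies in $\opH^2(G_1,k)$. The paper packages the $G_1$ step via the May spectral sequence \eqref{eq:Mayspecseq} (the differential $d_2:E_2^{0,1}\to E_2^{2,0}$ has image $k\cdot\str^{(1)}$, so the edge map injects $(\gzero^*/k\cdot\str)^{(1)}$ into $\opH^2(G_1,k)$), whereas you carry out the equivalent direct coboundary computation on $X(\g)$; the content---that $[\glmn,\glmn]=\ker(\str)$ forces $\ell\in k\cdot\str$, which is incompatible with $\ell|_{\g_m}=-\tr_m$---is identical.
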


\begin{proof}
Set $G = GL(m|n)$, and set $\g = \Lie(G) = \glmn$. Write $\gzero = \g_m \oplus \g_n$ as in \eqref{eq:Zgradingglmn}, and let $\tr_m: \g_m \rightarrow k$ and $\tr_n: \g_n \rightarrow k$ be the usual trace functions on $\g_m$ and $\g_n$, respectively. We consider $\tr_m$ and $\tr_n$ as elements of $\g^*$ in the obvious way. Then the supertrace function $\str : \glmn \rightarrow k$ is defined by $\str = \tr_m - \tr_n$. By \cite[Corollary 5.2.3]{Drupieski:2013b}, the $E_2$-page of the spectral sequence \eqref{eq:Mayspecseq} has the form $E_2^{i,j} = \opH^j(\g,k) \otimes S^i(\gzero^*(2))^{(1)}$, where $\Hbul(\g,k)$ denotes the ordinary Lie superalgebra cohomology of $\g$. Immediate calculation using the Koszul resolution for $\g$ \cite[\S3.1]{Drupieski:2013b} shows that $\opH^1(\g,k) \cong (\g/[\g,\g])^*$ is spanned by the supertrace function. Now one can check that the differential $d_2: E_2^{0,1} \rightarrow E_2^{2,0}$ maps $E_2^{0,1} \cong \opH^1(\g,k)$ onto the subspace of $E_2^{2,0} \cong (\gzero^*)^{(1)}$ spanned by $\str^{(1)}$.\footnote{This can be checked, for example, by considering the construction of the spectral sequence \eqref{eq:Mayspecseq} via the resolution $X(\g)$, and then looking at the low degree terms in $X(\g)$.} This implies that the horizontal edge map of \eqref{eq:Mayspecseq} induces an injection $(\gzero^* / (k \cdot \str))^{(1)} \hookrightarrow \opH^2(G_1,k)$. In particular, the image of ${\tr_m}^{(1)} \in (\gzero^*)^{(1)}$ under the edge map $(\gzero^*)^{(1)} \rightarrow \opH^2(G_1,k)$ is nonzero.

Now as in the proof of \cite[Lemma 1.4]{Friedlander:1997}, consider the commutative diagram
\[
\xymatrix{
\Ext_{G}^2(k,\g_m^{(1)}) \ar@{->}[r]^-{z \mapsto z|_{G_1}} \ar@{->}[d]^{(\tr_m^{(1)})_*} & \Ext_{G_1}^2(k,\g_m^{(1)}) \ar@{->}[r]^-{\sim} & \Hom_k((\g_m^*)^{(1)},\opH^2(G_1,k)) \ar@{->}[d]^{\circ \tr_m^{(1)}} \\
\Ext_{G}^2(k,k) \ar@{->}[rr]^-{z \mapsto z|_{G_1}} & & \Ext_{G_1}^2(k,k).
}
\]
The proof of Theorem \ref{thm:fgpgeq3} and the observation at the end of the previous paragraph imply that the image of $\bse_1|_G$ across the top row and right-hand column of this diagram is nonzero. Then the image of $\bse_1|_G$ down the left-hand column and across the bottom row of the diagram must also be nonzero. In particular, $({\tr_m}^{(1)})_*(\bse_1|_G)$ must be a nonzero class in $\Ext_G^2(k,k)$.
\end{proof}

\begin{problem}
Compute the rational cohomology ring $\Hbul(GL(m|n),k) = \Ext_{GL(m|n)}^\bullet(k,k)$.
\end{problem}

\begin{remark} \label{rem:nontrivialext}
It follows from a result of Kujawa \cite[Lemma 3.6]{Kujawa:2006a} that $\Ext_{GL(m|n)}^1(L,L) = 0$ for each irreducible rational $GL(m|n)$-supermodule $L$. On the other hand, Brundan and Kleshchev have shown for the supergroup $Q(n)$ that $\opH^1(Q(n),k) \cong \Pi(k)$ \cite[Corollary 7.8]{Brundan:2003}.
\end{remark}

\subsection{Cohomological finite generation in characteristic zero} \label{subsec:fgp=0}

In this section we prove cohom\-ological finite-generation for finite-dimensional cocommutative Hopf superalgebras over fields of characteristic zero, using a theorem of Kostant describing the structure of these algebras. The next result is Theorem 3.3 of \cite{Kostant:1977} (it is evident from the context that the theorem as published contains a typo: the word `commutative' should be the word `cocommutative'). Given a group $G$, we write $kG$ for the group algebra of $G$ over $k$, i.e., the ring of $k$-linear combinations of elements of $G$, in which multiplication is induced by multiplication in $G$.

\begin{theorem}[Kostant] \label{thm:Kostant}
Let $k$ be an algebraically closed field of characteristic zero, and let $A$ be a cocommutative Hopf super\-algebra over $k$. Let $G$ be the group of group-like elements in $A$, let $\g$ be the Lie superalgebra of primitive elements in $A$, and let $U(\g)$ be the universal enveloping superalgebra of $\g$. Then there exists an isomorphism of Hopf superalgebras $A \cong kG \# U(\g)$, where the smash product is taken with respect to the homomorphism $\pi: G \rightarrow GL(\g)$ defined by $\pi(g)(x) = gxg^{-1}$.
\end{theorem}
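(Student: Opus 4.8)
This statement is precisely \cite[Theorem 3.3]{Kostant:1977}, so within the paper itself nothing new needs to be proven and one simply cites it; what follows is only a sketch of Kostant's argument, included to orient the reader, since the theorem is the super analogue of the classical Cartier--Gabriel--Kostant decomposition of a cocommutative Hopf algebra.

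The plan is to analyze $A$ through its coradical filtration. Since $A$ is cocommutative and $k$ is algebraically closed, every simple subcoalgebra of $A$ is one-dimensional, so $A$ is pointed and its coradical $A_0$ coincides with the group algebra $kG$ of the group $G$ of group-like elements of $A$. Let $A^{1}$ denote the irreducible component of $A$ containing the identity, i.e., the connected Hopf subsuperalgebra obtained as the union of the terms of the coradical filtration built over $k \cdot 1$. One first checks that $A^{1}$ is a Hopf subsuperalgebra, that the set $\g$ of primitive elements of $A$ lies in $A^{1}$ and is a Lie superalgebra under the (super)commutator, and that conjugation by a group-like element $g \in G$ carries primitive elements to primitive elements, so that $\pi(g)(x) = g x g^{-1}$ defines an action of $G$ on $\g$ by Lie superalgebra automorphisms; this is what makes the smash product $kG \# U(\g)$ meaningful.

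The heart of the argument, and the step I expect to be the main obstacle, is the characteristic-zero structure theorem for connected cocommutative Hopf superalgebras, namely the super Milnor--Moore theorem: $A^{1} \cong U(\g)$ as Hopf superalgebras. This is proven exactly as in the ungraded case — one produces a coalgebra splitting of the canonical projection from $A^{1}$ onto its space of primitives, and then compares the coradical filtration on $A^{1}$ with the filtration by $U(\g)$-degree via a PBW-type argument. Characteristic zero enters decisively here: it is used to invert the integers that appear when one passes between divided-power and ordinary-power relations, and it is the reason one gets $U(\g)$ rather than a restricted or divided-power variant. The $\Z_2$-grading is carried along passively and contributes only the usual Koszul signs, with the odd part of $\g$ appearing as a purely odd exterior-type factor in the PBW basis.

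Finally one assembles the global decomposition. Multiplication gives a map $kG \otimes U(\g) \to A$, $g \otimes u \mapsto g \cdot u$, which is shown to be bijective by a comparison of associated graded objects with respect to the coradical filtration (using that $G$ normalizes $A^{1}$ and the previous two steps), and tracking how the algebra structure interacts with the comultiplication identifies this bijection with the smash product $kG \# U(\g)$ taken with respect to $\pi$. The only place where characteristic zero is indispensable is the identification $A^{1} \cong U(\g)$; the pointedness of $A$ and the reassembly step are valid over any field.
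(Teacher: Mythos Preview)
Your proposal is correct: the paper does not prove this theorem but simply cites it as \cite[Theorem 3.3]{Kostant:1977}, noting only that the published statement contains a typo (``commutative'' should read ``cocommutative''). The sketch you supply of Kostant's argument is accurate and helpful context, but it goes beyond what the paper itself provides.
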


Since $U(\g)$ is infinite-dimensional whenever $\gzero \neq 0$, and since any purely odd Lie superalgebra is automatically abelian, we immediately get:

\begin{corollary} \label{cor:Kostant}
Let $k$ be an algebraically closed field of characteristic zero, and let $A$ be a finite-dimensional cocommutative Hopf superalgebra over $k$. Then there exists a finite group $G$, a finite-dimensional odd superspace $V$, and a representation $\pi: G \rightarrow GL(V)$ such that $A$ is isomorphic as a Hopf superalgebra to the smash product $kG \# \Lambda(V)$ formed with respect to $\pi$.
\end{corollary}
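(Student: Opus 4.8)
The plan is to read off the corollary from Kostant's structure theorem together with the finiteness hypothesis. First I would invoke Theorem~\ref{thm:Kostant}: letting $G$ be the group of group-like elements of $A$ and $\g$ the Lie superalgebra of primitive elements of $A$, there is an isomorphism of Hopf superalgebras $A \cong kG \# U(\g)$, the smash product being formed with respect to the conjugation homomorphism $\pi: G \to GL(\g)$, $\pi(g)(x) = gxg^{-1}$. As a superspace the smash product is $kG \otimes U(\g)$, and both $kG \otimes 1$ and $1 \otimes U(\g)$ are subalgebras of it; hence $\dim_k(kG) \le \dim_k A < \infty$ and $\dim_k U(\g) \le \dim_k A < \infty$. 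The first inequality says $G$ is a finite group.

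Next I would use the PBW theorem for Lie superalgebras, which identifies the associated graded of a natural filtration on $U(\g)$ with $S(\gzero) \otimes \Lambda(\gone)$ as superspaces. Since $S(\gzero)$ is infinite-dimensional as soon as $\gzero \neq 0$, finite-dimensionality of $U(\g)$ forces $\gzero = 0$, so $\g$ is purely odd; put $V := \gone = \g$, a finite-dimensional odd superspace. Because $[\gone,\gone] \subseteq \gzero = 0$, the bracket on $\g$ vanishes identically, so $\g$ is the abelian purely odd Lie superalgebra on $V$. In characteristic $0$ the relation $x \cdot x = \tfrac{1}{2}[x,x] = 0$ holds in $U(\g)$ for every $x \in V$, so the defining relations of $U(\g)$ become exactly those of the exterior algebra, and $U(\g) = \Lambda(V)$.

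Finally I would combine these observations: the homomorphism $\pi: G \to GL(\g) = GL(V)$ of Kostant's theorem is the required representation, and substituting $U(\g) = \Lambda(V)$ into $A \cong kG \# U(\g)$ gives $A \cong kG \# \Lambda(V)$ with the smash product formed with respect to $\pi$, as claimed. I expect no genuine obstacle here; the only point deserving a sentence of care is to confirm that the $G$-action on $\Lambda(V)$ entering Kostant's smash product is the one induced by $\pi$ acting on $V$, which is immediate since conjugation by a group-like element is a superalgebra automorphism of $A$ preserving the subspace $V$ of primitives, hence restricting to $\pi(g)$ on $V$ and coinciding with the induced automorphism of $\Lambda(V) = U(\g)$.
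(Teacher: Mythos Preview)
Your proof is correct and follows the same approach as the paper. The paper's argument is the single sentence ``Since $U(\g)$ is infinite-dimensional whenever $\gzero \neq 0$, and since any purely odd Lie superalgebra is automatically abelian, we immediately get,'' and you have simply unpacked this: the PBW argument for $\gzero = 0$, the observation $[\gone,\gone] \subseteq \gzero = 0$, and the identification $U(\g) = \Lambda(V)$ are exactly what the paper is invoking. Your explicit check that $G$ is finite and that the $G$-action on $\Lambda(V)$ is the one induced by $\pi$ are details the paper leaves to the reader.
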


Now we prove the characteristic zero analogue of Corollary \ref{cor:fgpgeq3}.

\begin{theorem} \label{thm:fgp=0}
Let $k$ be a field of characteristic zero, let $A$ be a finite-dimensional cocommutative Hopf superalgebra over $k$, and let $M$ be a finite-dimensional $A$-supermodule. Then $\Hbul(A,k)$ is a finitely-generated $k$-algebra, and $\Hbul(A,M)$ is a finitely-generated $\Hbul(A,k)$-module.
\end{theorem}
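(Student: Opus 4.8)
The plan is to reduce the characteristic-zero case to two well-understood situations: ordinary finite group cohomology in characteristic zero, and the cohomology of an exterior algebra. First I would invoke Corollary \ref{cor:Kostant} to reduce to the case where $k$ is algebraically closed (extending scalars does not affect finite-generation, by a standard faithfully-flat base-change argument on the cobar complex, together with the fact that $\Hbul(A,k) \otimes_k \bar{k} \cong \Hbul(A \otimes_k \bar{k}, \bar{k})$) and where $A = kG \# \Lambda(V)$ is the smash product of a finite group algebra with the exterior algebra on a finite-dimensional odd superspace $V$, formed with respect to a representation $\pi \colon G \to GL(V)$.

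Next I would identify the cohomology. Because $\mathrm{char}(k) = 0$, the group algebra $kG$ is semisimple, so the Lyndon--Hochschild--Serre-type spectral sequence for the "extension" $\Lambda(V) \hookrightarrow A \twoheadrightarrow kG$ collapses and yields $\Hbul(A,k) \cong \Hbul(\Lambda(V),k)^{G}$, the ring of $G$-invariants, where $G$ acts on $\Hbul(\Lambda(V),k)$ through $\pi$ and the induced action on cohomology. Concretely, one has the classical computation $\Hbul(\Lambda(V),k) \cong \bss(V^*)$ (the free commutative superalgebra on the odd dual $V^*$, placed in cohomological degree $1$), cited in the excerpt as \cite[2.2(2)]{Priddy:1970}; as an ordinary algebra this is the polynomial ring $S((V^*)_{\ev\text{-ified}})$, i.e. a Noetherian commutative ring. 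Invariants of a Noetherian commutative $k$-algebra under a finite group action (with $|G|$ invertible, which holds in characteristic zero) again form a finitely-generated $k$-algebra, by the classical Noether/Hilbert finiteness theorem. This gives the first assertion. For the module statement, the same collapsed spectral sequence gives $\Hbul(A,M) \cong \left( \Hbul(\Lambda(V),k) \otimes M \right)^{G}$ (using that $M$ restricted to $\Lambda(V)$ and the fact that $\Lambda(V)$-cohomology with coefficients is $\Hbul(\Lambda(V),k) \otimes \mathrm{Res}\,M$ when $M$ is viewed appropriately, then taking $G$-fixed points); since $\Hbul(\Lambda(V),k) \otimes M$ is a finitely-generated module over the Noetherian ring $\Hbul(\Lambda(V),k)$, its ring of invariants is a finitely-generated module over the ring of invariants $\Hbul(\Lambda(V),k)^G = \Hbul(A,k)$, again by classical invariant theory (the Reynolds operator splits the inclusion).

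I expect the main obstacle to be bookkeeping rather than a genuine difficulty: one must set up the spectral sequence for the smash product $kG \# \Lambda(V)$ carefully in the super setting — verifying that it is multiplicative, that it collapses because $kG$ has cohomological dimension $0$, and that the resulting identification of $\Hbul(A,k)$ with $\Hbul(\Lambda(V),k)^G$ is as graded superalgebras and is compatible with the cup-product module structure on $\Hbul(A,M)$. An alternative route avoiding spectral sequences is to observe directly that $\Hbul(A,k) = \Ext_A^\bullet(k,k)$ can be computed from a projective $A$-resolution of $k$ obtained by inducing up a $G$-equivariant $\Lambda(V)$-resolution, and that $\Hom_A(kG \# P_\bullet, k) \cong \Hom_{\Lambda(V)}(P_\bullet,k)^G$ since $A$ is free of rank $|G|$ over $\Lambda(V)$ and $|G|^{-1} \in k$; this makes the identification of cohomology transparent. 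Either way, once the identification $\Hbul(A,k) \cong \bss(V^*)^G$ is in hand, finite-generation is a direct appeal to commutative Noetherian invariant theory, and the module finite-generation follows by the same appeal applied to $\bss(V^*) \otimes M$.
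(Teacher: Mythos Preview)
Your overall strategy matches the paper's proof exactly: base-change to an algebraically closed field, invoke Kostant's corollary to write $A \cong kG \# \Lambda(V)$, collapse the Lyndon--Hochschild--Serre spectral sequence using semisimplicity of $kG$, identify $\Hbul(A,k)$ with $S(V^*)^G$, and apply Noether's finiteness theorem. That part is fine.

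The gap is in your treatment of the module statement. You assert that $\Hbul(A,M) \cong \bigl(\Hbul(\Lambda(V),k) \otimes M\bigr)^G$, relying on an identification $\Hbul(\Lambda(V),M) \cong \Hbul(\Lambda(V),k) \otimes M$. This identification is false for a general $\Lambda(V)$-supermodule: it holds only when the $\Lambda(V)$-action on $M$ is trivial, which is not assumed (for instance, if $\dim V = 1$ and $M = \Lambda(V)$ is the free module, then $\Hbul(\Lambda(V),M)$ is one-dimensional, not isomorphic to $S(V^*) \otimes M$). The LHS collapse gives $\Hbul(A,M) \cong \Hbul(\Lambda(V),M)^G$, and what remains is to show that $\Hbul(\Lambda(V),M)$ is finitely generated over $\Hbul(\Lambda(V),k) \cong S(V^*)$; combined with the fact (from Noether) that $S(V^*)$ is module-finite over $S(V^*)^G$, this yields the result. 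The paper handles this step by observing that the Koszul-type cochain complex $C^\bullet(V,M)$ computing $\Hbul(\Lambda(V),M)$ is already a finitely-generated differential graded module over $C^\bullet(V,k) \cong S(V^*)$ (whose differential is zero since $V$ is abelian), and then passing to cohomology. You need this or an equivalent argument in place of the incorrect tensor-product identification.
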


\begin{proof}
Since $\Hbul(A,k) \otimes \ol{k} \cong \Hbul(A \otimes \ol{k}, \ol{k})$ as $\ol{k}$-algebras, and since $\Hbul(A,M) \otimes \ol{k} \cong \Hbul(A \otimes \ol{k},M \otimes \ol{k})$ as modules under the previous isomorphism, we may assume that $k$ is algebraically closed. Then by Corollary \ref{cor:Kostant}, there exists a finite group $G$, a finite-dimensional odd superspace $V$, and a representation $\pi: G \rightarrow GL(V)$ such that $A$ is isomorphic as a Hopf superalgebra to the smash product $kG \# \Lambda(V)$ formed with respect to $\pi$. Since the exterior algebra $\Lambda(V)$ is a normal subalgebra of $A$, and since the Hopf superalgebra quotient $A//\Lambda(V)$ is isomorphic to the group algebra $kG$, we get for each $A$-supermodule $M$ a Lyndon--Hochschild--Serre spectral sequence
\[
E_2^{i,j}(M) = \opH^i(kG, \opH^j(\Lambda(V),M)) \Rightarrow \opH^{i+j}(A,M).
\]
The group algebra $kG$ is semisimple by Maschke's Theorem, so $E_2^{i,j}(M) = 0$ for $i > 0$. Then the spectral sequence collapses at the $E_2$-page, yielding $\Hbul(A,M) \cong \Hbul(\Lambda(V),M)^G$. Next, the ring $\Hbul(\Lambda(V),k)$ is isomorphic as a graded $k$-algebra to the symmetric algebra $S(V^*)$. This identification is an isomorphism of $kG$-modules, where the $kG$-module structure on $S(V^*)$ is induced by the contragredient representation of $G$ on $V^*$. Then by Noether's finiteness theorem, the cohomology ring $\Hbul(A,k) \cong S(V^*)^G$ is a finitely-generated $k$-algebra, and $S(V^*)$ is finitely-generated as a module over $S(V^*)^G$. Then to prove that $\Hbul(A,M)$ is a finitely-generated $\Hbul(A,k)$-module, it suffices to show that $\Hbul(\Lambda(V),M)$ is a finitely-generated $\Hbul(\Lambda(V),k)$-supermodule.

Consider $V$ as an abelian Lie superalgebra. Then $\Lambda(V)$ is the enveloping superalgebra of $V$, and $\Hbul(\Lambda(V),M)$ is the Lie superalgebra cohomology group $\Hbul(V,M)$ studied in \cite[\S\S3.1--3.2]{Drupieski:2013b}. Let $C^\bullet(V,M)$ be the cochain complex defined in \cite[\S3.2]{Drupieski:2013b} whose cohomology is equal to $\Hbul(V,M)$. Then the cup product makes $C^\bullet(V,M)$ into a finitely-generated differential graded (either left or right) $C^\bullet(V,k)$-super\-module; see the discussion after \cite[Remark 3.2.2]{Drupieski:2013b} and Footnote 2 in \cite[\S2.6]{Drupieski:2013b}. Since $V$ is abelian, the differntial on $C^\bullet(V,k)$ is trivial, and we get $\Hbul(V,k) \cong C^\bullet(V,k) \cong S(V^*)$. Passing to cohomology, this implies that $\Hbul(V,M)$ is finitely-generated under the (left or right) cup product action of $\Hbul(V,k) \cong C^\bullet(V,k)$.
\end{proof}

\makeatletter
\renewcommand*{\@biblabel}[1]{\hfill#1.}
\makeatother

\bibliographystyle{eprintamsplain}
\bibliography{CFG}

\providecommand{\bysame}{\leavevmode\hbox to3em{\hrulefill}\thinspace}
\providecommand{\MR}{\relax\ifhmode\unskip\space\fi MR }
\providecommand{\MRhref}[2]{%
  \href{http://www.ams.org/mathscinet-getitem?mr=#1}{#2}
}
\providecommand{\href}[2]{#2}
\begin{thebibliography}{10}

\bibitem{Axtell:2013}
J.~Axtell, \href{http://dx.doi.org/10.1090/S1088-4165-2013-00445-4} {\emph{Spin
  polynomial functors and representations of {S}chur superalgebras}},
  Represent. Theory \textbf{17} (2013), 584--609.

\bibitem{Bourbaki:1998a}
N.~Bourbaki, \emph{Algebra {I}. {C}hapters 1--3}, Elements of Mathematics
  (Berlin), Springer-Verlag, Berlin, 1998, Translated from the French, Reprint
  of the 1989 English translation [ MR0979982 (90d:00002)].

\bibitem{Bourbaki:2003}
\bysame, \emph{Algebra {II}. {C}hapters 4--7}, Elements of Mathematics
  (Berlin), Springer-Verlag, Berlin, 2003, Translated from the 1981 French
  edition by P. M. Cohn and J. Howie, Reprint of the 1990 English edition.

\bibitem{Brundan:2002}
J.~Brundan and A.~Kleshchev, \href{http://dx.doi.org/10.1007/s002090100282}
  {\emph{Projective representations of symmetric groups via {S}ergeev
  duality}}, Math. Z. \textbf{239} (2002), no.~1, 27--68.

\bibitem{Brundan:2003}
\bysame, \href{http://dx.doi.org/10.1016/S0021-8693(02)00620-8} {\emph{Modular
  representations of the supergroup {$Q(n)$}. {I}}}, J. Algebra \textbf{260}
  (2003), no.~1, 64--98, Special issue celebrating the 80th birthday of Robert
  Steinberg.

\bibitem{Brundan:2003a}
J.~Brundan and J.~Kujawa, \href{http://dx.doi.org/10.1023/A:1025113308552}
  {\emph{A new proof of the {M}ullineux conjecture}}, J. Algebraic Combin.
  \textbf{18} (2003), no.~1, 13--39.

\bibitem{Cartan:1999}
H.~Cartan and S.~Eilenberg, \emph{Homological algebra}, Princeton Landmarks in
  Mathematics, Princeton University Press, Princeton, NJ, 1999, With an
  appendix by David A. Buchsbaum, Reprint of the 1956 original.

\bibitem{Cartier:1957}
P.~Cartier, \emph{Une nouvelle op\'eration sur les formes diff\'erentielles},
  C. R. Acad. Sci. Paris \textbf{244} (1957), 426--428.

\bibitem{Drupieski:2013b}
C.~M. Drupieski, \href{http://dx.doi.org/10.1090/S1088-4165-2013-00440-5}
  {\emph{Cohomological finite generation for restricted {L}ie superalgebras and
  finite supergroup schemes}}, Represent. Theory \textbf{17} (2013), 469--507.

\bibitem{El-Turkey:2013}
H.~El~Turkey and J.~R. Kujawa,
  \href{http://dx.doi.org/10.2140/pjm.2013.262.285} {\emph{Presenting {S}chur
  superalgebras}}, Pacific J. Math. \textbf{262} (2013), no.~2, 285--316.

\bibitem{Etingof:2004}
P.~Etingof and V.~Ostrik, \emph{Finite tensor categories}, Mosc. Math. J.
  \textbf{4} (2004), no.~3, 627--654, 782--783.

\bibitem{Franjou:1999}
V.~Franjou, E.~M. Friedlander, A.~Scorichenko, and A.~Suslin,
  \href{http://dx.doi.org/10.2307/121092} {\emph{General linear and functor
  cohomology over finite fields}}, Ann. of Math. (2) \textbf{150} (1999),
  no.~2, 663--728.

\bibitem{Franjou:1994}
V.~Franjou, J.~Lannes, and L.~Schwartz,
  \href{http://dx.doi.org/10.1007/BF01231771} {\emph{Autour de la cohomologie
  de {M}ac {L}ane des corps finis}}, Invent. Math. \textbf{115} (1994), no.~3,
  513--538.

\bibitem{Friedlander:2003}
E.~M. Friedlander, \emph{Lectures on the cohomology of finite group schemes},
  Rational representations, the {S}teenrod algebra and functor homology, Panor.
  Synth\`eses, vol.~16, Soc. Math. France, Paris, 2003, pp.~27--53.

\bibitem{Friedlander:1997}
E.~M. Friedlander and A.~Suslin, \href{http://dx.doi.org/10.1007/s002220050119}
  {\emph{Cohomology of finite group schemes over a field}}, Invent. Math.
  \textbf{127} (1997), no.~2, 209--270.

\bibitem{Hilton:1997}
P.~J. Hilton and U.~Stammbach, \emph{A course in homological algebra}, second
  ed., Graduate Texts in Mathematics, vol.~4, Springer-Verlag, New York, 1997.

\bibitem{Iwai:1965}
A.~Iwai and N.~Shimada, \emph{A remark on resolutions for {H}opf algebras},
  Publ. Res. Inst. Math. Sci. Ser. A \textbf{1} (1965/1966), 187--198.

\bibitem{Kostant:1977}
B.~Kostant, \emph{Graded manifolds, graded {L}ie theory, and prequantization},
  Differential geometrical methods in mathematical physics ({P}roc. {S}ympos.,
  {U}niv. {B}onn, {B}onn, 1975), Springer, Berlin, 1977, pp.~177--306. Lecture
  Notes in Math., Vol. 570.

\bibitem{Kuhn:1994}
N.~J. Kuhn, \href{http://dx.doi.org/10.2307/2374932} {\emph{Generic
  representations of the finite general linear groups and the {S}teenrod
  algebra. {I}}}, Amer. J. Math. \textbf{116} (1994), no.~2, 327--360.

\bibitem{Kuhn:1995}
\bysame, \href{http://dx.doi.org/10.1007/BF00961668} {\emph{Generic
  representations of the finite general linear groups and the {S}teenrod
  algebra. {III}}}, $K$-Theory \textbf{9} (1995), no.~3, 273--303.

\bibitem{Kuhn:1998}
\bysame,
  \href{http://muse.jhu.edu/journals/american_journal_of_mathematics/v120/120.6kuhn.pdf}
  {\emph{Rational cohomology and cohomological stability in generic
  representation theory}}, Amer. J. Math. \textbf{120} (1998), no.~6,
  1317--1341.

\bibitem{Kujawa:2006a}
J.~Kujawa, \href{http://dx.doi.org/10.1090/S1088-4165-06-00219-6}
  {\emph{Crystal structures arising from representations of {${\rm GL}(m\vert
  n)$}}}, Represent. Theory \textbf{10} (2006), 49--85.

\bibitem{Mastnak:2010}
M.~Mastnak, J.~Pevtsova, P.~Schauenburg, and S.~Witherspoon,
  \href{http://dx.doi.org/10.1112/plms/pdp030} {\emph{Cohomology of
  finite-dimensional pointed {H}opf algebras}}, Proc. Lond. Math. Soc. (3)
  \textbf{100} (2010), no.~2, 377--404.

\bibitem{May:1966}
J.~P. May, \emph{The cohomology of restricted {L}ie algebras and of {H}opf
  algebras}, J. Algebra \textbf{3} (1966), 123--146.

\bibitem{Mccleary:2001}
J.~McCleary, \emph{A user's guide to spectral sequences}, second ed., Cambridge
  Studies in Advanced Mathematics, vol.~58, Cambridge University Press,
  Cambridge, 2001.

\bibitem{Milnor:1965}
J.~W. Milnor and J.~C. Moore, \emph{On the structure of {H}opf algebras}, Ann.
  of Math. (2) \textbf{81} (1965), 211--264.

\bibitem{Pirashvili:2003}
T.~Pirashvili, \emph{Introduction to functor homology}, Rational
  representations, the {S}teenrod algebra and functor homology, Panor.
  Synth\`eses, vol.~16, Soc. Math. France, Paris, 2003, pp.~1--26.

\bibitem{Priddy:1970}
S.~B. Priddy, \emph{Koszul resolutions}, Trans. Amer. Math. Soc. \textbf{152}
  (1970), 39--60.

\bibitem{Suslin:1997}
A.~Suslin, E.~M. Friedlander, and C.~P. Bendel,
  \href{http://dx.doi.org/10.1090/S0894-0347-97-00240-3} {\emph{Infinitesimal
  {$1$}-parameter subgroups and cohomology}}, J. Amer. Math. Soc. \textbf{10}
  (1997), no.~3, 693--728.

\bibitem{Suslin:1997a}
\bysame, \href{http://dx.doi.org/10.1090/S0894-0347-97-00239-7} {\emph{Support
  varieties for infinitesimal group schemes}}, J. Amer. Math. Soc. \textbf{10}
  (1997), no.~3, 729--759.

\bibitem{Touze:2010b}
A.~Touz{\'e}, \href{http://dx.doi.org/10.1016/j.aim.2010.02.014}
  {\emph{Cohomology of classical algebraic groups from the functorial
  viewpoint}}, Adv. Math. \textbf{225} (2010), no.~1, 33--68.

\bibitem{Touze:2014}
\bysame, \emph{Bar complexes and extensions of classical exponential functors},
  Ann. Inst. Fourier (Grenoble) \textbf{64} (2014), no.~6, 2563--2637.

\end{thebibliography}

\begin{figure}[p]
\includegraphics{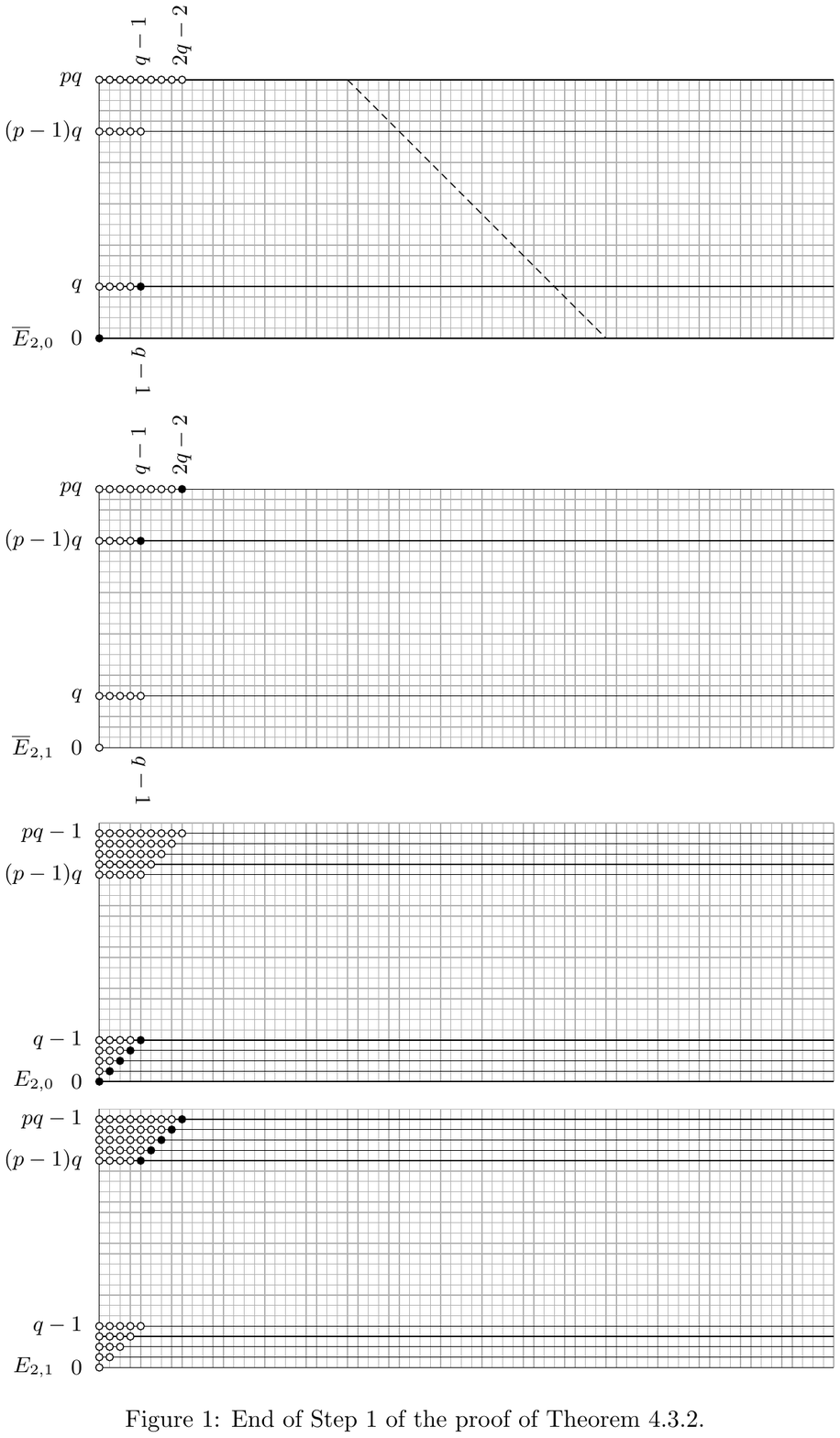}
\caption{End of Step 1 of the proof of Theorem \ref{thm:quadruplecalculation}.} \label{fig:step1}
\end{figure}

\begin{figure}[p]
\includegraphics{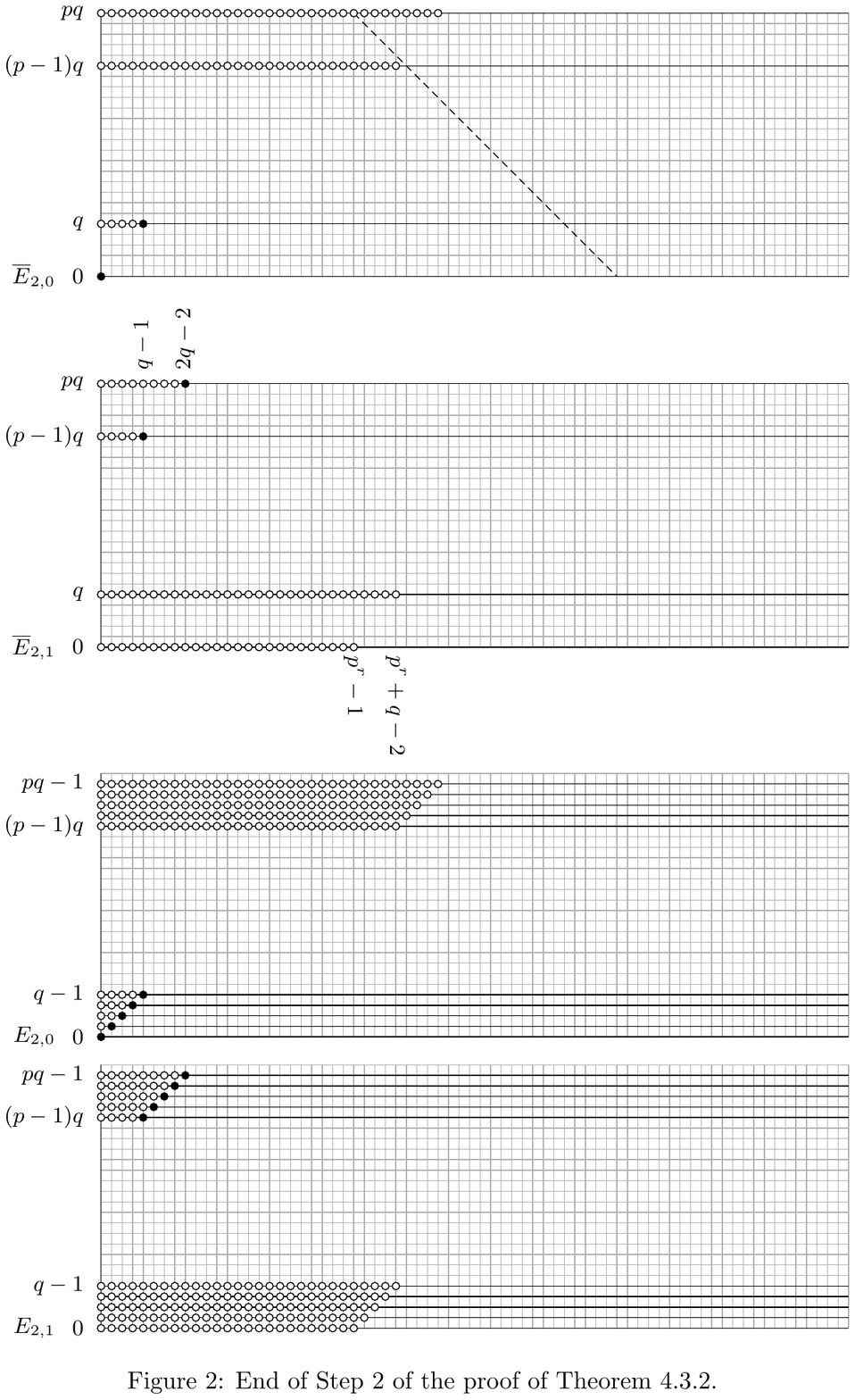}
\caption{End of Step 2 of the proof of Theorem \ref{thm:quadruplecalculation}.} \label{fig:step2}
\end{figure}

\begin{figure}[p]
\includegraphics{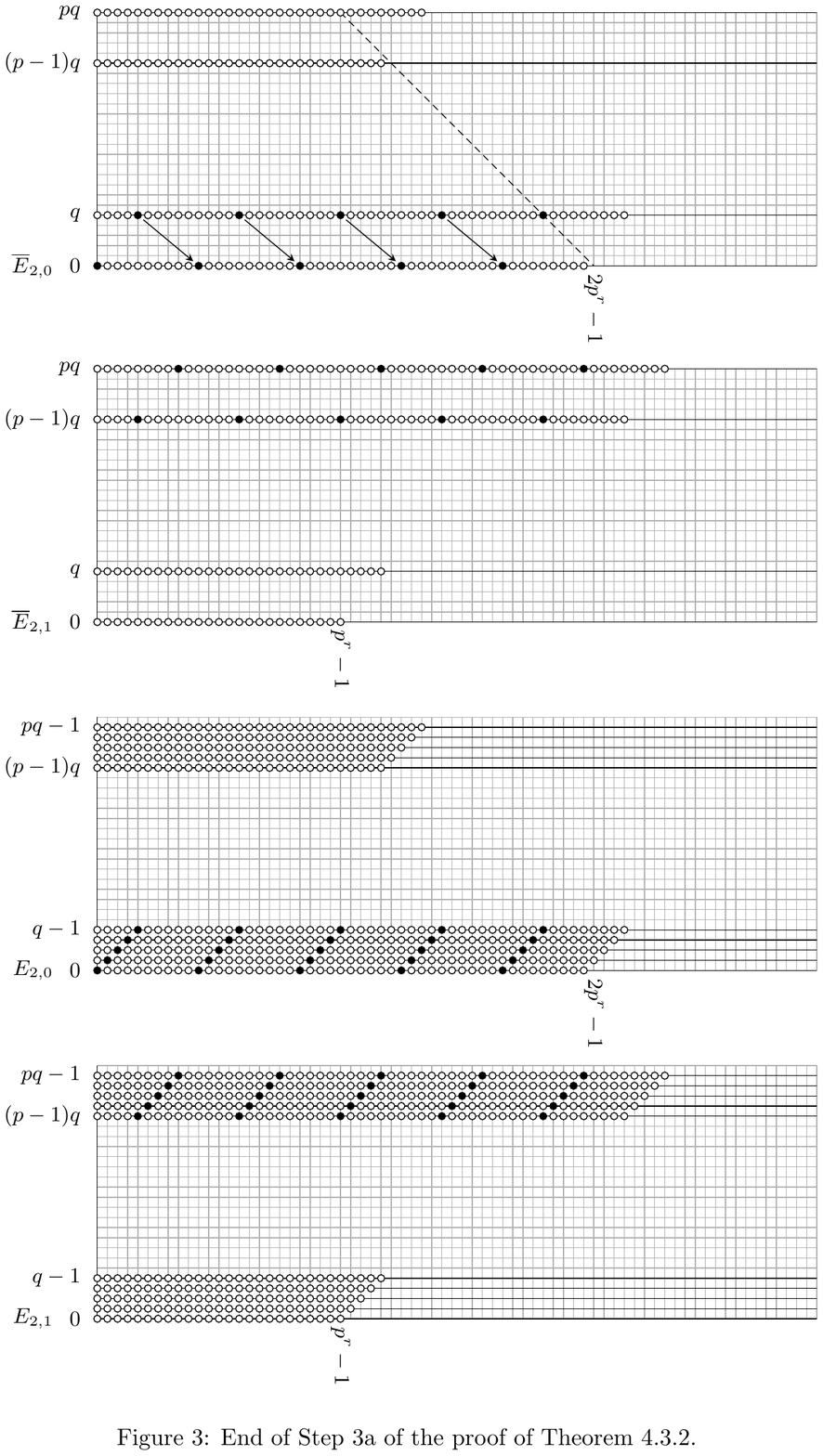}
\caption{End of Step 3 of the proof of Theorem \ref{thm:quadruplecalculation}.} \label{fig:step3}
\end{figure}

\begin{figure}[p]
\includegraphics{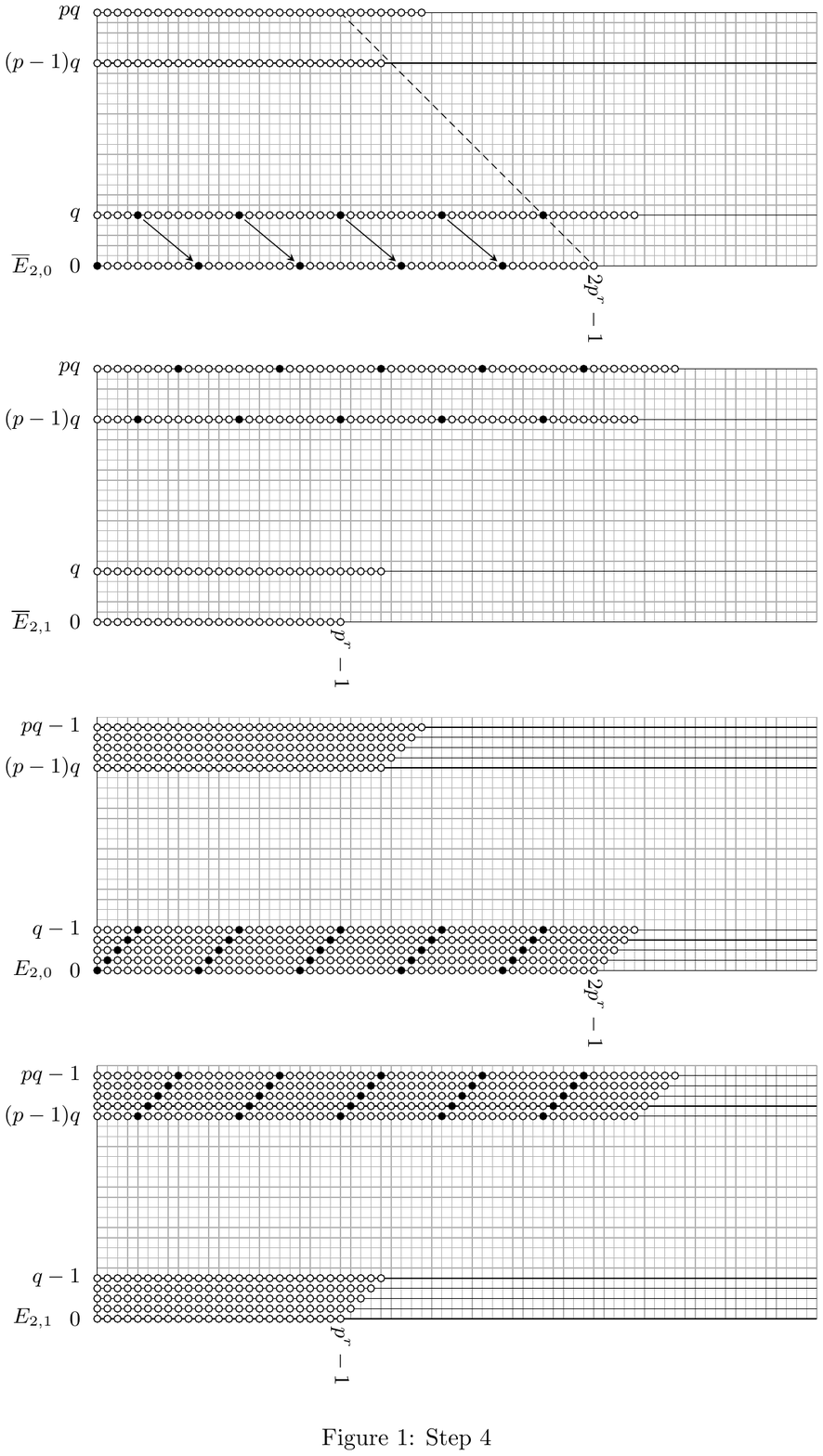}
\caption{End of Step 4 of the proof of Theorem \ref{thm:quadruplecalculation}.} \label{fig:step4}
\end{figure}

\begin{figure}[p]
\includegraphics{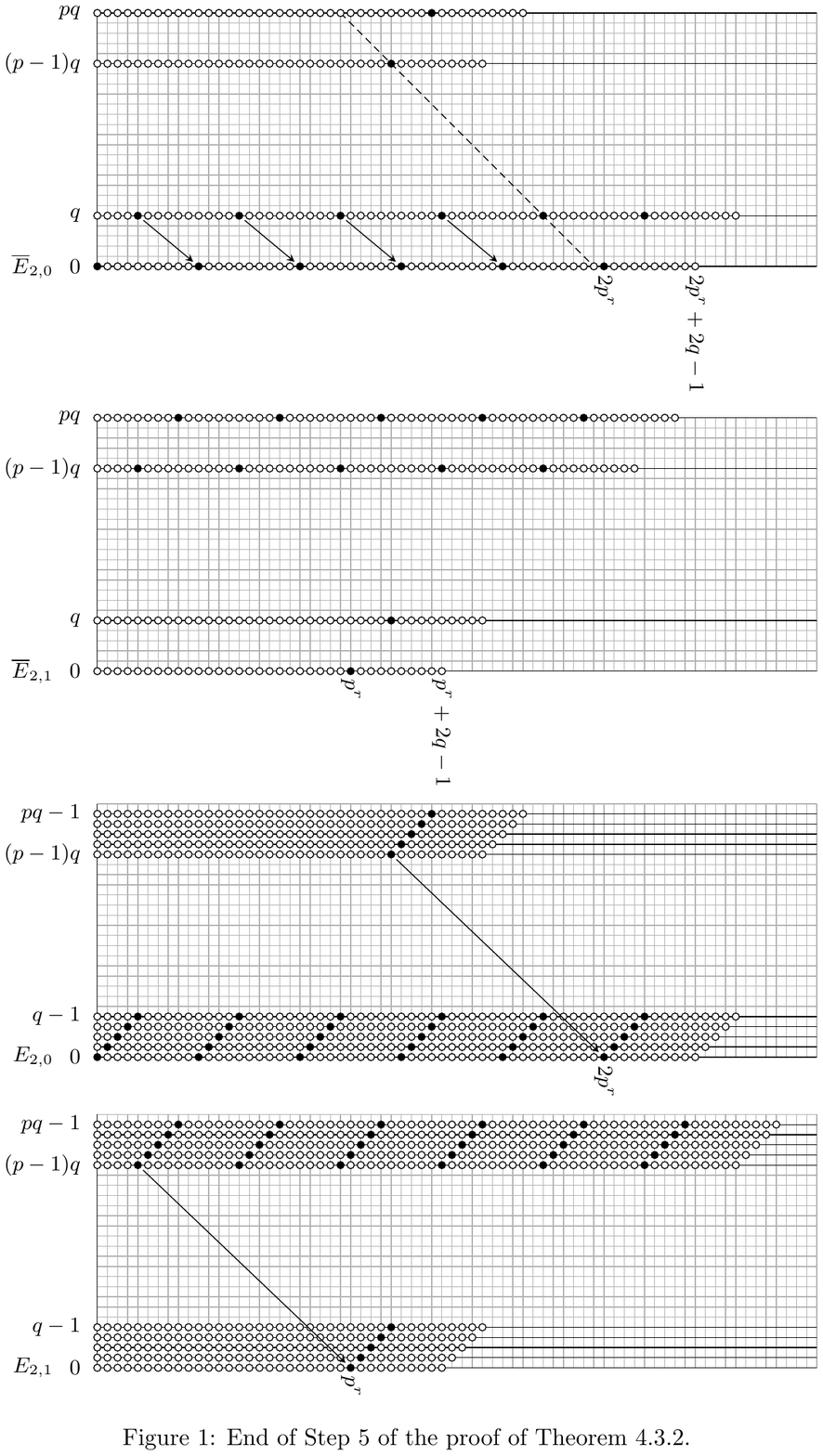}
\caption{End of Step 5 of the proof of Theorem \ref{thm:quadruplecalculation}.} \label{fig:step5}
\end{figure}

\begin{figure}[p]
\includegraphics{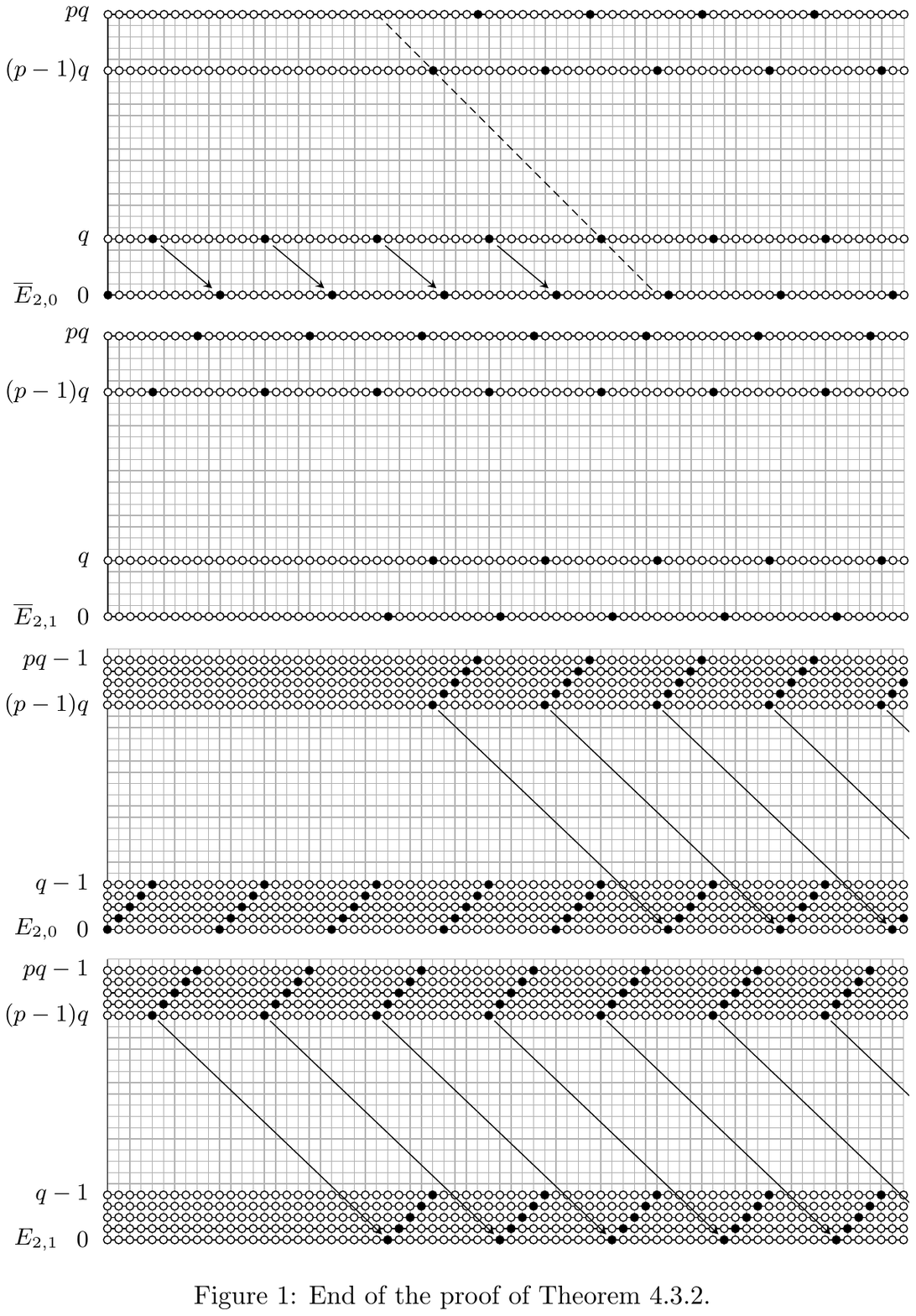}
\caption{End of the proof of Theorem \ref{thm:quadruplecalculation}.} \label{fig:endsteps}
\end{figure}

\end{document}